\newlength{\auxlength}
\newlength{\auxlengthtwo}
\newlength{\auxlengththree}
\newcommand{\alg}{\mathbf}
\newcommand{\class}{\mathsf}
\newcommand{\elem}{\mathsf}
\newcommand{\set}[2]{\{ #1 \mid #2 \}}
\newcommand{\pair}[2]{\langle #1, #2 \rangle}
\newcommand{\equals}{\approx}
\newcommand{\inequals}{\leq}
\newcommand{\assign}{\mathrel{:=}}
\newcommand{\dual}{\partial}
\newcommand{\bs}{\backslash}
\newcommand{\sqle}{\sqsubset}
\newcommand{\sqge}{\sqsupset}
\newcommand{\sqleq}{\sqsubseteq}
\newcommand{\sqgeq}{\sqsupseteq}
\newcommand{\into}{\hookrightarrow}
\newcommand{\Iso}{\mathbb{I}}
\newcommand{\fracof}[1]{#1^{\div}}
\newcommand{\fracalg}[1]{\alg{#1}^{\div}}
\newcommand{\cdmalg}[1]{\alg{#1}^{\!\Delta}}
\newcommand{\cdmhom}[1]{#1^{\!\Delta}}
\newcommand{\extalg}[1]{\overline{\alg{#1}}}
\newcommand{\fracsigma}{\sigma^{\div}}
\newcommand{\fraciota}{\iota^{\div}}
\newcommand{\fracclass}[1]{\class{#1}^{\div}}
\newcommand{\frachom}[1]{#1^{\div}}
\newcommand{\prefracalg}[1]{\fracalg{#1}_{\mathrm{pre}}}
\newcommand{\algAsquared}{\alg{A}^{\!2}}
\newcommand{\fracalgA}{\alg{A}^{\!\div}}
\newcommand{\fracalgB}{\fracalg{B}}
\newcommand{\fracalgN}{\fracalg{\N}}
\newcommand{\fracclassK}{\fracclass{K}}
\newcommand{\multipair}[2]{\pair{#1}{#2}_{\scriptscriptstyle\bullet}}
\newcommand{\transplus}{\alpha}
\newcommand{\transminus}{\beta}
\newcommand{\comp}[1]{\overline{#1}}
\newcommand{\compl}[1]{#1^{\ell}}
\newcommand{\compr}[1]{#1^{r}}
\newcommand{\complr}[1]{#1^{\ell r}}
\newcommand{\comprl}[1]{#1^{r \ell}}
\newcommand{\ltor}{\ltorleft}
\newcommand{\rtol}{\rtolleft}
\newcommand{\ltorleft}{\ell_{\scriptscriptstyle\circ}}
\newcommand{\ltorright}{r_{\scriptscriptstyle\circ}}
\newcommand{\rtolleft}{\ell_{\scriptscriptstyle\oplus}}
\newcommand{\rtolright}{r_{\scriptscriptstyle\oplus}}
\newcommand{\leftpair}{\multipair}
\newcommand{\rightpair}[2]{\pair{#1}{#2}_{\!\scriptscriptstyle\oplus}}
\newcommand{\leftdot}{\circ}
\newcommand{\rightplus}{\oplus}
\newcommand{\leftunit}{1_{\scriptscriptstyle\circ}}
\newcommand{\rightzero}{0_{\scriptscriptstyle\oplus}}
\newcommand{\invframe}[1]{#1}
\newcommand{\invframeofalg}[1]{F_{\alg{#1}}}
\newcommand{\galois}[1]{#1^{+}}
\newcommand{\toright}{^{\vartriangleright}}
\newcommand{\toleft}{^{\vartriangleleft}}
\newcommand{\torighttoleft}{^{\vartriangleright\vartriangleleft}}
\newcommand{\tolefttoright}{^{\vartriangleleft\vartriangleright}}
\newcommand{\N}{\mathbb{N}}
\newcommand{\Z}{\mathbb{Z}}
\newcommand{\Zplus}{\mathbb{Z}_{+}}
\newcommand{\ZlexZ}{\Z \overrightarrow{\times} \Z}
\newcommand{\elembot}{\boldsymbol{\bot}}
\newcommand{\elemone}{\boldsymbol{1}}
\newcommand{\Lukthree}{\text{\bf\L}_{\alg{3}}}
\newcommand{\Lukthreeext}{\Lukthree^{\!\Delta}}
\newcommand{\Mthree}{\alg{M}_{\alg{3}}}
\newcommand{\Nfive}{\alg{N}_{\alg{5}}}
\newcommand{\Hfive}{\alg{H}_{\alg{5}}}
\newcommand{\fracHfive}{\fracalg{H}_{\alg{5}}}
\newcommand{\Hfivesquared}{\Hfive^{2}}
\newcommand{\topmirroralgA}{\alg{A}^{\!\ast}}
\newcommand{\botmirroralgA}{\alg{A}_{\ast}}
\newcommand{\preleq}{\preccurlyeq}
\newcommand{\precdot}{\leftdot}
\newcommand{\preplus}{\rightplus}
\newcommand{\preone}{1^{\div}}
\newcommand{\prezero}{0^{\div}}
\newcommand{\twoalg}{\alg{2}}
\newcommand{\omegaplusonealg}{\boldsymbol{\omega} + \boldsymbol{1}}
\newcommand{\thistheoremref}{}
\newtheoremstyle{plainnewline}
{}{}
{\itshape}{}
{\bfseries}{.}
{\newline}{\thmname{#1}\thmnumber{ #2}\thistheoremref\thmnote{ (#3)}}
\theoremstyle{plainnewline}
\newtheorem{theorem}{Theorem}[section]
\newtheorem{lemma}[theorem]{Lemma}
\newtheorem{proposition}[theorem]{Proposition}
\newtheorem{definition}[theorem]{Definition}
\newtheorem{corollary}[theorem]{Corollary}
\theoremstyle{plain}
\newtheorem{fact}[theorem]{Fact}
\newtheorem{lemmanoname}[theorem]{Lemma}
\newtheorem{corollarynoname}[theorem]{Corollary}
\newtheorem{problem}{Problem}
\begin{document}

\begin{frontmatter}

\author[1]{Nick Galatos}
\ead[1]{ngalatos@du.edu}
\address[1]{Department of Mathematics, University of Denver, Denver, CO, USA}

\author[2]{Adam \texorpdfstring{P\v{r}enosil}{Prenosil}}
\ead[2]{adam.prenosil@vanderbilt.edu}
\address[2]{Department of Mathematics, Vanderbilt University, Nashville, TN, USA}

\title{Complemented MacNeille completions and algebras of fractions}

\begin{abstract}
  We introduce ($\ell$-)bimonoids as ordered algebras consisting of two compatible monoidal structures on a partially ordered (lattice-ordered) set. Bimonoids form an appropriate framework for the study of a general notion of complementation, which subsumes both Boolean complements in bounded distributive lattices and multiplicative inverses in monoids. The central question of the paper is whether and how bimonoids can be embedded into complemented bimonoids, generalizing the embedding of cancellative commutative monoids into their groups of fractions and of bounded distributive lattices into their free Boolean extensions. We prove that each commutative ($\ell$-)bimonoid indeed embeds into a complete complemented commutative $\ell$-bimonoid in a doubly dense way reminiscent of the Dedekind--MacNeille completion. Moreover, this complemented completion, which is term equivalent to a commutative involutive residuated lattice, sometimes contains a tighter complemented envelope analogous to the group of fractions. In the case of cancellative commutative monoids this algebra of fractions is precisely the familiar group of fractions, while in the case of Brouwerian (Heyting) algebras it is a (bounded) idempotent involutive commutative residuated lattice. This construction of the algebra of fractions in fact yields a categorical equivalence between varieties of integral and of involutive residuated structures which subsumes as special cases the known equivalences between Abelian $\ell$-groups and their negative cones, and between Sugihara monoids and their negative cones.
\end{abstract}

\begin{keyword}
  bimonoids \sep pomonoids \sep residuated lattices \sep involutive residuated lattices \sep complementation \sep Dedekind--MacNeille completion \sep algebra of fractions \sep group of fractions
\end{keyword}

\end{frontmatter}

\section{Introduction}

  In various areas of mathematics, ordered algebraic structures naturally arise in which a partially ordered monoid (or \emph{pomonoid}) $\langle A, \leq, \cdot, 1 \rangle$ is equipped with an order-inverting involution $x \mapsto \comp{x}$. This gives rise to another partially ordered monoid $\langle A, \geq, +, 0 \rangle$ over the same set, where $0 = \comp{1}$ and the two monoidal operations are related by De~Morgan duality:
\begin{align*}
  \comp{x \cdot y} & = \comp{y} + \comp{x} & & \text{and} & \comp{x + y} & = \comp{y} \cdot \comp{x}.
\end{align*}
  Moreover, the order-inverting involution often relates the two monoids in a way that is reminiscent of Boolean algebras. Namely, in Boolean algebras, where we take the two monoids to be the meet and join semilattices, we have
\begin{align*}
  x \leq z \vee \neg y \iff x \wedge y \leq z \iff y \leq \neg x \vee z.
\end{align*}
  Similarly, in partially ordered groups, where the two monoids coincide, we have
\begin{align*}
  x \leq z \cdot y^{-1} \iff x \cdot y \leq z \iff y \leq x^{-1} \cdot z.
\end{align*}
  Finally, in MV-algebras we have
\begin{align*}
  x \leq z \oplus \neg y \iff x \odot y \leq z \iff y \leq \neg x \oplus z.
\end{align*}
  Groups can and will be be identified with partially ordered groups whose partial order is the equality relation.

  The notion of an \emph{involutive residuated pomonoid} was devised to subsume the above examples and more. Involutive residuated pomonoids can thus be thought of as ordered algebras of the form $\langle A, \leq, \cdot, 1, +, 0, \comp{\phantom{0}}\,\rangle$ where each element has a Boolean-like or group-like complement and the two monoids are De~Morgan duals of each other. \emph{Involutive residuated lattices} moreover require that the partial order be a lattice.

  This paper is devoted to the study of the \emph{positive subreducts} of commutative involutive residuated pomonoids. These are defined as the ordered algebras of the form $\langle A, \leq, \cdot, 1, +, 0 \rangle$ which can be embedded into some commutative involutive residuated pomonoid. More generally, we consider the following problem:
\begin{center}
  Given a class $\class{K}$ of involutive residuated pomonoids, \\ describe the positive subreducts of the structures in $\class{K}$.
\end{center}
  We also consider the positive subreducts of commutative involutive residuated lattices, i.e.\ algebras of the form $\langle A, \vee, \wedge, \cdot, 1 ,+, 0 \rangle$ which can be embedded into some commutative involutive residuated lattice. Although some of our definitions and constructions allow for non-commutative bimonoids, throughout the paper we focus on the commutative case.\footnote{Beyond the commutative case this problem is highly non-trivial already for groups. Although monoids which embed into a group of (right) fractions are precisely the so-called left-reversible cancellative monoids, the~description of monoids which embed into \emph{some} group, not necessarily a group of fractions, is substantially more complicated (see~\cite[Ch.\,12]{clifford+preston67}).}

  There are in fact at least two classical results of this form: each cancellative commutative monoid can be embedded into an Abelian group (the smallest such group is the \emph{group of fractions} of the monoid) and each bounded distributive lattice can be embedded into a Boolean algebra (the smallest such algebra is the \emph{free Boolean extension} of the bounded distributive lattice). Conversely, Abelian groups are precisely those cancellative commutative monoids where each element $x$ has a multiplicative inverse $x^{-1}$ and Boolean algebras are precisely those bounded distributive lattices where each element $x$ has a Boolean complement~$\neg x$. These results can be expressed more compactly as follows: bounded distributive lattices are precisely the positive subreducts of Boolean algebras and cancellative commutative monoids are precisely the positive subreducts of Abelian groups.

  One of our goals is to unify and extend these two constructions, bringing out both their common features and their essential differences. Our first main result describes the positive subreducts of commutative involutive residuated pomonoids. We identify these subreducts as \emph{commutative bimonoids}. These are pairs of commutative pomonoids over the same partially ordered set satisfying the compatibility condition
\begin{align*}
  a \cdot (b + c) & \leq (a \cdot b) + c,
\end{align*}
  which we call \emph{hemidistributivity}.\footnote{The term ``hemidistributivity'' was first introduced for this condition by Dunn \& Hardegree~\cite{dunn+hardegree01algebraic-methods}. The term ``bimonoid'' is also used to denote a compatible pair of a monoid and a comonoid over a symmetric monoidal category (see~\cite{porst08}). Our usage of the term is essentially unrelated.} Two elements $a$, $b$ of a commutative bimonoid are called \emph{complements}~if
\begin{align*}
  a \cdot b & \leq 0 & & \text{and} & 1 & \leq a + b.
\end{align*}
  This notion subsumes both Boolean complements in bounded distributive lattices and multiplicative inverses in monoids. Complemented commutative bimonoids are in fact term equivalent to commutative involutive residuated pomonoids. The~problem of describing the positive subreducts of involutive residuated structures therefore reduces to the following problem:
\begin{align*}
  \text{Given a bimonoid, embed it into a complemented bimonoid.}
\end{align*}
  We show that this can always be done in the commutative case.

  Specifically, we prove in Section~\ref{sec: macneille} that each commutative bimonoid $\alg{A}$ embeds into what we call its \emph{complemented Dedekind--MacNeille completion}~$\cdmalg{A}$. This is a complete commutative involutive residuated lattice (unique up to a unique isomorphism which fixes $\alg{A}$) containing $\alg{A}$ as a sub-bimonoid where each element is a join of the form $\bigvee_{i \in I} (a_{i} \cdot \comp{b}_{i})$, or equivalently a meet of the form $\bigwedge_{i \in I} (a_{i} + \comp{b}_{i})$, for $a_{i}, b_{i} \in \alg{A}$. The construction of the complemented Dedekind--MacNeille completion $\cdmalg{A}$ relies heavily on the machinery of involutive residuated frames developed by Galatos \& Jipsen \cite{galatos+jipsen13residuated-frames}.

  If $\alg{A}$ is moreover a commutative $\ell$-bimonoid, i.e.\ a commutative lattice-ordered bimonoid which satisfies the equations $x \cdot (y \vee z) \equals (x \cdot y) \vee (x \cdot z)$ and $x + (y \wedge z) \equals (x + y) \wedge (x + z)$, then the embedding of $\alg{A}$ into $\cdmalg{A}$ preserves finite meets and joins. Commutative $\ell$-bimonoids are thus the positive subreducts of commutative involutive residuated lattices. Moreover, in Section~\ref{sec: subreducts} we show how to axiomatize the positive subreducts of any variety of commutative involutive residuated lattices defined by equations in the signature $\{ \vee, \cdot, 1 \}$.

  The above construction provides a commutative complemented envelope for each commutative bimonoid. This is a satisfactory result, but observe that not all complemented extensions are created equal: a generic element of a free Boolean extension has the form $\bigvee_{i \in I} (a_{i} \wedge \neg b_{i})$, while a generic element of a group of fractions has the simpler form $a \cdot b^{-1}$. Let us introduce a name for complemented extensions of this second, simpler kind: we shall say that a commutative involutive residuated pomonoid $\alg{B}$ which contains a commutative bimonoid $\alg{A}$ as a sub-bimonoid is a \emph{commutative complemented bimonoid of fractions} of $\alg{A}$ if each element $x$ of $\alg{B}$ has the form $x = a \cdot \comp{b}$, or equivalently $x = a + \comp{b}$, for some $a, b \in \alg{A}$. If~a~commutative complemented bimonoid of fractions exists, it is unique up to isomorphism and it embeds into $\cdmalg{A}$. We~denote it $\fracalgA$.

  Although in the present paper we restrict our attention to complemented Dedekind--MacNeille completions and their sub\-algebras, this is not the only possible direction in which one can look for generalizations of free Boolean extensions. In particular, the problem of axiomatizing the $\ell$-bimonoidal subreducts of MV-algebras was recently solved by Abbadini et al.~\cite{abbadini+jipsen+kroupa+vannucci22} using a different kind of complemented envelope, where each element is a finite sum rather than a possibly infinite join of elements of the form $a \cdot \comp{b}$.

  The second problem that we consider is then the question of which bimonoids admit such well-behaved complemented extensions. That is:
\begin{center}
  Which bimonoids have a complemented bimonoid of fractions?
\end{center}
  We only consider this problem for commutative complemented bimonoids of fractions of commutative bimonoids. As we already observed above, cancellative commutative monoids admit a bimonoid of fractions, while distributive lattices typically do not. We show in Section~\ref{sec: bimonoids of fractions} that commutative bimonoids which admit a commutative complemented bimonoid of fractions can be described by a certain first-order sentence. Moreover, for residuated bimonoids (bimonoids whose multi\-plicative pomonoid is residuated) the existence of a complemented bimonoid of fractions may be witnessed by a pair of terms satisfying certain inequalities. With the help of these terms we can construct the complemented bimonoid of fractions explicitly.

  The construction of the complemented bimonoid of fractions covers some known cases as well as some new ones. In particular, we prove that all Brouwerian algebras, hence also all Heyting algebras, admit a complemented bimonoid of fractions, which is an idempotent involutive residuated lattice. This~extends to the non-semilinear case of a result of Galatos \& Raftery~\cite{galatos+raftery12} for so-called relative Stone algebras, whose algebras of fractions are known as Sugihara monoids. More generally, complemented bimonoids of fractions can be constructed for Brouwerian semilattices and for Boolean-pointed Brouwerian algebras. The latter construction extends a result of Fussner~\&~Galatos~\cite{fussner+galatos19} for Boolean-pointed relative Stone algebras. The two reflection constructions of Galatos \& Raftery~\cite{galatos+raftery04} are also covered (if we allow for bimonoids of fractions of bisemigroups), as is of course the classical construction of the Abelian group of fractions of a cancellative commutative monoid.\footnote{Let us recall the definitions of these algebras. Brouwerian algebras differ from Heyting algebras by removing the assumption that a bottom element exists. Relative Stone algebras are Brouwerian algebras which satisfy the equation ${(x \rightarrow y) \vee (y \rightarrow x) \equals 1}$; equivalently, they are subdirect products of Brouwerian chains. Relative Stone algebras with a bottom element are better known as G\"{o}del algebras. Boolean-pointed Brouwerian algebras are Brouwerian algebras equipped with a constant $0$ such that the interval $[0, 1]$ is a Boolean lattice. Brouwerian semilattices are unital meet semilattices equipped with relative pseudocomplementation ($x \rightarrow y$). Finally, Sugihara monoids are distributive idempotent commutative involutive residuated lattices.}
 
  The importance of considering bimonoidal structure, instead of merely residuated lattice structure, when constructing complemented extensions can be illustrated on Heyting algebras. These can be seen as bimonoids in two different ways: if we take $0 = \bot$ and $x + y = x \vee y$, the smallest complemented extension is the free Boolean extension, whereas if we take $0 = \top$ and $x + y = x \wedge y$, we obtain a (non-integral) idempotent involutive residuated lattice. This latter algebra is what we referred to above as the bimonoid of fractions of the Heyting algebra.

  Borrowing an idea used by Montagna \& Tsinakis~\cite{montagna+tsinakis10} in the context of groups of fractions, the bimonoid~$\alg{A}$ may be identified inside of its complemented bimonoid of fractions $\fracalgA$ as the image of a certain interior operator $\fracsigma$, provided that multiplication in $\alg{A}$ is residuated. This sometimes allows us to extend the construction of the complemented bimonoid of fractions to a categorical equivalence. In~one direction, we take a suitable residuated bimonoid $\alg{A}$ to a complemented bimonoid $\fracalgA$ equipped with an interior operator~$\fracsigma$, while in the other direction we take a suitable complemented bimonoid $\alg{B}$ equipped with an interior operator~$\sigma$ to the image of this operator $\alg{B}_{\sigma}$, which is a sub-bimonoid of $\alg{B}$. This yields a new categorical equivalence subsuming several known equivalences: the restriction to the commutative case of the equivalence between lattice-ordered groups ($\ell$-groups) and certain integral residuated lattices due to Bahls~et~al.~\cite{bahls+et+al03}, the restriction to the commutative case of the equivalence between certain $\ell$-groups with a conucleus and certain cancellative residuated lattices due to Montagna \& Tsinakis~\cite{montagna+tsinakis10}, the equivalence between odd Sugihara monoids and relative Stone algebras due to Galatos~\&~Raftery~\cite{galatos+raftery12}, and its extension to Sugihara monoids and Boolean-pointed relative Stone algebras due to Fussner~\&~Galatos~\cite{fussner+galatos19}.

  Let us now outline the structure of the paper. In Section~\ref{sec: bimonoids} we review some basic terminology concerning involutive residuated structures and introduce bimonoids, lattice-ordered bimonoids ($\ell$-bimonoids), and related structures as an attempt to describe the positive subreducts of involutive residuated structures. We define complements in bimonoids and show that involutive residuated pomonoids (involutive residuated lattices) are term equivalent to bimonoids ($\ell$-bimonoids) with complementation. We then provide some examples of bimonoids. In Section~\ref{sec: macneille} we prove the first main result of the paper, namely the existence of commutative complemented Dedekind--MacNeille completions of commutative bimonoids. The existence of commutative complemented bimonoids of fractions is then studied in Section~\ref{sec: bimonoids of fractions}. This in particular yields new categorical equivalences between varieties of residuated structures, as well as uniform proofs of some known equivalences. In Section~\ref{sec: subreducts} we study the preservation of equations by the two constructions (complemented Dedekind--MacNeille completions and complemented bimonoids of fractions) and axiomatize the $\ell$-bimonoidal subreducts of each variety of commutative involutive residuated lattices axiomatized by equations in the signature $\{ \vee, \cdot, 1 \}$.  Finally, in Section~\ref{sec: open problems} we list some problems which the paper leaves open.

\section{Bimonoids}
\label{sec: bimonoids}

  The main algebraic structures studied in the present paper, namely bimonoids and their lattice-ordered and residuated variants, are introduced in this section. We define complements in bimonoids and show that complemented bimonoids are term equivalent to involutive residuated pomonoids. We then introduce some basic terminology and provide examples of how bimonoids can be constructed from partially ordered monoids. Familiarity with the basic notions of universal algebra is assumed (see e.g.\ \cite{burris+sankappanavar81,alv87}).

\subsection{Involutive residuated structures}

  The structures that we study in this paper all have at least a partially ordered semigroup reduct.

\begin{definition}[Partially ordered semigroups and s$\ell$-semigroups]
  A \emph{partially ordered semigroup} or \emph{posemigroup} $\langle A, \leq, \cdot \rangle$ is a semigroup $\langle A, \cdot \rangle$ equipped with a partial order $\langle A, \leq \rangle$ which satisfies the implications
\begin{align*}
  x \inequals y \implies x \cdot z \inequals y \cdot z \qquad \text{and} \qquad x \inequals y \implies z \cdot x \inequals z \cdot y.
\end{align*}
  A \emph{semilattice-ordered semigroup} or \emph{s$\ell$-semigroup} $\langle A, \vee, \cdot \rangle$ is a semigroup $\langle A, \cdot \rangle$ equipped with a (join) semilattice structure $\langle A, \vee \rangle$ which satisfies the equations
\begin{align*}
  x \cdot (y \vee z) \equals (x \cdot y) \vee (x \cdot z) \qquad \text{and} \qquad (x \vee y) \cdot z \equals (x \cdot z) \vee (y \cdot z).
\end{align*}
  A \emph{partially ordered monoid} or \emph{pomonoid} $\langle A, \leq, \cdot, 1 \rangle$ is a posemigroup ${\langle A, \leq, \cdot \rangle}$ with a multiplicative unit~$1$. A~\emph{semilattice-ordered monoid} or \emph{s$\ell$-monoid} $\langle A, \vee, \cdot, 1 \rangle$, also known as an \emph{idempotent semiring}, is an s$\ell$-semigroup $\langle A, \vee, \cdot \rangle$ with a multiplicative unit~$1$.
\end{definition}

  The classes of s$\ell$-semigroups and s$\ell$-monoids are varieties of algebras, while the classes of posemigroups and pomonoids are varieties of ordered algebras in the sense of Pigozzi~\cite{pigozzi03partially-ordered-varieties}. Each s$\ell$-semigroup can of course be seen as a posemigroup. Several important classes of posemigroups can be defined by inequalities.

\begin{definition}[Commutative, integral, and idempotent posemigroups]
  A posemigroup is \emph{commutative} if it satisfies $x \cdot y \equals y \cdot x$, it is \emph{integral} if it satisfies the inequalities $x \cdot y \inequals x$ and $x \cdot y \inequals y$, and it is idempotent if it satisfies $x \cdot x \equals x$.
\end{definition}

  A pomonoid is integral if and only if $x \leq 1$ for each $x$. In addition to a partial order and a semigroup structure, the algebras that we study in the present paper are often also equipped with division-like operations called the left and right residual.

\begin{definition}[Residuated posemigroups and s$\ell$-semigroups]
  A binary operation $x \bs y$ on a posemigroup is called \emph{left division} (or the \emph{right residual} of multiplication) if it satisfies
\begin{align*}
  x \cdot y \inequals z \iff y \inequals x \bs z.
\end{align*}
  A binary operation $x / y$ on a posemigroup is called \emph{right division} (or the \emph{left residual} of multiplication) if it satisfies
\begin{align*}
  x \cdot y \inequals z \iff x \inequals z / y.
\end{align*}
  A \emph{residuated posemigroup} $\langle A, \leq, \cdot, \bs, / \rangle$ is a posemigroup $\langle A, \leq, \cdot \rangle$ equipped with the two binary operations $x \bs y$ and $x / y$ of left and right division. A \emph{residuated s$\ell$-semigroup} $\langle A, \vee, \cdot, \bs, / \rangle$ is an s$\ell$-semigroup $\langle A, \vee, \cdot \rangle$ which is also a residuated posemigroup $\langle A, \leq, \cdot, \bs, / \rangle$ with respect to the semilattice order. A \emph{residuated pomonoid (s$\ell$-monoid)} is a residuated posemigroup (s$\ell$-semigroup) equipped with a multiplicative unit $1$.
\end{definition}

  Residuated posemigroups (pomonoids) form a variety of ordered algebras, while residuated s$\ell$-semigroups (s$\ell$-monoids) form an ordinary variety of algebras. In commutative residuated posemigroups $x \bs y = y / x$. In~that case we simplify the algebraic signature and use the notation $x \rightarrow y$ for the residual $x \bs y = y / x$.

\begin{definition}[Admissible joins] \label{def: admissible join}
  Let $\alg{A}$ be a posemigroup and $X \subseteq \alg{A}$. The join $\bigvee X$, if it exists, is called \emph{admissible} if for each $y \in \alg{A}$
\begin{align*}
  (\bigvee X) \cdot y & = \bigvee \set{x \cdot y}{x \in X}, \\
  y \cdot (\bigvee X) & = \bigvee \set{y \cdot x}{x \in X}.
\end{align*}
\end{definition}

\begin{fact}
  All existing joins are admissible in a residuated posemigroup.
\end{fact}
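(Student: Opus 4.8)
The plan is to unpack the definition of an admissible join and show that the two defining equalities follow automatically from residuation. Let $\alg{A}$ be a residuated posemigroup, let $X \subseteq \alg{A}$, and suppose $\bigvee X$ exists. Fix $y \in \alg{A}$. I must show
\begin{align*}
  (\textstyle\bigvee X) \cdot y & = \bigvee \set{x \cdot y}{x \in X},
\end{align*}
together with the symmetric identity on the other side; since the two arguments are mirror images (using right division $/$ in place of left division $\bs$), I will only carry out the first and note that the second is dual.

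\textbf{The key step: residuation turns multiplication into a left adjoint.}

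First I would observe that for a fixed $y$, the map $x \mapsto x \cdot y$ is the lower half of a Galois connection, with upper adjoint $z \mapsto z / y$. Concretely, residuation gives $x \cdot y \inequals z \iff x \inequals z / y$. It is a standard fact that a monotone map which is a left adjoint preserves all existing joins, and this is exactly what I want to invoke. To make the argument self-contained I would prove the join-preservation directly: let $s = \bigvee X$ and let $u$ be any upper bound of $\set{x \cdot y}{x \in X}$, i.e.\ $x \cdot y \inequals u$ for all $x \in X$. By residuation this is equivalent to $x \inequals u / y$ for all $x \in X$, which says that $u / y$ is an upper bound of $X$, hence $s \inequals u / y$ since $s$ is the least such bound. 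Applying residuation once more gives $s \cdot y \inequals u$. Thus $s \cdot y$ is below every upper bound of $\set{x \cdot y}{x \in X}$.

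\textbf{Concluding that $s \cdot y$ is the join.}

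It remains to check that $s \cdot y$ is itself an upper bound of $\set{x \cdot y}{x \in X}$, which is immediate from monotonicity of multiplication: $x \inequals s$ for each $x \in X$ gives $x \cdot y \inequals s \cdot y$. Combining this with the previous paragraph, $s \cdot y$ is an upper bound that lies below every upper bound, so $s \cdot y = \bigvee \set{x \cdot y}{x \in X}$; in particular the right-hand join exists. The symmetric identity $y \cdot (\bigvee X) = \bigvee \set{y \cdot x}{x \in X}$ follows by the identical argument with $\bs$ in place of $/$. I do not anticipate a genuine obstacle here: the entire content is the adjoint-functor observation, and the only thing to be slightly careful about is that ``admissible'' also asserts the \emph{existence} of the join on the right, not merely its value — but this existence drops out of the least-upper-bound argument above, since we exhibit $s \cdot y$ as the least upper bound without presupposing that the join exists.
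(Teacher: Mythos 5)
Your proof is correct, and it is exactly the standard adjunction argument (multiplication by a fixed element is a left adjoint via residuation, hence preserves all existing joins) that the paper takes for granted: the Fact is stated there without proof. Your care in noting that the argument also establishes the \emph{existence} of the join $\bigvee \set{x \cdot y}{x \in X}$, rather than presupposing it, is precisely the right point to make explicit.
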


  All finite non-empty joins are admissible in an s$\ell$-semigroup. The empty join, i.e.\ the bottom element~$\bot$, is admissible if and only if $\bot \cdot x = \bot = x \cdot \bot$ for each $x$.

  We shall mainly be interested in so-called involutive (residuated) posemigroups, where the binary division operations can be decomposed into a binary addition ($x + y$) and a unary complementation ($\compl{x}$ or $\compr{x}$). Examples of such algebras include $\ell$-groups, Boolean algebras, MV-algebras, Sugihara monoids, and relation algebras (see e.g.\ \cite[Section\,2.3]{glimpse07} for the definitions of these structures). Although these algebras only have one unary complementation operation ($x^{-1}$ in for $\ell$-groups, $\neg x$ for Boolean algebras and MV-algebras), in general one has to consider two distinct unary operations on a pointed residuated pomonoid, which we denote $\compl{x}$~and~$\compr{x}$. This notation was chosen to be consistent with the existing notation for pregroups~\cite{lambek99}.  

\begin{definition}[Involutive residuated posemigroups]
  An \emph{involutive residuated posemigroup} $\langle A, \leq, \cdot, \compl{}, \compr{} \rangle$ is a posemigroup $\langle A, \leq, \cdot \rangle$ with two antitone unary operations $\compl{x}$ and $\compr{x}$ satisfying $\complr{x} \equals x \equals \comprl{x}$ such that $\langle A, \leq, \cdot, \bs, / \rangle$ is a residuated posemigroup, where
\begin{align*}
  x \bs y & \assign \compr{(\compl{y} \cdot x)}, &
  y / x & \assign \compl{(x \cdot \compr{y})}.
\end{align*}
  An \emph{involutive residuated pomonoid} $\langle A, \leq, \cdot, 1, \compl{}, \compr{} \rangle$ is an involutive residuated posemigroup $\langle A, \leq, \cdot, \compl{}, \compr{} \rangle$ which is also a pomonoid $\langle A, \leq, \cdot, 1 \rangle$.
\end{definition}

  For the basic arithmetic of involutive residuated posemigroups see~\cite[Section~3.3]{glimpse07}. For pomonoids the antitonicity of the two operations $\compl{x}$ and $\compr{x}$ follows from the other conditions. In~involutive residuated posemigroups we may introduce another semigroup operation as the De~Morgan dual of multiplication:
\begin{align*}
  x + y & \assign \compr{(\compl{y} \cdot \compl{x})} = \compl{(\compr{y} \cdot \compr{x})}.
\end{align*}
  Using this operation we may express the two residuals as
\begin{align*}
  x \bs y & \assign \compr{x} + y, &
  y / x & \assign y + \compl{x}.
\end{align*}
  Up to term equivalence we can therefore view involutive residuated posemigroups as ordered algebras of the form $\langle A, \leq, \cdot, +, \compl{}, \compr{} \rangle$. This is indeed how we shall treat them in the rest of this paper. We generally prefer to use the notation $\compr{x} + y$ and $y + \compl{x}$ for $x \bs y$ and $y / x$ in involutive residuated posemigroups.

  In an involutive residuated pomonoid we may also introduce the constant
\begin{align*}
  0 \assign \compr{1} = \compl{1}.
\end{align*}
  We have $x + 0 = x = 0 + x$. Using this constant we may express the two antitone operations $\compl{x}$ and $\compr{x}$ as
\begin{align*}
  \compl{x} & \assign 0 / x & & \text{and} & \compr{x} & \assign x \bs 0.
\end{align*}
  The equations $\complr{x} \equals x \equals \comprl{x}$ then transform into
\begin{align*}
  (0 / x) \bs 0 \equals x \equals 0 / (x \bs 0).
\end{align*}
  Up to term equivalence we can therefore treat involutive residuated pomonoids as ordered algebras of the form $\langle A, \leq, \cdot, 1, +, 0, \compl{}, \compr{} \rangle$. Equivalently, involutive residuated pomonoids can be defined as pointed residuated pomonoids $\langle A, \leq, \cdot, 1, \bs, /, 0 \rangle$ which satisfy the equations $(0 / x) \bs 0 \equals x \equals 0 / (x \bs 0)$.

\begin{definition}[Involutive residuated lattices]
  An \emph{involutive residuated lattice} $\langle A, \vee, \wedge, \cdot, 1, +, 0, \compl{}, \compr{} \rangle$ is a lattice $\langle A, \vee, \wedge \rangle$ which is also an involutive residuated pomonoid $\langle A, \leq, \cdot, 1, +, 0, \compl{}, \compr{} \rangle$ with respect to the lattice order.
\end{definition}

\subsection{Bimonoids and \texorpdfstring{$\ell$-bimonoids}{l-bimonoids}: basic definitions}
\label{subsec: basic definitions}

  A natural problem is now to describe the \emph{positive subreducts} of involutive residuated structures, i.e.\ subreducts without the antitone operations $\compl{x}$ and $\compr{x}$. For example, what are the subreducts of the form $\langle A, \leq, \cdot, + \rangle$ or $\langle A, \leq, \cdot, 1, +, 0 \rangle$ of involutive residuated pomonoids or the subreducts of the form $\langle A, \vee, \wedge, \cdot, + \rangle$ or $\langle A, \vee, \wedge, \cdot, 1, +, 0 \rangle$ of involutive residuated lattices? To describe them, we introduce bisemigroups and $\ell$-bisemigroups and their unital counterparts, bimonoids and $\ell$-bimonoids.

\begin{definition}[Bisemigroups and bimonoids]
  A \emph{bisemigroup} $\alg{A} = \langle A, \leq, \cdot, + \rangle$ is a pair of posemigroups
\begin{align*}
  \alg{A}_{\circ} & = {\langle A, \leq, \cdot \rangle} & & \text{and} & \alg{A}_{+} & = {\langle A, \geq, + \rangle}
\end{align*}
  over dual orders, called the \emph{multi\-plicative} and the \emph{additive posemigroup} of $\alg{A}$, such that 
\begin{align*}
  x \cdot (y + z) \leq (x \cdot y) + z \qquad \text{and} \qquad (z + y) \cdot x \leq z + (y \cdot x).
\end{align*}
  A \emph{bimonoid} $\alg{A} = \langle A, \leq, \cdot, 1, +, 0 \rangle$ is a bisemigroup equipped with a multi\-plicative unit~$1$ and an additive unit~$0$. The \emph{multiplicative} and \emph{additive pomonoids} of $\alg{A}$ are then
\begin{align*}
  \alg{A}_{\circ} & = {\langle A, \leq, \cdot, 1 \rangle} & & \text{and} & \alg{A}_{+} & = {\langle A, \geq, +, 0 \rangle}.
\end{align*}
\end{definition}

  Although the notation is similar, the reader should not think of multiplication and addition in bisemigroups as analogues of the homonymous operations in rings or even semirings. Rather, one should think of splitting the single multiplication operation of partially ordered groups into a mirror pair of two operations.

\begin{definition}[Admissible joins and meets] \label{def: admissible join and meet}
  Let $\alg{A}$ be a posemigroup and $X \subseteq \alg{A}$. The join $\bigvee X$, if it exists, is called \emph{admissible} if it is admissible in the multiplicative posemigroup $\alg{A}_{\circ}$, i.e.\ if for each $y \in \alg{A}$
\begin{align*}
  (\bigvee X) \cdot y & = \bigvee \set{x \cdot y}{x \in X} & & \text{and} & y \cdot (\bigvee X) & = \bigvee \set{y \cdot x}{x \in X}.
\end{align*}
  The meet $\bigwedge X$, if it exists, is called \emph{admissible} if it is admissible in the additive posemigroup $\alg{A}_{+}$, i.e.\ if for each $y \in \alg{A}$
\begin{align*}
  (\bigwedge X) + y & = \bigwedge \set{x + y}{x \in X} & & \text{and} & y \cdot (\bigwedge X) & = \bigwedge \set{y + x}{x \in X}.
\end{align*}
\end{definition}

  A \emph{homomorphism} of bisemigroups (bimonoids) is an order-preserving homomorphism of multiplicative and additive semigroups (monoids). An \emph{embedding} of bisemigroups (bimonoids) is a homomorphism which is an order embedding. A \emph{complete embedding} moreover preserves all existing joins and meets.

\begin{definition}[Lattice-ordered bisemigroups and bimonoids]
  A \emph{lattice-ordered bisemigroup} or \emph{$\ell$-bisemigroup} $\alg{A} = \langle A, \vee, \wedge, \cdot, + \rangle$ is a pair of s$\ell$-semigroups $\alg{A}_{\circ} = \langle A, \vee, \cdot \rangle$ and $\alg{A}_{+} = \langle A, \wedge, + \rangle$, called respectively the \emph{multiplicative} and the \emph{additive s$\ell$-semigroup} of $\alg{A}$, such that $\langle A, \vee, \wedge \rangle$ is a lattice with the lattice order $\leq$ and $\langle A, \leq, \cdot, + \rangle$ is a bisemigroup.
  A \emph{lattice-ordered bimonoid} or \emph{$\ell$-bimonoid} $\alg{A} = \langle A, \vee, \wedge, \cdot, 1, +, 0 \rangle$ is a pair of s$\ell$-monoids $\alg{A}_{\circ} = \langle A, \vee, \cdot, 1 \rangle$ and $\alg{A}_{+} = \langle A, \wedge, +, 0 \rangle$, called respectively the \emph{multiplicative} and the \emph{additive s$\ell$-monoid} of $\alg{A}$, such that $\langle A, \vee, \wedge \rangle$ is a lattice with the lattice order $\leq$ and $\langle A, \leq, \cdot, 1, +, 0 \rangle$ is a bimonoid.
\end{definition}

  In other words, $\ell$-bisemigroups ($\ell$-bimonoids) form a variety of algebras axiomatized by the lattice axioms, two sets of semigroup (monoid) axioms, the hemidistributivity axioms
\begin{align*}
  x \cdot (y + z) & \leq (x \cdot y) + z, & (x + y) \cdot z & \leq x + (y \cdot z),
\end{align*}
  and the axioms stating that finite non-empty joins and meets are admissible:
\begin{align*}
  x \cdot (y \vee z) & \equals (x \cdot y) \vee (x \cdot z), & x + (y \wedge z) & \equals (x + y) \wedge (x + z), \\
  (x \vee y) \cdot z & \equals (x \cdot z) \vee (y \cdot z), & (x \wedge y) + z & \equals (x + z) \wedge (y + z).
\end{align*}
  Bisemigroups and bimonoids exhibit an order duality which extends the order duality on posets. Namely, the following two bimonoids are said to be \emph{order dual}:
\begin{align*}
  & \langle A, \leq, \cdot, 1, +, 0 \rangle & & \text{and} & \langle A, \geq, +, 0, \cdot, 1 \rangle.
\end{align*}
  Similarly, the following two $\ell$-bimonoids are said to be order dual:
\begin{align*}
  & \langle A, \vee, \wedge, \cdot, 1, +, 0 \rangle & & \text{and} & \langle A, \wedge, \vee, +, 0, \cdot, 1 \rangle.
\end{align*}
  This duality involves exchanging the roles of addition and multiplication as well as inverting the partial order. The order dual of an ($\ell$-)bimonoid $\alg{A}$ is an \mbox{($\ell$-)}bimonoid $\alg{A}^{\dual}$, thus an inequality holds in all ($\ell$-)bimonoids if and only if its naturally defined order dual~does. Like ordinary monoids, bimonoids and $\ell$-bimonoids also exhibit a \emph{left--right symmetry}, which consists in changing the operations $x \cdot y$ and $x + y$ to $y \cdot x$ and $y + x$. The above of course applies \emph{mutatis mutandis} to bisemigroups and $\ell$-bisemigroups too.

  We use both of the notations $x \cdot y$~and~$x y$ for multiplication. The notation $x \cdot y$ will be preferred when we wish to emphasize that multiplication and addition are on an equal footing, while the tighter notation $x y$ will be preferred when we wish to avoid writing too many parentheses. In particular, multiplication in such contexts is assumed to bind more tightly than other operations, e.g.\ we write $xy + z$ for $(x \cdot y) + z$. We~also use the notation
\begin{align*}
  x^n & = \overbrace{x \cdot \ldots \cdot x}^{n \text{ times}} & & \text{and} & n x & = \overbrace{x + \ldots + x}^{n \text{ times}}.
\end{align*}

  The compatibility condition between addition and multiplication, which in the case of bimonoids can also be written as
\begin{align*}
  x \cdot (y + z) \cdot w & \leq (x \cdot y) + (z \cdot w),
\end{align*}
  will be called \emph{hemidistributivity}. This term is due to Dunn \& Hardegree~\cite{dunn+hardegree01algebraic-methods}, who studied this compatibility condition as an algebraic formulation of the multiple-conclusion cut rule. The condition itself, however, is older. To~the best of our knowledge, the hemidistributive law was first explicitly written down in a paper of Meyer \& Routley \cite[p.\,236]{meyer+routley74e-is-a-conservative-extension}.\footnote{We thank James Raftery for bringing this to our attention.} It was also independently considered by Grishin~\cite{grishin83ajdukiewicz-lambek} in his study of symmetric variants of the Lambek calculus. In this context the condition is called \emph{mixed associativity}. A~categorical version of hemidistributivity was studied by Cockett \& Seely~\cite{cockett+seely97weakly-distributive-categories}, who called it \emph{weak distributivity}. Their weakly distributive categories in fact form the categorical counterpart of bimonoids, or conversely bimonoids are obtained from weakly distributive categories by restricting to partial orders.

  The hemidistributive laws can also be thought of as an ordered version of the \emph{inter\-associative laws} between two semigroup operations
\begin{align*}
  x \cdot (y + z) \equals (x \cdot y) + z \qquad \text{and} \qquad (y + z) \cdot x \equals y + (z \cdot x),
\end{align*}
  introduced under this name for arbitrary binary operations by Zupnik~\cite{zupnik71interassociativity} and later studied in the context of semigroups \cite{boyd+gould+nelson97interassociativity,gorbatkov13interassociativity}. Note, however, that inter\-associativity binds the two operations together very closely. If a multiplicative unit exists, addition can be defined in terms of multiplication as $x +_{a} y \assign x \cdot a \cdot y$ for some suitable $a$. This is because by interassociativity
\begin{align*}
  x + y = (x \cdot 1) + (1 \cdot y) = x \cdot (1 + 1) \cdot y = x +_{(1 + 1)} y.
\end{align*}
 Conversely, the operation $x +_{a} y$ is interassociative with multiplication for each~$a$. As we shall see, hemidistributivity allows for a much looser relationship between two monoidal operations than interassociativity.

  It is in particular important to note that the two monoidal structures in an $\ell$-bimonoid do not determine each other, in contrast to the case of lattices, where the join semilattice is uniquely determined by the meet semilattice and vice versa. For example, if $\langle L, \vee, \wedge, 0, 1 \rangle$ is a bounded distributive lattice, then taking $x \cdot y = x \wedge y$ and $x + y = x \vee y$ yields an $\ell$-bimonoid, but $x \cdot y = x \wedge y = x + y$ also yields an $\ell$-bimonoid (with the appropriate multiplicative and additive units).

  The bisemigroups that we shall consider in this paper will often be residuated, meaning that the multiplication has a right and a left residual. Note that addition will not be required to have a residual (or a dual residual) in such structures.

\begin{definition}[Residuated bisemigroups and $\ell$-bisemigroups]
  A \emph{residuated bisemigroup} is an ordered algebra $\langle A, \leq, \cdot, \bs, /, + \rangle$ such that $\langle A, \leq, \cdot, + \rangle$ is a bisemigroup and $\langle A, \leq, \cdot, \bs, / \rangle$ is a residuated posemigroup. A \emph{residuated $\ell$-bisemigroup} is an algebra $\langle A, \vee, \wedge, \cdot, \bs, /, + \rangle$ such that $\langle A, \vee, \wedge, \cdot, + \rangle$ is an $\ell$-bisemigroup and $\langle A, \leq, \cdot, \bs, / \rangle$ is a residuated posemigroup. A \emph{residuated bimonoid} (\emph{residuated $\ell$-bimonoid}) is a bimonoid ($\ell$-bimonoid) which is also a residuated bisemigroup ($\ell$-bisemigroup).
\end{definition}

\subsection{Bimonoids and \texorpdfstring{$\ell$-bimonoids}{l-bimonoids}: complementation}
\label{subsec: complementation}

  Bimonoids form an appropriate setting for the study of a general notion of complementation, which in particular subsumes Boolean complements in distributive lattices and multiplicative inverses in pomonoids. We~show that involutive residuated pomonoids (lattices) are up to term equivalence precisely bimonoids ($\ell$-bimonoids) equipped with complementation.

\begin{definition}[Complements in bisemigroups]
  Let $x$ and $y$ be elements of a bisemigroup $\alg{A}$. Then $y$ is called a \emph{left complement ($\ell$-complement)} of $x$ in $\alg{A}$ if the following inequalities hold for each $w \in \alg{A}$:
\begin{align*}
  w + (y \cdot x) & \leq w, & w & \leq w \cdot (x + y), \\
  (y \cdot x) + w & \leq w, & w & \leq (x + y) \cdot w.
\end{align*}
  It is called a \emph{right complement ($r$-complement)} of $x$ in $\alg{A}$ if the following inequalities hold for each $w \in \alg{A}$:
\begin{align*}
  w + (x \cdot y) & \leq w, & w & \leq w \cdot (y + x), \\
  (x \cdot y) + w & \leq w, & w & \leq (y + x) \cdot w.
\end{align*}
  A bisemigroup is \emph{complemented} if each element has a left complement and a right complement.
\end{definition}

  Surjective homomorphisms of bisemigroups preserve all existing left and right complements. In bimonoids, the definition of a complement can be simplified substantially.

\begin{proposition}[Complements in bimonoids]
  Let $x$ and $y$ be elements of a bimonoid $\alg{A}$. Then $y$ is an $\ell$-complement of $x$ in $\alg{A}$ if and only if
\begin{align*}
  y \cdot x & \leq 0 & & \text{ and } & 1 & \leq x + y.
\end{align*}
  Likewise, $y$ is an $r$-complement of $x$ in $\alg{A}$ if and only if
\begin{align*}
  x \cdot y & \leq 0 & & \text{ and } & 1 & \leq y + x.
\end{align*}
\end{proposition}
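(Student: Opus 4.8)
The plan is to prove both equivalences by directly relating the four defining inequalities of a complement to the two simplified conditions, exploiting the presence of the multiplicative unit $1$ and the additive unit $0$. I will treat the $\ell$-complement case in detail; the $r$-complement case then follows by the left--right symmetry of bimonoids noted earlier.

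For the forward direction, suppose $y$ is an $\ell$-complement of $x$, so that $w + (y \cdot x) \leq w$, $(y \cdot x) + w \leq w$, $w \leq w \cdot (x + y)$, and $w \leq (x + y) \cdot w$ hold for every $w \in \alg{A}$. The key observation is that instantiating $w$ at the appropriate unit collapses each compound term via a unit law. Setting $w = 0$ in the first inequality and using $0 + (y \cdot x) = y \cdot x$ yields $y \cdot x \leq 0$; setting $w = 1$ in the third inequality and using $1 \cdot (x + y) = x + y$ yields $1 \leq x + y$. Thus the two simplified conditions are just the instances of the defining inequalities at the two units.

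For the backward direction, suppose $y \cdot x \leq 0$ and $1 \leq x + y$. Here the two simplified conditions are propagated back to an arbitrary $w$ by monotonicity together with the unit laws. From $y \cdot x \leq 0$ and the order-preservation of $+$ we obtain $w + (y \cdot x) \leq w + 0 = w$ and $(y \cdot x) + w \leq 0 + w = w$; from $1 \leq x + y$ and the order-preservation of $\cdot$ we obtain $w = w \cdot 1 \leq w \cdot (x + y)$ and $w = 1 \cdot w \leq (x + y) \cdot w$. This recovers all four defining inequalities for every $w$, completing the equivalence.

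The one point requiring care---rather than a genuine obstacle---is that the additive pomonoid $\alg{A}_{+}$ carries the dual order $\geq$, so one should record the elementary fact that order-preservation of $+$ with respect to $\geq$ is the same as order-preservation with respect to $\leq$, which is what the backward direction invokes. Notably, neither hemidistributivity nor any admissibility assumption is used: the entire argument rests solely on the unit laws for $0$ and $1$ and on the monotonicity of the two monoidal operations in the lattice (here, partial) order.
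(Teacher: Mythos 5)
Your proof is correct, and it is precisely the routine verification the authors intended: the paper states this proposition without proof, evidently regarding it as immediate. Your two steps---instantiating $w$ at the units $0$ and $1$ for the forward direction, and using the unit laws together with monotonicity of $\cdot$ and $+$ (correctly noting that monotonicity of $+$ with respect to $\geq$ is the same as with respect to $\leq$) for the backward direction, with the $r$-complement case by left--right symmetry---fill the gap exactly as expected, and you are right that neither hemidistributivity nor admissibility plays any role.
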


  In particular, $0$ and $1$ are both left and right complements of each other in a bimonoid. All homomorphisms of bimonoids preserve existing left and right complements. Crucially, left and right complements are unique whenever they exist. This holds even in a bisemigroup.

\begin{proposition}[Uniqueness of complements]
  Each element of a bisemigroup has at most one $\ell$-complement (at most one $r$-complement).
\end{proposition}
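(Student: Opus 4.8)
The plan is to establish uniqueness via antisymmetry of the partial order. Assuming that both $y$ and $y'$ are $\ell$-complements of a fixed element $x$ in a bisemigroup $\alg{A}$, I would show $y \leq y'$ and then, running the identical argument with the roles of $y$ and $y'$ exchanged, $y' \leq y$, whence $y = y'$. The statement about $r$-complements then follows immediately by the left--right symmetry of bisemigroups noted earlier, which carries the defining inequalities of an $\ell$-complement onto exactly those of an $r$-complement, so no separate computation is required for that case.

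The heart of the argument is a short chain of inequalities using only two of the four defining conditions together with a single application of hemidistributivity. First I would instantiate the top-right condition $w \leq w \cdot (x + y')$, valid because $y'$ is an $\ell$-complement of $x$, at $w = y$, obtaining $y \leq y \cdot (x + y')$. Next, hemidistributivity in the form $a \cdot (b + c) \leq (a \cdot b) + c$ (with $a = y$, $b = x$, $c = y'$) gives $y \cdot (x + y') \leq (y \cdot x) + y'$. Finally I would instantiate the bottom-left condition $(y \cdot x) + w \leq w$, valid because $y$ is an $\ell$-complement of $x$, at $w = y'$, obtaining $(y \cdot x) + y' \leq y'$. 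Concatenating these yields
\begin{align*}
  y \leq y \cdot (x + y') \leq (y \cdot x) + y' \leq y',
\end{align*}
as desired. Note that only the top-right condition $w \leq w \cdot (x + y)$ and the bottom-left condition $(y \cdot x) + w \leq w$ of the definition enter the proof; the other two inequalities play no role in uniqueness.

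The point worth flagging is that, in the absence of a multiplicative or additive unit, the familiar group-theoretic proof that inverses are unique (via $y = y \cdot 1 = y \cdot (x \cdot y') = \ldots = y'$) is simply unavailable in a bare bisemigroup. Hemidistributivity is precisely what compensates for this gap: it allows the term $y \cdot x$ to be pushed across from a product $y \cdot (x + y')$ into a sum $(y \cdot x) + y'$, which is exactly the shape on which the annihilation-like condition $(y \cdot x) + w \leq w$ can be brought to bear. Thus there is no serious obstacle beyond identifying the correct two defining inequalities and the substitutions $w = y$ and $w = y'$ that let hemidistributivity bridge $y$ and $y'$; once these choices are fixed the computation is immediate and symmetric.
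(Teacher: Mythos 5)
Your proof is correct and is essentially identical to the paper's: the chain $y \leq y \cdot (x + y') \leq (y \cdot x) + y' \leq y'$, combining one defining inequality for each complement with a single application of hemidistributivity, is exactly the paper's argument, and the $r$-complement case is likewise dispatched by left--right symmetry in both.
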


\begin{proof}
  If $y$ and $z$ are $\ell$-complements of $x$, then $y \leq y \cdot (x + z) \leq (y \cdot x) + z \leq z$. The other case is analogous.
\end{proof}

  We use $\compl{x}$ ($\compr{x}$) to denote the unique left (right) complement of $x$ whenever it exists. (We shall verify shortly that this is consistent with our previous usage of this notation.) In~the commutative case clearly $\compl{x} = \compr{x}$ whenever these exist. We use the notation $\comp{x}$ in this case.

\begin{proposition}[Residuation laws for complements] \label{prop: residuation for complements}
  The following residuation laws hold in each bisemigroup whenever the complements exist:
\begin{align*}
  x \cdot y \leq z & \iff y \leq \compr{x} + z, & x \leq y + z & \iff \compl{y} \cdot x \leq z, \\
  x \cdot y \leq z & \iff x \leq z + \compl{y}, & x \leq y + z & \iff x \cdot \compr{z} \leq y.
\end{align*}
\end{proposition}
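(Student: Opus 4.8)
The plan is to prove all four biconditionals by the same elementary mechanism, exploiting the shape of the defining inequalities of a complement. Those inequalities fall into two families: \emph{absorption} clauses of the form $u + w \leq w$ or $w + u \leq w$, where $u$ is a product of an element with its complement, and \emph{expansion} clauses of the form $w \leq v \cdot w$ or $w \leq w \cdot v$, where $v$ is a sum of an element with its complement. Each direction of each law is then a two-step inequality chain: to prove one direction I would apply an expansion clause to insert the appropriate complement factor, invoke a hemidistributivity axiom to trade the resulting product-of-a-sum for a sum-of-a-product, and close using monotonicity and the hypothesis; to prove the converse I would multiply the hypothesis by the appropriate element on the appropriate side, hemidistribute, and discharge the leftover term with an absorption clause. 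I would record at the outset that although the additive reduct $\alg{A}_{+}$ carries the order $\geq$, the operation $+$ is nonetheless monotone with respect to $\leq$ (reversing the order on both sides of the defining implication leaves it intact), and that $\cdot$ is monotone with respect to $\leq$ as well; both facts are used without comment.

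Concretely, for the first law $x \cdot y \leq z \iff y \leq \compr{x} + z$ I would argue as follows. For the left-to-right direction, the expansion clause $w \leq (\compr{x} + x) \cdot w$ witnessing that $\compr{x}$ is a right complement of $x$ gives, at $w = y$, that $y \leq (\compr{x} + x) \cdot y$; hemidistributivity yields $(\compr{x} + x) \cdot y \leq \compr{x} + (x \cdot y)$, and since $x \cdot y \leq z$ by hypothesis, monotonicity of $+$ gives $\compr{x} + (x \cdot y) \leq \compr{x} + z$, so $y \leq \compr{x} + z$. For the converse, left-multiplying $y \leq \compr{x} + z$ by $x$ gives $x \cdot y \leq x \cdot (\compr{x} + z)$, hemidistributivity gives $x \cdot (\compr{x} + z) \leq (x \cdot \compr{x}) + z$, and the absorption clause $(x \cdot \compr{x}) + w \leq w$ at $w = z$ yields $(x \cdot \compr{x}) + z \leq z$, whence $x \cdot y \leq z$.

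The remaining three biconditionals can then be deduced from the first by the two symmetries of bisemigroups recorded earlier, since each symmetry interchanges $\compl{}$ and $\compr{}$: the left--right symmetry turns the defining clauses of a left complement into those of a right complement, and one checks that order duality does the same. Applying the left--right symmetry to the first law (reversing every product and sum and swapping $\compr{x}$ for $\compl{x}$) produces $x \cdot y \leq z \iff x \leq z + \compl{y}$, the second law; applying order duality (swapping $\cdot$ with $+$ and $\leq$ with $\geq$, again swapping the complement) produces $x \leq y + z \iff \compl{y} \cdot x \leq z$, and composing the two symmetries produces the fourth law. Alternatively each can simply be verified by the identical expansion/hemidistribute/absorb template. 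I do not expect a conceptual obstacle here; the only real hazard is bookkeeping. In each of the eight one-line chains one must select the correct clause of the complement definition (left versus right, expansion versus absorption) and apply hemidistributivity and the monotone multiplication on the correct side, and choosing the wrong sidedness yields an inequality that cannot be closed. Getting this pairing of clauses right, rather than any difficulty of substance, is what requires care.
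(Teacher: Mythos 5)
Your proof is correct and takes essentially the same route as the paper: the paper proves the first law by exactly your chain $y \leq (\compr{x} + x) \cdot y \leq \compr{x} + (x \cdot y) \leq \compr{x} + z$ and its converse via hemidistributivity and the absorption clause, then dispatches the remaining three laws by left--right duality and order duality. The only difference is that you spell out the bookkeeping (which complement clauses are invoked, why the symmetries swap $\compl{}$ and $\compr{}$, and which symmetry yields which law) that the paper leaves implicit.
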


\begin{proof}
  If $x \cdot y \leq z$, then $y \leq (\compr{x} + x) \cdot y \leq \compr{x} + (x \cdot y) \leq \compr{x} + z$. Conversely, if $y \leq \compr{x} + z$, then $x \cdot y \leq x \cdot (\compr{x} + z) \leq (x \cdot \compr{x}) + z \leq z$. The other claims follow by left--right duality and order duality.
\end{proof}

\begin{proposition}[De~Morgan laws for bisemigroups] \label{prop: de morgan laws}
  Bisemigroups satisfy the De Morgan laws and double negation elimination whenever the complements on the right-hand side of the equation exist:
\begin{align*}
  \compl{(x \cdot y)} & = \compl{y} + \compl{x}, & \compl{(x + y)} & = \compl{y} \cdot \compl{x}, & x & = \comprl{x}, \\
  \compr{(x \cdot y)} & = \compr{y} + \compr{x}, & \compr{(x + y)} & = \compr{y} \cdot \compr{x}, & x & = \complr{x}.
\end{align*}
  Complementation is antitone in bisemigroups whenever the complements exist:
\begin{align*}
  x \leq y & \implies \compl{y} \leq \compl{x}, \\
  x \leq y & \implies \compr{y} \leq \compr{x}.
\end{align*}
  Moreover, $\ell$-bisemigroups satisfy the De Morgan laws for lattice connectives whenever the complements on the right-hand side of the equation exist:
\begin{align*}
  \compl{(x \wedge y)} & = \compl{x} \vee \compl{y}, & \compl{(x \vee y)} & = \compl{x} \wedge \compl{y}, \\
  \compr{(x \wedge y)} & = \compr{x} \vee \compr{y}, & \compr{(x \vee y)} & = \compr{x} \wedge \compr{y}.
\end{align*}
\end{proposition}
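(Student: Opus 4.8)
The plan is to prove every identity by reducing it to the uniqueness of complements (the preceding proposition), using only hemidistributivity, monotonicity of $\cdot$ and $+$, and the four defining inequalities of a complement. Throughout I call an element $e$ \emph{absorbed} if $w + e \leq w$ and $e + w \leq w$ for all $w$; note that absorbed elements form a downset and are closed under $+$ and $\wedge$.

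First I would dispose of double negation essentially for free. Writing out the defining inequalities of ``$y$ is an $\ell$-complement of $x$'' and of ``$x$ is an $r$-complement of $y$'', one sees that they are literally the same four inequalities. Hence $\compl{x}$ being the $\ell$-complement of $x$ is the very same statement as $x$ being the $r$-complement of $\compl{x}$, so $\comprl{x} = x$ whenever $\compl{x}$ exists; the identity $\complr{x} = x$ is symmetric. For antitonicity, assuming $x \leq y$ and that $\compr{x}, \compr{y}$ exist, I would chain
\begin{align*}
  \compr{y} \leq (\compr{x} + x) \cdot \compr{y} \leq \compr{x} + (x \cdot \compr{y}) \leq \compr{x} + (y \cdot \compr{y}) \leq \compr{x},
\end{align*}
the steps using in order the unit-like inequality $w \leq (\compr{x} + x) \cdot w$, hemidistributivity, monotonicity from $x \leq y$, and the absorption inequality $\compr{x} + (y \cdot \compr{y}) \leq \compr{x}$; the $\compl{}$ case is symmetric.

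For the De Morgan laws it suffices, by the left--right and order dualities of bisemigroups, to prove a single one, say $\compr{(x \cdot y)} = \compr{y} + \compr{x}$, by showing that $t := \compr{y} + \compr{x}$ is the $r$-complement of $x \cdot y$. The two absorption inequalities reduce to checking that $(x \cdot y) \cdot t$ is absorbed: from $y \cdot t \leq y \compr{y} + \compr{x} \leq \compr{x}$ (hemidistributivity, then absorption for $\compr{y}$) I obtain $x \cdot y \cdot t \leq x \compr{x}$, and since $x \compr{x}$ is absorbed the claim follows because absorbed elements form a downset. The two unit-like inequalities follow from $y \leq (\compr{x} + x)\, y \leq \compr{x} + xy$, which gives $\compr{y} + y \leq t + xy$ and hence $w \leq w(\compr{y} + y) \leq w(t + xy)$, and symmetrically on the other side.

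Finally, the lattice De Morgan laws, which I expect to be the main obstacle. Again by uniqueness it is enough to verify that $\compl{x} \vee \compl{y}$ is the $\ell$-complement of $x \wedge y$. The unit-like side is clean: by the additive distributive law $(x \wedge y) + z = (x + z) \wedge (y + z)$ and monotonicity, $(x \wedge y) + (\compl{x} \vee \compl{y})$ dominates $(x + \compl{x}) \wedge (y + \compl{y})$, each conjunct being unit-like. The negative side uses the multiplicative distributive law, $(\compl{x} \vee \compl{y})(x \wedge y) = \compl{x}(x \wedge y) \vee \compl{y}(x \wedge y) \leq (\compl{x} x) \vee (\compl{y} y)$, so that $(\compl{x} \vee \compl{y})(x \wedge y)$ is a \emph{join} of absorbed elements. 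The delicate point is precisely that this join must again be absorbed, and here multiplication does not distribute over $\wedge$ nor $+$ over $\vee$. When units are present this is immediate, since an element is absorbed exactly when it lies below $0$, joins of elements below $0$ lie below $0$, and the two unit-like conjuncts lie above $1$; in the unit-free case one instead leans on double negation and antitonicity, which make $\compl{}$ and $\compr{}$ mutually inverse order-reversing bijections, so that $\compl{}$ carries the existing meet $x \wedge y$ to the join $\compl{x} \vee \compl{y}$. The remaining three lattice identities then follow by the order and left--right dualities.
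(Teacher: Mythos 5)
Your treatment of double negation, antitonicity, and the multiplicative/additive De~Morgan laws is correct and essentially identical to the paper's: the paper likewise obtains $x = \complr{x} = \comprl{x}$ from the symmetry of the definition, proves antitonicity by the same four-step chain, reduces the four product/sum laws to a single one via the left--right and order dualities, and verifies that one law by checking the defining inequalities of a complement using hemidistributivity and absorption (the paper does the $\compl{}$ version where you do the $\compr{}$ version, which is immaterial).

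The gap is in the lattice De~Morgan laws for $\ell$-bisemigroups, i.e.\ precisely in the unit-free case, and it occurs twice. First, your ``clean'' unit-like side is not clean: knowing $(x \wedge y) + (\compl{x} \vee \compl{y}) \geq (x + \compl{x}) \wedge (y + \compl{y})$ helps only if a meet of unit-like elements still satisfies $w \leq w \cdot (u \wedge v)$, and this is exactly the order dual of the join-of-absorbed-elements problem you flag on the other side; monotonicity gives $w \cdot (u \wedge v) \leq w \cdot u$, which is the wrong direction, and multiplication distributes over joins, not meets. Second, and more seriously, the unit-free fallback is circular. Antitonicity and double negation make $\compl{}$ and $\compr{}$ mutually inverse antitone bijections only between the sets of elements that \emph{have} complements, and such partial bijections do not carry meets computed in the ambient lattice to joins in the ambient lattice: to run the usual Galois argument you must apply $\compr{}$ to an arbitrary upper bound $u$ of $\{\compl{x}, \compl{y}\}$, and such $u$ need not have an $r$-complement; moreover the existence half of the claim --- that $x \wedge y$ has a complement at all --- cannot follow from order-theoretic properties of a partial bijection (on the lattice $d < p, q < e$ with $p, q$ incomparable, the maps defined only on $\{p, q\}$ and swapping them satisfy antitonicity and double negation, yet send $p \wedge q$ nowhere). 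What is needed is a verification of the four defining inequalities for the candidate pair consisting of $x \wedge y$ and $\compl{x} \vee \compl{y}$, and that is exactly the ``delicate point'' your proposal leaves unresolved. To be fair, the paper's own proof of this part is the same one-sentence appeal to antitonicity and double negation, so your proposal is no less complete than the printed proof; but your more explicit scaffolding makes visible that, without units, this appeal does not by itself yield the statement, whereas with units your argument (absorbed $\iff$ below $0$, unit-like $\iff$ above $1$) is complete.
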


\begin{proof}
  The equalities $x = \complr{x}$ and $x = \comprl{x}$ hold because by definition $y$ is an $\ell$-complement of $x$ if and only if $x$ is an $r$-complement of $y$. If $x \leq y$, then
\begin{align*}
  \compl{y} & \leq \compl{y} \cdot (x + \compl{x}) \leq (\compl{y} \cdot x) + \compl{x} \leq (\compl{y} \cdot y) + \compl{x} \leq \compl{x},
\end{align*}
  and likewise $\compr{y} \leq \compr{x}$. The De~Morgan laws for the lattice connectives follow from the antitonicity of $\compl{x}$ and $\compr{x}$ and the equalities $x = \complr{x}$ and $x = \comprl{x}$. The De~Morgan laws for $\compr{(x \cdot y)}$ and $\compr{(x + y)}$ will follow from the De~Morgan laws for $\compl{(x \cdot y)}$ and $\compl{(x + y)}$ by left--right symmetry. Likewise, the De~Morgan law for $\compl{(x + y)}$ follows from the De~Morgan law for $\compl{(x \cdot y)}$. It remains to prove that $\compl{(x \cdot y)}$ exists and equals $\compl{y} + \compl{x}$. But
\begin{align*}
  w + ((\compl{y} + \compl{x}) \cdot x \cdot y) \leq w + ((\compl{y} + (\compl{x} \cdot x)) \cdot y) \leq w + (\compl{y} \cdot y) \leq w, \\
  w \cdot ((x \cdot y) + \compl{y} + \compl{x}) \geq w \cdot ((x \cdot (y + \compl{y})) + \compl{x}) \geq w \cdot (x + \compl{x}) \geq w,
\end{align*}
  and likewise for the other two inequalities which define $\compl{(x \cdot y)}$.
\end{proof}  

  The above propositions in fact show that complemented bisemigroups (bimonoids) are precisely involutive residuated posemigroups (pomonoids) presented in a slightly different way. For the purposes of the following proposition, the expansion of a complemented bisemigroup (bimonoid) by the two unary operations $\compl{x}$ and $\compr{x}$ will be called a bisemigroup (bimonoid) \emph{with complementation}. In other words, the class of all bisemigroups (bimonoids) with complementation forms an ordered variety which is axiomatized by adding the following inequalities to an axiomatization of bisemigroups (bimonoids):
\settowidth{\auxlength}{$w \cdot (\compr{x} + x)$}
\settowidth{\auxlengthtwo}{$w + (x \cdot \compr{x})$}
\begin{align*}
  \hbox to \auxlengthtwo{$w + (\compl{x} \cdot x)$} & \inequals w, & \hbox to \auxlengthtwo{$w + (x \cdot \compr{x})$} & \inequals w, & w & \inequals \hbox to \auxlength{$w \cdot (x + \compl{x})$}, & w & \inequals \hbox to \auxlength{$w \cdot (\compr{x} + x)$}, \\
  \hbox to \auxlengthtwo{$(\compl{x} \cdot x) + w$} & \inequals w, & \hbox to \auxlengthtwo{$(x \cdot \compr{x}) + w$} & \inequals w, & w & \inequals \hbox to\auxlength{$(x + \compl{x}) \cdot w$}, & w & \inequals \hbox to\auxlength{$(\compr{x} + x) \cdot w$}.
\end{align*}

\begin{proposition}[Involutive residuated posemigroups are complemented bisemigroups]
  Bimonoids with complementation are term equivalent to involutive residuated pomonoids. Bisemigroups with complementation are term equivalent to involutive residuated posemigroups which satisfy the inequalities
\begin{align*}
  x & \inequals x (y \bs y), & x & \inequals (y \bs y) x, \\
  x & \inequals x (y / y), & x & \inequals (y / y) x.
\end{align*}
\end{proposition}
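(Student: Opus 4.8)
The plan is to present both classes as definitional expansions of structures in the signature $\langle A, \leq, \cdot, \compl{}, \compr{} \rangle$ (augmented by $1$ in the unital case) and then to show that the underlying model classes literally coincide, which yields the term equivalence. On both sides $+$ is definable by the same term $x + y = \compr{(\compl{y} \cdot \compl{x})}$, and $0 = \compl{1} = \compr{1}$: on the involutive side this is the definition, while on the complemented side it follows from the De~Morgan law $\compl{(x + y)} = \compl{y} \cdot \compl{x}$ of Proposition~\ref{prop: de morgan laws} by applying $\compr{}$ and using $\complr{z} = z$. Hence it suffices to prove that a structure in this signature is a bimonoid (bisemigroup) with complementation if and only if it is an involutive residuated pomonoid (posemigroup of the stated kind).

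For the forward direction I would start from a bimonoid (bisemigroup) with complementation, where every element has both complements, so Propositions~\ref{prop: residuation for complements} and~\ref{prop: de morgan laws} apply without side conditions. Using $\compr{(\compl{z})} = z$, $\compl{(\compr{z})} = z$ and the De~Morgan laws for products, one computes $\compr{(\compl{z} \cdot x)} = \compr{x} + z$ and $\compl{(x \cdot \compr{z})} = z + \compl{x}$, which are exactly the terms defining $x \bs z$ and $z / x$ in an involutive residuated posemigroup. Proposition~\ref{prop: residuation for complements} then states precisely that $x \cdot y \leq z \iff y \leq \compr{x} + z$ and $x \cdot y \leq z \iff x \leq z + \compl{y}$, i.e.\ that $\bs$ and $/$ are genuine residuals, while antitonicity of $\compl{}, \compr{}$ and the involution laws $\complr{x} = x = \comprl{x}$ are supplied by Proposition~\ref{prop: de morgan laws}. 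Thus the structure is an involutive residuated posemigroup, and a pomonoid once $1$ is present.

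For the reverse direction I would start from an involutive residuated pomonoid. The multiplicative pomonoid is immediate, the additive pomonoid $\langle A, \geq, +, 0 \rangle$ comes from the De~Morgan duality of $+$ with $\cdot$, and hemidistributivity $x \cdot (y + z) \leq (x \cdot y) + z$ follows from a one-line residuation argument: from $x \cdot y \leq x \cdot y$ we get $y \leq x \bs (x \cdot y) = \compr{x} + x \cdot y$, hence $y + z \leq \compr{x} + (x \cdot y + z) = x \bs (x \cdot y + z)$ by monotonicity and associativity of $+$, and residuating back gives the claim (the mirror inequality follows by left--right symmetry using $/$). So the structure is a bimonoid. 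To see that $\compl{}, \compr{}$ are its complements it suffices, by the simplified criterion for complements in bimonoids, to check $\compl{x} \cdot x \leq 0$, $1 \leq x + \compl{x}$, $x \cdot \compr{x} \leq 0$, $1 \leq \compr{x} + x$. Here $\compl{x} \cdot x \leq 0 \iff \compl{x} \leq 0 / x = \compl{x}$ (and dually for $\compr{x}$), while $x + \compl{x} = x / x \geq 1$ and $\compr{x} + x = x \bs x \geq 1$ since $1 \leq a \bs a$ and $1 \leq a / a$ hold in every residuated pomonoid. This settles the first assertion.

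For the second assertion the unit is unavailable, so complements must be verified against the full definition in a bisemigroup, and this is where the four inequalities $x \leq x(y \bs y)$, $x \leq (y \bs y)x$, $x \leq x(y / y)$, $x \leq (y / y)x$ enter. The identities $\compr{x} + x = x \bs x$, $x + \compl{x} = x / x$, together with their De~Morgan transforms $\compr{(\compl{x} \cdot x)} = x \bs x$ and $\compl{(x \cdot \compr{x})} = x / x$, let me rewrite each defining inequality as an instance of one of the four: for example $w + \compl{x} \cdot x = \compl{((x \bs x) \cdot \compr{w})} \leq \compl{(\compr{w})} = w$ is equivalent, by antitonicity of $\compl{}$, to $\compr{w} \leq (x \bs x) \cdot \compr{w}$, an instance of $x \leq (y \bs y)x$; the unit-like conditions $w \leq w(x + \compl{x}) = w(x / x)$ and $w \leq (x + \compl{x})w$ are instances of the $/$-inequalities, and symmetrically for $\compr{x}$. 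Conversely, these inequalities read off directly from the complement conditions in any bisemigroup with complementation, so the two classes coincide. The main obstacle is exactly this unit-free bookkeeping: because $\compll{x}$ and $\comprr{x}$ need not equal $x$, every computation must be routed through the valid involution laws $\complr{x} = x = \comprl{x}$, and for each defining inequality one must select the representation of $+$ (as $\compr{(\compl{\cdot} \cdot \compl{\cdot})}$ or $\compl{(\compr{\cdot} \cdot \compr{\cdot})}$) that exposes the correct residual term.
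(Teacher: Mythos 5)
Your proof is correct and takes essentially the same route as the paper's: the forward direction rests on Propositions~\ref{prop: residuation for complements} and~\ref{prop: de morgan laws}, hemidistributivity is established by a short residuation computation, and the eight complement inequalities are recovered from the four assumed ones via the De~Morgan laws and the antitonicity of $\compl{}$ and $\compr{}$. Your two small deviations --- a different (equally valid) residuation argument for hemidistributivity, and the use of the simplified bimonoid criterion $\compl{x} \cdot x \leq 0$, $1 \leq x + \compl{x}$ in the unital case --- are only cosmetic.
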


\begin{proof}
  The previous proposition shows that bisemigroups with complementation can be seen as involutive residuated posemigroups. Moreover, the above four inequalities (which are satisfied in each residuated pomonoid) are precisely four of the eight inequalities defining complements in bisemigroups. Conversely, given an involutive residuated posemigroup satisfying the above four inequalities, we need to show that $\compl{x}$ and $\compr{x}$ are the left and right complements of $x$ and that the hemidistributive law holds. Let us first prove hemidistributivity:
\begin{align*}
  x \cdot (y + z) \leq (x \cdot y) + z & \iff x \cdot \compr{(\compl{z} \cdot \compl{y})} \leq \compr{(\compl{z}\cdot \compl{(x \cdot y)})} \iff \compl{z}\cdot \compl{(x \cdot y)} \cdot x \leq \compl{z} \cdot \compl{y}.
\end{align*}
  The last inequality holds because $\compl{(x \cdot y)} \cdot x \leq \compl{y}$ by residuation. The four inequalities above now immediately yield four of the eight inequalities defining complements. The other four inequalities follow by applying the operations $\compl{}$ and $\compr{}$. For example, $\compl{w} + (\compl{x} \cdot x) \leq \compl{w}$ implies that $w = \comprl{w} \leq \compr{(\compl{w} + (\compl{x} \cdot x))} = (\compr{x} + x) \cdot w$.
\end{proof}

\subsection{Examples of bisemigroups and bimonoids}
\label{subsec: examples of bimonoids}

  We saw in the previous section that bisemigroups (bimonoids) occur as subreducts of involutive residuated posemigroups (pomonoids). Let us now consider several other ways of constructing bisemigroups and bimonoids in order to provide the reader with a stock of examples.

\begin{definition}[Commutative, integral, and idempotent bisemigroups]
  A bisemigroup is \emph{commutative} if its multiplicative and additive posemigroups are both commutative, it is \emph{multiplicatively (additively) integral} if its multiplicative (additive) posemigroup is integral, it is \emph{bi-integral} if it is both multiplicative and additively integral, and it is \emph{idempotent} if its multiplicative and additive posemigroups are both idempotent.
\end{definition}

  In particular, a bimonoid is multiplicatively (additively) integral if it satisfies $x \inequals 1$ ($0 \inequals x$). Idempotent bi-integral bisemigroups are in fact very familiar objects.

\begin{proposition}[Distributive lattices as bisemigroups]
  Idempotent bi-integral bisemigroups (bimonoids) are precisely (bounded) distributive lattices equipped with the lattice order.
\end{proposition}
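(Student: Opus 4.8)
The plan is to prove both directions of the stated equivalence. For the converse, I would start from a (bounded) distributive lattice $\langle A, \vee, \wedge, 0, 1 \rangle$ and set $x \cdot y := x \wedge y$ and $x + y := x \vee y$ (in the bounded case taking $1$ as the multiplicative and $0$ as the additive unit). Then $\alg{A}_{\circ} = \langle A, \leq, \wedge \rangle$ and $\alg{A}_{+} = \langle A, \geq, \vee \rangle$ are commutative, idempotent, and integral posemigroups, and the hemidistributive inequality $x \wedge (y \vee z) \leq (x \wedge y) \vee z$ is immediate from distributivity, since $x \wedge (y \vee z) = (x \wedge y) \vee (x \wedge z) \leq (x \wedge y) \vee z$; the mirror-image law is symmetric. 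Thus every (bounded) distributive lattice arises as an idempotent bi-integral bisemigroup (bimonoid) with the lattice order.

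The substance is the forward direction. First I would show that in any idempotent bi-integral bisemigroup the multiplicative operation is the lattice meet and the additive operation the lattice join for the given order $\leq$. Multiplicative integrality gives $x \cdot y \leq x$ and $x \cdot y \leq y$, so $x \cdot y$ is a lower bound of $\{ x, y \}$; and if $z \leq x$ and $z \leq y$, then posemigroup monotonicity yields $z \cdot z \leq x \cdot z \leq x \cdot y$, whence $z = z \cdot z \leq x \cdot y$ by idempotency. Hence $x \cdot y$ is the greatest lower bound, i.e.\ $x \cdot y = x \wedge y$. Dually, as $\alg{A}_{+}$ is a posemigroup on the reversed order, additive integrality reads $x \leq x + y$ and $y \leq x + y$, and the same argument (now using idempotency of $+$) shows $x + y = x \vee y$. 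Note that commutativity and associativity of $\cdot$ and $+$ come for free, being forced by the meet/join characterisation, so commutativity need not be assumed; in particular $\langle A, \leq \rangle$ is a lattice.

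It then remains to upgrade hemidistributivity to full distributivity. With $\cdot = \wedge$ and $+ = \vee$, hemidistributivity becomes $x \wedge (y \vee z) \leq (x \wedge y) \vee z$, which I will call $(H)$. I would derive the distributive inequality $x \wedge (y \vee z) \leq (x \wedge y) \vee (x \wedge z)$ in three short steps: from $(H)$ we get $x \wedge (y \vee z) \leq (x \wedge y) \vee z$; combining this with $x \wedge (y \vee z) \leq x$ gives $x \wedge (y \vee z) \leq x \wedge ((x \wedge y) \vee z)$; and a second application of $(H)$ to the arguments $z$ and $x \wedge y$ (using commutativity of $\vee$) yields $x \wedge ((x \wedge y) \vee z) \leq (x \wedge z) \vee (x \wedge y)$. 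Chaining these gives the distributive inequality, and since the reverse inequality holds in every lattice, $\langle A, \vee, \wedge \rangle$ is distributive. In the bimonoid case the units are the bounds, since $x = x \cdot 1 \leq 1$ and $0 \leq 0 + x = x$ identify $1$ as top and $0$ as bottom, so the lattice is bounded.

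I expect the only real obstacle to be this passage from the one-sided law $(H)$ to two-sided distributivity; the telescoping argument above sidesteps it cleanly, but as a fallback one could instead check that $(H)$ fails in both $\Nfive$ and $\Mthree$ and invoke Birkhoff's characterisation of distributive lattices as those omitting $\Nfive$ and $\Mthree$ as sublattices (the inequality, being universal, is inherited by sublattices). Everything else—the identification of $\cdot$ and $+$ with meet and join and the attendant automatic commutativity—is routine once idempotency and integrality are combined with posemigroup monotonicity.
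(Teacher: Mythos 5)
Your proof is correct and takes essentially the same route as the paper's: identify $\cdot$ and $+$ with lattice meet and join using idempotency, integrality, and monotonicity, and then observe that hemidistributivity is equivalent to distributivity for lattices. The paper compresses both steps into citations of known facts, whereas you supply the details (the semilattice characterisation and the telescoping derivation of the distributive inequality from the one-sided law), and both are carried out correctly.
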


\begin{proof}
  Integral idempotent posemigroups (pomonoids) are known to be precisely (unital) meet semilattices, the partial order coinciding with the semilattice order. Integral idem\-potent bisemigroups (bimonoids) are thus (bounded) lattices, the partial order being the lattice order. Moreover, for lattices hemidistributivity is equivalent to distributivity.
\end{proof}

  Secondly, each posemigroup (pomonoid) can be seen as a bisemigroup (bimonoid) if we take
\begin{align*}
  x + y & \assign x \cdot y & & \text{and} & 0 & \assign 1 \text{ (for pomonoids)}.
\end{align*}
  In the same way, each lattice-ordered semigroup (monoid) where multiplication distributes over binary meets and joins can be seen as an $\ell$-bisemigroup ($\ell$-bimonoid). Although these examples may at first sight seem too trivial to be of any interest, we shall see that non-trivial bimonoids may be constructed from these as bimonoids of fractions. In particular, it will be useful to view $\ell$-groups and Brouwerian algebras as residuated $\ell$-bimonoids where the multi\-plicative and additive monoids coincide.

  Thirdly, observe that in the presence of multiplicative integrality the inequalities
\begin{align*}
  x \cdot (y + z) & \leq (x \cdot y) + (x \cdot z) & & \text{and} & (y + z) \cdot x & \leq (y \cdot x) + (z \cdot x)
\end{align*}
imply hemidistributivity. Examples~of multiplicatively integral $\ell$-bisemigroups ($\ell$-bimonoids) thus include all integral s$\ell$-semigroups (with a bottom element $\bot$) if we take
\begin{align*}
  x + y & \assign x \vee y & & \text{and} & 0 & \assign \bot \text{ (if $\bot$ exists)}.
\end{align*}

  A bisemigroup can also be obtained from any posemigroup with a top element $\top$ by taking the $\top$-drastic addition: $x + y \assign \top$ for all $x$ and $y$. Similarly a bisemigroup can be obtained from a posemigroup with a bottom element $\bot$ such that $\bot \cdot x = x = x \cdot \bot$ for all $x$ by taking the $\bot$-drastic addition: $x + y \assign \bot$ for all $x$ and~$y$. Only the first of these constructions extends to pomonoids in a reasonable way: if we start with an integral pomonoid with a bottom element $\bot$ such that
\begin{align*}
  x \cdot y = \bot \iff x = \bot \text{ or } y = \bot,
\end{align*}
  then the following modified $\top$-drastic addition yields a bimonoid with $0 \assign \bot$:
\settowidth{\auxlength}{$x$}
\begin{align*}
  x + y & \assign \begin{cases}\hbox to \auxlength{\hfil$1$\hfil} \text{ if } x > \bot \text{ and } y > \bot, \\ \hbox to \auxlength{$x$} \text{ if } y = \bot, \\ \hbox to \auxlength{\hfil$y$\hfil} \text{ if } x = \bot. \end{cases}
\end{align*}
  Integral pomonoids which satisfy the required condition are in fact easy to come by: take an arbitrary integral pomonoid and append a new bottom element $\bot$ with
\begin{align*}
  \bot \cdot x = \bot = x \cdot \bot.
\end{align*}

  Our next family of examples consists of bimonoids obtained from pointed Brouwerian algebras.

\begin{definition}[Brouwerian algebras and Heyting algebras]
  A \emph{Brouwerian algebra} $\langle A, \vee, \wedge, 1, \rightarrow \rangle$ is a distributive lattice $\langle A, \vee, \wedge \rangle$ with a top element $1$ such that the binary operation $x \rightarrow y$ satisfies the equivalence
\begin{align*}
  x \wedge y \leq z & \iff y \leq x \rightarrow z.
\end{align*}
  A~\emph{pointed Brouwerian algebra} is one equipped with a constant~$0$. A \emph{Heyting algebra} is a pointed Brouwerian algebra where $0$ is the smallest element.
\end{definition}

  Equivalently, Brouwerian (Heyting) algebras are precisely (bounded) idempotent integral residuated lattices. Heyting algebras and (pointed) Brouwerian algebras are varieties, and the variety of Brouwerian algebras may be identified with the subvariety of pointed Brouwerian algebras such that $0 = 1$. Pointed Brouwerian algebras are also called Johansson algebras or $j$-algebras~\cite{odintsov08}.

\begin{proposition}[Pointed Brouwerian algebras as bimonoids] \label{prop: pointed brouwerian algebras}
  The variety of pointed Brouwerian algebras and the variety of multiplicatively integral idem\-potent com\-mutative residuated $\ell$-bimonoids are term equivalent via the correspondence
\begin{align*}
  x \cdot y & \assign x \wedge y & & \text{ and } & x + y & \assign (0 \rightarrow (x \wedge y)) \wedge (x \vee y).
\end{align*}
\end{proposition}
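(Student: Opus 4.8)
The plan is to exhibit the two translations and verify that they are mutually inverse, with essentially all the difficulty concentrated in showing that in a bimonoid of the stated kind the additive operation is \emph{forced} to equal $(0 \rightarrow (x \wedge y)) \wedge (x \vee y)$. First I would dispose of the multiplicative side, which is routine. In a multiplicatively integral idempotent commutative residuated $\ell$-bimonoid the inequalities $x \cdot y \leq x$, $x \cdot y \leq y$, together with $z = z \cdot z \leq x \cdot y$ whenever $z \leq x$ and $z \leq y$, show that $x \cdot y = x \wedge y$, exactly as in the Proposition identifying distributive lattices with idempotent bi-integral bisemigroups; the multiplicative s$\ell$-semigroup law $x \cdot (y \vee z) = (x \cdot y) \vee (x \cdot z)$ then becomes distributivity, $1$ is the top element by integrality, and residuation gives $x \wedge y \leq z \iff y \leq x \rightarrow z$, so the residual is relative pseudocomplementation. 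Hence the $\{\vee, \wedge, 1, \rightarrow, 0\}$-reduct is a pointed Brouwerian algebra, while conversely the two displayed formulas define candidate multiplicative and additive operations on any pointed Brouwerian algebra. What remains is (i) the identity pinning down $+$ in a bimonoid, and (ii) the check that the formulas really produce a bimonoid of the required kind.

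For (i) I would prove $x + y = (0 \rightarrow (x \wedge y)) \wedge (x \vee y)$ in every such bimonoid. Writing $\nabla z \assign 0 \rightarrow z$, note $\nabla$ is a closure operator ($z \leq \nabla z$ from $0 \cdot z \leq z$; monotone; idempotent as $0 \cdot 0 = 0$) which preserves finite meets and satisfies $0 \cdot \nabla z \leq z$. The bound $x + y \leq x \vee y$ follows from additive idempotency and monotonicity, since $x, y \leq x \vee y$ give $x + y \leq (x \vee y) + (x \vee y) = x \vee y$. The bound $x + y \leq \nabla(x \wedge y)$, i.e.\ $0 \cdot (x + y) \leq x \wedge y$, follows from the triple law $0 \cdot (x + y) = 0 \cdot (x + y) \cdot 0 \leq (0 \cdot x) + (0 \cdot y)$ together with the remark that $a, b \leq 0$ force $a + b \leq (0 + b) \wedge (a + 0) = a \wedge b$, applied to $a = 0 \wedge x$ and $b = 0 \wedge y$. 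For the reverse inequality I would use distributivity to rewrite the right-hand side as $(x \wedge \nabla y) \vee (y \wedge \nabla x)$ and, by commutativity, reduce to $x \wedge \nabla y \leq x + y$. Setting $q = x \wedge \nabla y$, one has $0 \wedge q \leq y$ (since $q \leq \nabla y$) and, by additive distributivity, idempotency, and the unit law, $q = (q + 0) \wedge (q + q) = q + (0 \wedge q) \leq q + y \leq x + y$. This chain is the delicate point of the whole argument.

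For (ii) it is convenient to record the equivalent form $x + y = (x \wedge \nabla y) \vee (y \wedge \nabla x)$, obtained by distributing $\nabla(x \wedge y) = \nabla x \wedge \nabla y$ over $x \vee y$. Commutativity is immediate, idempotency is $x + x = (x \wedge \nabla x) \vee (x \wedge \nabla x) = x$, and $x + 0 = x$ reduces to $0 \cdot \nabla x \leq x$. The additive s$\ell$-semigroup law $x + (y \wedge z) = (x + y) \wedge (x + z)$ and hemidistributivity $x \cdot (y + z) \leq (x \cdot y) + z$ each unwind, after expansion in the distributive lattice, to terms absorbed via $w \leq \nabla w$. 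Associativity becomes transparent in this form: using $\nabla(x + y) = \nabla x \wedge \nabla y$ (itself from $x \wedge y \leq x + y \leq \nabla(x \wedge y)$), one computes that both $(x + y) + z$ and $x + (y + z)$ equal the manifestly symmetric expression $(x \wedge \nabla y \wedge \nabla z) \vee (y \wedge \nabla x \wedge \nabla z) \vee (z \wedge \nabla x \wedge \nabla y)$. With the routine multiplicative axioms this confirms that the formulas yield a multiplicatively integral idempotent commutative residuated $\ell$-bimonoid.

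Mutual inverseness is then read off directly: starting from a pointed Brouwerian algebra the composite recovers $\wedge$ as $x \cdot y$, recovers $\rightarrow$ as the residual of $\cdot$, and leaves $\vee, 1, 0$ untouched, whereas starting from a bimonoid the composite recovers $\cdot = \wedge$ and its residual, and reconstructs $+$ via the identity of step (i). The main obstacle is therefore squarely the reverse inequality $(x \wedge \nabla y) \vee (y \wedge \nabla x) \leq x + y$ of that identity, since everything else is either forced by integrality and idempotency or reduces to absorption in a distributive lattice.
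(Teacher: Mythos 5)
Your proof is correct and shares the paper's skeleton: force $x \cdot y = x \wedge y$, show the bimonoid axioms pin down $x + y = (0 \rightarrow (x \wedge y)) \wedge (x \vee y)$, then verify that this formula makes any pointed Brouwerian algebra a bimonoid of the stated kind; your two upper bounds ($x + y \leq x \vee y$ and $0(x+y) \leq x \wedge y$) are derived exactly as in the paper. The genuine divergence is at the step you rightly flag as delicate, and in the converse verification. For $(0 \rightarrow (x \wedge y)) \wedge (x \vee y) \leq x + y$ the paper leans on hemidistributivity, via $(0 \rightarrow xy)\,x \leq (x + 0)(0 \rightarrow y) \leq x + 0\,(0 \rightarrow y) \leq x + y$, whereas you invoke distribution of $+$ over meets, $q = (q + 0) \wedge (q + q) = q + (0 \wedge q) \leq q + y \leq x + y$ for $q = x \wedge \nabla y$; both are sound one-line arguments resting on different bimonoid axioms, so the identity is in fact forced by either. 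In the converse direction your normal form $x + y = (x \wedge \nabla y) \vee (y \wedge \nabla x)$ buys a manifestly symmetric three-term expression for associativity (the paper achieves the same effect with $(0 \rightarrow xyz)(x \vee y \vee z)$, which is the same expression after distributing), and it leads you to check additive idempotency and $x + (y \wedge z) = (x + y) \wedge (x + z)$ explicitly, laws the paper's proof treats as routine and never writes down; the absorption steps you compress into one phrase do go through, since each cross term such as $x \wedge y \wedge \nabla z$ or $x \wedge z \wedge \nabla y$ lies below $x \wedge \nabla y \wedge \nabla z$, but those two lines are worth writing out in a final version.
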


\begin{proof}
  Given such a commutative residuated $\ell$-bimonoid $\langle A, \vee, \wedge, \cdot, 1, \rightarrow, +, 0 \rangle$, idempotence and multi\-plicative integrality imply that $x \cdot y = x \wedge y$, therefore $\langle A, \vee, \wedge, 1, \rightarrow, 0 \rangle$ is a pointed Brouwerian algebra. We now show that $x + y = (0 \rightarrow xy)(x \vee y)$. Clearly $x + y \leq (x \vee y) + (x \vee y) = x \vee y$, and $0 (x + y) = 0^2 (x + y) \leq 0x + 0y \leq 0 x y$, since $0x + 0y \leq 0 x + 0 = 0 x$ and $0 x + 0 y \leq 0 + 0 y \leq 0 y$, therefore $x + y \leq 0 \rightarrow xy$. Conversely, to prove that $(0 \rightarrow x y) (x \vee y) \leq x + y$, it suffices to prove that $(0 \rightarrow x y) x \leq x + y$. But we have $(0 \rightarrow xy) x \leq x (0 \rightarrow y) = (x + 0) (0 \rightarrow y) \leq x + 0 (0 \rightarrow y) \leq x + y$. Thus $x + y = (0 \rightarrow (x \wedge y)) (x \vee y)$.

  Conversely, given a pointed Brouwerian algebra, we have $x + y = y + x$ and $x + 0 = (0 \rightarrow x0) (x \vee 0) = (0 \rightarrow x) (x \vee 0) = (0 \rightarrow x)x \vee  (0 \rightarrow x)0 = x \vee 0 x = x$, since $x=x\cdot x \leq  (0 \rightarrow x)x\leq 1\cdot x=x$. Moreover,
\begin{align*}
  x + (y + z) & = (0 \rightarrow x (0 \rightarrow yz) (y \vee z)) (x \vee (0 \rightarrow yz) (y \vee z)) \\
  & = (0 \rightarrow xyz) (x \vee (0 \rightarrow yz) (y \vee z)) \\
  & = (0 \rightarrow xyz) (x \vee y \vee z)),
\end{align*}
  so $x + (y + z) = z + (x + y) = (x + y) + z$. Finally, we need to verify hemidistributivity: $x (y + z) = x (0 \rightarrow yz) (y \vee z) \leq (0 \rightarrow xyz) (xy \vee z) = xy + z$. The addition operation therefore yields a bimonoid.
\end{proof}

  In particular, a specific variety of pointed Brouwerian algebras will be important later.

\begin{definition}[Boolean-pointed Brouwerian algebras]
  A pointed Brouwerian algebra is called \emph{Boolean-pointed} if the interval $[0, 1]$ is a Boolean lattice, or equivalently if it satisfies the equation $x \vee (x \rightarrow 0) \equals 1$.
\end{definition}

 The following lemma will be used later to simplify the proof of Fact~\ref{f:BpBr}.

\begin{lemma}[Inequational validity in Boolean-pointed Brouwerian algebras] \label{lemma: validity in bpbras}
  Let $t(x_1, \dots, x_m, y_1, \dots, y_n)$ and $u(x_1, \dots, x_m, y_1, \dots, y_n)$ be terms in the signature of Brouwerian algebras. The inequality $t(x_1, \dots, x_m, 0, \dots, 0) \inequals u(x_1, \dots, x_m, 0, \dots, 0)$  holds in all Boolean-pointed Brouwerian algebras if and only if $t(x_1, \dots, x_m, 1, \dots, 1) \inequals u(x_1, \dots, x_m, 1, \dots, 1)$ holds in all Brouwerian algebras and $t(x_1, \dots, x_m, 0, \dots, 0) \vee 0 \inequals u(x_1, \dots, x_m, 0, \dots, 0) \vee 0$ holds in all Boolean-pointed Brouwerian algebras.
\end{lemma}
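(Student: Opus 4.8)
The plan is to split the target inequality $t(\vec{x}, 0, \dots, 0) \leq u(\vec{x}, 0, \dots, 0)$ (call it the \emph{substituted inequality}) into a ``join part'' controlled by $\vee 0$ and a ``meet part'' controlled by $\wedge 0$, which match the two right-hand conditions: the \emph{join inequality} $t(\vec{x}, 0, \dots, 0) \vee 0 \leq u(\vec{x}, 0, \dots, 0) \vee 0$ over Boolean-pointed Brouwerian algebras, and the \emph{Brouwerian inequality} $t(\vec{x}, 1, \dots, 1) \leq u(\vec{x}, 1, \dots, 1)$ over Brouwerian algebras. The left-to-right direction is immediate: the join inequality follows from the substituted inequality by joining both sides with $0$, and the Brouwerian inequality follows by recalling that the variety of Brouwerian algebras is the subvariety of pointed Brouwerian algebras with $0 = 1$ (then $[0,1] = \{1\}$ is trivially Boolean), so substituting $0 = 1$ turns the substituted inequality into the Brouwerian one.

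For the converse, fix a Boolean-pointed Brouwerian algebra $\alg{A}$ together with elements $\vec{a}$, and write $p \assign t(\vec{a}, 0, \dots, 0)$ and $q \assign u(\vec{a}, 0, \dots, 0)$. I would first record the distributive cancellation principle: in any distributive lattice, for a fixed element $c$ one has $r \leq s$ if and only if $r \vee c \leq s \vee c$ and $r \wedge c \leq s \wedge c$ (the nontrivial direction is a one-line distributive computation). Instantiating $c \assign 0$ and $(r, s) \assign (p, q)$ reduces the goal $p \leq q$ to the two inequalities $p \vee 0 \leq q \vee 0$ and $p \wedge 0 \leq q \wedge 0$, the first of which is exactly the join inequality applied to $\vec{a}$.

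For the remaining meet inequality I would invoke the standard fact that, for a fixed element $c$ of a Brouwerian algebra, the map $\psi \colon x \mapsto x \wedge c$ is a surjective homomorphism in the full Brouwerian signature onto the principal downset ${\downarrow} c$, viewed as a Brouwerian algebra with top element $c$; the only clause needing verification is preservation of $\rightarrow$, which reduces to $(a \rightarrow b) \wedge a \leq b$ together with distributivity. Taking $c \assign 0$, the homomorphism $\psi$ sends the pointed constant $0$ to the top of ${\downarrow} 0$. Hence $p \wedge 0 = \psi(p)$ is the value of $t$ computed in ${\downarrow} 0$ at $\psi(a_1), \dots, \psi(a_m)$ with every remaining argument set to the top $0 = 1_{{\downarrow} 0}$, that is, $t(\psi(\vec{a}), 1, \dots, 1)$ in ${\downarrow} 0$; similarly $q \wedge 0 = u(\psi(\vec{a}), 1, \dots, 1)$. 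Since ${\downarrow} 0$ is a Brouwerian algebra, the Brouwerian inequality yields $p \wedge 0 \leq q \wedge 0$, which completes the argument.

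The main obstacle is precisely this last reduction: one must recognise that meeting with $0$ is a homomorphism of the \emph{entire} Brouwerian signature (not merely of the lattice reduct) and that it collapses the constant $0$ to the top of its image. It is this collapse that silently converts a ``substitute $0$'' term into a ``substitute $1$'' term and so hands the meet part over to the Brouwerian inequality. Everything else is routine; notably, Boolean-pointedness of $\alg{A}$ enters only through the classes over which the inequalities are quantified, not through any of the algebraic manipulations.
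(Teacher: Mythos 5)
Your proposal is correct and follows essentially the same route as the paper: the paper also splits $p \leq q$ into the comparisons $p \vee 0 \leq q \vee 0$ and $p \wedge 0 \leq q \wedge 0$ via distributivity (phrased contrapositively), and handles the meet part by passing to the quotient of $\alg{A}$ by the filter generated by $0$, which is a Brouwerian-algebra homomorphism identifying $0$ with $1$. That quotient is exactly your map $\psi\colon x \mapsto x \wedge 0$ onto ${\downarrow}0$ up to isomorphism, so the two arguments coincide in substance.
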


\begin{proof}
  The left-to-right implication is immediate. Conversely, suppose that $b = t(a_1, \dots, a_m, 0, \dots, 0) \nleq u(a_1, \dots, a_m, 0 \dots, 0) = c$ in some Boolean-pointed Brouwerian algebra $\alg{A}$. Then either $b \wedge 0 \nleq c \wedge 0$ or $b \vee 0 \nleq c \vee 0$. In the former case, let $F$ be the filter generated by $0$, and let $\pi\colon \alg{A} \to \alg{A} / F$ be the appropriate projection map, which is a homomorphism of Brouwerian algebras. Then $b \wedge 0 \nleq c \wedge 0$ implies that $\pi(b) \nleq \pi(c)$. It follows that the equality $t(x_1, \dots, x_m, 1, \dots, 1) \equals u(x_1, \dots, x_m, 1, \dots, 1)$ fails in the Brouwerian algebra $\alg{A} / F$ for $x_i = \pi(a_i)$, since $\pi(0) = \pi(1)$. In the latter case, $c \rightarrow 0 \nleq b \rightarrow 0$.
\end{proof}

  Further examples of bi-integral bisemigroups can be obtained using an ordinal sum construction. Consider a family of bisemigroups $\alg{A}_{i}$ for $i \in I$, where $I$ is a chain ordered by $\sqleq$. The chain is called \emph{non-trivial} if it has at least two elements. We define the \emph{ordinal sum} of this family to be an ordered algebra $\alg{A} = \langle A, \leq, +, \cdot \rangle$ over the universe $A \assign \bigcup_{i \in I} A_{i}$. Let $a \in \alg{A}_{i}$ and $b \in \alg{A}_{j}$. The~order on $\alg{A}$ is defined as follows:
\begin{align*}
  a \leq b & \iff \text{either } i \sqle j \text{ or } i = j \text{ and } a \leq b.
\end{align*}
  The operations of $\alg{A}$ are:
\begin{align*}
  a \cdot b & = \begin{cases}a & \text{ if } i \sqleq j, \\ a \cdot b & \text{ if } i = j, \\ b & \text{ if } i \sqge j, \end{cases} & a + b & = \begin{cases}b & \text{ if } i \sqle j, \\ a + b & \text{ if } i = j, \\ a & \text{ if } i \sqge j. \end{cases}
\end{align*}

\begin{fact}
  Let $I$ be a non-trivial chain ordered by $\sqleq$ and let $\alg{A}_{i}$ be a family of (commutative) bisemigroups. Then its ordinal sum $\alg{A}$ is a bisemigroup if and only if each bisemigroup $\alg{A}_{i}$ for $i \in I$ is bi-integral. In that case $\alg{A}$ is a (commutative) bi-integral bisemigroup.
\end{fact}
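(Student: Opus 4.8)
The plan is to reduce every required (in)equality to the level of the indexing chain $I$ and to observe that the only genuine constraints arise from configurations in which several of the relevant blocks coincide. The guiding observation is that across distinct blocks multiplication returns the element in the lower block while addition returns the element in the higher block: if $a \in \alg{A}_{i}$ and $b \in \alg{A}_{j}$, then the block of $a \cdot b$ is $\min(i,j)$ and the block of $a + b$ is $\max(i,j)$, with the local operation of $\alg{A}_{i}$ used when $i = j$. Consequently the block of either side of any bisemigroup axiom is a lattice term in the input blocks, and an inequality $s \leq t$ in $\alg{A}$ holds automatically whenever the block of $s$ lies strictly below that of $t$, fails whenever it lies strictly above, and reduces to a within-block inequality of some $\alg{A}_{k}$ exactly when the two blocks coincide.

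For the easy direction I would first record that associativity of $\cdot$ and of $+$ holds for every family $\{\alg{A}_{i}\}$, since at the block level it is the identity $\min(\min(i,j),k) = \min(i,\min(j,k))$ and its dual, the collapsing cases reducing to associativity in a single $\alg{A}_{k}$. For monotonicity and hemidistributivity the block-level inequalities $\min(i,k) \leq \min(j,k)$ (for $i \leq j$) and $\min(p,\max(q,r)) \leq \max(\min(p,q),r)$ always hold in the chain $I$, the latter being an instance of distributivity; so, assuming each $\alg{A}_{i}$ is bi-integral, it remains only to check the within-block residue in the tie cases, where the required inequalities turn out to be precisely integrality inequalities such as $x \cdot z \leq z$ and $x \leq x + z$ of the relevant block. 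This shows $\alg{A}$ is a bisemigroup; a direct inspection then gives $a \cdot b \leq a, b$ and $a, b \leq a + b$ both across and within blocks, so $\alg{A}$ is bi-integral, and commutativity of the blocks is inherited since across distinct blocks both $a \cdot b$ and $b \cdot a$ equal the lower element.

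The substance of the proof is the converse, for which I would isolate two families of instances. First, instantiating multiplicative monotonicity $x \cdot z \leq y \cdot z$ together with its left-hand mirror, with $x, z \in \alg{A}_{k}$ and $y$ chosen in any strictly higher block, forces $x \cdot z \leq z$ and $z \cdot x \leq z$ in $\alg{A}_{k}$, because $y \cdot z$ then collapses to $z$; this yields full multiplicative integrality of every block $k$ that is not the top of $I$. Second, instantiating hemidistributivity $x \cdot (y + z) \leq (x \cdot y) + z$ with $x, z \in \alg{A}_{p}$ and $y$ in any strictly lower block collapses the left side to $x \cdot z$ and the right side to $z$, forcing $x \cdot z \leq z$ in $\alg{A}_{p}$; this yields multiplicative integrality of every block $p$ that is not the bottom of $I$, the remaining mirror inequality following by the left--right symmetry of bisemigroups. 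Since $I$ is non-trivial, no block is simultaneously the top and the bottom, so these two cases together exhaust all blocks and every $\alg{A}_{i}$ is multiplicatively integral. Applying the order duality of bisemigroups, which interchanges $\cdot$ with $+$ and reverses the order together with the chain $I$, transports the entire argument to additive integrality (equivalently, the additive analogues of the two instances force additive integrality at every non-bottom and every non-top block). Hence each $\alg{A}_{i}$ is bi-integral.

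The main obstacle is organizational rather than conceptual: keeping the case analysis on the relative order of the input blocks under control and verifying that in each tie case the within-block residue is exactly an integrality inequality, neither stronger nor weaker, and confirming that the ``not top'' and ``not bottom'' instances genuinely cover all blocks. This last point is precisely where the non-triviality of $I$ is indispensable, since for a one-element chain the ordinal sum is just $\alg{A}_{1}$ and no integrality is forced.
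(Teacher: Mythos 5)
Your proof is correct and takes essentially the same route as the paper's: the forward direction is the same case analysis on the relative order of blocks (with the chain inequality $i \wedge (j \vee k) \sqleq (i \wedge j) \vee k$ carrying the hemidistributivity cases), and the converse rests on the same two instantiations — monotonicity against an element in a strictly higher block and hemidistributivity against an element in a strictly lower block — which the paper merely phrases contrapositively (deriving a failure of the axioms from a failure of integrality). Your block-map packaging and the appeal to order duality for additive integrality are organizational refinements, not a different argument.
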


\begin{proof}
  Given a family of bi-integral bisemigroups, the two operations are clearly associative. To show that they are isotone, consider $a \in \alg{A}_{i}$, $b \in \alg{A}_{j}$, $c \in \alg{A}_{k}$. If $i = j$ and $a \leq b$, then case analysis ($k \sqle i$, $k = i$, $k \sqle i$) shows that $a \cdot c \leq b \cdot c$ and $c \cdot a \leq c \cdot b$. Suppose therefore that $i \sqle j$. If $k \sqle i$ or $j \sqle k$, the inequalities $a \cdot c \leq b \cdot c$ and $c \cdot a \leq c \cdot b$ follow from $c \leq c$ and $i \sqle j$, respectively. If $k = i$, they reduce to the inequalities $a \cdot c \leq c$ and $c \cdot a \leq c$, which hold by the multiplicative integrality of $\alg{A}_{i} = \alg{A}_{k}$. If $k = j$, they follow from $i \sqle j$. The proof for addition is entirely analogous and involves additive integrality.

  Let us now verify that $a \cdot (b + c) \leq (a \cdot b) + c$ for $a \in \alg{A}_{i}$, $b \in \alg{A}_{j}$, $c \in \alg{A}_{k}$. The proof of the other hemidistributive law is analogous. If $i, j, k$ are distinct, then the inequality $a \cdot (b + c) \leq (a \cdot b) + c$ in $\alg{A}$ follows from the inequality $i \wedge (j \vee k) \sqleq (i \wedge j) \vee k$ in $I$. If $i = j = k$, then the inequality follows from the same inequality in $\alg{A}_{i} = \alg{A}_{j} = \alg{A}_{k}$. If $i \sqle j = k$, then the inequality simplifies to $a \leq c$, which follows from $i \leq k$. If $j = k \sqle i$, then the inequality simplifies to $b + c \leq b + c$. The~cases $i = j \sqle k$ and $k \sqle i = j$ are similar. Finally, if $i = k \sqle j$, then the inequality simplifies to $a \leq a + c$, while if $j \sqle i = k$, then the inequality simplifies to $a \cdot c \leq c$. These inequalities follow respectively from the additive integrality of $\alg{A}_{i}$ and the multiplicative integrality of $\alg{A}_{k}$.

  The ordered algebra $\alg{A}$ is thus a bisemigroup, and it is easily seen to be bi-integral. Conversely, suppose that $\alg{A}_{i}$ is not multiplicatively integral for some $i \in I$, say $a \cdot b \nleq b$ for some $a, b \in \alg{A}_{i}$. (If $\alg{A}_{i}$ is not additively integral instead of multiplicatively integral, or if $a \cdot b \nleq a$ instead of $a \cdot b \nleq b$, the proof is entirely analogous.) Consider some $c \in \alg{A}_{j}$ for $j \neq i$. If $i \sqle j$, then $a \leq c$ but $a \cdot b \nleq b = c \cdot b$. If on the other hand $j \sqle i$, then $a \cdot (c + b) = a \cdot b \nleq b = (a \cdot c) + b$.
\end{proof}

  If the chain $I$ has an upper bound $\top$ and $\alg{A}_{\top}$ has a multiplicative unit $1$, then clearly $1$ is a multiplicative unit for the whole of $\alg{A}$. Likewise, if $I$ has a lower bound $\bot$ and $\alg{A}_{\bot}$ has an additive unit $0$, then $0$ is an additive unit for the whole of $\alg{A}$.

  One special case of this construction is worth mentioning: if $\alg{B}$ is a bi-integral bisemigroup, we may take $I = \{ 0, 1 \}$ with $0 \sqle 1$, $\alg{A}_{0} \assign \alg{B}$, and $\alg{A}_{1} \assign \alg{B}^{\dual}$ (the order dual of $\alg{B}$). If $\alg{B}$ has an additive unit, then $\alg{A}$ has both a multiplicative and an additive unit, i.e.\ it is a bi-integral bimonoid. For example, consider the additive posemigroup $\alg{B} \assign \langle \N, \leq, + \rangle$ of non-negative integers with the usual ordering. Expanding it by the drastic multiplication $x \cdot y \assign 0$ yields a bi-integral bisemigroup with an additive unit. Applying the above construction yields a bi-integral bimonoid, which is order isomorphic to the Chang MV-algebra. However, the bimonoidal structure is quite different when multiplying elements from $\alg{A}_{0}$ and~$\alg{A}_{1}$.

  Finally, any bisemigroup may be extended to a bounded bisemigroup by adding new top and bottom elements $\top$ and $\bot$ such that
\begin{align*}
  \bot \cdot x & = \bot = x \cdot \bot, & \top + x & = \top = x + \top, \\
  x + \bot = \bot + x & = \begin{cases}\bot \text{ if } x < \top, \\ \top \text{ if } x = \top,\end{cases} & x \cdot \top = \top \cdot x & = \begin{cases}\top \text{ if } x > \bot, \\ \bot \text{ if } x = \bot. \end{cases}
\end{align*}
  On the other hand, the task of extending an arbitrary bisemigroup to a bimonoid is not quite so simple. The~reader may for example consider the problem of extending the bisemigroup $\langle \Zplus, \geq, \cdot, + \rangle$ of positive integers with the usual multiplication and addition and the dual of the usual order to a bimonoid.

\section{Complemented MacNeille completions of bimonoids and bisemigroups}
\label{sec: macneille}

  We now establish the first main result of the present paper: each commutative ($\ell$-)bimonoid $\alg{A}$ embeds into a complete complemented commutative $\ell$-bimonoid~$\cdmalg{A}$ in a certain doubly dense way akin to the Dedekind--MacNeille completion. The construction of this complemented completion is accomplished using so-called involutive residuated frames, introduced by Galatos \& Jipsen~\cite{galatos+jipsen13residuated-frames}.

\subsection{Complemented MacNeille completions: definition}
\label{subsec: macneille definition}

  The Dedekind--MacNeille completion of a lattice $\alg{L}$ is defined as an embedding of~$\alg{L}$ into a complete lattice where the image of $\alg{L}$ is both meet dense and join dense in a precise sense. This completion is unique up to a unique isomorphism which fixes (the image of)~$\alg{L}$. We~now define the \emph{complemented} Dedekind--MacNeille (DM) completion of a bimonoid, and show that in the commutative case it is unique in the same sense.

\begin{definition}[Join density and meet density]
  A subset $X$ of a poset $P$ is \emph{meet dense (join dense)} in $P$ if each element of $P$ is a meet (join) of some subset of~$X$. A subset $X$ of a bimonoid $\alg{A}$ is \emph{admissibly meet dense (admissibly join dense)} in $\alg{A}$ if each element of $\alg{A}$ is an admissible meet (admissible join) of some subset of $X$.
\end{definition}

  Equivalently, a set $X$ is meet dense in $P$ if and only if for all $a, b \in P$
\begin{align*}
  a \leq b \iff (b \leq x \implies a \leq x \text{ for all } x \in X).
\end{align*}
  Likewise, $X$ is join dense in $P$ if and only if for all $a, b \in P$
\begin{align*}
  a \leq b \iff (x \leq a \implies x \leq b \text{ for all } x \in X).
\end{align*}
  Recall that $\comp{a}$ denotes the complement of $a$ in a commutative bimonoid (if it exists), i.e.\ $\comp{a} = \compl{a} = \compr{a}$.

\begin{definition}[Commutative $\Delta_{1}$-extensions]
  Let $\iota\colon \alg{A} \into \alg{B}$ be an embedding of a commutative bimonoids. We call the embedding $\iota$, and by extension the $\ell$-bimonoid~$\alg{B}$ itself, a \emph{commutative (admissible) $\Delta_{1}$-extension} of $\alg{A}$ if
\begin{align*}
  \text{elements of the form $\iota(a) \cdot \comp{\iota(b)}$ with $a, b \in \alg{A}$ are (admissibly) join dense in $\alg{B}$,}
\end{align*}
  and
\begin{align*}
  \text{elements of the form $\iota(a) + \comp{\iota(b)}$ with $a, b \in \alg{A}$ are (admissibly) meet dense in $\alg{B}$.}
\end{align*}
\end{definition}

  The above definition does not assume that the complements $\comp{\iota(b)}$ exist for each $b \in \alg{A}$. In other words, a commutative \mbox{$\Delta_{1}$-extension} need not be complemented. In particular, each commutative bimonoid is a commutative admissible $\Delta_{1}$-extension of itself, since each of its elements $a$ has the form $a \cdot \comp{0}$, or equivalently $a + \comp{1}$. In a complemented commutative $\Delta_{1}$-extension each existing meet and join is admissible, therefore complemented commutative $\Delta_{1}$-extensions are always admissible. Moreover, the meet density and join density conditions are equivalent in this case.

\begin{definition}[Complemented Dedekind--MacNeille completions]
  A \emph{commutative complemented Dedekind--MacNeille (DM) completion} of a commutative bimonoid is a complete commutative complemented $\Delta_{1}$-extension.
\end{definition}

  A commutative complemented DM completion of a finite commutative bimonoid of cardinality $n$ has cardinality at most $2^{(n^{2})}$, since there are at most $n^{2}$ elements of the form $\iota(a) \cdot \comp{\iota(b)}$ and each element of the completion is a join of some set of these elements.

\begin{fact} \label{fact: admissible joins preserved}
  Let $\iota\colon \alg{A} \into \extalg{A}$ be a commutative $\Delta_{1}$-extension. Then $\iota$ preserves all admissible joins and meets which exist in $\alg{A}$. In particular, if $\alg{A}$ is an $\ell$-bimonoid, then $\iota$ is an embedding of $\ell$-bimonoids.
\end{fact}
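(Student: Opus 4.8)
The plan is to prove preservation of admissible \emph{joins} directly; preservation of admissible meets then follows by the order duality of bimonoids, and the final $\ell$-bimonoid claim is immediate because in an $\ell$-bimonoid every finite non-empty join and meet is admissible (by the s$\ell$-semigroup distribution axioms built into the definition), so that $\iota$, being an order embedding that preserves these admissible joins and meets, preserves $\vee$ and $\wedge$. So fix an admissible join $a = \bigvee X$ in $\alg{A}$. Since $\iota$ is order-preserving, $\iota(a)$ is an upper bound of $\iota[X]$, and the task reduces to showing $\iota(a) \leq b$ for every upper bound $b \in \extalg{A}$ of $\iota[X]$.

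The key idea is to use the two density conditions crosswise: to bound $\iota(a)$ from above I would invoke \emph{meet} density, and to turn the resulting additive inequalities into the multiplicative inequalities where admissibility of the join can actually be applied, I would use the residuation laws of Proposition~\ref{prop: residuation for complements}. Concretely, by meet density of the elements $\iota(e) + \comp{\iota(f)}$ it suffices to show $\iota(a) \leq \iota(e) + \comp{\iota(f)}$ whenever $b \leq \iota(e) + \comp{\iota(f)}$. Fixing such $e, f$ (so that $\comp{\iota(f)}$ exists), each $\iota(x) \leq b$ yields $\iota(x) \leq \iota(e) + \comp{\iota(f)}$ for every $x \in X$; applying residuation together with double negation (Proposition~\ref{prop: de morgan laws}, rewriting $\comp{\comp{\iota(f)}}$ as $\iota(f)$) this becomes $\iota(x) \cdot \iota(f) \leq \iota(e)$, and since $\iota$ is an embedding of bimonoids, $x \cdot f \leq e$ holds in $\alg{A}$ for every $x \in X$.

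Now admissibility does the essential work: because $a = \bigvee X$ is admissible, $a \cdot f = \bigvee \set{x \cdot f}{x \in X}$, and as each $x \cdot f \leq e$ we obtain $a \cdot f \leq e$ in $\alg{A}$. Transporting back along $\iota$ gives $\iota(a) \cdot \iota(f) \leq \iota(e)$, and one further application of residuation yields $\iota(a) \leq \iota(e) + \comp{\iota(f)}$, as required; meet density then delivers $\iota(a) \leq b$ and completes the join case. The meet case is the exact order dual: one shows $b \leq \iota(a)$ for every lower bound $b$ of $\iota[X]$ using \emph{join} density of the elements $\iota(c) \cdot \comp{\iota(d)}$, together with the additive admissibility identity $a + d = \bigwedge \set{x + d}{x \in X}$, the roles of $+$ and $\cdot$ being interchanged throughout.

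I expect the only genuine subtlety to be bookkeeping around the existence of complements: a $\Delta_{1}$-extension need not itself be complemented, so the density conditions range only over those $e, f$ (resp.\ $c, d$) for which $\comp{\iota(f)}$ (resp.\ $\comp{\iota(d)}$) exists. This is precisely the hypothesis under which the residuation laws of Proposition~\ref{prop: residuation for complements} are asserted, so no difficulty arises in applying them. The conceptual point worth flagging is the crosswise use of the two density conditions---meet density to bound a preserved join, join density to bound a preserved meet---which is forced by the fact that admissibility of joins is a condition about multiplication, whereas the meet-dense generators $\iota(e) + \comp{\iota(f)}$ are built from addition, with residuation serving as the bridge between the two.
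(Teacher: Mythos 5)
Your proof is correct and follows essentially the same route as the paper's: bound $\iota(\bigvee X)$ against the meet-dense generators $\iota(e)+\comp{\iota(f)}$, convert to multiplicative inequalities via the residuation laws of Proposition~\ref{prop: residuation for complements}, apply admissibility of the join inside $\alg{A}$, and transport back along the embedding, with the meet case handled by order duality. The paper merely compresses the bookkeeping you make explicit (the auxiliary upper bound $b$ and the double-negation step $\comp{\comp{\iota(f)}}=\iota(f)$), so there is no substantive difference.
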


\begin{proof}
  We only prove the claim for joins. Suppose that $a = \bigvee_{i \in I} a_{i}$ is an admissible join in $\alg{A}$. Clearly $\iota (a_{i}) \leq \iota (a)$ for each $i \in I$. If $\iota(a_{i}) \leq \iota(c) + \comp{\iota(d)}$ for each $i \in I$, then $\iota(a_{i} \cdot d) = \iota(a_{i}) \cdot \iota(d) \leq \iota(c)$ for each $i \in I$, therefore $a_{i} \cdot d \leq c$ for each $i \in I$, and $a \cdot d \leq c$ by the admissibility of $a$. Thus $\iota(a) \cdot \iota(d) = \iota(a \cdot d) \leq \iota(c)$ and $\iota(a) \leq \iota(c) + \comp{\iota(d)}$. It now follows from the meet density of elements of the form $\iota(c) + \comp{\iota(d)}$ that $\iota(a) = \bigvee_{i \in I} \iota(a_{i})$.
\end{proof}
\nopagebreak
  We generally disregard the embedding $\iota\colon \alg{A} \into \extalg{A}$ and treat $\alg{A}$ as a sub-bimonoid of $\extalg{A}$. Each commutative $\Delta_{1}$-extension $\extalg{A}$ of a commutative bimonoid $\alg{A}$ turns out to be an \emph{essential extension} in the sense that each homomorphism of bimonoids $h\colon \extalg{A} \to \alg{B}$ is an embedding whenever its restriction to $\alg{A}$ is.

\begin{proposition}[Commutative $\Delta_{1}$-extensions are essential] \label{prop: essential extensions}
  Each commutative $\Delta_{1}$-extension of a bimonoid is essential.
\end{proposition}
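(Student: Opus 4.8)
The goal is to show that if the restriction $h \circ \iota$ is an order embedding then so is $h$. Since $h$ is already order preserving, it suffices to prove the converse implication $h(x) \leq h(y) \implies x \leq y$ for all $x, y \in \extalg{A}$; injectivity of $h$ then follows immediately. The plan is to exploit the two density conditions to reduce such a comparison in $\extalg{A}$ to the comparison of a \emph{single} join-dense element with a \emph{single} meet-dense element, and then to push that comparison down into $\alg{A}$, where $h$ is assumed to be an embedding.

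First I would isolate the key reduction: for $a, b, c, d \in \alg{A}$ with $\comp{\iota(b)}$ and $\comp{\iota(d)}$ existing in $\extalg{A}$,
\[ \iota(c) \cdot \comp{\iota(d)} \leq \iota(a) + \comp{\iota(b)} \iff \iota(b \cdot c) \leq \iota(a + d) \iff b \cdot c \leq a + d. \]
The first equivalence follows from two applications of the residuation laws for complements (Proposition~\ref{prop: residuation for complements}): moving $\iota(b)$ across via $u \leq v + \comp{w} \iff u \cdot w \leq v$ and then moving $\comp{\iota(d)}$ across via $u \cdot \comp{w} \leq v \iff u \leq v + w$. Crucially, these steps invoke only the complements $\comp{\iota(b)}$, $\comp{\iota(d)}$, together with the double-negation identities $\comp{\comp{\iota(b)}} = \iota(b)$ and $\comp{\comp{\iota(d)}} = \iota(d)$ (Proposition~\ref{prop: de morgan laws}), all of which are available precisely because $\iota(c) \cdot \comp{\iota(d)}$ and $\iota(a) + \comp{\iota(b)}$ are genuine elements of the dense sets. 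The second equivalence is just that $\iota$ is an embedding of bimonoids.

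Next I would note that, since homomorphisms of bimonoids preserve existing complements, the composite $j \assign h \circ \iota$ satisfies the very same reduction inside $\alg{B}$, namely $j(c) \cdot \comp{j(d)} \leq j(a) + \comp{j(b)} \iff b \cdot c \leq a + d$, now using that $j$ is an embedding. Hence the two inequalities $\iota(c) \cdot \comp{\iota(d)} \leq \iota(a) + \comp{\iota(b)}$ in $\extalg{A}$ and $j(c) \cdot \comp{j(d)} \leq j(a) + \comp{j(b)}$ in $\alg{B}$ have the same truth value. With this the main argument is short: assuming $h(x) \leq h(y)$, by join density it suffices to show $p \leq y$ for every $p = \iota(c) \cdot \comp{\iota(d)} \leq x$, and by meet density it suffices to show $p \leq q$ for every $q = \iota(a) + \comp{\iota(b)} \geq y$. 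Applying $h$ gives $j(c) \cdot \comp{j(d)} = h(p) \leq h(x) \leq h(y) \leq h(q) = j(a) + \comp{j(b)}$, so the reduction in $\alg{B}$ yields $b \cdot c \leq a + d$, and the reduction in $\extalg{A}$ then gives $p \leq q$, as required.

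I expect the main obstacle to be the reduction itself. Because a $\Delta_{1}$-extension need not be complemented, one cannot residuate freely, and one must verify that the chosen chain of residuation steps uses only the complements $\comp{\iota(b)}$ and $\comp{\iota(d)}$ that are guaranteed to exist (being the ones appearing in the selected dense elements), and never an arbitrary $\comp{\iota(a)}$ or $\comp{\iota(c)}$. Everything else—the monotonicity and complement preservation of $h$, and the two equivalent characterizations of join and meet density—is routine.
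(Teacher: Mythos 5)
Your proof is correct and follows essentially the same route as the paper's: both reduce, via join and meet density, to comparing a single generator $\iota(c)\cdot\comp{\iota(d)}$ against a single generator $\iota(a)+\comp{\iota(b)}$, and then use the residuation laws for complements (which $h$ preserves) to translate that comparison into an inequality $b\cdot c \leq a + d$ inside $\alg{A}$, where the hypothesis on $h\restriction\alg{A}$ applies. The only difference is presentational — you argue the implication $h(x)\leq h(y) \implies x\leq y$ directly, while the paper runs the same argument contrapositively — and your explicit attention to which complements are guaranteed to exist is exactly the right point of care.
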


\begin{proof}
  Let $\extalg{A}$ be commutative $\Delta_{1}$-extension of $\alg{A}$ and ${h\colon \extalg{A} \to \alg{B}}$ be a homomorphism of bimonoids whose restriction to $\alg{A}$ is an embedding. Then $h$ preserves all existing complements. Suppose that $h$ is not an embedding. Then there are $a_{i}, b_{i}, c_{j}, d_{j} \in \alg{A}$ for $i \in I$ and $j \in J$ such that
\begin{align*}
  & h \biggl( \bigvee_{i \in I} a_{i} \cdot \comp{b_{i}} \biggr) \leq_{\alg{B}} h \biggl( \bigwedge_{j \in J} c_{j} + \comp{d_{j}} \biggr) & & \text{and} & \bigvee_{i \in I} a_{i} \cdot \comp{b_{i}} \nleq_{\extalg{A}} \bigwedge_{j \in J} c_{j} + \comp{d_{j}}.
\end{align*}
  It follows that $h(a \cdot \comp{b}) \leq_{\alg{B}} h(c + \comp{d})$ and $a \cdot \comp{b} \nleq_{\extalg{A}} c + \comp{d}$ for some $a, b, c, d \in \alg{A}$. But~then $h(a \cdot d) \leq_{\alg{B}} h(c + b)$ and $a \cdot d \nleq_{\extalg{A}} c + b$.  Since $a \cdot d \in \alg{A}$ and $c + b \in \alg{A}$, the~restriction of $h$ to $\alg{A}$ is not an embedding.
\end{proof}

  A given commutative bimonoid may have many distinct commutative $\Delta_{1}$-extensions. However, there is up to isomorphism only one such extension which is both \emph{complete} and \emph{complemented}. Moreover, it is universal in the sense that any other commutative admissible $\Delta_{1}$-extension embeds into it. This is a consequence of the following sequence of easy lemmas, where $\extalg{A}$ is a commutative $\Delta_{1}$-extension of a commutative bimonoid~$\alg{A}$ and $a_{i}, b_{i}, c_{j}, d_{j} \in \alg{A}$ for $i \in I$ and $j \in J$.

\begin{lemma}[Joins below meets] \label{lemma: joins below meets}
  Whenever the join and meet exist in~$\extalg{A}$,
\begin{align*}
   \bigvee_{i \in I} a_{i} \cdot \comp{b_{i}} \leq_{\extalg{A}} \bigwedge_{j \in J}  \comp{c_{j}} + d_{j} & \iff c_{j} \cdot a_{i} \leq_{\alg{A}} d_{j} + b_{i} \text{ for all } i \in I \text{ and } j \in J.
\end{align*}
\end{lemma}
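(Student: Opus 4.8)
The plan is to peel off the outer join and meet and reduce everything to a single residuation computation already living in $\alg{A}$. First I would use the universal properties of the join and meet, which exist in $\extalg{A}$ by hypothesis: a given element lies below $\bigwedge_{j \in J} \comp{c_{j}} + d_{j}$ if and only if it lies below each $\comp{c_{j}} + d_{j}$, and $\bigvee_{i \in I} a_{i} \cdot \comp{b_{i}}$ lies below a given element if and only if each $a_{i} \cdot \comp{b_{i}}$ does. Hence the left-hand inequality is equivalent to $a_{i} \cdot \comp{b_{i}} \leq_{\extalg{A}} \comp{c_{j}} + d_{j}$ holding for all $i \in I$ and $j \in J$, and it suffices to establish, for fixed $i$ and $j$, the single equivalence $a_{i} \cdot \comp{b_{i}} \leq \comp{c_{j}} + d_{j} \iff c_{j} \cdot a_{i} \leq d_{j} + b_{i}$.

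For this I would apply the residuation laws for complements (Proposition~\ref{prop: residuation for complements}) twice, in their commutative form $\comp{x} = \compl{x} = \compr{x}$. Writing $a, b, c, d$ for $a_{i}, b_{i}, c_{j}, d_{j}$, the law $x \leq y + z \iff \comp{y} \cdot x \leq z$ (with $x = a \cdot \comp{b}$, $y = \comp{c}$, $z = d$) turns $a \cdot \comp{b} \leq \comp{c} + d$ into $\comp{\comp{c}} \cdot a \cdot \comp{b} \leq d$; double negation elimination from Proposition~\ref{prop: de morgan laws} gives $\comp{\comp{c}} = c$, so this reads $c \cdot a \cdot \comp{b} \leq d$. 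Then the law $x \cdot y \leq z \iff x \leq z + \comp{y}$ (with $x = c \cdot a$, $y = \comp{b}$, $z = d$) rewrites $(c \cdot a) \cdot \comp{b} \leq d$ as $c \cdot a \leq d + \comp{\comp{b}} = d + b$, again using $\comp{\comp{b}} = b$. Chaining these biconditionals yields exactly the desired equivalence.

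Finally, since $c_{j} \cdot a_{i}$ and $d_{j} + b_{i}$ both lie in $\alg{A}$ and $\iota$ is an order embedding, the inequality $c_{j} \cdot a_{i} \leq d_{j} + b_{i}$ read in $\extalg{A}$ is equivalent to the same inequality read in $\alg{A}$, giving the stated form $\leq_{\alg{A}}$. I expect no genuine obstacle here: the argument is a routine double application of residuation, and the only points requiring care are the bookkeeping of left versus right residuals (trivialized by commutativity) and checking that every complement invoked actually exists. The latter holds because the only complements appearing are $\comp{b_{i}}$ and $\comp{c_{j}}$, present by the assumption that the join and meet in the statement are defined, together with their double complements $b_{i}$ and $c_{j}$, which then exist by Proposition~\ref{prop: de morgan laws}.
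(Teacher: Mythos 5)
Your proposal is correct and follows essentially the same route as the paper, whose entire proof is a one-line appeal to the residuation laws for complements (Proposition~\ref{prop: residuation for complements}); you have simply made explicit the steps the paper leaves implicit, namely the reduction via the universal properties of the join and meet, the double application of residuation with $\comp{\comp{c}} = c$ and $\comp{\comp{b}} = b$, and the passage from $\leq_{\extalg{A}}$ to $\leq_{\alg{A}}$ via the order embedding.
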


\begin{proof}
  This follows from the residuation law for complements in bimonoids (Proposition~\ref{prop: residuation for complements}).
\end{proof}

\begin{lemma}[Joins below joins] \label{lemma: joins below joins}
  Whenever the two joins exist in $\extalg{A}$,
\begin{align*}
   \bigvee_{i \in I} a_{i} \cdot \comp{b_{i}} & \leq_{\extalg{A}} \bigvee_{j \in J} c_{j} \cdot \comp{d_{j}}
\end{align*}
  if and only if for all $x, y \in \alg{A}$
\begin{align*}
  x \cdot c_{j} \leq_{\alg{A}} y + d_{j} \text{ for all } j \in J ~ & \implies ~ x \cdot a_{i} \leq_{\alg{A}} y + b_{i} \text{ for all } i \in I.
\end{align*}
  If $\alg{A}$ is residuated and the meet below exists in $\alg{A}$, then this is equivalent to
\begin{align*}
  \bigwedge_{j \in J} (x + d_{j}) / c_{j} & \leq_{\alg{A}} (x + b_{i}) / a_{i} \text{ for all } i \in I \text{ and } x \in \alg{A}.
\end{align*}
  In particular $a \cdot \comp{b} \leq_{\extalg{A}} c \cdot \comp{d}$ if and only if $(x + d) / c \leq_{\alg{A}} (x + b) / a$ for all $x \in \alg{A}$.
\end{lemma}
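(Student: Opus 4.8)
The plan is to reduce the relation $\bigvee_{i \in I} a_{i} \comp{b_{i}} \leq \bigvee_{j \in J} c_{j} \comp{d_{j}}$ to inequalities living in $\alg{A}$ by combining the meet density of the elements $p + \comp{q}$ in $\extalg{A}$ with the residuation laws for complements. Everything rests on one elementary equivalence, valid for $a, b, p, q \in \alg{A}$ whenever the complements $\comp{b}$ and $\comp{q}$ exist:
\[
  a \cdot \comp{b} \leq p + \comp{q} \iff q \cdot a \leq p + b .
\]
This is just Proposition~\ref{prop: residuation for complements} applied twice (transpose $\comp{q}$ to the left, then $\comp{b}$ to the right) together with commutativity; equivalently, it is the instance of Lemma~\ref{lemma: joins below meets} in which both index sets are singletons. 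I will abbreviate $v \assign \bigvee_{j \in J} c_{j} \comp{d_{j}}$.

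For the first biconditional I would argue through meet density. Since the left-hand side is a join, $\bigvee_{i} a_{i} \comp{b_{i}} \leq v$ holds iff $a_{i} \comp{b_{i}} \leq v$ for every $i$; and by meet density of the elements $p + \comp{q}$, this holds iff $a_{i} \comp{b_{i}} \leq p + \comp{q}$ for every such element lying above $v$. Now $v \leq p + \comp{q}$ unpacks (as a join below a fixed element) to $c_{j} \comp{d_{j}} \leq p + \comp{q}$ for all $j$, which by the displayed equivalence reads $q \cdot c_{j} \leq p + d_{j}$; and $a_{i} \comp{b_{i}} \leq p + \comp{q}$ reads $q \cdot a_{i} \leq p + b_{i}$. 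Setting $x = q$ and $y = p$, and pulling the hypothesis (which does not depend on $i$) across the quantifier over $i$, gives exactly the stated implication. Read in reverse, this same chain establishes the backward direction with no further ingredients, so that half of the proof is immediate.

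The point requiring care, and the main obstacle, is that the meet-density argument only ranges over those $x = q$ whose complement $\comp{q}$ exists in $\extalg{A}$, whereas the statement claims the condition for \emph{all} $x, y \in \alg{A}$. This matters only for the forward direction. To reach an arbitrary $x \in \alg{A}$ I would instead rewrite $x \cdot c_{j} \leq y + d_{j}$ as $x \cdot (c_{j} \comp{d_{j}}) \leq y$ and use that the existing join $v$ is \emph{admissible}, so that $x \cdot v = \bigvee_{j} x \cdot (c_{j} \comp{d_{j}}) \leq y$; since $a_{i} \comp{b_{i}} \leq v$, monotonicity of multiplication then yields $x \cdot (a_{i} \comp{b_{i}}) \leq x \cdot v \leq y$, i.e.\ $x \cdot a_{i} \leq y + b_{i}$. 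Admissibility of $v$ holds automatically once $\extalg{A}$ is complemented --- in particular in the complemented Dedekind--MacNeille completion, the setting in which this lemma is ultimately applied, where moreover $\comp{q}$ exists for every $q$ --- so there the condition genuinely holds for all $x, y$.

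For the residuated reformulation I would transpose the multiplications using the right residual. Assuming $\alg{A}$ residuated, $x \cdot c_{j} \leq y + d_{j}$ is equivalent to $x \leq (y + d_{j}) / c_{j}$, so the premise ``$x \cdot c_{j} \leq y + d_{j}$ for all $j$'' says precisely $x \leq \bigwedge_{j} (y + d_{j}) / c_{j}$, using that this meet exists in $\alg{A}$; likewise $x \cdot a_{i} \leq y + b_{i}$ becomes $x \leq (y + b_{i}) / a_{i}$. The implication in the first characterization thus reads: for all $x$, if $x \leq \bigwedge_{j} (y + d_{j}) / c_{j}$ then $x \leq (y + b_{i}) / a_{i}$ for every $i$. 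As the meet $\bigwedge_{j} (y + d_{j}) / c_{j}$ is itself an element of $\alg{A}$, substituting it for $x$ shows this is equivalent to $\bigwedge_{j} (y + d_{j}) / c_{j} \leq (y + b_{i}) / a_{i}$ for all $i$ and all $y$; renaming $y$ to $x$ recovers the displayed condition, and specializing to $|I| = |J| = 1$ yields the concluding ``in particular'' clause.
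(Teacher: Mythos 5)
Your argument follows the same route as the paper's own proof: reduce the inequality in $\extalg{A}$ via meet density of the elements $p + \comp{q}$, unpack the joins, and translate each comparison into $\alg{A}$ by the residuation laws for complements (Proposition~\ref{prop: residuation for complements}); your residuated reformulation and the singleton specialization are also handled exactly as in the paper. What you add is the explicit worry about the quantifier range, and this worry is warranted: the paper's proof passes from ``for all $x, y$ such that $\comp{x}$ exists in $\extalg{A}$'' to ``for all $x, y \in \alg{A}$'' without comment, and that step is not sound in general. In fact the lemma as literally stated can fail. Take $\alg{A} = \extalg{A}$ to be the bimonoid of closed subsets of $\mathbb{R}$ (so $\cdot = \cap$, $+ = \cup$, $1 = \mathbb{R}$, $0 = \emptyset$; every bimonoid is a $\Delta_{1}$-extension of itself), let $c_{j} = [1/j, 1]$ and $d_{j} = 0$ for $j \geq 1$, and let $I$ be a singleton with $a = [0,1]$, $b = 0$. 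Both joins exist and equal $[0,1]$, so the left-hand inequality holds; yet with $x = \{0\}$ and $y = \emptyset$ we have $x \cdot c_{j} \leq y + d_{j}$ for all $j$ while $x \cdot a \nleq y + b$. Here the only complemented elements are $0$ and $1$, and the join $\bigvee_{j} c_{j}$ is not admissible, which is precisely the loophole you identified.

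Your repair is correct: when the right-hand join is admissible (in particular when $\extalg{A}$ is complemented, since complemented bimonoids are residuated and all existing joins in a residuated posemigroup are admissible), rewriting $x \cdot c_{j} \leq y + d_{j}$ as $x \cdot (c_{j}\comp{d_{j}}) \leq y$ and distributing $x$ over the join gives the forward direction for arbitrary $x$; the backward direction and the singleton ``in particular'' clause need no extra hypotheses (for singletons, monotonicity of multiplication alone already yields the forward direction). This corrected statement is also exactly what the paper's applications require, with one adjustment to your closing remark: the lemma is not applied only to the complemented DM completion. In Theorem~\ref{thm: universality of dm completions} it is applied to an arbitrary commutative \emph{admissible} $\Delta_{1}$-extension $\extalg{A}$, which need not be complemented; there one must restrict to admissible representations $\bigvee_{i} a_{i}\comp{b_{i}}$ (which admissibility of the extension supplies), and your hypothesis covers exactly that case, while Theorem~\ref{thm: universality of bimonoids of fractions} and the later uses invoke only the singleton case.
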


\begin{proof}
  The inequality $\bigvee_{i \in I} a_{i} \cdot \comp{b_{i}} \leq_{\extalg{A}} \bigvee_{j \in J} c_{j} \cdot \comp{d_{j}}$ is equivalent to
\begin{align*}
  \bigvee_{j \in J} c_{j} \cdot \comp{d_{j}} \leq_{\extalg{A}} \comp{x} + y & \implies \bigvee_{i \in I} a_{i} \cdot \comp{b_{i}} \leq_{\extalg{A}} \comp{x} + y \text{ for all } x, y \in \alg{A}.
\end{align*}
  This amounts to
\begin{align*}
  & (\forall j \in J) \, ( c_{j} \cdot \comp{d_{j}} \leq_{\extalg{A}} \comp{x} + y ) \implies (\forall i \in I) \, ( a_{i} \cdot \comp{b_{i}} \leq_{\extalg{A}} \comp{x} + y ) \text{ for all } x, y \in \alg{A}.
\end{align*}
  But by the residuation law for complemented bimonoids (Proposition~\ref{prop: residuation for complements}) and the fact that $\alg{A}$ is a sub-bimonoid of $\extalg{A}$ this is equivalent to
\begin{align*}
  & (\forall j \in J) \, ( x \cdot c_{j} \leq_{\alg{A}} y + d_{j} ) \implies (\forall i \in I) \, ( x \cdot a_{i} \leq_{\alg{A}} y + b_{i}) \text{ for all } x, y \in \alg{A}.
\end{align*}
  Finally, if $\alg{A}$ is residuated, then the above condition is equivalent to
\begin{align*}
  & (\forall j \in J) \, ( x \leq_{\alg{A}} (y + d_{j}) / c_{j}) \implies (\forall i \in I) \, ( x \leq_{\alg{A}} (y + b_{i}) / a_{i} ) \text{ for all } x, y \in \alg{A}.
\end{align*}
  If the meet $\bigwedge_{j \in J} (y + d_{j}) / c_{j}$ exists in $\alg{A}$, then this is equivalent to
\begin{align*}
  \bigwedge_{j \in J} (y + d_{j}) / c_{j} & \leq_{\alg{A}} (y + b_{i}) / a_{i} \text{ for each } i \in I. \qedhere
\end{align*}
\end{proof}

\begin{lemma}[Meets below joins] \label{lemma: meets below joins}
  Whenever the join and meet exist in~$\extalg{A}$, the inequality
\begin{align*}
  \bigwedge_{i \in I}  \comp{a_{i}} + b_{i} \leq_{\extalg{A}} \bigvee_{j \in J} c_{j} \cdot \comp{d_{j}}
\end{align*}
  holds if and only if for all $u, v, x, y \in \alg{A}$
\begin{align*}
  (\forall i \in I) \, (\forall j \in J) \, ( u \cdot a_{i} \leq_{\alg{A}} v + b_{i} ~ \& ~ x \cdot c_{j} \leq y + d_{j} ) & \implies u \cdot x \leq v + y.
\end{align*}
\end{lemma}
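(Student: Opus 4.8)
The plan is to establish both implications from the join density of the elements of the form $a \cdot \comp{b}$, using the reductions already packaged in Lemmas~\ref{lemma: joins below meets} and~\ref{lemma: joins below joins}. Write $p := \bigwedge_{i \in I} \comp{a_{i}} + b_{i}$ and $q := \bigvee_{j \in J} c_{j} \cdot \comp{d_{j}}$, both assumed to exist in $\extalg{A}$. The single-term instance of Lemma~\ref{lemma: joins below meets} will be the workhorse: for any $u, v, r, s \in \alg{A}$ it reads $u \cdot \comp{v} \leq_{\extalg{A}} \comp{r} + s \iff r \cdot u \leq_{\alg{A}} s + v$.

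For the left-to-right direction I would assume $p \leq_{\extalg{A}} q$ and fix $u, v, x, y \in \alg{A}$ satisfying $u \cdot a_{i} \leq v + b_{i}$ for all $i \in I$ and $x \cdot c_{j} \leq y + d_{j}$ for all $j \in J$. The first family of hypotheses gives $u \cdot \comp{v} \leq \comp{a_{i}} + b_{i}$ for every $i$, hence $u \cdot \comp{v} \leq p$; the second gives $c_{j} \cdot \comp{d_{j}} \leq \comp{x} + y$ for every $j$, hence $q \leq \comp{x} + y$ because $q$ is the least upper bound of the $c_{j} \cdot \comp{d_{j}}$. Chaining $u \cdot \comp{v} \leq p \leq q \leq \comp{x} + y$ and applying Lemma~\ref{lemma: joins below meets} once more converts $u \cdot \comp{v} \leq \comp{x} + y$ into $x \cdot u \leq y + v$, which is $u \cdot x \leq v + y$ by commutativity. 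This direction needs only the residuation law and the universal properties of $p$ and $q$; no density is used.

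For the converse I would assume the displayed first-order condition. By join density, $p = \bigvee S$ for some set $S$ of elements $u \cdot \comp{v}$ with $u, v \in \alg{A}$, so it suffices to show each member of $S$ lies below $q$: then $q$ is an upper bound of $S$ and $p = \bigvee S \leq q$. Fixing $u \cdot \comp{v} \in S$, from $u \cdot \comp{v} \leq p \leq \comp{a_{i}} + b_{i}$ Lemma~\ref{lemma: joins below meets} yields $u \cdot a_{i} \leq v + b_{i}$ for all $i$. To obtain $u \cdot \comp{v} \leq q$ I would invoke the single-left-term instance of Lemma~\ref{lemma: joins below joins}, which reduces it to showing that, for all $x, y \in \alg{A}$, the hypotheses $x \cdot c_{j} \leq y + d_{j}$ (all $j$) force $x \cdot u \leq y + v$. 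But for such $x, y$ the first-order condition applied to $u, v, x, y$---whose antecedent now holds on both conjuncts---delivers precisely $u \cdot x \leq v + y$. Hence $u \cdot \comp{v} \leq q$, as needed.

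The argument is largely bookkeeping of nested quantifiers, and the one point I would be careful about is that only the particular meet $p$ and join $q$ are assumed to exist, not completeness of $\extalg{A}$. This is why I would phrase the backward direction through ``$p = \bigvee S$ for some subset $S$ of the join-dense elements'' rather than through the join of all join-dense elements lying below $p$: exhibiting $q$ as an upper bound of the already-existing join $S$ avoids positing any join that the hypotheses do not guarantee, and the single-term applications of Lemmas~\ref{lemma: joins below meets} and~\ref{lemma: joins below joins} then only ever involve $p$, $q$, and elements of $\alg{A}$ together with the complements already occurring in $c_{j} \cdot \comp{d_{j}}$ and $u \cdot \comp{v}$.
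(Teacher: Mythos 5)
Your proof is correct and in substance coincides with the paper's: the paper's one-line proof appeals to the poset fact that $p \leq q$ iff ($\alpha \leq p$ and $q \leq \beta$ imply $\alpha \leq \beta$) for every join generator $\alpha = u \cdot \comp{v}$ and meet generator $\beta = \comp{x} + y$, and then unfolds all three inequalities by the residuation law (as in Lemma~\ref{lemma: joins below meets}) --- which is exactly what your two directions establish. The only difference is bookkeeping: you discharge the backward direction by citing Lemma~\ref{lemma: joins below joins} as a black box (whose proof encapsulates the meet-density step), whereas the paper re-runs that lemma's method symmetrically; the underlying density-plus-residuation computation is identical.
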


\begin{proof}
  This equivalence is proved using the same method as the previous lemma, taking advantage of the observation that $x \leq y$ in a poset if and only if
\begin{align*}
  a \leq x ~ \& ~ y \leq b & \implies a \leq b
\end{align*}
  for each join generator $a$ and each meet generator $b$.
\end{proof}

\begin{theorem}[Universality of complemented DM completions] \label{thm: universality of dm completions}
  Let $\iota\colon \alg{A} \to \alg{B}$ be an isomorphism of commutative bimonoids and let $\extalg{A}$ and $\extalg{B}$ be commutative admissible $\Delta_{1}$-extensions of~$\alg{A}$ and $\alg{B}$. If $\extalg{B}$ is complete and complemented, then there is a unique complete embedding of bimonoids $\overline{\iota}\colon \extalg{A} \to \extalg{B}$ which extends the isomorphism $\iota$.
\end{theorem}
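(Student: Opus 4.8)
The plan is to let join density force the only possible definition of $\overline{\iota}$ and then verify it works, using the three order-comparison lemmas as the engine. I identify $\alg{A}$ and $\alg{B}$ with sub-bimonoids of $\extalg{A}$ and $\extalg{B}$. The key observation is that Lemmas~\ref{lemma: joins below meets}, \ref{lemma: joins below joins} and~\ref{lemma: meets below joins} reduce \emph{every} order comparison between the distinguished joins $\bigvee_i a_i \cdot \comp{b_i}$ and meets $\bigwedge_j c_j + \comp{d_j}$ inside a $\Delta_1$-extension to a first-order condition phrased entirely in the base bimonoid. Since $\iota$ is an isomorphism, it transports each such condition on $\alg{A}$ bijectively to the corresponding condition on $\alg{B}$, and this single fact does most of the work.

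First I would define $\overline{\iota}$. Each $x \in \extalg{A}$ is an admissible join $x = \bigvee_{i \in I} a_i \cdot \comp{b_i}$ with $a_i, b_i \in \alg{A}$, so I set $\overline{\iota}(x) := \bigvee_{i \in I} \iota(a_i) \cdot \comp{\iota(b_i)}$, computed in $\extalg{B}$ (the join exists as $\extalg{B}$ is complete, the complements as it is complemented). Well-definedness and the order-embedding property come out together: $\bigvee_i a_i \comp{b_i} \leq \bigvee_k a'_k \comp{b'_k}$ in $\extalg{A}$ iff the condition of Lemma~\ref{lemma: joins below joins} holds in $\alg{A}$, iff (transport along $\iota$) it holds in $\alg{B}$, iff $\bigvee_i \iota(a_i)\comp{\iota(b_i)} \leq \bigvee_k \iota(a'_k)\comp{\iota(b'_k)}$ in $\extalg{B}$; using both inequalities gives representation-independence and shows $\overline{\iota}$ preserves and reflects order. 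The mixed Lemmas~\ref{lemma: joins below meets} and~\ref{lemma: meets below joins}, transported the same way, show that for any admissible \emph{meet} representation $x = \bigwedge_j c_j + \comp{d_j}$ one also has $\overline{\iota}(x) = \bigwedge_j \iota(c_j) + \comp{\iota(d_j)}$; both inequalities collapse to the triviality $x \leq x$. Thus the join-based and meet-based descriptions of $\overline{\iota}$ coincide, which lets me treat multiplication and addition symmetrically. Since $a = a \cdot \comp{0}$ with $\comp{\iota(0)} = 1$, the map restricts to $\iota$ on $\alg{A}$ and fixes $0$ and $1$.

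Next comes the homomorphism property. As $\extalg{B}$ is a complete complemented commutative $\ell$-bimonoid, it is term equivalent to a complete commutative involutive residuated lattice, so multiplication there distributes over arbitrary joins. Given $x = \bigvee_i a_i \comp{b_i}$ and $y = \bigvee_k c_k \comp{d_k}$, admissibility (Definition~\ref{def: admissible join and meet}) together with commutativity and the De~Morgan law $\comp{b_i}\comp{d_k} = \comp{b_i + d_k}$ of Proposition~\ref{prop: de morgan laws} yields $x \cdot y = \bigvee_{i,k} (a_i c_k) \cdot \comp{(b_i + d_k)}$, again of the standard join form; applying $\overline{\iota}$ and distributing the product over the joins in $\extalg{B}$ gives $\overline{\iota}(x \cdot y) = \overline{\iota}(x) \cdot \overline{\iota}(y)$. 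Preservation of $+$ then follows by the order duality swapping $\cdot \leftrightarrow +$ and joins $\leftrightarrow$ meets, applied to the meet-based description of $\overline{\iota}$, where I again use that the two descriptions agree.

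It remains to treat completeness and uniqueness. Uniqueness is immediate and in fact dictates the definition: any complete embedding $\varphi$ extending $\iota$ preserves complements (homomorphisms of bimonoids do) and products, hence $\varphi(a \comp{b}) = \iota(a)\comp{\iota(b)}$, and then preservation of the existing join $x = \bigvee_i a_i \comp{b_i}$ forces $\varphi = \overline{\iota}$. For completeness, suppose $x = \bigvee_s x_s$ exists in $\extalg{A}$; the inequality $\overline{\iota}(x) \geq \bigvee_s \overline{\iota}(x_s)$ is clear, and for the reverse, by meet density in $\extalg{B}$ it suffices to show that every meet generator $m = \iota(c) + \comp{\iota(d)}$ lying above $\bigvee_s \overline{\iota}(x_s)$ also lies above $\overline{\iota}(x)$. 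The crucial simplification is that, using residuation in $\extalg{B}$ (Proposition~\ref{prop: residuation for complements}) together with the homomorphism and order-reflection properties, the relation $\overline{\iota}(z) \leq m$ is equivalent to the \emph{complement-free} inequality $d \cdot z \leq c$ in $\extalg{A}$. So $m$ above all $\overline{\iota}(x_s)$ means $d \cdot x_s \leq c$ for every $s$, and I must derive $d \cdot x \leq c$. This is exactly where the \textbf{main obstacle} sits: the deduction requires $d \cdot \bigvee_s x_s = \bigvee_s d \cdot x_s$, that is, admissibility of the join $x = \bigvee_s x_s$, which is the essential use of the admissibility hypothesis on the $\Delta_1$-extension. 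Granting it, $d \cdot x = \bigvee_s d \cdot x_s \leq c$ and the argument closes, the dual statement for meets following by order duality; I expect this admissibility step to be the one point of the proof requiring genuine care.
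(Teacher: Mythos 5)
Your proposal follows the paper's proof almost step for step: same map, Lemma~\ref{lemma: joins below joins} for well-definedness and order-embedding, Lemmas~\ref{lemma: joins below meets} and~\ref{lemma: meets below joins} to identify the join-based and meet-based descriptions, admissibility plus De~Morgan for product preservation, order duality for sums, and the same uniqueness argument. The one place you diverge is the completeness step, and there you have a genuine gap. You reduce preservation of an existing join $x = \bigvee_s x_s$ to the implication ($d \cdot x_s \leq c$ for all $s$) $\Rightarrow$ ($d \cdot x \leq c$), and you justify this by ``admissibility of the join $x = \bigvee_s x_s$, which is the essential use of the admissibility hypothesis on the $\Delta_1$-extension.'' That is a misreading of the hypothesis: admissible join density says only that each element of $\extalg{A}$ is an admissible join of \emph{some} family of the distinguished generators $a \cdot \comp{b}$; it says nothing about arbitrary existing joins of arbitrary elements of $\extalg{A}$ being admissible. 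The two conditions really do differ --- this is precisely the content of Theorem~\ref{thm: funayama}: every commutative bimonoid is an admissible $\Delta_1$-extension of itself, yet ``each join in $\alg{A}$ is admissible'' is a nontrivial extra condition there (e.g.\ a complete distributive lattice which is not a frame, viewed as a bimonoid, has non-admissible joins). So the step, as you justify it, fails.

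The step is nonetheless true, and the repair --- which is what the paper does --- needs no admissibility at all. Expand each $x_s$ as an admissible join of generators, $x_s = \bigvee_{i \in I_s} a_{s,i} \cdot \comp{b_{s,i}}$. Then $x = \bigvee_{s,i} a_{s,i} \cdot \comp{b_{s,i}}$ by pure order theory (a join of joins is the join of the union of the generating sets), and since the value of $\overline{\iota}$ does not depend on the chosen generator representation --- Lemma~\ref{lemma: joins below joins} requires only that the joins \emph{exist}, not that they be admissible --- one gets $\overline{\iota}(x) = \bigvee_{s,i} \iota(a_{s,i}) \cdot \comp{\iota(b_{s,i})} = \bigvee_s \overline{\iota}(x_s)$ directly: join preservation holds essentially by definition. (Equivalently, your own reduction can be closed by applying Lemma~\ref{lemma: joins below joins} to the two generator representations of $x$, instantiating its universally quantified parameters at $d$ and $c$.) Note also that the genuinely indispensable use of the admissibility hypothesis is the one you already made in the product-preservation computation; flagging the completeness step as ``the essential use'' of admissibility is a misdiagnosis of where that hypothesis does its work.
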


\begin{proof}
  We define the map $\overline{\iota}_{+}\colon \extalg{A} \to \extalg{B}$ as
\begin{align*}
  \overline{\iota}_{+} \colon \sideset{}{^{\extalg{A}}}\bigvee_{i \in I} a_{i} \comp{b_{i}} \mapsto \sideset{}{^{\extalg{B}}}\bigvee_{i \in I} \iota(a_{i}) \comp{\iota(b_{i})}.
\end{align*}
  The element $\bigvee_{i \in I}^{\extalg{B}} \iota(a_{i}) \comp{\iota(b_{i})}$ exists because $\extalg{B}$ is complete and complemented, and the map is a well-defined order-embedding by Lemma~\ref{lemma: joins below joins}. It preserves arbitrary joins by definition, and moreover it preserves products (here we use the admissibility of the extension $\extalg{\alg{A}}$):
\begin{align*}
  \overline{\iota}_{+} \biggl(\sideset{}{^{\extalg{A}}}\bigvee_{i \in I} a_{i} \comp{b_{i}} \cdot \sideset{}{^{\extalg{A}}}\bigvee_{i \in I} c_{i} \comp{d_{i}} \biggr) & = \overline{\iota}_{+} \biggl( \sideset{}{^{\extalg{A}}}\bigvee_{i \in I} \sideset{}{^{\extalg{A}}}\bigvee_{j \in J} a_{i} \comp{b_{i}} \cdot c_{j} \comp{d_{j}} \biggr) 
   = \overline{\iota}_{+} \biggl( \sideset{}{^{\extalg{A}}}\bigvee_{i \in I} \sideset{}{^{\extalg{A}}}\bigvee_{j \in J} a_{i} c_{j} \cdot \comp{d_{j} + b_{i}} \biggr) = \\
  & = \sideset{}{^{\extalg{B}}}\bigvee_{i \in I} \sideset{}{^{\extalg{B}}}\bigvee_{j \in J} \iota(a_{i} c_{j}) \cdot \comp{\iota(d_{j} + b_{i})} 
   = \sideset{}{^{\extalg{B}}}\bigvee_{i \in I} \sideset{}{^{\extalg{B}}}\bigvee_{j \in J} \iota(a_{i}) \comp{\iota(b_{i})} \cdot \iota(c_{j}) \comp{\iota(d_{j})} = \\
  & = \sideset{}{^{\extalg{B}}}\bigvee_{i \in I} \iota(a_{i}) \comp{\iota(b_{i})} \cdot \sideset{}{^{\extalg{B}}}\bigvee_{j \in J} \iota(c_{j}) \comp{\iota(d_{j})} 
   = \iota_{+} \biggl( \sideset{}{^{\extalg{A}}}\bigvee_{i \in I} a_{i} \comp{b_{i}} \biggr) \cdot \iota_{+} \biggl( \sideset{}{^{\extalg{A}}}\bigvee_{i \in I} c_{i} \comp{d_{i}} \biggr)
\end{align*}
  We can also define the map $\overline{\iota}_{-}\colon \extalg{A} \to \extalg{B}$ as
\begin{align*}
  \overline{\iota}_{-} \colon \sideset{}{^{\!\!\extalg{A}}}\bigwedge_{i \in I} \comp{a_{i}} + b_{i} \mapsto \sideset{}{^{\!\!\extalg{B}}}\bigwedge_{i \in I} \comp{\iota(a_{i})} + \iota(b_{i}).
\end{align*}
  This map is a well-defined order-embedding by the order dual of Lemma~\ref{lemma: joins below joins}. It again preserves arbitrary meets by definition, and it preserves sums by the order dual of the above argument. Moreover, $\overline{\iota}_{+}(1) = \overline{\iota}_{+}(1 \comp{0}) = \iota(1) \comp{\iota(0)} = \iota(1) \iota(1) = \iota(1)$. Similarly, $\overline{\iota}_{-}(0) = \iota(0)$.

  We know that $\overline{\iota}_{-}(x) = \overline{\iota}_{+}(x)$ for each $x \in \extalg{A}$ by Lemmas~\ref{lemma: joins below meets}~and~\ref{lemma: meets below joins}. Let~us denote this common value by $\overline{\iota}(x)$. Then $\overline{\iota}$ is an embedding of bimonoids and clearly $\overline{\iota}(a) = \iota(a)$ for $a \in \alg{A}$. Conversely, it is clear from its definition that $\overline{\iota}$ is the only complete embedding of $\extalg{A}$ into $\extalg{B}$ which extends $\iota$.
\end{proof}

  The proof of the universality of the commutative complemented DM completion relies substantially on commutativity. Without this assumption, one would have to impose a specific way of simplifying products of the form, say, $a \compr{b} c \compr{d}$ into $x \compr{y}$, and simplifying sums of the form, say, $a + \compl{b} + c + \compl{d}$ into $x + \compl{y}$.

\begin{corollary}[Uniqueness of complemented DM completions] \label{cor: uniqueness of dm completions}
  Let $\iota\colon \alg{A} \to \alg{B}$ be an isomorphism of commutative bimonoids and $\cdmalg{A}$ and $\cdmalg{B}$ be commutative complemented DM completions of $\alg{A}$ and $\alg{B}$. Then there is a unique isomorphism $\cdmhom{\iota}\colon \cdmalg{A} \to \cdmalg{B}$ which extends~$\iota$.
\end{corollary}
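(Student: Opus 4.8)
The plan is to deduce this from Theorem~\ref{thm: universality of dm completions} by a standard back-and-forth argument, so the real content is already contained in the universality theorem. First I would note that the hypotheses of that theorem apply in both directions here. Since $\cdmalg{A}$ and $\cdmalg{B}$ are complemented $\Delta_{1}$-extensions, each existing meet and join in them is admissible, so both are in particular \emph{admissible} commutative $\Delta_{1}$-extensions; and both are complete and complemented by assumption. Applying Theorem~\ref{thm: universality of dm completions} to $\iota\colon \alg{A} \to \alg{B}$ (with $\extalg{A} = \cdmalg{A}$ and $\extalg{B} = \cdmalg{B}$) yields a unique complete embedding $\cdmhom{\iota}\colon \cdmalg{A} \to \cdmalg{B}$ extending $\iota$. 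Applying the same theorem to the isomorphism $\iota^{-1}\colon \alg{B} \to \alg{A}$ (with the roles of the two extensions reversed) yields a unique complete embedding $\cdmhom{(\iota^{-1})}\colon \cdmalg{B} \to \cdmalg{A}$ extending $\iota^{-1}$.

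Next I would argue that these two embeddings are mutually inverse. The composite $\cdmhom{(\iota^{-1})} \circ \cdmhom{\iota}$ is again a complete embedding of bimonoids $\cdmalg{A} \to \cdmalg{A}$, and its restriction to $\alg{A}$ is $\iota^{-1} \circ \iota = \mathrm{id}_{\alg{A}}$, so it extends the identity isomorphism $\mathrm{id}_{\alg{A}}\colon \alg{A} \to \alg{A}$. But the identity map $\mathrm{id}_{\cdmalg{A}}$ is plainly also a complete embedding $\cdmalg{A} \to \cdmalg{A}$ extending $\mathrm{id}_{\alg{A}}$, so by the \emph{uniqueness} clause of Theorem~\ref{thm: universality of dm completions} applied to $\mathrm{id}_{\alg{A}}$ we get $\cdmhom{(\iota^{-1})} \circ \cdmhom{\iota} = \mathrm{id}_{\cdmalg{A}}$. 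Symmetrically, $\cdmhom{\iota} \circ \cdmhom{(\iota^{-1})} = \mathrm{id}_{\cdmalg{B}}$. Hence $\cdmhom{\iota}$ is a bijection whose inverse $\cdmhom{(\iota^{-1})}$ is also a complete bimonoid embedding, so $\cdmhom{\iota}$ is an isomorphism of (complete complemented) commutative bimonoids extending $\iota$.

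Finally, for uniqueness of the isomorphism itself, I would observe that any isomorphism $\cdmalg{A} \to \cdmalg{B}$ extending $\iota$ is in particular a \emph{complete} embedding extending $\iota$, since an isomorphism of complete lattices preserves all joins and meets. By the uniqueness clause of Theorem~\ref{thm: universality of dm completions} there is only one such complete embedding, namely $\cdmhom{\iota}$; therefore the extending isomorphism is unique. The argument is essentially formal, so I do not anticipate a genuine obstacle; the only points that require a word of justification are the two observations used implicitly above, that complemented $\Delta_{1}$-extensions are admissible (needed to invoke the theorem) and that an isomorphism of complete lattices is automatically a complete embedding (needed to reduce the uniqueness of the isomorphism to the uniqueness of complete embeddings).
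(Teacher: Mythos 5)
Your proposal is correct and follows exactly the route the paper intends: the corollary is stated as an immediate consequence of Theorem~\ref{thm: universality of dm completions}, using that complemented $\Delta_{1}$-extensions are automatically admissible, applying the universality theorem in both directions, and invoking its uniqueness clause to see that the two embeddings are mutually inverse and that any extending isomorphism (being a complete embedding) must coincide with $\cdmhom{\iota}$. Your write-up simply makes explicit the standard back-and-forth argument the paper leaves to the reader.
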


  Each complemented DM completion of $\alg{A}$ is in fact an ordinary DM completion of any complemented $\Delta_{1}$-extension of $\alg{A}$. Here~by an \emph{(ordinary) DM completion} we mean an embedding of bimonoids $\iota\colon \alg{A} \into \alg{B}$ such that $\iota[\alg{A}]$ is both join and meet dense in $\alg{B}$. Note that (ordinary) DM completions of involutive residuated structures were already constructed in~\cite{galatos+jipsen13residuated-frames} using the machinery of involutive residuated frames.

\begin{proposition}[Ordinary DM completions of complemented $\Delta_{1}$-extensions]
  Let $\iota_{1}\colon \alg{A} \into \extalg{A}$ be a commutative complemented $\Delta_{1}$-extension of a commutative bimonoid $\alg{A}$ and $\iota\colon \alg{A} \into \cdmalg{A}$ be a commutative complemented DM completion of a $\alg{A}$. Then there is a unique ordinary DM completion $\iota_{2}\colon \extalg{A} \into \cdmalg{A}$ such that $\iota = \iota_{2} \circ \iota_{1}$.
\end{proposition}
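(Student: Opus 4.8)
The plan is to read $\iota_{2}$ off the universality theorem (Theorem~\ref{thm: universality of dm completions}) applied to the identity, to verify density by hand, and to settle uniqueness by reducing it back to the uniqueness clause of that same theorem. For \textbf{existence}, I would apply Theorem~\ref{thm: universality of dm completions} with $\alg{B} = \alg{A}$ and the isomorphism taken to be $\mathrm{id}_{\alg{A}}$, choosing $\extalg{A}$ to be the given complemented $\Delta_{1}$-extension (which is admissible, being complemented) and $\extalg{B} = \cdmalg{A}$ (which is complete and complemented). This yields a unique complete embedding of bimonoids $\iota_{2} \colon \extalg{A} \to \cdmalg{A}$ extending $\mathrm{id}_{\alg{A}}$, that is, satisfying $\iota_{2} \circ \iota_{1} = \iota$.

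Next I would check that $\iota_{2}$ is an \emph{ordinary} DM completion, i.e.\ that $\iota_{2}[\extalg{A}]$ is both join dense and meet dense in $\cdmalg{A}$. Since $\iota \colon \alg{A} \into \cdmalg{A}$ is a $\Delta_{1}$-extension, the elements $\iota(a) \cdot \comp{\iota(b)}$ with $a, b \in \alg{A}$ are join dense in $\cdmalg{A}$. Because $\iota_{2}$ is a bimonoid homomorphism, it preserves the complements $\comp{\iota_{1}(b)}$, which exist in the complemented bimonoid $\extalg{A}$; combined with $\iota_{2} \circ \iota_{1} = \iota$ this gives $\iota(a) \cdot \comp{\iota(b)} = \iota_{2}\bigl(\iota_{1}(a) \cdot \comp{\iota_{1}(b)}\bigr) \in \iota_{2}[\extalg{A}]$. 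A superset of a join-dense set is join dense, so $\iota_{2}[\extalg{A}]$ is join dense, and the dual argument with the meet-dense elements $\iota(a) + \comp{\iota(b)}$ yields meet density.

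For \textbf{uniqueness} the key point is to bridge the gap between an ordinary DM completion (a merely dense embedding) and a complete embedding (one preserving all existing joins and meets), since only the latter is governed by the uniqueness clause of Theorem~\ref{thm: universality of dm completions}. Here I would invoke the standard observation that an embedding whose image is both join and meet dense in a \emph{complete} poset automatically preserves all existing joins and meets. Concretely, if $u = \bigvee S$ exists in $\extalg{A}$ and $z \in \cdmalg{A}$ is an upper bound of $\iota_{2}'[S]$ for some ordinary DM completion $\iota_{2}'$, then writing $z = \bigwedge \set{\iota_{2}'(v)}{z \leq \iota_{2}'(v)}$ via meet density and using that $\iota_{2}'$ is an order embedding forces $v \geq u$ for each such $v$, whence $z \geq \iota_{2}'(u)$; thus $\iota_{2}'(u) = \bigvee \iota_{2}'[S]$, and dually for meets. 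Consequently any ordinary DM completion $\iota_{2}'$ with $\iota = \iota_{2}' \circ \iota_{1}$ is in fact a complete embedding extending $\mathrm{id}_{\alg{A}}$, so the uniqueness clause of Theorem~\ref{thm: universality of dm completions} gives $\iota_{2}' = \iota_{2}$.

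I expect this third step to be the main (if mild) obstacle, precisely because the definition of an ordinary DM completion asks only for density and not for preservation of infinite joins and meets. The work lies in observing that completeness of $\cdmalg{A}$ upgrades density to full preservation, which is exactly what lets me reuse the universality theorem instead of reproving a uniqueness statement from first principles; the existence and density steps are then routine consequences of Theorem~\ref{thm: universality of dm completions} and of $\cdmalg{A}$ being a complemented $\Delta_{1}$-extension.
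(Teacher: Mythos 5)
Your proposal is correct and follows essentially the same route as the paper's proof: existence of $\iota_{2}$ comes from Theorem~\ref{thm: universality of dm completions} applied to $\mathrm{id}_{\alg{A}}$ (noting that a complemented $\Delta_{1}$-extension is automatically admissible), and double density follows because the bimonoid homomorphism $\iota_{2}$ sends $\iota_{1}(a) \cdot \comp{\iota_{1}(b)}$ to the join-dense elements $\iota(a) \cdot \comp{\iota(b)}$ of $\cdmalg{A}$, dually for meets. The only difference is organizational: for uniqueness the paper argues that any competing ordinary DM completion must agree with $\iota_{2}$ on the elements $\iota_{1}(a) \cdot \comp{\iota_{1}(b)}$, which are join dense in $\extalg{A}$, an argument that tacitly relies on exactly the fact you prove explicitly (a doubly dense order embedding into a complete lattice preserves all existing joins and meets), so your detour through the uniqueness clause of Theorem~\ref{thm: universality of dm completions} is the same argument with that implicit step spelled out.
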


\begin{proof}
  By the universality of complemented DM completions, there is a unique complete embedding of bimonoids $\iota_{2}\colon \extalg{A} \into \cdmalg{A}$ such that $\iota = \iota_{2} \circ \iota_{1}$. Each element $x \in \cdmalg{A}$ therefore has the form $x = \bigvee_{i \in I} \iota_{2}(\iota_{1}(a_{i})) \comp{\iota_{2}(\iota_{1}(b_{i}))} = \bigvee_{i \in I} \iota_{2}(\iota_{1}(a_{i}) \comp{\iota_{1}(b_{i})})$ for some $a_{i}, b_{i} \in \alg{A}$, i.e.\ $\bigvee_{i \in I} \iota_{2}(y_{i})$ for some $y_{i} \in \extalg{A}$. Similarly, each $x \in \cdmalg{A}$ has the form $\bigwedge_{i \in I} \iota_{2}(y_{i})$ for some $y_{i} \in \extalg{A}$. Thus $\iota_{2}$ is an ordinary DM completion. To prove uniqueness, observe that any ordinary DM completion $\iota_{2}\colon \extalg{A} \into \cdmalg{A}$ such that $\iota = \iota_{2} \circ \iota_{1}$ agrees with this one on where it sends elements of the forms $\iota_{1}(a) \comp{\iota_{1}(b)}$ for $a, b \in \alg{A}$. Since each element of $\extalg{A}$ is a join of such elements, any two ordinary DM completions $\iota_{2}\colon \extalg{A} \into \cdmalg{A}$ such that $\iota = \iota_{2} \circ \iota_{1}$ must coincide.
\end{proof}

\begin{corollary}[Complemented DM completions of complemented bimonoids]
  If $\alg{A}$ is a commutative complemented bimonoid, then the commutative complemented DM completion of $\alg{A}$ is the ordinary DM completion of $\alg{A}$.
\end{corollary}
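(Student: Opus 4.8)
The plan is to observe that, when $\alg{A}$ is already complemented, the canonical join and meet generators witnessing a complemented $\Delta_{1}$-extension collapse into the image $\iota[\alg{A}]$ itself. Once this is seen, the (admissible) join/meet density of these generators immediately upgrades to ordinary join/meet density of $\iota[\alg{A}]$, which is exactly what is needed to identify the complemented DM completion with an ordinary DM completion.

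Concretely, I would let $\iota\colon \alg{A} \into \cdmalg{A}$ denote the commutative complemented DM completion and fix arbitrary $a, b \in \alg{A}$. Since $\alg{A}$ is complemented, the complement $\comp{b}$ exists in $\alg{A}$, and since homomorphisms of bimonoids preserve existing complements, we have $\comp{\iota(b)} = \iota(\comp{b})$. Consequently the join generator $\iota(a) \cdot \comp{\iota(b)}$ equals $\iota(a) \cdot \iota(\comp{b}) = \iota(a \cdot \comp{b})$, which lies in $\iota[\alg{A}]$; dually, the meet generator $\iota(a) + \comp{\iota(b)}$ equals $\iota(a + \comp{b}) \in \iota[\alg{A}]$. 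Thus both families of generators are contained in $\iota[\alg{A}]$. By the definition of a complemented $\Delta_{1}$-extension, the join generators are join dense and the meet generators are meet dense in $\cdmalg{A}$; since both families sit inside $\iota[\alg{A}]$, it follows at once that $\iota[\alg{A}]$ is both join dense and meet dense in $\cdmalg{A}$, i.e.\ that $\iota$ is an ordinary DM completion. Equivalently, one may invoke the preceding proposition with $\extalg{A} \assign \alg{A}$ and $\iota_{1} \assign \mathrm{id}_{\alg{A}}$, which is a genuine complemented $\Delta_{1}$-extension since each $a \in \alg{A}$ has the form $a \cdot \comp{0} = a + \comp{1}$; the unique ordinary DM completion it produces must then coincide with $\iota$.

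I do not expect a substantial obstacle in this argument. The single point requiring care is the identity $\comp{\iota(b)} = \iota(\comp{b})$, and this is guaranteed by the preservation of complements under bimonoid homomorphisms noted earlier, together with the fact that all complements exist in $\alg{A}$ by hypothesis. Everything else is a direct unwinding of the definitions of $\Delta_{1}$-extension and of ordinary DM completion.
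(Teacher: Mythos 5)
Your proposal is correct; in fact it contains two proofs. The ``equivalently'' clause at the end --- instantiating the preceding proposition at $\extalg{A} \assign \alg{A}$ and $\iota_{1} \assign \mathrm{id}_{\alg{A}}$, which is a legitimate complemented $\Delta_{1}$-extension since each $a$ equals $a \cdot \comp{0} = a + \comp{1}$ --- is precisely the paper's argument: the corollary is stated there without proof because the proposition then yields a unique ordinary DM completion $\iota_{2}$ with $\iota = \iota_{2} \circ \mathrm{id}_{\alg{A}}$, forcing $\iota$ itself to be one. Your primary, direct argument is a genuinely different and more self-contained route, and it also works: since $\alg{A}$ is complemented, preservation of complements under bimonoid homomorphisms together with uniqueness of complements gives $\comp{\iota(b)} = \iota(\comp{b})$ in $\cdmalg{A}$, so every join generator $\iota(a) \cdot \comp{\iota(b)} = \iota(a \cdot \comp{b})$ and every meet generator $\iota(a) + \comp{\iota(b)} = \iota(a + \comp{b})$ lies in $\iota[\alg{A}]$, and the defining density of the generators becomes literally join and meet density of $\iota[\alg{A}]$, i.e.\ the definition of an ordinary DM completion. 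What the direct route buys is independence from the proposition (whose own proof goes through the universality theorem); what the paper's route buys is the uniqueness clause for free --- $\iota$ is the \emph{only} ordinary DM completion compatible with the identity. The one point worth making explicit in your write-up is that $\comp{\iota(b)} = \iota(\comp{b})$ uses uniqueness of complements in addition to their preservation: preservation shows $\iota(\comp{b})$ \emph{is a} complement of $\iota(b)$, and uniqueness identifies it with $\comp{\iota(b)}$.
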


  In particular, the commutative complemented DM completion of a bounded distributive lattice $\alg{A}$ is the DM completion of the free Boolean extension of $\alg{A}$ (the smallest Boolean algebra into which $\alg{A}$ embeds). Let~us remark that this is known to be precisely the injective hull of $\alg{A}$ in the category of bounded distributive lattices (see~\cite{banaschewski+bruns68}). In~particular, it is a maximal essential extension (in the sense of Proposition~\ref{prop: essential extensions}).

  The question now arises whether this categorical characterization of the complemented DM completion as the injective hull generalizes to some broader class of bimonoids. Without pursuing the topic further, let us merely observe that there are commutative $\ell$-bimonoids $\alg{A}$ whose commutative complemented DM completion is not a maximal essential extension of~$\alg{A}$ (and therefore it is not the injective hull of $\alg{A}$). Equivalently, there are complete commutative complemented $\ell$-bimonoids with proper essential extensions.

  For example, one may take the essential embedding of the additive $\ell$-group of integers $\Z$ into the lexico\-graphic product $\ZlexZ$: the group $\Z \times \Z$ equipped with the lexico\-graphic order, where $\pair{a}{b} \leq \pair{c}{d}$ if and only if either $a \leq c$ or $a = c$ and $b \leq d$. The map $\iota\colon \Z \to \ZlexZ$ such that $\iota(n) \assign \pair{n}{0}$ is an essential embedding of $\Z$ into $\ZlexZ$. If we now extend $\Z$ and $\ZlexZ$ by a new top and bottom element $\top$ and $\bot$ and extend the embedding $\iota$ so that $\iota(\top) \assign \top$ and $\iota(\bot) \assign \bot$, we obtain a proper essential extension of a commutative complete complemented $\ell$-bimonoid. The complemented DM completion of $\Z$, i.e.\ the extension of $\Z$ by the top and bottom elements, is therefore not an injective hull of $\Z$, because the composite embedding of $\Z$ into the extension of $\ZlexZ$ by bounds is also essential.

\subsection{Involutive residuated frames}
\label{subsec: macneille frames}

  Having established the uniqueness of commutative complemented DM completions, the rest of this section is devoted to proving their existence using the \emph{involutive (residuated) frames} of Galatos \& Jipsen~\cite{galatos+jipsen13residuated-frames}. The~definition given below in fact differs from the original definition of an involutive (Gentzen) frame in several respects. However, adapting the results of Galatos \& Jipsen~\cite{galatos+jipsen13residuated-frames} to suit our current needs is a straightforward task, and the results of this subsection are not substantially novel.\footnote{Let us briefly summarize the differences between the two frameworks. Firstly, we take $\alg{A}$ to be a bimonoid rather than a partial involutive residuated lattice (or some more general involutive residuated structure). Secondly, we do not assume that the positive or the negative sides of the frame are generated by $\alg{A}$ as a monoid. Thirdly, we allow the positive and the negative sides of the frame to be distinct sets. Finally, we assume that the monoidal operations on both sides of the frame are single-valued and associative, rather than satisfying the weaker conditions $((a \leftdot b) \leftdot c)\toright = (a \leftdot (b \leftdot c))\toright$ and $((a \rightplus b) \rightplus c)\toleft = (a \rightplus (b \rightplus c))\toleft$. Nothing in our proofs depends on this last assumption, it merely makes our definitions easier to state.}

  Before giving the formal definition of an involutive frame, let us first explain the purpose of introducing these structures. Our goal will be to construct a complete complemented $\ell$-bimonoid given a monoid of join generators $\alg{L} = \langle L, \leftdot, \leftunit \rangle$, a monoid of meet generators $\alg{R} = \langle R, \rightplus, \rightzero \rangle$, and a link between these two monoids. This link consists of a binary relation $x \sqleq y$, which tells us how to compare a join generator $x$ and a meet generator $y$, and mutually inverse monoidal anti-isomorphisms, which tell us how complementation acts on these monoids. Some natural compatibility conditions are of course postulated. The~above data constitute the frame from which we then construct a complete complemented bimonoid.

  Moreover, we wish to embed a given bimonoid $\alg{A}$ into the complete complemented $\ell$-bimonoid constructed from the frame. This embedding can be constructed from a pair of maps $\lambda\colon A \to L$ and $\rho\colon A \to R$ (not necessarily homomorphisms) embedding $\alg{A}$ into $\alg{L}$~and~$\alg{R}$. This yields what we call a involutive $\alg{A}$-frame, shown in Figure~\ref{fig: a-frame}. If the embeddings $\lambda$ and $\rho$ reflect the order in a natural sense, we call the involutive $\alg{A}$-frame faithful. The~key result is that an ($\ell$-)bimonoid $\alg{A}$ embeds into the involutive residuated lattice obtained from every faithful involutive $\alg{A}$-frame, and moreover there is a natural way to build a faithful commutative involutive $\alg{A}$-frame given a commutative bimonoid $\alg{A}$.

  All of the above builds on the Galois connection between a pair of sets $L$ and $R$ induced by a binary relation ${\sqleq} \subseteq L \times R$. Such a triple $\langle L, R, {\sqleq} \rangle$ is often called a polarity. For $X \subseteq L$, $x \in L$ and $Y \subseteq R$, $y \in R$ we use the notation
\begin{align*}
  X \sqleq y & \iff x \sqleq y \text{ for all } x \in X, \\
  x \sqleq Y & \iff x \sqleq y \text{ for all } y \in Y,
\end{align*}
\begin{align*}
  X \toright & \assign \set{y \in R}{X \sqleq y}, & x\toright & \assign \{ x \}\toright, \\
  Y \toleft & \assign \set{x \in L}{x \sqleq Y}, & y\toleft & \assign \{ y \}\toleft.
\end{align*}
  A set $X \subseteq L$ ($Y \subseteq R$) is called \emph{Galois closed} if $X = X\torighttoleft$ (if $Y = Y\tolefttoright$). The Galois closed subsets of $L$~and~$R$ ordered by inclusion form lattices which are anti-isomorphic via the maps $X \mapsto X\toright$ and $Y \mapsto Y\toleft$. Observe that $x \in X\torighttoleft$ if and only if $X \sqleq y$ implies $x \sqleq y$ for all $y \in R$, and $y \in Y\tolefttoright$ if and only if $x \sqleq Y$ implies $x \sqleq y$ for all $x \in L$. Consequently,
\begin{align*}
  X\torighttoleft \sqleq y & \iff X \sqleq y & & \text{and} & x \sqleq Y\tolefttoright & \iff x \sqleq Y.
\end{align*}

\begin{figure}
\caption{An involutive $\alg{A}$-frame}
\label{fig: a-frame}
\begin{center}
\begin{tikzpicture}
  \node[circle,draw,thick,outer sep=0.15cm] (A) at (-4.5, 1.5) {$\alg{A}$};
  \node[ellipse,draw,thick,outer sep=0.15cm] (L) at (0, 0) {$\langle L, \circ, \leftunit \rangle$};
  \node[ellipse,draw,thick,outer sep=0.15cm] (R) at (0, 3) {$\langle R, \rightplus, \rightzero \rangle$};
  \node (sqleq) at (0, 1.5) {$x \sqleq y$};
  \draw[dashed] (L) -- (sqleq) -- (R);
  \draw [->,thick] (L.north west) to [out=135,in=-135] (R.south west);
  \draw [->,thick] (R.south east) to [out=-45,in=45] (L.north east);
  \draw[->,thick] (A.south east) to [out=-45,in=180] (L.west);
  \draw[->,thick] (A.north east) to [out=45,in=-180] (R.west);
  \node at (-2.45, 1.5) {$\ltorleft(x), \ltorright(x)$};
  \node at (2.45, 1.5) {$\rtolleft(y), \rtolright(y)$};
  \node at (-3.25, 0) {$\lambda$};
  \node at (-3.25, 3) {$\rho$};
\end{tikzpicture}
\end{center}
\end{figure}

  This Galois connection is then expanded by a monoidal structure $\langle L, \leftdot, \leftunit \rangle$ on $L$ and a monoidal structure $\langle R, \rightplus, \rightzero \rangle$ on $R$, as well as maps $\ltorleft, \ltorright\colon L \to R$ and $\rtolleft, \rtolright\colon R \to L$. We use the notation
\settowidth{\auxlength}{$X_{1} \leftdot X_{2}$}
\settowidth{\auxlengthtwo}{$y_{1} \rightplus y_{2}$}
\begin{align*}
  \hbox to \auxlength{$X_{1} \leftdot X_{2}$} & \assign \set{\hbox to \auxlengthtwo{$x_{1} \leftdot x_{2}$}}{x_{1} \in X_{1} \text{ and } x_{2} \in X_{2}}, \\
  \hbox to \auxlength{$Y_{1} \rightplus Y_{2}$} & \assign \set{\hbox to \auxlengthtwo{$y_{1} \rightplus y_{2}$}}{y_{1} \in Y_{1} \text{ and } y_{2} \in Y_{2}},
\end{align*}
  with $x_{1} \leftdot X_{2} \assign \{ x_{1} \} \leftdot X_{2}$ and likewise for $X_{1} \leftdot x_{2}$, $x_{1} \rightplus X_{2}$, $X_{1} \rightplus x_{2}$.

\begin{definition}[Involutive frames]
  A \emph{(commutative) involutive frame} is a two-sorted structure consisting of two (commutative) monoids
\settowidth{\auxlength}{$\alg{R}$}
\begin{align*}
  \hbox to \auxlength{\hfil$\alg{L}$\hfil} & = \langle L, \leftdot, \leftunit \rangle, \\
  \hbox to \auxlength{$\alg{R}$} & = \langle R, \rightplus, \rightzero \rangle,
\end{align*}
  two pairs of maps
\begin{align*}
  \ltorleft, \ltorright&\colon L \rightarrow R, \\
  \rtolleft, \rtolright&\colon R \rightarrow L,
\end{align*}
  and a relation ${\sqleq} \subseteq L \times R$ which satisfies the following form of residuation called \emph{nuclearity}:
\settowidth{\auxlength}{$x \sqleq z \rightplus \ltorleft(y)$}
\settowidth{\auxlengthtwo}{$x \sqleq y \rightplus z$}
\settowidth{\auxlengththree}{$y \sqleq \ltorright(x) \rightplus z$}
\begin{align*}
  \hbox to \auxlength{$x \sqleq z \rightplus \ltorleft(y)$} \iff \hbox to \auxlengthtwo{$x \leftdot y \sqleq z$} \iff \hbox to \auxlengththree{$y \sqleq \ltorright(x) \rightplus z$}, \\
  \hbox to \auxlength{$x \leftdot \rtolright(z) \sqleq y$} \iff \hbox to \auxlengthtwo{$x \sqleq y \rightplus z$} \iff \hbox to \auxlengththree{$\rtolleft(y) \leftdot x \sqleq y$}.
\end{align*}
\end{definition}

\begin{lemmanoname} \label{lemma: nuclearity}
  In each involutive frame we have
\begin{align*}
  X_{1}\torighttoleft \leftdot X_{2}\torighttoleft & \subseteq (X_{1} \circ X_{2})\torighttoleft & & \text{and} & Y_{1}\tolefttoright \rightplus Y_{2}\tolefttoright & \subseteq (Y_{1} \rightplus Y_{2})\tolefttoright
\end{align*}
  for all ${X_{1}, X_{2} \subseteq L}$ and $Y_{1}, Y_{2} \subseteq R$. Consequently,
\begin{align*}
  (X_{1}\torighttoleft \leftdot X_{2})\toright & = (X_{1} \leftdot X_{2})\toright = (X_{1} \leftdot X_{2}\torighttoleft)\toright & & \text{and} & (Y_{1}\tolefttoright \rightplus Y_{2})\toleft & = (Y_{1} \rightplus Y_{2})\toleft = (Y_{1} \rightplus Y_{2}\tolefttoright)\toleft.
\end{align*}
\end{lemmanoname}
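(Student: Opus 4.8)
The statement is the standard \emph{nuclearity} lemma for the Galois connection $X \mapsto X\toright$, $Y \mapsto Y\toleft$, and I would prove it exactly as one does for ordinary residuated frames, exploiting the two lines of residuation conditions built into the definition. The plan is first to establish the inclusion $X_1\torighttoleft \leftdot X_2\torighttoleft \subseteq (X_1 \leftdot X_2)\torighttoleft$ by an element-wise argument that peels off one factor at a time, replacing it by an arbitrary element of its Galois closure; then to obtain the companion inclusion for $R$ by the order and left--right dual; and finally to derive the four displayed equalities by routine Galois-connection bookkeeping.

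For the first inclusion I would fix $a \in X_1\torighttoleft$ and $b \in X_2\torighttoleft$ and show $a \leftdot b \in (X_1 \leftdot X_2)\torighttoleft$; by the closure characterization recalled just before the definition, this amounts to proving $a \leftdot b \sqleq y$ for every $y \in R$ with $X_1 \leftdot X_2 \sqleq y$. Starting from $x_1 \leftdot x_2 \sqleq y$ for all $x_1 \in X_1$ and $x_2 \in X_2$, the nuclearity equivalence $x_1 \leftdot x_2 \sqleq y \iff x_2 \sqleq \ltorright(x_1) \rightplus y$ holds for every such $x_2$, so the identity $X_2\torighttoleft \sqleq z \iff X_2 \sqleq z$ upgrades it to $b \sqleq \ltorright(x_1) \rightplus y$. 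Rewriting back gives $x_1 \leftdot b \sqleq y$, hence $x_1 \sqleq y \rightplus \ltorleft(b)$, for every $x_1 \in X_1$; one further application of the closure identity, now to $X_1$ and $a$, yields $a \sqleq y \rightplus \ltorleft(b)$, which rewrites to $a \leftdot b \sqleq y$, as desired. The companion inclusion $Y_1\tolefttoright \rightplus Y_2\tolefttoright \subseteq (Y_1 \rightplus Y_2)\tolefttoright$ follows by the identical argument using the second line of residuation conditions together with $x \sqleq Y\tolefttoright \iff x \sqleq Y$.

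For the ``consequently'' equalities I would need only the first inclusion together with the general Galois facts $Z \subseteq Z\torighttoleft$, antitonicity of $\toright$, and $(Z\torighttoleft)\toright = Z\toright$. From the chain $X_1 \leftdot X_2 \subseteq X_1\torighttoleft \leftdot X_2 \subseteq X_1\torighttoleft \leftdot X_2\torighttoleft \subseteq (X_1 \leftdot X_2)\torighttoleft$, applying $\toright$ reverses all inclusions and collapses the outer term via $((X_1 \leftdot X_2)\torighttoleft)\toright = (X_1 \leftdot X_2)\toright$, thereby squeezing $(X_1\torighttoleft \leftdot X_2)\toright$ between two copies of $(X_1 \leftdot X_2)\toright$ and forcing equality. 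The same reasoning applied to the second factor handles $(X_1 \leftdot X_2\torighttoleft)\toright$, and the two equalities on the $R$ side are the corresponding duals.

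I expect the only real care to lie in the second paragraph: one must apply the residuation equivalences in the correct order and check that, after replacing $x_2$ by an arbitrary closure element $b$, the resulting inequality is still between a single element of $L$ and a single element of $R$, so that both the nuclearity conditions and the closure identity remain applicable when peeling off $x_1$. No genuine obstacle arises beyond this bookkeeping, since all of the structural work is carried out by the two identities $X\torighttoleft \sqleq z \iff X \sqleq z$ and $x \sqleq Y\tolefttoright \iff x \sqleq Y$ recorded just before the definition.
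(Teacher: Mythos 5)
Your proof is correct and follows essentially the same route as the paper's: peel off one factor at a time using the nuclearity equivalences interleaved with the closure identity $X\torighttoleft \sqleq y \iff X \sqleq y$, then obtain the four equalities by the antitone/squeeze argument with $(Z\torighttoleft)\toright = Z\toright$. The only cosmetic difference is that the paper dispatches the additive inclusion by swapping the polarity to $\langle R, L, \sqgeq \rangle$ rather than rerunning the argument with the second line of nuclearity conditions, which amounts to the same thing.
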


\begin{proof}
  We only prove the claims for multiplication. The claims for addition then follow if we take the polarity $\langle R, L, \sqgeq \rangle$ instead of $\langle L, R \sqleq \rangle$. The inclusion $X_{1}\torighttoleft \leftdot X_{2}\torighttoleft \subseteq (X_{1} \circ X_{2})\torighttoleft$ states that if $x_{1} \in X_{1}\torighttoleft$, $x_{2} \in X_{2}\torighttoleft$, and $X_{1} \leftdot X_{2} \sqleq y$, then $x_{1} \leftdot x_{2} \sqleq y$. But $X_{1} \leftdot X_{2} \sqleq y$ implies $X_{2} \sqleq \ltorright[X_{1}] \rightplus y$, hence $x_{2} \in X_{2}\torighttoleft \sqleq \ltorright[X_{1}] \rightplus y$ and $X_{1} \leftdot x_{2} \sqleq y$. Similarly, this implies $X_{1} \sqleq y \rightplus \ltorleft[X_{2}]$, hence $x_{1} \in X_{1}\torighttoleft \sqleq y \rightplus \ltorleft[X_{2}]$ and $x_{1} \leftdot x_{2} \sqleq y$.

  The other equalities now follow: $X_{1} \subseteq X_{1}\torighttoleft$, hence $(X_{1}\torighttoleft \leftdot X_{2})\toright \subseteq (X_{1} \leftdot X_{2})\toright$. Conversely, $X_{1}\torighttoleft \leftdot X_{2} \subseteq X_{1}\torighttoleft \leftdot X_{2}\torighttoleft \subseteq (X_{1} \leftdot X_{2})\torighttoleft$, therefore $(X_{1}\torighttoleft \leftdot X_{2})\toright \supseteq ((X_{1} \leftdot X_{2})\torighttoleft)\toright = (X_{1} \leftdot X_{2})\toright$.
\end{proof}

\begin{lemma}[Hemidistributivity for involutive frames] \label{lemma: frame hemidistributivity}
  In an involutive frame for each $X \subseteq L$ and $Y \subseteq R$ we have:
\begin{align*}
  u \leftdot X \sqleq x ~ \& ~ v \sqleq X\toright \rightplus y & \implies u \leftdot v \sqleq x \rightplus y, & u \leftdot Y\toleft \sqleq x ~ \& ~ v \sqleq Y \rightplus y \implies u \leftdot v \sqleq x \rightplus y, \\
  u \sqleq x \rightplus X\toright ~ \& ~ X \leftdot v \sqleq y & \implies u \leftdot v \sqleq x \rightplus y, & u \sqleq x \rightplus Y ~ \& ~ Y\toleft \leftdot v \sqleq y \implies u \leftdot v \sqleq x \rightplus y.
\end{align*}
\end{lemma}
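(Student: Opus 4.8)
All four implications are instances of a single two-step residuation argument, so the plan is to prove the first one in full and then observe that the remaining three follow by the identical recipe. The frame's left--right symmetry already reduces the four to two (it pairs the first implication with the third and the second with the fourth, since flipping $\leftdot$ and $\rightplus$ and renaming $u \leftrightarrow v$, $x \leftrightarrow y$ carries one into the other while fixing $X\toright$ and $Y\toleft$, which depend only on $\sqleq$), and the remaining pair is related by passing to the opposite polarity $\langle R, L, {\sqgeq}\rangle$ exactly as in the proof of the preceding lemma. The guiding idea is to turn one hypothesis, via nuclearity, into the assertion that a single explicit element lies in the Galois-dual set $X\toright$ (or $Y\toleft$), then instantiate the other, universally quantified hypothesis at precisely that element, and finally residuate back, re-bracketing once by associativity of $\leftdot$ or of $\rightplus$.

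For the first implication I would assume $u \leftdot X \sqleq x$ and $v \sqleq X\toright \rightplus y$ and aim for $u \leftdot v \sqleq x \rightplus y$. The first hypothesis says $u \leftdot w \sqleq x$ for every $w \in X$, which by the nuclearity equivalence $a \leftdot b \sqleq c \iff b \sqleq \ltorright(a) \rightplus c$ is the same as $w \sqleq \ltorright(u) \rightplus x$ for every $w \in X$, i.e.\ $\ltorright(u) \rightplus x \in X\toright$. Feeding this witness into the second hypothesis, which asserts $v \sqleq t \rightplus y$ for all $t \in X\toright$, yields $v \sqleq (\ltorright(u) \rightplus x) \rightplus y$, and associativity of $\rightplus$ rewrites this as $v \sqleq \ltorright(u) \rightplus (x \rightplus y)$. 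Reading the same nuclearity equivalence in the opposite direction, now with $c \assign x \rightplus y$, gives exactly $u \leftdot v \sqleq x \rightplus y$.

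The other three cases run verbatim, only with a different frame map producing the witness: the second residuates $v \sqleq Y \rightplus y$ to $v \leftdot \rtolright(y) \in Y\toleft$ (re-bracketing by associativity of $\leftdot$); the third residuates $X \leftdot v \sqleq y$ to $y \rightplus \ltorleft(v) \in X\toright$; and the fourth residuates $u \sqleq x \rightplus Y$ to $\rtolleft(x) \leftdot u \in Y\toleft$. In each case one instantiates the companion hypothesis at this witness, re-brackets by associativity, and applies nuclearity once more. I anticipate no genuine obstacle: the one substantive insight is that residuating a hypothesis canonically deposits an element into the relevant polar set, and beyond that the only thing requiring care is the bookkeeping of pairing each hypothesis with the correct one of the four nuclearity biconditionals and of tracking which of $\ltorleft, \ltorright, \rtolleft, \rtolright$ and which associativity the chosen witness invokes.
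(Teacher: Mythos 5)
Your proof is correct and follows essentially the same route as the paper's: both arguments residuate the hypothesis $u \leftdot X \sqleq x$ into the statement that $\ltorright(u) \rightplus x \in X\toright$ and then play this witness off against the second hypothesis, handling the remaining three cases analogously. The only immaterial difference is that the paper also residuates the second hypothesis (to $v \leftdot \rtolright(y) \sqleq X\toright$) and combines the two in the middle, whereas you instantiate the second hypothesis directly at the witness and finish with associativity of $\rightplus$ and one application of nuclearity.
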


\begin{proof}
  If $u \leftdot X \sqleq x$ and $v \sqleq X\toright \rightplus y$, then $X \sqleq \ltorright(u) \rightplus x$ and $v \leftdot \rtolright(y) \sqleq X\toright$, so $v \leftdot \rtolright(y) \sqleq \ltorright(u) \rightplus x$ and $u \leftdot v \sqleq x \rightplus y$. The other implications are analogous.
\end{proof}

  These implications can be seen as a form of the multiple-premise and multiple-conclusion Cut rule if we interpret the condition $u \leftdot v \sqleq x \rightplus y$ as expressing the provability of the sequent $u, v \vdash x, y$.

   To embed an ($\ell$-)bimonoid $\alg{A}$ into the algebra constructed from an involutive frame, we need to postulate some additional structure: a pair of maps $\lambda\colon A \to L$ and $\rho\colon A \to R$ which satisfy the conditions in Figure \ref{fig: gentzen conditions}. These will be called the \emph{($\ell$-)bimonoidal Gentzen conditions}, depending on whether we include the conditions on meets and joins, due to their similarity to the logical rules in Gentzen calculi. They~are to be interpreted as universally quantified implications: e.g.\ for all $a, b \in \alg{A}$ and $x, y \in R$ if $\lambda(a) \sqleq x$ and $\lambda(b) \sqleq y$, then $\lambda(a + b) \sqleq x \rightplus y$. The maps $\lambda, \rho$ are not required to be homomorphisms and the maps $\ltorleft, \ltorright, \rtolleft, \rtolright$ are not required to be anti-homomorphisms, although in the frames constructed in the current paper they will be.

\begin{definition}[Involutive $\alg{A}$-frames]
  Let $\alg{A}$ be an ($\ell$-)bimonoid. An \emph{involutive $\alg{A}$-frame} is then an involutive frame equipped with two maps $\lambda\colon A \to L$ and $\rho\colon A \to R$ which satisfy the ($\ell$-)bimonoidal Gentzen conditions as well as Identity and Cut:
\begin{prooftree}
  \def\fCenter{\sqleq}
  \AxiomC{}
  \noLine
  \UnaryInf$\lambda (a) \fCenter \rho (a)$
  \Axiom$x \fCenter \rho (a)$
  \Axiom$\lambda (a) \fCenter y$
  \BinaryInf$x \fCenter y$
  \noLine
  \insertBetweenHyps{\hskip 40pt}
  \BinaryInfC{}
\end{prooftree}
  An ($\ell$-)involutive $\alg{A}$-frame is called \emph{faithful} if $\lambda (a) \sqleq \rho (b)$ implies $a \leq b$ for all $a, b \in \alg{A}$.  
\end{definition}

\begin{figure}
\caption{Gentzen conditions}
\label{fig: gentzen conditions}
\begin{framed}
\begin{prooftree}
  \def\fCenter{\sqleq}
  \AxiomC{}
  \noLine
  \UnaryInf$\leftunit \fCenter \rho (1)$
  \Axiom$\leftunit \fCenter y$
  \UnaryInf$\lambda (1) \fCenter y$
  \Axiom$x \fCenter \rightzero$
  \UnaryInf$x \fCenter \rho(0)$
  \AxiomC{}
  \noLine
  \UnaryInf$\lambda (0) \fCenter \rightzero$
  \noLine
  \QuaternaryInfC{}
\end{prooftree}
\begin{prooftree}
  \def\fCenter{\sqleq}
  \Axiom$\lambda (a) \fCenter x$
  \Axiom$\lambda (b) \fCenter y$
  \BinaryInf$\lambda (a + b) \fCenter x \rightplus y$
  \Axiom$x \fCenter \rho (a)$
  \Axiom$y \fCenter \rho (b)$
  \BinaryInf$x \leftdot y \fCenter \rho (a \cdot b)$
  \noLine
  \BinaryInfC{}
\end{prooftree}
\begin{prooftree}
  \def\fCenter{\sqleq}
  \Axiom$\lambda (a) \leftdot \lambda (b) \fCenter x$
  \UnaryInf$\lambda (a \cdot b) \fCenter x$
  \Axiom$x \fCenter \rho (a) \rightplus \rho (b)$
  \UnaryInf$x \fCenter \rho (a + b)$
  \noLine
  \BinaryInfC{}
\end{prooftree}
\begin{prooftree}
  \def\fCenter{\sqleq}
  \Axiom$\lambda (a) \fCenter x$
  \Axiom$\lambda (b) \fCenter x$
  \BinaryInf$\lambda (a \vee b) \fCenter x$
  \Axiom$x \fCenter \rho (a)$
  \Axiom$x \fCenter \rho (b)$
  \BinaryInf$x \fCenter \rho (a \wedge b)$
  \noLine
  \BinaryInfC{}
\end{prooftree}
\begin{prooftree}
  \def\fCenter{\sqleq}
  \Axiom$\lambda (a) \fCenter x$
  \UnaryInf$\lambda (a \wedge b) \fCenter x$
  \Axiom$\lambda (b) \fCenter x$
  \UnaryInf$\lambda (a \wedge b) \fCenter x$
  \Axiom$x \fCenter \rho (a)$
  \UnaryInf$x \fCenter \rho (a \vee b)$
  \Axiom$x \fCenter \rho (b)$
  \UnaryInf$x \fCenter \rho (a \vee b)$
  \noLine
  \QuaternaryInfC{}
\end{prooftree}
\end{framed}
\end{figure}

\begin{definition}[The Galois algebra of an involutive frame]
  The \emph{Galois algebra} $\galois{\invframe{F}}$ of an involutive frame $\invframe{F}$ consists of the Galois closed subsets of $L$ equipped with the following operations:
\begin{align*}
  1 & \assign \{ \leftunit \} \torighttoleft, & X_{1} \cdot X_{2} & \assign (X_{1} \leftdot X_{2})\torighttoleft, & X_{1} \wedge X_{2} & \assign X_{1} \cap X_{2}, & \compl{X} & \assign (\ltorleft[X])\toleft, \\
  0 & \assign \{ \rightzero \} \toleft, & X_{1} + X_{2} & \assign (X_{1}\toright \rightplus X_{2}\toright)\toleft, & X_{1} \vee X_{2} & \assign (X_{1} \cup X_{2})\torighttoleft, & \compr{X} & \assign (\ltorright[X])\toleft.
\end{align*}
\end{definition}

  We now show that the Galois algebra of an involutive frame is a complete complemented $\ell$-bimonoid, and moreover $\alg{A}$ embeds into the Galois algebra of a faithful involutive $\alg{A}$-frame. A map $\lambda$ satisfying the conditions of the following lemma was called a quasi-homomorphism in~\cite{bellardinelli+jipsen+ono04}.

\begin{lemma}[Quasi-homomorphism Lemma] \label{lemma: quasi-homomorphisms}
  In each involutive $\alg{A}$-frame
\settowidth{\auxlength}{$\{ \leftunit \}\torighttoleft$}
\begin{align*}
  \lambda (1) \in \hbox to \auxlength{\hfil$\{ \leftunit \} \torighttoleft$\hfil} \sqleq \rho (1), \\
  \lambda (0) \in \hbox to \auxlength{\hfil$\{ \rightzero \} \toleft$\hfil} \sqleq \rho (0).
\end{align*}
  If $X$ and $Y$ are Galois closed subsets of $L$ such that
\settowidth{\auxlength}{$X$}
\settowidth{\auxlengthtwo}{$\lambda (a)$}
\begin{align*}
  \hbox to \auxlengthtwo{\hfil$\lambda (a)$\hfil} \in \hbox to \auxlength{\hfil$X$\hfil} & \sqleq \rho (a),\\
  \hbox to \auxlengthtwo{\hfil$\lambda (b)$\hfil} \in \hbox to \auxlength{\hfil$Y$\hfil} & \sqleq \rho (b),
\end{align*}
  then we have
\settowidth{\auxlength}{$\lambda(a + b)$}
\settowidth{\auxlengthtwo}{$(X \cup Y)\torighttoleft$}
\settowidth{\auxlengththree}{$(X\toright \rightplus Y\toright)\toleft$}
\newlength{\auxlengthfour}
\settowidth{\auxlengthfour}{$\rho(a + b)$}
\begin{align*}
  \hbox to \auxlength{\hfill$\lambda (a \cdot b)$} \in \hbox to \auxlengththree{\hbox to \auxlengthtwo{$(X \leftdot Y)\torighttoleft$}\hfill} \sqleq \hbox to\auxlengthfour{$\rho(a \cdot b)$,\hfill}\\
  \hbox to \auxlength{$\lambda (a + b)$} \in \hbox to \auxlengththree{$(X\toright \rightplus Y\toright)\toleft$} \sqleq \hbox to \auxlengthfour{$\rho(a + b)$,} \\
  \hbox to \auxlength{$\lambda (a \vee b)$} \in \hbox to \auxlengththree{\hbox to \auxlengthtwo{$(X \cup Y)\torighttoleft$}\hfill} \sqleq \hbox to \auxlengthfour{$\rho(a \vee b)$,} \\
  \hbox to \auxlength{$\lambda (a \wedge b)$} \in \hbox to \auxlengththree{$X \cap Y$\hfill} \sqleq \hbox to \auxlengthfour{$\rho(a \wedge b)$.}
\end{align*}
\end{lemma}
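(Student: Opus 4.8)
The plan is to treat each operation separately, exploiting that the Gentzen conditions of Figure~\ref{fig: gentzen conditions} split into \emph{left-introduction} rules, which govern the behaviour of $\lambda$, and \emph{right-introduction} rules, which govern the behaviour of $\rho$. The only general tool I need beyond these rules is the standard reading of Galois closure: an element $x \in L$ lies in $Z\torighttoleft$ precisely when $x \sqleq y$ for every $y$ with $Z \sqleq y$, and dually $Z \sqleq y$ holds precisely when $y \in Z\toright$. Each of the claimed sandwiches then splits into a \emph{membership} half ($\lambda(\cdot)$ lies in the displayed Galois closed set) proved from the left rules, and a \emph{comparison} half (the displayed set is $\sqleq \rho(\cdot)$) proved from the right rules.

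For the nullary cases I would argue as follows. If $\leftunit \sqleq y$ then the left unit rule gives $\lambda(1) \sqleq y$; since every $y \in \{\leftunit\}\toright$ satisfies $\leftunit \sqleq y$, this places $\lambda(1) \in \{\leftunit\}\torighttoleft$. Conversely the right unit rule gives $\leftunit \sqleq \rho(1)$, i.e.\ $\rho(1) \in \{\leftunit\}\toright$, so every element of $\{\leftunit\}\torighttoleft$ is $\sqleq \rho(1)$. The claims for $0$ are order dual, following immediately from the two zero rules $\lambda(0) \sqleq \rightzero$ and ($x \sqleq \rightzero \Rightarrow x \sqleq \rho(0)$).

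The four binary membership claims all fit a single template driven by the left rules and the hypotheses $\lambda(a) \in X$, $\lambda(b) \in Y$. For multiplication, if $X \leftdot Y \sqleq y$ then in particular $\lambda(a) \leftdot \lambda(b) \sqleq y$, and the left rule for $\cdot$ yields $\lambda(a \cdot b) \sqleq y$, so $\lambda(a \cdot b) \in (X \leftdot Y)\torighttoleft$. For addition, any element of $X\toright \rightplus Y\toright$ is of the form $y_{1} \rightplus y_{2}$ with $X \sqleq y_{1}$ and $Y \sqleq y_{2}$, whence $\lambda(a) \sqleq y_{1}$, $\lambda(b) \sqleq y_{2}$, and the left rule for $+$ gives $\lambda(a + b) \sqleq y_{1} \rightplus y_{2}$. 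The join case uses $X, Y \subseteq X \cup Y$ together with the left rule for $\vee$; the meet case uses the left rules for $\wedge$ and, crucially, the fact that $X = X\torighttoleft$ and $Y = Y\torighttoleft$ are already closed, to certify $\lambda(a \wedge b) \in X \cap Y$. Symmetrically, each comparison claim uses the right rules and the hypotheses $X \sqleq \rho(a)$, $Y \sqleq \rho(b)$: e.g.\ for $\cdot$ one checks $x_{1} \leftdot x_{2} \sqleq \rho(a \cdot b)$ for $x_{1} \in X$, $x_{2} \in Y$, giving $\rho(a \cdot b) \in (X \leftdot Y)\toright$ and hence $(X \leftdot Y)\torighttoleft \sqleq \rho(a \cdot b)$; for $+$ one uses $\rho(a) \in X\toright$, $\rho(b) \in Y\toright$ so that $(X\toright \rightplus Y\toright)\toleft \sqleq \rho(a) \rightplus \rho(b)$, then pushes down by the right rule for $+$.

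I do not anticipate a genuine obstacle: the whole argument is a disciplined unwinding of the Galois-closure definitions against the Gentzen rules, and notably neither Identity nor Cut is invoked. The only points needing care are the additive case, where one must migrate from the $L$-side to the $R$-side via $\toright$ and $\rightplus$ rather than working directly with $\leftdot$, and the meet membership claim, whose target is the plain intersection $X \cap Y$, forcing an explicit appeal to the Galois closedness of $X$ and $Y$.
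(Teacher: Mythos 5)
Your proof is correct and follows essentially the same route as the paper's: both split each claim into a membership half handled by the left Gentzen rules and a comparison half handled by the right rules, mediated only by the observation that $x \in Z\torighttoleft$ means $x \sqleq y$ whenever $Z \sqleq y$ (and, as you note, neither argument needs Identity or Cut). The only difference is that you spell out the addition and meet cases explicitly, which the paper dismisses as analogous to multiplication and join.
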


\begin{proof}
  We divide the Gentzen conditions in Figure~\ref{fig: gentzen conditions} in left and right rules, depending on whether the conclusion contains the map $\lambda$ or the map $\rho$. For example, by the left rule for addition we mean the Gentzen condition whose conclusion is $\lambda(a+b) \sqleq x \rightplus y$. Observe that in order to prove that $Z\torighttoleft \sqleq y$ it suffices to prove that $Z \sqleq y$: if $x \in Z\torighttoleft$ and $Z \sqleq y$, then $x \sqleq Z\toright$ and $y \in Z\toright$, hence $x \sqleq y$.

  The claim that $\lambda(1) \in \{ \leftunit \}\torighttoleft$ is precisely the left rule for~$1$. The right rule states that $\leftunit \sqleq \rho(1)$, therefore, as observed above, $\{ \leftunit \}\torighttoleft \sqleq \rho(1)$. The~proof that $\lambda(0) \in \{ \rightzero \} \toleft \sqleq \rho(0)$ is analogous.

  If $X \leftdot Y \sqleq z$, then $\lambda (a) \leftdot \lambda (b) \sqleq z$, hence $\lambda (a \cdot b) \sqleq z$ by the left rule for multiplication. Thus $\lambda (a \cdot b) \in (X \cdot Y)\toright{}\toleft$. For every $x \in X$ and $y \in Y$ we have $x \leftdot y \sqleq \rho(a \cdot b)$ by the right rule for multiplication, thus $X \leftdot Y \sqleq \rho(a \cdot b)$. The~proof that $\lambda (a + b) \in (X\toright \rightplus Y\toright)\toleft \sqleq \rho (a + b)$ is analogous.

  If $X \cup Y \sqleq z$, then $\lambda (a) \sqleq z$ and $\lambda (b) \sqleq z$, hence by the left rule for joins $\lambda (a \vee b) \sqleq z$. Thus $\lambda(a \vee b) \in (X \cup Y) \torighttoleft$. If $X \sqleq \rho(a)$ and $Y \sqleq \rho(b)$, then by the right rule for joins $X \sqleq \rho(a \vee b)$ and $Y \sqleq \rho(a \vee b)$, hence $X \cup Y \sqleq \rho(a \vee b)$ and $(X \cup Y)\toright{}\toleft \sqleq \rho(a \vee b)$. The proof of $\lambda (a \wedge b) \in X \cap Y \sqleq \rho (a \wedge b)$ is analogous.
\end{proof}

\begin{theorem}[Galois algebras are involutive residuated lattices] \label{thm: galois algebras}
  Let $\invframe{F}$ be a (commutative) involutive frame and $\alg{A}$ be an ($\ell$-)bimonoid. Then the Galois algebra $\galois{\invframe{F}}$ is a complete (commutative) involutive residuated lattice. If $\invframe{F}$ is moreover an involutive $\alg{A}$-frame, then the map $a \mapsto \rho (a)\toleft = \lambda(a)\torighttoleft$ is a homomorphism from $\alg{A}$ to $\galois{\invframe{F}}$. It is an embedding if $\invframe{F}$ is faithful.
\end{theorem}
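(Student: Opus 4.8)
The plan is to realize $\galois{\invframe{F}}$ as the algebra of fixed points of a nucleus on the complete residuated lattice of all subsets of $L$, and then to read off the involution and the additive structure from the maps $\ltorleft, \ltorright, \rtolleft, \rtolright$ and the Galois connection. First I would recall the standard fact that the Galois-closed subsets of $L$ form a complete lattice in which arbitrary meets are intersections and arbitrary joins are $(\bigcup_{i} X_{i})\torighttoleft$; this already yields completeness and identifies $\wedge$ and $\vee$ with the operations listed in the definition of the Galois algebra. For the monoidal structure I would observe that $\langle \mathcal{P}(L), \subseteq, \leftdot, \{ \leftunit \} \rangle$ is a complete residuated lattice and that the closure operator $\gamma(X) = X\torighttoleft$ is a nucleus on it: the nucleus inequality $\gamma(X) \leftdot \gamma(Y) \subseteq \gamma(X \leftdot Y)$ is exactly the first inclusion of Lemma~\ref{lemma: nuclearity}. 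By the standard theory of nuclei (see~\cite{galatos+jipsen13residuated-frames}) the image $\gamma[\mathcal{P}(L)]$, which is precisely the carrier of $\galois{\invframe{F}}$, is then a complete residuated lattice under $X \cdot Y = (X \leftdot Y)\torighttoleft$ and $1 = \{ \leftunit \}\torighttoleft$, with the residuals inherited from $\mathcal{P}(L)$; associativity, unitality, and monotonicity of $\cdot$ reduce to the corresponding properties of $\leftdot$ together with the equalities $(X\torighttoleft \leftdot Y)\toright = (X \leftdot Y)\toright$ of Lemma~\ref{lemma: nuclearity}.

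Dually, applying the same argument to the polarity $\langle R, L, \sqgeq \rangle$ and the monoid $\langle R, \rightplus, \rightzero \rangle$ (using the second half of Lemma~\ref{lemma: nuclearity}) makes the Galois-closed subsets of $R$ into a complete residuated lattice; transporting this structure along the anti-isomorphism $X \mapsto X\toright$ between the Galois-closed subsets of $L$ and those of $R$ yields the operations $+$ and $0$ on $\galois{\invframe{F}}$. It then remains to exhibit the complementation witnessing involutivity. Here I would first check, using nuclearity with $z = \rightzero$, that $\compl{X} = (\ltorleft[X])\toleft$ coincides with the residual-complement $0 / X = \{ w : w \leftdot x \sqleq \rightzero \text{ for all } x \in X \}$ and that $\compr{X} = X \bs 0$, so that $\compl{X}$ and $\compr{X}$ are genuinely the left and right complements of $X$ in the residuated lattice; antitonicity is then immediate.

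The crux of the argument is double-negation elimination, $\complr{X} = X = \comprl{X}$, since without it one obtains only a residuated lattice rather than an involutive one. The point is that the two lines of the nuclearity condition force the element maps to be mutually inverse: comparing them shows that $\rtolright(\ltorleft(y))$ and $y$ behave identically against every $z$ under $\leftdot$ and $\sqleq$ (and symmetrically for the other pairing), so that $\ltorleft \leftrightarrow \rtolright$ and $\ltorright \leftrightarrow \rtolleft$ are mutually inverse up to Galois equivalence. Reformulating $\compl{X}$ and $\compr{X}$ as the left and right polars of the relation $x \perp y \iff x \leftdot y \sqleq \rightzero$ on $L$, I would then verify that a subset of $L$ is Galois closed if and only if it is closed under the double polar, so that $\compl{}$ and $\compr{}$ restrict to mutually inverse antitone bijections on the Galois-closed sets; this is precisely double-negation. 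The De~Morgan decomposition of the residuals, $X \bs Y = \compr{X} + Y$ and $Y / X = Y + \compl{X}$, then follows from nuclearity, so that $\galois{\invframe{F}}$ is a complete involutive residuated lattice; it is commutative when the frame is, in which case $\compl{} = \compr{}$. I expect this verification of involutivity to be the main obstacle, since it is the only place where the full strength of the nuclearity conditions, beyond the nucleus inequality of Lemma~\ref{lemma: nuclearity}, and the interplay of all four maps are needed.

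Finally, for the embedding I would set $h(a) = \lambda(a)\torighttoleft = \rho(a)\toleft$; these two sets agree because Identity gives $\lambda(a)\torighttoleft \subseteq \rho(a)\toleft$ while Cut gives the reverse inclusion. The Quasi-homomorphism Lemma~\ref{lemma: quasi-homomorphisms} then does essentially all the remaining work: for each operation $\ast \in \{ \cdot, +, \vee, \wedge \}$ with corresponding term $c$, it shows $\lambda(c) \in h(a) \ast h(b) \sqleq \rho(c)$, and analogously for the units; since $h(a) \ast h(b)$ is Galois closed and is squeezed between $\lambda(c)\torighttoleft = h(c)$ and $\rho(c)\toleft = h(c)$, it must equal $h(c)$. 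Thus $h$ is a homomorphism of bimonoids, and of $\ell$-bimonoids when $\alg{A}$ is an $\ell$-bimonoid (using the lattice clauses of the Gentzen conditions). For faithfulness, $h(a) \subseteq h(b)$ gives $\lambda(a) \sqleq \rho(b)$ since $\lambda(a) \in \lambda(a)\torighttoleft \subseteq \rho(b)\toleft$, whence $a \leq b$ by faithfulness; as $h$ is monotone, this makes $h$ an order embedding and hence injective.
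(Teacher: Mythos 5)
Your proposal is correct, and for the structural half it takes a genuinely different route from the paper. The paper never mentions nuclei and never proves double negation: it verifies by hand that $\galois{\invframe{F}}$ is a \emph{complemented} $\ell$-bimonoid (associativity and units via Lemma~\ref{lemma: nuclearity}, the complement laws $\compl{X} \cdot X \subseteq 0$ and $1 \subseteq X + \compl{X}$ and their mirrors via nuclearity with $z \assign \rightzero$, hemidistributivity via Lemma~\ref{lemma: frame hemidistributivity}), and then the term equivalence of Subsection~\ref{subsec: complementation} yields antitonicity, $\complr{X} = X = \comprl{X}$, the De~Morgan laws and the residuation laws for free, by uniqueness of complements (Propositions~\ref{prop: residuation for complements} and~\ref{prop: de morgan laws}). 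You instead obtain the multiplicative structure as the image of the nucleus $X \mapsto X\torighttoleft$ on $\mathcal{P}(L)$, the additive structure by dualizing over the polarity $\langle R, L, \sqgeq \rangle$ and transporting along $X \mapsto X\toright$, and you then have to pay for involutivity with a direct proof of double negation; in exchange you never need to check hemidistributivity, since it follows from involutivity by that same term equivalence. Your sketch of the crux is sound: with $x \perp y$ meaning $x \leftdot y \sqleq \rightzero$, nuclearity gives $x \perp y \iff x \sqleq \ltorleft(y) \iff y \sqleq \ltorright(x)$, and also $X \sqleq y \iff \rtolleft(y) \in \compl{X} \iff \rtolright(y) \in \compr{X}$; the latter shows that the $\sqleq$-Galois-closed subsets of $L$ are exactly the double polars for $\perp$, so the two polar maps $X \mapsto \compl{X}$ and $X \mapsto \compr{X}$ are mutually inverse antitone bijections on $\galois{\invframe{F}}$, and the decomposition of the residuals is a similar short computation. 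Thus the paper's route stays elementary and self-contained in its bimonoid calculus, while yours outsources completeness and residuation to standard nucleus theory (closer to the original residuated-frames treatment) at the price of the polarity argument. Your homomorphism/embedding half coincides with the paper's: the Quasi-homomorphism Lemma~\ref{lemma: quasi-homomorphisms} plus the observation that the only Galois-closed $Z$ with $\lambda(c) \in Z \sqleq \rho(c)$ is $\lambda(c)\torighttoleft = \rho(c)\toleft$. One loose end you share with the paper: faithfulness only gives that $h\colon a \mapsto \rho(a)\toleft$ \emph{reflects} order, and both you and the paper take its monotonicity for granted; this is harmless for $\ell$-bimonoids (monotonicity follows from preservation of $\vee$) and for the concrete frame of Theorem~\ref{thm: dm completions exist}, where $\lambda(a) \sqleq \rho(b)$ iff $a \leq b$, but it is not a consequence of the frame axioms for plain bimonoids, since no Gentzen condition refers to the order of $\alg{A}$.
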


\begin{proof}
  In the following $X, Y, Z \subseteq L$ will be Galois closed sets. We first prove that multiplication in $\galois{\invframe{F}}$ is associative. The proof that addition is associative is analogous. By~Lemma~\ref{lemma: nuclearity} we have
\begin{align*}
  (X \cdot Y) \cdot Z & = ((X \leftdot Y)\torighttoleft \leftdot Z)\torighttoleft \\
  & = ((X \leftdot Y) \leftdot Z)\torighttoleft{}\torighttoleft \\
  & = (X \leftdot (Y \leftdot Z))\torighttoleft \\
  & = (X \leftdot (Y \leftdot Z)\torighttoleft)\torighttoleft \\
  & = X \cdot (Y \cdot Z).
\end{align*}

  Next, we prove that $1$ is a unit element with respect to multiplication. The proof that $0$ is a unit element with respect to addition is analogous. Again by Lemma~\ref{lemma: nuclearity}
\begin{align*}
  X \cdot 1 = (X \leftdot \{ \leftunit \} \torighttoleft) \torighttoleft = (X \leftdot \leftunit) \torighttoleft = X \torighttoleft = X, \\
  1 \cdot X = (\{ \leftunit \} \torighttoleft \leftdot X) \torighttoleft = (\leftunit \leftdot X) \torighttoleft = X \torighttoleft = X,
\end{align*}

  To prove the inclusion $\compl{X} \cdot X \subseteq 0$, observe that
\begin{align*}
  \compl{X} \cdot X \subseteq 0 & \iff ((\ltorleft [X])\toleft \leftdot X) \torighttoleft \subseteq \{ \rightzero \} \toleft \\
  & \iff (\ltorleft [X])\toleft \leftdot X \subseteq \{ \rightzero \} \toleft \\
  & \iff (\ltorleft [X])\toleft \leftdot X \sqleq \rightzero \\
  & \iff \left( x_{1} \sqleq \ltorleft[X] \text{ and } x_{2} \in X \right) \text{ implies } x_{1} \leftdot x_{2} \sqleq \rightzero.
\end{align*}
  But the last implication holds by nuclearity, since
\begin{align*}
  x_{1} \sqleq \ltorleft[X] \iff x_{1} \sqleq \rightzero \rightplus \ltorleft[X] \iff x_{1} \leftdot X \sqleq \rightzero.
\end{align*}
  To prove the inclusion $1 \subseteq X + \compl{X}$, by Lemma~\ref{lemma: nuclearity} we have
\begin{align*}
  1 \subseteq X + \compl{X} & \iff \{ \leftunit \} \torighttoleft \subseteq (X\toright \rightplus (\ltorleft [X])\tolefttoright)\toleft \\
  & \iff \{ \leftunit \} \subseteq (X\toright \rightplus (\ltorleft [X])\tolefttoright)\toleft \\
  & \iff \{ \leftunit \} \subseteq (X\toright \rightplus \ltorleft [X])\toleft \\
  & \iff\leftunit \sqleq X\toright \rightplus \ltorleft [X] \\
  & \iff \leftunit \leftdot X \sqleq X\toright \\
  & \iff X \sqleq X\toright.
\end{align*}
  The proofs of the inclusions $X \cdot \compr{X} \subseteq 0$ and $1 \subseteq \compr{X} + X$ are entirely analogous.

  It remains to prove hemidistributivity. We have
\begin{align*}
  X \cdot (Y + Z) \subseteq (X \cdot Y) + Z & \iff X \leftdot (Y \toright \rightplus Z \toright) \toleft \subseteq ((X \leftdot Y) \toright \rightplus Z \toright) \toleft \\
  & \iff X \leftdot (Y \toright \rightplus Z \toright) \toleft \sqleq (X \leftdot Y) \toright \rightplus Z \toright.
\end{align*}
  Since $(X \leftdot Y)\toright = \bigcap_{x \in X} (x \leftdot Y)\toright$ and $(Y\toright \rightplus Z\toright)\toleft = \bigcap_{z \in Z\toright} (Y\toright \rightplus z)\toleft$, it suffices to prove that $x \circ (Y\toright \rightplus z)\toleft \sqleq (x \leftdot Y)\toright \rightplus z$ for all $x \in L$ and $z \in R$. But this follows if we take $X \assign Y$, $y \assign z$, and $u \assign x$ in the first condition of Lemma~\ref{lemma: frame hemidistributivity}.

  The above proves that the Galois algebra $\galois{\invframe{F}}$ is a complemented $\ell$-bimonoid. It is complete because the join of each family $X_{i} \in \galois{\invframe{F}}$ for $i \in I$ is $\left( \bigcup_{i \in I} X_{i} \right)\torighttoleft$. Moreover, if the monoids $\alg{L}$ and $\alg{R}$ are commutative, then so is $\galois{\invframe{F}}$. Now suppose that $\invframe{F}$ is an $\alg{A}$-frame. Then $\rho(a)\toleft = \lambda(a)\torighttoleft$ by Identity and Cut. To prove that the map ${a \mapsto \rho(a)\toleft}$ is a homomorphism from $\alg{A}$ to $\galois{\invframe{F}}$, by Lemma~\ref{lemma: quasi-homomorphisms} it suffices to show that in an involutive $\alg{A}$-frame the only Galois closed set $X$ such that $\lambda (a) \in X \sqleq \rho (a)$ is~$\rho (a)\toleft$.

  Suppose that $\lambda (a) \in X \sqleq \rho (a)$ for $X \subseteq L$ Galois closed. Then $X \subseteq \rho(a) \toleft$. Proving that $\rho (a) \toleft \subseteq X$ amounts to proving that $x \sqleq \rho (a)$ implies $x \in X = X \torighttoleft$, i.e.\ to proving the implication $X \sqleq y \implies x \sqleq y$. But $X \sqleq y$ implies $\lambda (a) \sqleq y$ and applying Cut to $x \sqleq \rho (a)$ and $\lambda (a) \sqleq y$ yields that $x \sqleq y$.

  Finally, let $\invframe{F}$ be a faithful involutive $\alg{A}$-frame. Then $\rho (a)\toleft \subseteq \rho (b)\toleft$ implies that $\lambda (a) \in \rho (b)\toleft$, i.e.\ $\lambda (a) \sqleq \rho (b)$, hence $a \leq b$ by faithfulness.
\end{proof}

\newcommand{\leftcong}{\theta_{L}}
\newcommand{\rightcong}{\theta_{R}}

  Observe that in each involutive frame $F$ we may define the following equivalence relations on $\alg{L}$ and $\alg{R}$:
\begin{align*}
  \pair{x}{x'} \in \leftcong & \text{ if and only if } (x \sqleq y \iff x' \sqleq y) \text{ for each } y \in R, \\
  \pair{y}{y'}  \in \rightcong & \text{ if and only if } (x \sqleq y \iff x \sqleq y') \text{ for each } x \in L.
\end{align*}
  These are in fact congruences on $F$. More precisely, if $\pair{x_{1}}{x'_{1}} \in \leftcong$, $\pair{x_{2}}{x'_{2}} \in \leftcong$, and $\pair{y}{y'} \in \rightcong$, then $\pair{x_{1} \leftdot x_{2}}{x'_{1} \leftdot x'_{2}} \in \leftcong$ and $\pair{\ltorleft(x_{1})}{\ltorleft(x'_{1})}, \pair{\ltorright(x_{1})}{\ltorright(x'_{1})} \in \rightcong$ (and likewise with $L$ and $R$ exchanged). Moreover, $x \sqleq y$ if and only if $x' \sqleq y'$. It follows that the Galois algebra of the involutive frame $F$ is isomorphic to the Galois algebra of the involutive frame $F / \theta$ consisting of $\alg{L} / \leftcong$ and $\alg{R} / \rightcong$ connected by ${\sqleq}$ and $\ltorright, \ltorleft, \rtolright, \rtolleft$ via the map $X \mapsto \set{[x]_{\leftcong}}{x \in X}$.

\subsection{Complemented MacNeille completions: existence}
\label{subsec: macneille existence}

  We now use the tools introduced in the previous subsection to construct the commutative complemented DM completion of an arbitrary commutative bimonoid $\alg{A}$ as the Galois algebra of an involutive $\alg{A}$-frame.

  We define the structure $\invframeofalg{A}$, which we claim to be a faithful commutative involutive $\alg{A}$-frame, as follows. The monoids $\alg{L}$~and~$\alg{R}$ both have the same universe $A^{2}$, but elements of $\alg{L}$ are denoted $\leftpair{a}{b}$, while elements of $\alg{R}$ are denoted $\rightpair{a}{b}$. These are intended to correspond to $a \cdot \comp{b}$ and $a + \comp{b}$. For such pairs we define the monoidal operations
\settowidth{\auxlength}{$\rightpair{a}{b} \rightplus \rightpair{c}{d}$}
\settowidth{\auxlengthtwo}{$\rightpair{a + c}{b \cdot d}$}
\begin{align*}
  \hbox to \auxlength{$\leftpair{a}{b} \leftdot \leftpair{c}{d}$} & \assign \hbox to \auxlengthtwo{$\leftpair{a \cdot c}{b + d}$}, \\
  \hbox to \auxlength{$\rightpair{a}{b} \rightplus \rightpair{c}{d}$} & \assign \hbox to \auxlengthtwo{$\rightpair{a + c}{b \cdot d}$},
\end{align*}
  with units
\begin{align*}
  \leftunit & \assign \leftpair{1}{0}, \\
  \rightzero & \assign \rightpair{0}{1},
\end{align*}
  and the maps
\begin{align*}
  \ltorleft (\leftpair{a}{b}) & = \rightpair{b}{a} = \ltorright (\leftpair{a}{b}), \\
  \rtolleft (\rightpair{a}{b}) & = \leftpair{b}{a} = \rtolright (\rightpair{a}{b}).
\end{align*}
  The relation connecting the two monoids is defined as
\begin{align*}
  \leftpair{a}{b} \sqleq \rightpair{c}{d} & \iff a \cdot d \leq_{\alg{A}} b + c.
\end{align*}
  If we interpret $\leftpair{a}{b}$ and $\rightpair{c}{d}$ in the intended way, this equivalence is precisely what residuation yields. Finally, $\alg{A}$ embeds into $\alg{L}$ and $\alg{R}$ via the maps
\begin{align*}
  \lambda (a) & \assign \leftpair{a}{0}, \\
  \rho (a) & \assign \rightpair{a}{1}.
\end{align*}
  We now verify that this yields a faithful commutative involutive $\alg{A}$-frame.

\begin{lemma}[Involutive frames from bimonoids] \label{lemma: frames from bimonoids}
  $\invframeofalg{A}$ is a faithful commutative involutive $\alg{A}$-frame.
\end{lemma}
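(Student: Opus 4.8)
The plan is to check, in the order (i) that $\alg{L}$ and $\alg{R}$ are commutative monoids, (ii) that the relation $\sqleq$ satisfies nuclearity, (iii) that $\lambda$ and $\rho$ satisfy Identity, Cut, and the bimonoidal Gentzen conditions (together with the $\ell$-bimonoidal ones when $\alg{A}$ is an $\ell$-bimonoid), and (iv) that the frame is faithful. The uniform strategy is to translate every frame-level statement into an inequality in $\alg{A}$ by unfolding the defining relation $\leftpair{a}{b} \sqleq \rightpair{c}{d} \iff a \cdot d \leq_{\alg{A}} b + c$ along with the definitions of $\leftdot$, $\rightplus$, $\ltorleft$, $\rtolleft$, $\lambda$, and $\rho$, and then to discharge the resulting $\alg{A}$-inequality.

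Steps (i) and (ii) are routine bookkeeping. Associativity, commutativity, and the unit laws for $\leftdot$ and $\rightplus$ follow at once from the corresponding laws for $\cdot$ and $+$ in $\alg{A}$, since $\leftpair{a}{b} \leftdot \leftpair{c}{d} = \leftpair{a \cdot c}{b + d}$ and $\rightpair{a}{b} \rightplus \rightpair{c}{d} = \rightpair{a + c}{b \cdot d}$ act componentwise, with units $\leftunit = \leftpair{1}{0}$ and $\rightzero = \rightpair{0}{1}$. For nuclearity, unfolding each equivalence produces the \emph{same} inequality in $\alg{A}$ on both sides once one reassociates and commutes; for instance $\leftpair{a}{b} \leftdot \leftpair{c}{d} \sqleq \rightpair{e}{f}$ unfolds to $a \cdot c \cdot f \leq_{\alg{A}} b + d + e$, and $\leftpair{a}{b} \sqleq \rightpair{e}{f} \rightplus \ltorleft(\leftpair{c}{d})$ unfolds to $a \cdot f \cdot c \leq_{\alg{A}} b + e + d$, which is the same after using the commutativity and associativity of $\cdot$ and $+$. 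Thus each nuclearity equivalence is an identity modulo these laws.

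For step (iii), Identity unfolds to $a \leq a$, and Cut, given $\leftpair{p}{q} \sqleq \rho(a)$ and $\lambda(a) \sqleq \rightpair{r}{s}$ (i.e.\ $p \leq q + a$ and $a \cdot s \leq r$), unfolds to $p \cdot s \leq q + r$, which follows from $p s \leq (q + a) s \leq q + a s \leq q + r$ using hemidistributivity and monotonicity. The unit rules and the left rules for $\cdot$ and $+$ reduce to trivial identities of the frame ($\lambda(1) = \leftunit$, $\rho(0) = \rightzero$, $\lambda(a) \leftdot \lambda(b) = \lambda(a \cdot b)$, $\rho(a) \rightplus \rho(b) = \rho(a + b)$), the single-premise lattice rules reduce to monotonicity of $\cdot$ and $+$, and the two-premise lattice rules reduce to the $\ell$-bimonoid distributivity laws $(a \vee b) \cdot q \equals a q \vee b q$ and $q + (a \wedge b) \equals (q + a) \wedge (q + b)$. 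The one genuinely non-trivial point, which I expect to be the main obstacle, is the right rule for $\cdot$ (and, dually, the left rule for $+$): from $\leftpair{p}{q} \sqleq \rho(a)$ and $\leftpair{r}{s} \sqleq \rho(b)$, i.e.\ $p \leq q + a$ and $r \leq s + b$, one must derive $\leftpair{p \cdot r}{q + s} \sqleq \rho(a \cdot b)$, i.e.\ $p \cdot r \leq q + s + a b$. A careless application of hemidistributivity to $(q + a)(s + b)$ pairs $a$ with $s$ and loses the product $a b$; the trick is to commute first, so that $p r \leq (q + a)(s + b) \leq q + a(s + b) = q + a(b + s) \leq q + (a b + s) = q + s + a b$, where the steps use $(x + y) z \leq x + y z$ and $x (y + z) \leq x y + z$ after commuting $s$ past $b$ to keep $a$ and $b$ adjacent.

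Finally, step (iv) is immediate: $\lambda(a) \sqleq \rho(b)$ unfolds to $a \cdot 1 \leq 0 + b$, i.e.\ $a \leq b$, so the frame is faithful (in fact $\lambda$ and $\rho$ reflect the order exactly). The only step requiring more than commutativity, associativity, and monotonicity is the hemidistributive computation in the right rule for multiplication flagged above.
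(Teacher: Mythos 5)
Your proposal is correct and follows essentially the same route as the paper's proof: unfold every frame-level condition into an inequality in $\alg{A}$, discharge the two trivial monoidal rules via the identities $\lambda(a) \leftdot \lambda(b) = \lambda(a \cdot b)$ and $\rho(a) \rightplus \rho(b) = \rho(a+b)$, the lattice rules via monotonicity and the $\ell$-bimonoid distributivity laws, and the one genuinely non-trivial pair of rules via hemidistributivity --- you prove the right rule for $\cdot$ and dualize to get the left rule for $+$, while the paper proves the left rule for $+$ and dualizes, which are mirror-image choices of the same idea. A minor point in your favor: you verify Cut explicitly (via $p \cdot s \leq (q+a) \cdot s \leq q + a \cdot s \leq q + r$), whereas the paper's proof only treats nuclearity, faithfulness, and the Figure~\ref{fig: gentzen conditions} rules and leaves Identity and Cut unmentioned even though they are part of the definition of an involutive $\alg{A}$-frame.
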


\begin{proof}
  Checking that $\invframeofalg{A}$ is a commutative monoidal pair is straightforward. Nuclearity states that $\leftpair{a}{b} \leftdot \leftpair{c}{d} = \leftpair{a \cdot c}{b + d} \sqleq \rightpair{e}{f}$ is equivalent to $\leftpair{a}{b} \sqleq \rightpair{d + e}{c \cdot f} = \rightpair{d}{c} \rightplus \rightpair{e}{f}$. But this holds, as both of these conditions amount to $a \cdot c \cdot f \leq b + d + e$. Moreover, $\lambda (a) = \leftpair{a}{0} \sqleq \rightpair{b}{1} = \rho (b)$ implies that $a = a \cdot 1 \leq b + 0 = b$, therefore $\invframeofalg{A}$ is faithful. It remains to verify that the Gentzen conditions of Figure \ref{fig: gentzen conditions} hold in $\invframeofalg{A}$. We only deal with the left rules, since the right rules follow the same pattern as the left rules for the dual connective.

  The left rules for $0$ and $1$ clearly hold. To prove the left rule for multiplication, observe that $\lambda (a) \leftdot \lambda (b) = \leftpair{a}{0} \leftdot \leftpair{b}{0} = \leftpair{a \cdot b}{0} = \lambda (a \cdot b)$. To prove the left rule for join, observe that $\lambda (a) \sqleq \rightpair{c}{d}$ and $\lambda (b) \sqleq \rightpair{c}{d}$ imply that $a \cdot d \leq c$ and $b \cdot d \leq c$. By the distributivity of products over joins we have $(a \vee b) \cdot d \leq c$ and $\lambda (a \vee b) \sqleq \rightpair{c}{d}$. The left rule for meet follows from the mono\-tonicity of multiplication, i.e.\ $a \cdot d \leq c$ implies $(a \wedge b) \cdot d \leq c$, as does $b \cdot d \leq c$. Finally, we prove the left rule for addition. If $\lambda (a) \sqleq \rightpair{c}{d}$, then $a \cdot d \leq c$. If $\lambda (b) \sqleq \rightpair{e}{f}$, then $b \cdot f \leq e$. It follows by hemidistributivity that $(a + b) d f \leq a d + b f \leq c + e$, i.e. $\lambda(a + b) = \leftpair{a + b}{0} \sqleq \rightpair{c + e}{d \cdot f} = \rightpair{c}{d} \rightplus \rightpair{e}{f}$.
\end{proof}

\begin{theorem}[Complemented commutative DM completions exist] \label{thm: dm completions exist}
  Each commutative ($\ell$-)bimonoid $\alg{A}$ has a commutative complemented DM completion, namely the Galois algebra $\galois{\invframeofalg{A}}$ of the involutive $\alg{A}$-frame $\invframeofalg{A}$.
\end{theorem}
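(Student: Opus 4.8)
The backbone of the argument is already in place. By Lemma~\ref{lemma: frames from bimonoids} the structure $\invframeofalg{A}$ is a faithful commutative involutive $\alg{A}$-frame, so Theorem~\ref{thm: galois algebras} tells us that its Galois algebra $\galois{\invframeofalg{A}}$ is a complete commutative involutive residuated lattice --- equivalently, a complete complemented commutative $\ell$-bimonoid --- and that the map $\iota\colon a \mapsto \lambda(a)\torighttoleft = \rho(a)\toleft$ embeds $\alg{A}$ into it. Thus the only thing left to verify is that $\iota$ is a $\Delta_{1}$-extension, i.e.\ that the elements $\iota(a) \cdot \comp{\iota(b)}$ are join dense and the elements $\iota(a) + \comp{\iota(b)}$ are meet dense in $\galois{\invframeofalg{A}}$. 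The complements $\comp{\iota(b)}$ exist here because $\galois{\invframeofalg{A}}$ is complemented.

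For join density I would first invoke the standard fact that the principal Galois closed sets $\leftpair{a}{b}\torighttoleft$ are join dense in $\galois{\invframeofalg{A}}$, since any Galois closed $X$ equals $\bigvee_{\leftpair{a}{b} \in X} \leftpair{a}{b}\torighttoleft$. It therefore suffices to identify $\leftpair{a}{b}\torighttoleft$ with $\iota(a) \cdot \comp{\iota(b)}$. The crux is the computation $\comp{\iota(b)} = \leftpair{1}{b}\torighttoleft$. Setting $z = \rightzero$ in nuclearity gives $z \sqleq \ltorleft(x) \iff z \leftdot x \sqleq \rightzero$, which rewrites the definition $\comp{\iota(b)} = (\ltorleft[\iota(b)])\toleft$ as
\[
  \comp{\iota(b)} = \set{\leftpair{s}{t}}{s p \leq t + q \text{ for all } \leftpair{p}{q} \in \iota(b)},
\]
where a direct unfolding of $\iota(b) = \leftpair{b}{0}\torighttoleft$ shows $\iota(b) = \set{\leftpair{p}{q}}{p d \leq q + b d \text{ for all } d \in A}$. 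I would then prove $\comp{\iota(b)} = \leftpair{1}{b}\torighttoleft$ by double inclusion: for one direction, observe that $d \leq b + c$ forces $\leftpair{d}{c} \in \iota(b)$ by hemidistributivity; for the other, instantiate the defining condition of $\leftpair{p}{q} \in \iota(b)$ at $d = 1$ to obtain $p \leq b + q$. With this in hand, nuclearity (Lemma~\ref{lemma: nuclearity}) yields $\iota(a) \cdot \comp{\iota(b)} = \leftpair{a}{0}\torighttoleft \cdot \leftpair{1}{b}\torighttoleft = (\leftpair{a}{0} \leftdot \leftpair{1}{b})\torighttoleft = \leftpair{a}{b}\torighttoleft$, which gives join density.

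Meet density is handled by the order duality built into the frame (exchanging $\alg{L}$ with $\alg{R}$, $\leftdot$ with $\rightplus$, and $\sqleq$ with $\sqgeq$): the principal sets $\rightpair{a}{b}\toleft$ are meet dense, and the dual computation gives $\comp{\iota(b)} = \rightpair{0}{b}\toleft$, whence $\iota(a) + \comp{\iota(b)} = \rightpair{a}{1}\toleft + \rightpair{0}{b}\toleft = (\rightpair{a}{1} \rightplus \rightpair{0}{b})\toleft = \rightpair{a}{b}\toleft$. Both density conditions then hold, so $\iota$ exhibits $\galois{\invframeofalg{A}}$ as a complete complemented commutative $\Delta_{1}$-extension, i.e.\ a commutative complemented DM completion. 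The one genuinely non-routine point, and the step I would spend the most care on, is the complement computation $\comp{\iota(b)} = \leftpair{1}{b}\torighttoleft$ together with its dual; everything else reduces to nuclearity and bookkeeping with the pairing notation.
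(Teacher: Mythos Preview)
Your proof is correct and follows the same overall architecture as the paper: invoke Lemma~\ref{lemma: frames from bimonoids} and Theorem~\ref{thm: galois algebras}, observe that the principal Galois closed sets $\leftpair{a}{b}\torighttoleft$ are join dense, and then identify $\leftpair{a}{b}\torighttoleft$ with $\iota(a)\cdot\comp{\iota(b)}$ via the key computation $\comp{\iota(b)} = \leftpair{1}{b}\torighttoleft$ together with Lemma~\ref{lemma: nuclearity}.

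Where you differ from the paper is precisely in how that key computation is carried out. You unfold $\iota(b)$ and $\comp{\iota(b)}$ explicitly as subsets of $A^{2}$ and prove the double inclusion using hemidistributivity in $\alg{A}$. The paper instead never leaves the frame language: it shows $\ltor[\lambda(b)\torighttoleft] = \rtol(\rho(b))\toright$ by a short chain of nuclearity manipulations plus the single observation that $\rtol$ and $\ltor$ are mutual inverses on this particular frame (coordinate swapping), whence $\comp{\iota(b)} = (\ltor[\iota(b)])\toleft = \rtol(\rho(b))\torighttoleft = \leftpair{1}{b}\torighttoleft$ drops out immediately. The paper's route is shorter and avoids appealing to hemidistributivity in $\alg{A}$ a second time (that work was already absorbed into Lemma~\ref{lemma: frames from bimonoids}); your route is more self-contained and makes the set-theoretic content visible. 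The paper also handles only join density explicitly, since in a complemented $\Delta_{1}$-extension the two density conditions are equivalent, whereas you treat both by duality. Either way the argument goes through.
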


\begin{proof}
  The Galois algebra of $\invframeofalg{A}$ is by Theorem~\ref{thm: galois algebras} and Lemma~\ref{lemma: frames from bimonoids} a complete involutive commutative residuated lattice into which~$\alg{A}$ embeds via the map $a \mapsto \rho (a)\toleft = \lambda (a)\torighttoleft$. It~remains to prove that elements of the form
\begin{align*}
  \lambda(a)\torighttoleft \cdot \comp{\lambda(b)\torighttoleft} = \lambda(a)\torighttoleft \cdot \ltor[\lambda(b)\torighttoleft]\toleft
\end{align*}
  are join dense in~$\invframeofalg{A}$. Because the elements $\leftpair{a}{b}\torighttoleft$ are join dense in $\invframeofalg{A}$ and
\begin{align*}
  \leftpair{a}{b}\torighttoleft = (\leftpair{a}{0} \leftdot \leftpair{1}{b})\torighttoleft = \left( \lambda (a) \leftdot \rtol (\rho (b)) \right) \torighttoleft = \lambda (a)\torighttoleft \cdot \rtol(\rho(b))\torighttoleft,
\end{align*}
  it suffices to prove that $\ltor [\lambda(b)\torighttoleft] = \rtol(\rho(b))\toright$. But this holds because
\begin{align*}
  y \in \rtol(\rho(b))\toright & \iff \rtol(\rho(b)) \sqleq y \\
  & \iff \leftunit \sqleq y \rightplus \rho(b) \\
  & \iff \rtol(y) \sqleq \rho(b) \\
  & \iff \rtol(y) \in \rho(b)\toleft \\
  & \iff \rtol(y) \in \lambda(b)\torighttoleft \\
  & \iff y \in \ltor[\lambda(b)\torighttoleft],
\end{align*}
  using the fact that $\rtol(y) \in X \iff y \in \ltor[X]$, by the definitions of the maps $\rtol$ and $\ltor$.
\end{proof}

  The existence of the commutative complemented DM completion immediately yields a bimonoidal version of Funayama's theorem for distributive lattices~\cite{funayama59}. This theorem, in its stronger form~\cite{bezhanishvili+gabelaia+jibladze13}, states that a distributive lattice has an embedding into a complete Boolean algebra which preserves all existing joins if and only if the distributive lattice satisfies the join-infinite distributive law for all existing joins.\footnote{We thank Guram Bezhanishvili for bringing this theorem to our attention.}

\begin{theorem}[Funayama's theorem for bimonoids] \label{thm: funayama}
  Let $\iota\colon \alg{A} \into \cdmalg{A}$ be a commutative complemented DM completion of a commutative bimonoid $\alg{A}$. Then the following are equivalent:
\begin{enumerate}[(i)]
\item each join in $\alg{A}$ is admissible,
\item the embedding $\iota$ preserves all existing joins,
\item some embedding of $\alg{A}$ into a commutative complete complemented bimonoid preserves all existing joins.
\end{enumerate}
\end{theorem}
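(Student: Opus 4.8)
The plan is to prove the cycle of implications (i) $\Rightarrow$ (ii) $\Rightarrow$ (iii) $\Rightarrow$ (i). The implication (i) $\Rightarrow$ (ii) is immediate from Fact~\ref{fact: admissible joins preserved}: a commutative complemented DM completion is in particular a commutative $\Delta_{1}$-extension, so $\iota$ preserves every admissible join of $\alg{A}$; if every join of $\alg{A}$ is admissible, then $\iota$ preserves all existing joins. The implication (ii) $\Rightarrow$ (iii) is essentially trivial once we observe that $\cdmalg{A}$ is itself a commutative complete complemented bimonoid, being a complete commutative involutive residuated lattice, and recalling that complemented bimonoids are term equivalent to involutive residuated pomonoids. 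Thus the embedding $\iota$ itself witnesses~(iii).

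The substance of the theorem lies in (iii) $\Rightarrow$ (i). Suppose $h\colon \alg{A} \into \alg{B}$ is an embedding into a commutative complete complemented bimonoid $\alg{B}$ which preserves all existing joins. The key observation is that, being complemented, $\alg{B}$ is term equivalent to an involutive residuated pomonoid, and all existing joins are admissible in a residuated posemigroup; hence multiplication in $\alg{B}$ distributes over every existing join. First I would fix an existing join $a = \bigvee_{i \in I} a_{i}$ in $\alg{A}$ and an arbitrary $y \in \alg{A}$. Since $h$ preserves joins, $h(a) = \bigvee_{i \in I} h(a_{i})$ in $\alg{B}$, and admissibility of this join in $\alg{B}$ yields $h(a \cdot y) = h(a) \cdot h(y) = \bigvee_{i \in I} (h(a_{i}) \cdot h(y)) = \bigvee_{i \in I} h(a_{i} \cdot y)$.

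It then remains to transfer this back to $\alg{A}$. Clearly $a_{i} \cdot y \leq a \cdot y$ in $\alg{A}$ for each $i$, so $a \cdot y$ is an upper bound of $\set{a_{i} \cdot y}{i \in I}$. If $c \in \alg{A}$ is any other upper bound, then $h(a_{i} \cdot y) \leq h(c)$ for all $i$, whence $h(a \cdot y) = \bigvee_{i \in I} h(a_{i} \cdot y) \leq h(c)$, and since $h$ is an order embedding we conclude $a \cdot y \leq c$. Therefore $a \cdot y = \bigvee_{i \in I} (a_{i} \cdot y)$ is the join taken in $\alg{A}$; by commutativity the symmetric condition holds as well, so the join $\bigvee_{i \in I} a_{i}$ is admissible. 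The main (indeed the only nontrivial) obstacle is recognizing that complemented bimonoids are residuated and hence enjoy admissible joins; everything else is a routine exploitation of the order-reflecting property of the embedding~$h$.
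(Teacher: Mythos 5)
Your proposal is correct and follows essentially the same route as the paper: (i)$\Rightarrow$(ii) via Fact~\ref{fact: admissible joins preserved}, (ii)$\Rightarrow$(iii) by noting $\cdmalg{A}$ itself witnesses the claim, and (iii)$\Rightarrow$(i) by observing that all joins are admissible in a complemented (hence residuated) bimonoid and pulling admissibility back along the order embedding. The only difference is presentational: you spell out in detail the transfer step ("admissible in $\alg{B}$, therefore also in $\alg{A}$") that the paper compresses into a single clause.
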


\begin{proof}
  If an embedding of $\alg{A}$ into a commutative complete complemented bimonoid $\alg{B}$ preserves the join $\bigvee X$, then $\bigvee X$ is admissible in $\alg{B}$ (since each join is admissible in an involutive residuated lattice), and therefore also in $\alg{A}$. Conversely, $\iota$ preserves each admissible join which exists in $\alg{A}$ by Fact~\ref{fact: admissible joins preserved}, therefore if each join in $\alg{A}$ is admissible, then each join in $\alg{A}$ is preserved by $\iota$.
\end{proof}

\begin{figure}
\caption{The algebras $\Lukthree$ and $\Lukthreeext$}
\label{fig: lukthree}
\begin{center}
\begin{tikzpicture}[scale=1,dot/.style={circle,fill,inner sep=2.5pt,outer sep=2.5pt}]
  \node (0) at (0, 0) [dot] {};
  \node (1/2) at (0, 1.5) [dot] {};
  \node (1) at (0, 3) [dot] {};
  \draw[-,thick] (0) -- (1/2) -- (1);
  \node (n1) at (0.5, 3) {$\elemone$};
  \node (n1/2) at (0.5, 1.5) {$\elem{a}$};
  \node (n0) at (0.5, 0) {$\elem{b}$};
  \node at (0, -1) {$\Lukthree$};
\end{tikzpicture}
\qquad
\begin{tikzpicture}[scale=1,dot/.style={circle,fill,inner sep=2.5pt,outer sep=2.5pt}]
  \node (b) at (0, 0) [dot] {};
  \node (a) at (0, 1) [dot] {};
  \node (atimesnega) at (0, 2) [dot] {};
  \node (1) at (-1, 3) [dot] {};
  \node (sideways) at (1, 3) [dot] {};
  \node (aplusnega) at (0, 4) [dot] {};
  \node (nega) at (0, 5) [dot] {};
  \node (negb) at (0, 6) [dot] {};
  \node[align=left] (nb) at (0.5, 0) {$\elem{b}$};
  \node[align=left] (na) at (0.5, 1) {$\elem{a}$};
  \node (natimesnega) at (0.5, 2) {$\elem{a} \comp{\elem{a}}$};
  \node (n1) at (-1.5, 3) {$\elemone$};
  \node[align=left] (nsideways) at (1.5, 3) {$\elem{a} \comp{\elem{b}}$};
  \node (naplusnega) at (0.85, 4) {$\elemone \vee \elem{a} \comp{\elem{b}}$};
  \node (nnega) at (0.5, 5) {$\comp{\elem{a}}$};
  \node (nnegb) at (0.5, 6) {$\comp{\elem{b}}$};
  \draw[-,thick] (b) -- (a) -- (atimesnega) -- (1) -- (aplusnega) -- (nega) -- (negb);
  \draw[-,thick] (atimesnega) -- (sideways) -- (aplusnega);
  \node at (0, -1) {$\Lukthreeext$ (joins of $x \comp{y}$)};
\end{tikzpicture}
\qquad
\begin{tikzpicture}[scale=1,dot/.style={circle,fill,inner sep=2.5pt,outer sep=2.5pt}]
  \node (b) at (0, 0) [dot] {};
  \node (a) at (0, 1) [dot] {};
  \node (atimesnega) at (0, 2) [dot] {};
  \node (1) at (-1, 3) [dot] {};
  \node (sideways) at (1, 3) [dot] {};
  \node (aplusnega) at (0, 4) [dot] {};
  \node (nega) at (0, 5) [dot] {};
  \node (negb) at (0, 6) [dot] {};
  \node[align=left] (nb) at (0.5, 0) {$\elem{b}$};
  \node[align=left] (na) at (0.5, 1) {$\elem{a}$};
  \node (natimesnega) at (1.25, 2) {$\elemone \wedge (\elem{b} + \comp{\elem{a}})$};
  \node (n1) at (-1.5, 3) {$\elemone$};
  \node[align=left] (nsideways) at (1.725, 3) {$\elem{b} + \comp{\elem{a}}$};
  \node (naplusnega) at (0.8, 4) {$\elem{a} + \comp{\elem{a}}$};
  \node (nnega) at (0.5, 5) {$\comp{\elem{a}}$};
  \node (nnegb) at (0.5, 6) {$\comp{\elem{b}}$};
  \draw[-,thick] (b) -- (a) -- (atimesnega) -- (1) -- (aplusnega) -- (nega) -- (negb);
  \draw[-,thick] (atimesnega) -- (sideways) -- (aplusnega);
  \node at (0, -1) {$\Lukthreeext$ (meets of $x + \comp{y}$)};
\end{tikzpicture}
\end{center}
\end{figure}

  Let us now illustrate how one can find the complemented DM completion of a small bimonoid. Consider the multiplicative reduct of the three-element MV-chain $\elemone > \elem{a} > \elem{b}$ viewed as a bimonoid. That is, $\elem{a} \cdot \elem{a} = \elem{b}$, $x \cdot \elem{b} = \elem{b} = \elem{b} \cdot x$, $x \cdot \elemone = x = \elemone \cdot x$, and $x + y \assign x \cdot y$. Let us call this bimonoid $\Lukthree$. Although multiplication and addition coincide in $\Lukthree$, they come apart in $\Lukthreeext$. In other words, even though the construction of expanding a pomonoid by $x + y \assign x \cdot y$ and $0 \assign 1$ is trivial on its own, it provides a way of constructing non-trivial bimonoids when combined with the complemented DM completion.

  To describe the algebra $\Lukthreeext$, we may first observe that $\elem{b} = \elem{b} \comp{\elemone} = \elem{b} \comp{\elem{a}} = \elem{b} \comp{\elem{b}}$. Of course, $\elem{a} = \elem{a} \comp{\elemone}$ and $\elemone = \elemone \comp{\elemone}$, since $\elemone$ (not $\elem{b}$!) is the additive unit of $\Lukthree$. Each element of $\Lukthreeext$ is now a join of a subset of $L \assign \{ \elem{b}, \elem{a}, \elemone, \elem{a} \comp{\elem{a}}, \elem{a} \comp{\elem{b}}, \comp{\elem{a}}, \comp{\elem{b}} \}$ as well as a meet of a subset of $R \assign \{ \elem{b}, \elem{a}, \elemone, \elem{b} + \comp{\elem{a}}, \elem{a} + \comp{\elem{a}}, \comp{\elem{a}}, \comp{\elem{b}} \}$. To find which joins of subsets of $L$ are distinct, we list subsets of $L$ of the form $\set{x \in L}{x \leq y}$ for $y \in R$:
\begin{align*}
  \{ \elem{b} \}, \{ \elem{b}, \elem{a} \}, \{ \elem{b}, \elem{a}, \elem{a} \comp{\elem{a}}, \elemone \}, \{ \elem{b}, \elem{a}, \elem{a} \comp{\elem{a}}, \elem{a} \comp{\elem{b}} \}, \{ \elem{b}, \elem{a}, \elem{a} \comp{\elem{a}}, \elemone, \elem{a} \comp{\elem{b}} \}, \{ \elem{b}, \elem{a}, \elem{a} \comp{\elem{a}}, \elemone, \elem{a} \comp{\elem{b}}, \comp{\elem{a}} \}, \{ \elem{b}, \elem{a}, \elem{a} \comp{\elem{a}}, \elemone, \elem{a} \comp{\elem{b}}, \comp{\elem{a}}, \comp{\elem{b}} \}
\end{align*}
  Taking intersection of these yields one more set: $\{ \elem{b}, \elem{a}, \elem{a} \comp{\elem{a}} \}$. These $8$ sets correspond to the distinct joins which exist in $\Lukthreeext$. It follows that the algebra $\Lukthreeext$ has precisely the structure shown in the middle part of Figure~\ref{fig: lukthree}. In particular, the complemented DM completion of a linear bimonoid need not be linear. The calculation is also facilitated by the fact that the elements of $L$ are ordered by $ \elem{b}< \elem{a}<  \elem{a} \comp{\elem{a}}< \elemone, \elem{a} \comp{\elem{b}}< \comp{\elem{a}}< \comp{\elem{b}}$ and that Galois closed sets have to be downsets of $L$.

  This~poset has two order-inverting involutions, depending on the behavior of $\elemone$ and $\elem{a} \comp{\elem{b}}$. However, we know that $\comp{\elemone} = \elemone$ (in each bimonoid $\comp{1} = 0$), hence the map $x \mapsto \comp{x}$ on $\Lukthreeext$ is the unique involution with two fixpoints. This allows us to describe $\Lukthree$ in terms of its meet generators, as shown in the right part of Figure~\ref{fig: lukthree}. Multiplication and addition is now fully determined by the formulas $a \comp{b} \cdot c \comp{d} = (a \cdot c) \comp{(b + d)}$ and $a + \comp{b} + c + \comp{d} = (a + c) + \comp{b \cdot d}$ and the fact that multiplication distributes over joins and addition over meets. For example, $\elem{a} \comp{\elem{b}} \cdot \elem{a} \comp{\elem{b}} = \elem{a}\elem{a} \cdot \comp{\elem{b} + \elem{b}} = \elem{b} \comp{\elem{b}} = \elem{b}$ and $\comp{\elem{a}} \cdot (\elemone \vee \elem{a} \comp{\elem{b}}) = \elemone \comp{\elem{a}} \vee \elem{a} \comp{\elem{a} + \elem{b}} = \comp{\elem{a}} \vee \elem{a} \comp{\elem{b}} = \comp{\elem{a}}$. Note that there is no need to verify that this algebra is indeed a commutative complemented $\ell$-bimonoid: we merely transformed the abstract definition of $\Lukthreeext$ as the commutative complemented DM completion of $\Lukthree$ (we already know that $\Lukthreeext$ exists) into a more tangible form.

\subsection{Complemented MacNeille completions: bisemigroups}
\label{subsec: macneille for bisemigroups}

  The construction of the commutative complemented DM completion of a commutative bimonoid may be extended to bisemigroups. This does not involve any substantial conceptual difficulty: we merely admit join generators of the forms $1$, $a$, $\comp{b}$ in addition to $a \comp{b}$, as well as meet generators of the forms $0$, $c$, $\comp{d}$ in addition to~$c + \comp{d}$. Given data which specifies under what conditions $a \leq 0$, $1 \leq b$, and $1 \leq 0$, the appropriate analogue of the involutive frame $\invframeofalg{A}$ is defined in the ``obvious'' way. The~proof that this construction works is a routine modification of the proof for bimonoids, however, it involves a lot of tedious case analysis. We~therefore merely sketch some of its parts.

  Let $\alg{A}$ be a commutative bisemigroup (an $\ell$-bisemigroup) in this subsection. Let $F$ be an upset of $\alg{A}$ (a lattice filter of $\alg{A}$), $I$ be a downset of $\alg{A}$ (a lattice ideal of $\alg{A}$), and let $\alpha \in \{ +, - \}$. (We admit the empty set as a lattice filter and a lattice ideal here.) We impose the following compatibility conditions on $F$, $I$, and $\alpha$:
\begin{itemize}
\item if $f \in F$, then $a \leq a \cdot f$ for each $a \in \alg{A}$,
\item if $i \in I$, then $a + i \leq a$ for each $a \in \alg{A}$,
\item if $F \cap I$ is non-empty, then $\alpha = +$,
\end{itemize}
  Such $F$, $I$, and $\alpha$ always exist: we may always take $F = I = \emptyset$. Relative to this data, the \emph{unital} commutative complemented DM completion of $\alg{A}$ is the unique complete commutative complemented bimonoid $\cdmalg{A}$ where
\begin{itemize}
\item elements of the forms $1$, $a$, $\comp{b}$, $a \comp{b}$ for $a, b \in \alg{A}$ are join dense,
\item elements of the forms $0$, $a$, $\comp{b}$, $a + \comp{b}$ for $a, b \in \alg{A}$ are meet dense,
\item $1 \leq a$ for $a \in \alg{A}$ if and only if $a \in F$,
\item $a \leq 0$ for $a \in \alg{A}$ if and only if $a \in I$, and
\item $1 \leq 0$ if and only if $\alpha = +$.
\end{itemize}
  The proof of uniqueness (indeed, universality) of $\cdmalg{A}$ carries over almost verbatim from the bimonoidal case, with some tedious case analysis thrown in. This completion again preserves all admissible meets and joins.

  The complemented DM completion $\cdmalg{A}$ is again obtained as the Galois algebra of a certain involutive $\alg{A}$-frame $\invframeofalg{A}$. Of~course, if $\alg{A}$ is a bisemigroup, we have to disregard the Gentzen conditions for $1$ and $0$ in the definition of an involutive $\alg{A}$-frame, since these are not part of the signature of $\alg{A}$.\footnote{The intermediate case where $\alg{A}$ has a multiplicative unit but not an additive unit or vice versa can be handled similarly.}

  The definition of the frame $\invframeofalg{A}$ needs to be modified as follows. The~set of join generators $L$ will consist of elements of four types, representing respectively $1$, $a$, $\comp{b}$, and $a \comp{b}$ for $a, b \in \alg{A}$. For the sake of simplicity, we shall simply write these as $1$, $a$, $\comp{b}$, and $a \comp{b}$, with the understanding that $a \comp{b}$ is to be interpreted as a formal pair consisting of $a$ and $b$, $\comp{b}$ as a formal pair consisting of $b$ and the sign $-$, and $a$ as a formal pair consisting of $a$ and the sign $+$. The set of meet generators $R$ will consist of elements of the form $0$, $a$, $\comp{b}$, or $a + \comp{b}$.

  We equip $L$ and $R$ with a monoidal structure in the obvious way: for example, $a \leftdot 1 = a$, $a \leftdot c = ac$, $a \leftdot \comp{d} = a \comp{d}$, and $a \leftdot c \comp{d} = ac \comp{d}$ etc. The maps $\ltorleft = \ltorright\colon L \to R$ and $\rtolleft = \rtolright\colon R \to L$ are defined as expected:
\begin{align*}
  \ltorleft(1) & = 0, & \ltorleft(a) & = \comp{a}, & \ltorleft(\comp{b}) & = b, & \ltorleft(a \comp{b}) & = b + \comp{a}, \\
  \rtolleft(0) & = 1, & \rtolleft(c) & = \comp{c}, & \rtolleft(\comp{d}) & = d, & \rtolleft(c + \comp{d}) & = d \comp{c}.
\end{align*}
  The relation $\sqleq$ between $L$ and $R$ is then defined as follows:
\begin{align*}
  1 \sqleq 0 & \iff \alpha = +, & a \sqleq 0 & \iff a \in I, & \comp{b} \sqleq 0 & \iff b \in F, & a \comp{b} \sqleq 0 & \iff a \leq b, \\
  1 \sqleq c & \iff c \in F, & a \sqleq c & \iff a \leq c, & \comp{b} \sqleq c & \iff b + c \in F, & a \comp{b} \sqleq c & \iff a \leq b + c, \\
  1 \sqleq \comp{d} & \iff d \in I, & a \sqleq \comp{d} & \iff a d \in I, & \comp{b} \sqleq \comp{d} & \iff d \leq b, & a \comp{b} \sqleq \comp{d} & \iff a d \leq b, \\
  1 \sqleq c + \comp{d} & \iff d \leq c, & a \sqleq c + \comp{d} & \iff a d \leq c, & \comp{b} \sqleq c + \comp{d} & \iff d \leq b + c, & a \comp{b} \sqleq c + \comp{d} & \iff a d \leq b + c.
\end{align*}

  The above definitions yield a commutative involutive frame $\invframeofalg{A}$. To turn it into an involutive $\alg{A}$-frame, we equip it with the maps $\lambda\colon A \to L$ and $\rho\colon A \to R$ such that $\lambda(a) = a$ and $\rho(a) = a$.

\begin{lemma}[Involutive frames from bisemigroups] \label{lemma: frames from bisemigroups}
  $\invframeofalg{A}$ is a faithful commutative involutive $\alg{A}$-frame.
\end{lemma}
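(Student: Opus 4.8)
The plan is to follow the proof of Lemma~\ref{lemma: frames from bimonoids} for the bimonoidal case, the conceptual content being identical; the extra work is purely a case analysis over the four types of join generators ($1$, $a$, $\comp{b}$, $a\comp{b}$) and the four types of meet generators ($0$, $c$, $\comp{d}$, $c + \comp{d}$). First I would check that $\alg{L}$ and $\alg{R}$ are commutative monoids. Closure under $\leftdot$ is the only mildly non-obvious point: one reads off from the multiplication table that the product of two generators again has one of the four admissible forms (e.g.\ $\comp{b} \leftdot \comp{d} = \comp{b + d}$ and $a\comp{b} \leftdot c\comp{d} = ac\,\comp{b + d}$), with $\leftunit = 1$ acting as a unit; associativity and commutativity then lift directly from those of $\cdot$ and $+$ in $\alg{A}$, and dually for $\rightplus$ on $\alg{R}$.

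The heart of the argument is nuclearity. Because $\alg{A}$ is commutative the two displayed lines in the definition of an involutive frame collapse pairwise, and the second line follows from the first by the order duality of the construction, which swaps $L$ with $R$, $\leftdot$ with $\rightplus$, and $\ltor$ with $\rtol$; so it suffices to establish $x \leftdot y \sqleq z \iff x \sqleq z \rightplus \ltor(y)$. The clean way to organise this is to observe that the defining table makes $x \sqleq z$ hold exactly when the corresponding inequality between the \emph{intended values} of $x$ and $z$ holds in $\alg{A}$, where the data $F$, $I$, $\alpha$ supply the truth values of the boundary comparisons involving $1$ and $0$ (namely $1 \leq a \iff a \in F$, $a \leq 0 \iff a \in I$, and $1 \leq 0 \iff \alpha = +$). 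Under this reading both sides of the nuclearity equivalence reduce, in each case, to a single inequality in $\alg{A}$ or to a single membership in $F$ or $I$, just as both sides reduced to $a\cdot c\cdot f \leq b + d + e$ in the bimonoidal proof. The one subtlety is that the cases where a $1$- or $0$-generator occurs hold precisely because of the three compatibility conditions on $F$, $I$, $\alpha$: the condition $a \leq a \cdot f$ for $f \in F$ lets a factor in $F$ be absorbed, $a + i \leq a$ for $i \in I$ dually absorbs a summand in $I$, and $F \cap I \neq \emptyset \Rightarrow \alpha = +$ guarantees $1 \leq 0$ whenever transitivity forces it.

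Next I would verify the bisemigroupal Gentzen conditions of Figure~\ref{fig: gentzen conditions}, omitting the rules for $1$ and $0$ since these are not in the signature of $\alg{A}$. The multiplication, join, and meet rules are handled exactly as in Lemma~\ref{lemma: frames from bimonoids} (using distributivity of $\cdot$ over $\vee$ for the join rule and monotonicity of $\cdot$ for the meet rule in the $\ell$-case), and the addition rule again follows from the frame hemidistributivity established in Lemma~\ref{lemma: frame hemidistributivity}; the only difference is that each rule must be rechecked against the four possible types of the side formulas, which introduces case analysis but no new idea. Identity, $\lambda(a) \sqleq \rho(a)$, is immediate since it reduces to $a \leq a$, and Cut, that $x \sqleq \rho(a)$ and $\lambda(a) \sqleq y$ imply $x \sqleq y$, becomes transitivity of $\leq$ once both hypotheses are translated into inequalities in $\alg{A}$ via the table, again a routine but lengthy case split. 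Finally faithfulness is immediate: $\lambda(a) \sqleq \rho(b)$ unfolds directly to $a \leq b$.

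The main obstacle is therefore not conceptual but organisational: managing the $4 \times 4$ case analysis for nuclearity and for Cut, and in particular checking that the three compatibility conditions on $F$, $I$, $\alpha$ are exactly what is needed to make the boundary cases (those involving the formal generators $1$ and $0$) go through. Everything else transfers essentially verbatim from the bimonoidal construction, which is why I would present only the representative cases and leave the remaining ones to the reader.
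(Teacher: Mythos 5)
Your overall plan coincides with the paper's: it too notes that $\langle L, \leftdot, 1 \rangle$ and $\langle R, \rightplus, 0 \rangle$ are clearly commutative monoids, gets faithfulness immediately from the definition of ${\sqleq}$ (namely $\lambda(a) \sqleq \rho(b)$ iff $a \leq b$), and dismisses the rest as tedious but routine case analysis, writing out only one verification in full. However, the one verification the paper writes out is Cut, and that choice exposes the weak point of your proposal: Cut is \emph{not} ``transitivity of $\leq$ once both hypotheses are translated into inequalities in $\alg{A}$.'' Take $x = \comp{b} \sqleq g$ and $g \sqleq \comp{d}$ with $g \in \alg{A}$: the hypotheses translate to $b + g \in F$ and $d \cdot g \in I$, and the conclusion $\comp{b} \sqleq \comp{d}$ translates to $d \leq b$. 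No transitivity is available here; the paper argues $d \leq d \cdot (g + b) \leq (d \cdot g) + b \leq b$, using the $F$-absorption condition, hemidistributivity of $\alg{A}$, and the $I$-absorption condition in succession. The same happens for $x = a\comp{b}$ against $y = \comp{d}$ or $y = c + \comp{d}$ (from $a \leq b + g$ and $d g \leq c$ one must derive $a d \leq b + c$ via $a d \leq (b+g) \cdot d \leq b + (d \cdot g) \leq b + c$), and the case $1 \sqleq g$, $g \sqleq 0$ is exactly where $F \cap I \neq \emptyset \implies \alpha = +$ is invoked. So the compatibility conditions and hemidistributivity do their real work inside Cut (and similarly inside the left Gentzen rule for addition, which also needs $I$-absorption in mixed cases such as $\lambda(a) \sqleq c$, $\lambda(b) \sqleq \comp{d}$), not where you place them.

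Conversely, you locate ``the one subtlety'' in nuclearity, but nuclearity needs no compatibility conditions at all: the maps $\ltorleft$, $\rtolleft$ and the monoid operations are defined precisely so that both sides of each nuclearity equivalence unfold to the \emph{same} table entry --- e.g.\ for $x = a\comp{b}$, $y = c\comp{d}$, $z = 0$, both $x \leftdot y \sqleq z$ and $x \sqleq z \rightplus \ltorleft(y)$ say $a c \leq b + d$. This misplacement is repairable, since anyone honestly executing the $4 \times 4$ case analysis would discover where the hypotheses on $F$, $I$, $\alpha$ are actually consumed; but as written, your account of Cut is a step that fails in roughly a third of its cases, and it is exactly the step the paper singles out as the substantive content of the lemma.
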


\begin{proof}
  The algebras $\langle L, \leftdot, 1 \rangle$ and $\langle R, \rightplus, 0 \rangle$ are clearly commutative monoids, and $\lambda(a) \sqleq \rho(b)$ if and only if $a \leq b$ by definition, in particular $\lambda(a) \sqleq \rho(a)$. It remains to check nuclearity, Cut, and the Gentzen conditions (excluding the conditions for the two units). This is a tedious but completely routine case analysis. We only rehearse the proof that Cut holds in $\invframeofalg{A}$.

  Suppose that $x \sqleq g$ and $g \sqleq y$ for some $g \in \alg{A}$. (i) Suppose that $x = 1$. If $y = 0$, then $g \in F \cap I$, so $\alpha = +$ and $x \sqleq y$. If $y = c \in \alg{A}$, then $g \in F$ and $g \leq c$, so $c \in F$ and $x \sqleq y$. If $y = \comp{d}$, then $a \in F$ and $a d \in I$, so $d \in I$ and $x \sqleq y$. If $y = c + \comp{d}$, then $g \in F$ and $d g \leq c$, so $d \leq c$ and $1 \sqleq x + y$.

  (ii) Suppose that $x = a$ for some $a \in \alg{A}$. If $y = 0$, then $a \leq g \in I$, so $a \in I$ and $x \sqleq y$. If $y = c$, then $a \leq g \leq c$, so $x \sqleq y$. If $y = \comp{d}$, then $a \leq g$ and $g d \in I$, so $a d \in I$ and $x \sqleq y$. If $y = c + \comp{d}$, then $a \leq g$ and $g d \leq c$, so $a d \leq c$ and $x \sqleq y$.

  (iii) Suppose that $x = \comp{b}$ for some $b \in \alg{A}$. If $y = 0$, then $b + g \in F$ and $g \in I$, so $b \in F$ and $x \sqleq y$. If $y = c$, then $b + g \in F$ and $g \leq c$, so $b + c \in F$ and $x \sqleq y$. If $y = \comp{d}$, then $g + b \in F$ and $d \cdot g \in I$, so $d \leq d \cdot (g + b) \leq (d \cdot g) + b \leq b$ and $x \sqleq y$. If $y = c + \comp{d}$, then $g + b \in F$ and $d g \leq c$, so $d \leq d \cdot (g + b) \leq (d \cdot g) + b \leq c + b$ and $x \sqleq y$.

  (iv) Suppose that $x = a \comp{b}$ for some $a, b \in \alg{A}$. If $y = 0$, then $a \leq b + g$ and $g \in I$, so $a \leq b$ and $x \sqleq y$. If $y = c$, then $a \leq b + g$ and $g \leq c$, so $a \leq b + c$ and $x \sqleq y$. If $y = \comp{d}$, then $a \leq b + g$ and $d \cdot g \in I$, so $a \cdot d \leq d \cdot (g + b) \leq (d \cdot g) + b \leq b$ and $x \sqleq y$. If $y = c + \comp{d}$, then $a \leq b + g$ and $d \cdot g \leq c$, so $a \cdot d \leq (b + g) \cdot d \leq b + (d \cdot g) \leq b + c$.
\end{proof}

\begin{theorem}[Unital complemented commutative DM completions exist]
  Each commutative ($\ell$-)bisemigroup $\alg{A}$ has a unital complemented commutative DM completion, relative to a choice of $F$, $I$, $\alpha$, namely the Galois algebra $\galois{\invframeofalg{A}}$ of the involutive $\alg{A}$-frame $\invframeofalg{A}$.
\end{theorem}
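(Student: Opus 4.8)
The plan is to run the argument of Theorem~\ref{thm: dm completions exist} again, now checking in turn each of the density and order clauses that define the unital completion relative to $F$, $I$, $\alpha$; the proof is parallel to the bimonoidal one, the only real cost being the bookkeeping forced by the several types of generators.

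First, all of the structural content comes for free. By Theorem~\ref{thm: galois algebras} and Lemma~\ref{lemma: frames from bisemigroups}, $\galois{\invframeofalg{A}}$ is a complete commutative involutive residuated lattice, hence a complete commutative complemented $\ell$-bimonoid, and since $\invframeofalg{A}$ is faithful the map $\iota\colon a \mapsto \rho(a)\toleft = \lambda(a)\torighttoleft$ is an embedding of $\alg{A}$ (an $\ell$-embedding in the lattice-ordered case). What remains is to match the join and meet generators of the frame with the intended elements of $\galois{\invframeofalg{A}}$ and to read off the three order conditions.

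For density, recall that the closures $\{x\}\torighttoleft$ of singletons $x \in L$ are always join dense in a Galois algebra, and dually the $\{y\}\tolefttoright$ for $y \in R$ are meet dense. Since $L$ consists precisely of the four types $1$, $a$, $\comp{b}$, $a\comp{b}$, join density follows once these are identified with $1$, $\iota(a)$, $\compl{\iota(b)}$, and $\iota(a) \cdot \compl{\iota(b)}$ respectively. The first two identifications are immediate, since $\{\leftunit\}\torighttoleft$ is the unit $1$ and $\lambda(a) = a$ gives $a\torighttoleft = \iota(a)$. The complement generator is handled exactly as in Theorem~\ref{thm: dm completions exist}: because $\ltor$ and $\rtol$ are mutually inverse bijections between $L$ and $R$ (read off their defining tables), we have $\rtol(y) \in X \iff y \in \ltor[X]$, and the same chain of equivalences via nuclearity yields $\ltor[\iota(b)] = \rtol(\rho(b))\toright$; applying $(-)\toleft$ and using $\compl{X} = (\ltor[X])\toleft$ gives $\compl{\iota(b)} = \rtol(\rho(b))\torighttoleft = \comp{b}\torighttoleft$, where $\comp{b} = \rtol(\rho(b))$ is exactly the complement generator. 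The mixed generator then factors in $\alg{L}$ as $a\comp{b} = \lambda(a) \leftdot \rtol(\rho(b))$, so by Lemma~\ref{lemma: nuclearity} we get $(a\comp{b})\torighttoleft = \lambda(a)\torighttoleft \cdot \rtol(\rho(b))\torighttoleft = \iota(a) \cdot \compl{\iota(b)}$. Meet density is the order dual.

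Finally, the three order conditions drop straight out of the defining table for $\sqleq$. With $1 = \{\leftunit\}\torighttoleft$ and $0 = \{\rightzero\}\toleft$ in $\galois{\invframeofalg{A}}$, we compute $1 \leq \iota(a) \iff \leftunit \sqleq \rho(a) \iff a \in F$, and $\iota(a) \leq 0 \iff \lambda(a) \sqleq \rightzero \iff a \in I$, and $1 \leq 0 \iff \leftunit \sqleq \rightzero \iff \alpha = +$, reading each equivalence off the corresponding entry of the table (together with $\lambda(a) = a = \rho(a)$). Uniqueness (indeed universality) of the completion transfers essentially verbatim from Theorem~\ref{thm: universality of dm completions} and Corollary~\ref{cor: uniqueness of dm completions}, the extra generator types being absorbed into the same case analysis, and preservation of admissible joins and meets follows as in Fact~\ref{fact: admissible joins preserved}. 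I expect the one genuine obstacle to be organizational rather than conceptual: confirming that $\ltor$ and $\rtol$ really are inverse bijections (so that the clean complement computation of the bimonoid case survives unchanged) and then keeping the four join-generator and four meet-generator types straight through the product and complement identifications and through the case analysis behind universality.
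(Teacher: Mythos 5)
Your proposal is correct and follows essentially the same route as the paper, which treats this theorem as an instance of the bimonoidal argument: Lemma~\ref{lemma: frames from bisemigroups} plus Theorem~\ref{thm: galois algebras} give the complete commutative involutive residuated lattice $\galois{\invframeofalg{A}}$ with a faithful embedding, and the density and order clauses are read off exactly as you do, by identifying the singleton closures of the four generator types with $1$, $\iota(a)$, $\comp{\iota(b)}$, $\iota(a)\cdot\comp{\iota(b)}$ (via the same nuclearity chain and the fact that $\ltor$, $\rtol$ are mutually inverse) and by consulting the table defining $\sqleq$ for the $F$, $I$, $\alpha$ conditions. The only cosmetic slip is writing the meet-dense elements as $\{y\}\tolefttoright \subseteq R$ rather than $y\toleft \subseteq L$, but your intended identification (equivalently, meet density via the involution applied to the join-dense elements) is exactly what the paper relies on.
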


\begin{corollary}[Embedding commutative bisemigroups into bimonoids]
  Each commutative ($\ell$-)bisemigroup embeds into a commutative ($\ell$-)bimonoid.
\end{corollary}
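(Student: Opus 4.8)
The plan is to read this off directly from the preceding theorem, which states that the Galois algebra $\galois{\invframeofalg{A}}$ is a unital complemented commutative DM completion of the given commutative bisemigroup $\alg{A}$ (relative to any admissible choice of $F$, $I$, $\alpha$). The one conceptual point worth isolating is that, although $\alg{A}$ carries no multiplicative or additive unit, its Galois algebra is always a genuine \emph{bimonoid}: the frame $\invframeofalg{A}$ of the present subsection is equipped with a join generator of type $1$ and a meet generator of type $0$, so by the construction of the Galois algebra $\galois{\invframeofalg{A}}$ acquires a multiplicative unit $\{ \leftunit \}\torighttoleft$ and an additive unit $\{ \rightzero \}\toleft$. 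By (the bisemigroup version of) Theorem~\ref{thm: galois algebras} it is in fact a complete commutative involutive residuated lattice, and hence in particular a commutative complemented $\ell$-bimonoid, so its bimonoidal (resp.\ $\ell$-bimonoidal) reduct is a commutative ($\ell$-)bimonoid.

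Concretely, I would first fix a commutative ($\ell$-)bisemigroup $\alg{A}$ and make the simplest admissible choice of parameters, namely $F = I = \emptyset$; the third compatibility condition (``if $F \cap I$ is non-empty, then $\alpha = +$'') is then vacuous, so either value of $\alpha$ is allowed. With this data in place, Lemma~\ref{lemma: frames from bisemigroups} tells us that $\invframeofalg{A}$ is a faithful commutative involutive $\alg{A}$-frame. Next I would invoke the bisemigroup analogue of Theorem~\ref{thm: galois algebras}: the map $a \mapsto \rho(a)\toleft = \lambda(a)\torighttoleft$ is a homomorphism of ($\ell$-)bisemigroups from $\alg{A}$ into the commutative ($\ell$-)bimonoid $\galois{\invframeofalg{A}}$, and it is an order embedding because the frame is faithful. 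This yields the desired embedding, and the corollary follows.

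The only step requiring a little attention — and the nearest thing to an obstacle — is verifying that this map really is an embedding of \emph{bisemigroups} (resp.\ $\ell$-bisemigroups), i.e.\ that it preserves $\cdot$, $+$, and in the lattice-ordered case also $\vee$ and $\wedge$. This is exactly the content of the Quasi-homomorphism Lemma (Lemma~\ref{lemma: quasi-homomorphisms}), and the relevant parts of that lemma invoke only the Gentzen conditions for multiplication, addition, join, and meet — none of which mention the units $1$ or $0$. Since precisely those non-unit Gentzen conditions were checked in Lemma~\ref{lemma: frames from bisemigroups}, the quasi-homomorphism argument transfers verbatim from the bimonoidal case, and no computation beyond that lemma is needed.
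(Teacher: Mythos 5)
Your proposal is correct and takes essentially the same route as the paper: the corollary is read off directly from the preceding theorem, since the Galois algebra $\galois{\invframeofalg{A}}$ (for any admissible choice of $F$, $I$, $\alpha$, e.g.\ $F = I = \emptyset$) is a complete commutative complemented ($\ell$-)bimonoid into which $\alg{A}$ embeds via $a \mapsto \rho(a)\toleft = \lambda(a)\torighttoleft$. Your observation that only the non-unit Gentzen conditions are needed for the quasi-homomorphism argument matches the paper's explicit remark that the conditions for $1$ and $0$ must be disregarded when $\alg{A}$ is a bisemigroup.
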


\section{Bimonoids of fractions}
\label{sec: bimonoids of fractions}

  We saw in the previous section that each commutative bimonoid $\alg{A}$ has a commutative complemented Dedekind--MacNeille (DM) completion $\cdmalg{A}$, where each element is a join of elements of the form $a \comp{b}$ for $a, b \in \alg{A}$. This completion is unique up to isomorphism, it contains each commutative admissible $\Delta_{1}$-extension of~$\alg{A}$, and it preserves all admissible meets and joins (Theorems~\ref{thm: universality of dm completions} and \ref{thm: dm completions exist} and Fact~\ref{fact: admissible joins preserved}).

  This answers the question of whether commutative bimonoids have complemented extensions. However, some bimonoids enjoy better-behaved complemented extensions than others: each element in the group of fractions of a cancellative commutative monoid has the form $a \comp{b}$, rather than merely being a join of such elements. We call such complemented extensions \emph{complemented bimonoids of fractions}. In this section, we determine which commutative bimonoids have a commutative complemented bimonoid of fractions. We~also show how to construct complemented bimonoids of fractions if they exist. Moreover, we prove that this construction sometimes yields a categorical equivalence between a class of residuated commutative bimonoids and a class of commutative complemented bimonoids with an interior operator.

\subsection{Definition and existence of bimonoids of fractions}
\label{subsec: existence of bimonoids of fractions}

  We define a bimonoid of fractions as a special kind of complemented admissible $\Delta_{1}$-extension.

\begin{definition}[Bimonoids of fractions]
  A commutative bimonoid $\alg{B}$ is called a \emph{commutative bimonoid of fractions} of a commutative bimonoid $\alg{A}$ if there is an embedding of bimonoids $\iota\colon \alg{A} \into \alg{B}$ such that each element of $\alg{B}$ has the form $\iota(a) \cdot \comp{\iota(b)}$ as well as $\iota(c) + \comp{\iota(d)}$, for some $a, b, c, d \in \alg{A}$.
\end{definition}

  In the above definition, we do not assume that $\comp{\iota(b)}$ exists for each $b \in \alg{A}$. As with other $\Delta_{1}$-extensions, we generally disregard the embedding $\iota$ and treat $\alg{A}$ as a sub-bimonoid of~$\alg{B}$. The basic facts about commutative admissible $\Delta_{1}$-extensions and commutative complemented DM completions proved in Subsection~\ref{subsec: macneille definition} also apply to bimonoids of fractions and complemented bimonoids of fractions, either as a direct corollary of results about $\Delta_{1}$-extensions, or by an analogous proof.

  In particular, the embedding $\iota$ preserves all admissible joins and meets. Consequently, if $\alg{A}$ and $\alg{B}$ are $\ell$-bimonoids, then $\iota$ is an embedding of $\ell$-bimonoids. Moreover, commutative complemented bimonoids of fractions are universal among all commutative bimonoids of fractions, and thus unique up to isomorphism.

\begin{theorem}[Universality of complemented bimonoids of fractions] \label{thm: universality of bimonoids of fractions}
    Let $\iota\colon \alg{A} \to \alg{B}$ be an isomorphism of commutative bimonoids and let $\extalg{A}$ and $\extalg{B}$ be commutative bimonoids of fractions of~$\alg{A}$ and $\alg{B}$. If $\extalg{B}$ is complemented, then there is a unique embedding of bimonoids $\overline{\iota}\colon \extalg{A} \to \extalg{B}$ which extends the isomorphism $\iota$.
\end{theorem}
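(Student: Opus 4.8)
The plan is to reproduce the argument for Theorem~\ref{thm: universality of dm completions}, exploiting the fact that here each element of a bimonoid of fractions is a \emph{single} product $a \cdot \comp{b}$ rather than a join of such products. This is exactly why completeness of $\extalg{B}$ is not needed: no infinite joins are formed, and the only use of the hypothesis is that $\extalg{B}$ be complemented, so that $\comp{\iota(b)}$ exists in $\extalg{B}$ for every $b$. First I would record that a commutative bimonoid of fractions is in particular a commutative admissible $\Delta_{1}$-extension, since the elements $a \cdot \comp{b}$ are trivially join dense and the elements $c + \comp{d}$ meet dense (each element is literally of both forms). Consequently Lemmas~\ref{lemma: joins below meets}, \ref{lemma: joins below joins}, and \ref{lemma: meets below joins} apply to both $\extalg{A}$ and $\extalg{B}$. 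I would then define $\overline{\iota}\colon \extalg{A} \to \extalg{B}$ by $\overline{\iota}(a \cdot \comp{b}) \assign \iota(a) \cdot \comp{\iota(b)}$, the right-hand side being available precisely because $\extalg{B}$ is complemented.

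The key step is well-definedness together with the order-embedding property, and this reduces entirely to transferring conditions through the isomorphism $\iota$. Specializing Lemma~\ref{lemma: joins below joins} to singletons gives that $a \cdot \comp{b} \leq_{\extalg{A}} c \cdot \comp{d}$ holds if and only if $x \cdot c \leq_{\alg{A}} y + d$ implies $x \cdot a \leq_{\alg{A}} y + b$ for all $x, y \in \alg{A}$, a condition phrased purely inside $\alg{A}$. The same lemma in $\extalg{B}$ expresses $\iota(a) \cdot \comp{\iota(b)} \leq_{\extalg{B}} \iota(c) \cdot \comp{\iota(d)}$ by the corresponding condition in $\alg{B}$, and since $\iota$ is an isomorphism these two conditions are equivalent. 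Hence $a \cdot \comp{b} \leq_{\extalg{A}} c \cdot \comp{d}$ if and only if $\iota(a) \cdot \comp{\iota(b)} \leq_{\extalg{B}} \iota(c) \cdot \comp{\iota(d)}$; reading this as an equivalence of inequalities in both directions yields that $\overline{\iota}$ is well defined (independent of the product representation) and an order-embedding, hence injective.

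For the operations, I would use De~Morgan (Proposition~\ref{prop: de morgan laws}): since the relevant complements exist, $\comp{b} \cdot \comp{d} = \comp{b + d}$, so $(a \cdot \comp{b}) \cdot (c \cdot \comp{d}) = (a \cdot c) \cdot \comp{(b + d)}$, and preservation of products follows because $\iota$ is a bimonoid homomorphism and $\extalg{B}$ satisfies the dual De~Morgan law. Preservation of sums I would obtain from the additive representation: using the singleton forms of Lemmas~\ref{lemma: joins below meets} and \ref{lemma: meets below joins}, the two inequalities witnessing $a \cdot \comp{b} = c + \comp{d}$ in $\extalg{A}$ become conditions in $\alg{A}$ that transfer through $\iota$, yielding $\overline{\iota}(c + \comp{d}) = \iota(c) + \comp{\iota(d)}$ and hence additivity by the dual computation. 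Finally $\overline{\iota}$ extends $\iota$ because $a = a \cdot \comp{0}$ with $\comp{0} = 1$ gives $\overline{\iota}(a) = \iota(a)$, and uniqueness is immediate: any embedding extending $\iota$ preserves the existing complements $\comp{b}$, so it is forced to send $a \cdot \comp{b}$ to $\iota(a) \cdot \comp{\iota(b)}$.

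The main obstacle is the same one that the identity $\overline{\iota}_{-} = \overline{\iota}_{+}$ resolves in Theorem~\ref{thm: universality of dm completions}, namely checking that $\overline{\iota}$ does not depend on whether an element is presented multiplicatively as $a \cdot \comp{b}$ or additively as $c + \comp{d}$. This is the only place requiring the ``mixed'' comparisons of a product against a sum and vice versa, and it is handled by the singleton instances of Lemmas~\ref{lemma: joins below meets} and \ref{lemma: meets below joins}; everything else is routine transport of first-order conditions on $\alg{A}$ across the isomorphism~$\iota$.
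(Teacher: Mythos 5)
Your proposal is correct and follows essentially the same route as the paper's proof: the paper likewise defines the map on multiplicative representations $a\comp{b} \mapsto \iota(a)\comp{\iota(b)}$ (and its additive twin), gets well-definedness and the order-embedding property from the singleton case of Lemma~\ref{lemma: joins below joins}, and uses Lemmas~\ref{lemma: joins below meets} and~\ref{lemma: meets below joins} exactly where you do, to reconcile the multiplicative and additive presentations of an element. Your formulation of defining one map and checking it respects additive representations is just a repackaging of the paper's ``$\overline{\iota}_{+} = \overline{\iota}_{-}$'' step, and your explicit uniqueness argument via preservation of existing complements is what the paper leaves implicit.
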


\begin{proof}
  The proof is entirely analogous to the proof of the universality of complemented DM completions (Theorem~\ref{thm: universality of dm completions}). We define the maps $\overline{\iota}_{\pm}\colon \extalg{A} \to \extalg{B}$ as
\begin{align*}
  & \overline{\iota}_{+} \colon a \comp{b} \mapsto \iota(a) \comp{\iota(b)}, & & \overline{\iota}_{-} \colon a + \comp{b} \mapsto \iota(a) + \comp{\iota(b)}.
\end{align*}
  These maps are well-defined order-embeddings by Lemma~\ref{lemma: joins below joins} (and its order dual). The map $\overline{\iota}_{+}$ preserves products (and the multiplicative identity) by definition and the map $\overline{\iota}_{-}$ preserves sums (and the additive identity) by definition. But $\overline{\iota}_{+}(x) = \overline{\iota}_{-}(x)$ for each $x \in \extalg{A}$ by Lemmas~\ref{lemma: joins below meets}~and~\ref{lemma: meets below joins}. Let~us denote this common value by $\overline{\iota}(x)$. Then $\overline{\iota}$ is an embedding of bimonoids. Conversely, it is clear from its definition that $\overline{\iota}$ is the only embedding of $\extalg{A}$ into $\extalg{B}$ which extends $\iota$.
\end{proof}

\begin{corollary}[Uniqueness of complemented bimonoids of fractions] \label{cor: uniqueness of twist products}
  Let $\iota\colon \alg{A} \to \alg{B}$ be an isomorphism of commutative bimonoids and $\extalg{A}$ and $\extalg{B}$ be commutative complemented bimonoids of fractions of $\alg{A}$ and $\alg{B}$. Then there is a unique isomorphism $\overline{\iota}\colon \extalg{A} \to \extalg{B}$ which extends~$\iota$.
\end{corollary}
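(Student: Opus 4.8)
The plan is to derive this corollary as a formal consequence of the universality theorem (Theorem~\ref{thm: universality of bimonoids of fractions}), via the standard argument that a universal object is unique up to a unique isomorphism. Since $\extalg{B}$ is complemented, Theorem~\ref{thm: universality of bimonoids of fractions} applied to the isomorphism $\iota$ yields a unique embedding of bimonoids $\overline{\iota}\colon \extalg{A} \into \extalg{B}$ extending $\iota$. Symmetrically, $\iota^{-1}\colon \alg{B} \to \alg{A}$ is again an isomorphism of commutative bimonoids, and since $\extalg{A}$ is complemented by hypothesis, the same theorem applied to $\iota^{-1}$ yields a unique embedding $\overline{\iota^{-1}}\colon \extalg{B} \into \extalg{A}$ extending $\iota^{-1}$. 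The candidate isomorphism is $\overline{\iota}$, and the remaining task is to show that $\overline{\iota^{-1}}$ is its two-sided inverse.

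For this I would invoke the uniqueness clause of the theorem once more, now applied to the identity automorphism $\mathrm{id}_{\alg{A}}\colon \alg{A} \to \alg{A}$ with both bimonoids of fractions taken to be $\extalg{A}$. The composite $\overline{\iota^{-1}} \circ \overline{\iota}\colon \extalg{A} \to \extalg{A}$ is a composite of two embeddings, hence an embedding, and it extends $\iota^{-1} \circ \iota = \mathrm{id}_{\alg{A}}$; but the identity map $\mathrm{id}_{\extalg{A}}$ is also an embedding extending $\mathrm{id}_{\alg{A}}$, so by the uniqueness of such an extension these two maps coincide, i.e.\ $\overline{\iota^{-1}} \circ \overline{\iota} = \mathrm{id}_{\extalg{A}}$. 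The same argument with the roles of $\alg{A}$ and $\alg{B}$ exchanged gives $\overline{\iota} \circ \overline{\iota^{-1}} = \mathrm{id}_{\extalg{B}}$.

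Consequently $\overline{\iota}$ is a bijection whose inverse is $\overline{\iota^{-1}}$. Since both $\overline{\iota}$ and $\overline{\iota^{-1}}$ are order-embeddings and homomorphisms of bimonoids, $\overline{\iota}$ is an order isomorphism which preserves and reflects both monoidal structures, i.e.\ an isomorphism of bimonoids extending $\iota$. Uniqueness of this isomorphism is immediate from the uniqueness of the extending embedding already guaranteed by Theorem~\ref{thm: universality of bimonoids of fractions}: any isomorphism extending $\iota$ is in particular an embedding extending $\iota$, and there is only one of those.

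I expect no genuine obstacle here, since the statement is a routine ``unique up to unique isomorphism'' corollary of universality; the only point requiring care is to apply the uniqueness half of the theorem (to the identity on $\alg{A}$ and on $\alg{B}$) in order to pin the two composites down as identities, rather than merely knowing abstractly that two universal objects are isomorphic.
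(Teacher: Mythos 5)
Your proof is correct, and it is essentially the argument the paper intends: the corollary is stated without proof precisely because it follows from Theorem~\ref{thm: universality of bimonoids of fractions} by the standard ``universal objects are unique up to a unique isomorphism'' argument, which you have spelled out carefully (including the key step of applying the uniqueness clause to $\mathrm{id}_{\alg{A}}$ and $\mathrm{id}_{\alg{B}}$ to identify the two composites with the identities). The only available shortcut you did not use is that surjectivity of $\overline{\iota}$ can also be read off directly from its explicit form $a\,\comp{b} \mapsto \iota(a)\,\comp{\iota(b)}$, since every element of $\extalg{B}$ has the form $c\,\comp{d}$ with $c, d \in \alg{B}$ and $\iota$ is onto; but your route is equally valid.
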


  In particular, if $\alg{A}$ has a commutative complemented bimonoid of fractions, then it is unique up to isomorphism and we denote it $\fracalgA$. Throughout the following, $\alg{A}$ denotes a commutative bimonoid, and $\cdmalg{A}$ denotes a commutative complemented DM completion of $\alg{A}$ (unique up to isomorphism).

\begin{proposition}[Existence of bimonoids of fractions] \label{prop: existence of bimonoid of fractions}
  The following are equivalent for each commutative bimonoid $\alg{A}$:
\begin{enumerate}[(i)]
\item $\alg{A}$ has a commutative complemented bimonoid of fractions,
\item the elements of the form $a \cdot \comp{b}$ form a complemented sub-bimonoid of~$\cdmalg{A}$,
\item the elements of the form $a + \comp{b}$ form a complemented sub-bimonoid of~$\cdmalg{A}$,
\item for each $a, b \in \alg{A}$ there are $x, y \in \alg{A}$ such that $a \cdot \comp{b} = x + \comp{y}$ in~$\cdmalg{A}$.
\item for each $a, b \in \alg{A}$ there are $x, y \in \alg{A}$ such that $a + \comp{b} = x \cdot \comp{y}$ in~$\cdmalg{A}$.
\end{enumerate}
\end{proposition}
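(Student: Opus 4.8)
The plan is to exploit the fact that $\cdmalg{A}$ is complemented, so that the map $x \mapsto \comp{x}$ is an order-reversing anti-automorphism of $\cdmalg{A}$ interchanging $\cdot$ with $+$ and $0$ with $1$; by the De~Morgan laws (Proposition~\ref{prop: de morgan laws}) it sends the element $a \cdot \comp{b}$ to $\comp{a \cdot \comp{b}} = b + \comp{a}$. Hence this involution carries the family $S = \set{a \cdot \comp{b}}{a, b \in \alg{A}}$ onto the family $T = \set{a + \comp{b}}{a, b \in \alg{A}}$ and vice versa, and it carries (complemented) sub-bimonoids to (complemented) sub-bimonoids. This immediately yields the two ``mirror'' equivalences (ii)$\Leftrightarrow$(iii) and (iv)$\Leftrightarrow$(v): applying $\comp{\phantom{x}}$ to an identity $a \cdot \comp{b} = x + \comp{y}$ produces $b + \comp{a} = y \cdot \comp{x}$, so as $a,b$ range over $\alg{A}$ the conditions (iv) and (v) are literally the same condition read through the involution. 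It therefore remains to prove (i)$\Leftrightarrow$(ii) and (ii)$\Leftrightarrow$(iv); everything else follows.

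For (i)$\Rightarrow$(ii) I would invoke the universality of complemented DM completions (Theorem~\ref{thm: universality of dm completions}). A commutative complemented bimonoid of fractions $\fracalgA$ is a complemented commutative $\Delta_{1}$-extension of $\alg{A}$ (each of its elements is simultaneously join and meet dense, being of both forms) and hence admissible, so there is a unique complete embedding $\phi\colon \fracalgA \into \cdmalg{A}$ fixing $\alg{A}$. Since $\phi$ preserves products and complements, it sends $a \cdot \comp{b}$ computed in $\fracalgA$ to $a \cdot \comp{b}$ computed in $\cdmalg{A}$; as every element of $\fracalgA$ has the form $a \cdot \comp{b}$, the image $\phi[\fracalgA]$ is exactly $S$, a complemented sub-bimonoid isomorphic to $\fracalgA$. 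Conversely, for (ii)$\Rightarrow$(i) one checks that $S$ contains $\alg{A}$ (as $a = a \cdot \comp{0}$, using $\comp{0} = 1$) and, being a complemented sub-bimonoid, is a commutative involutive residuated pomonoid each of whose elements is of the form $a \cdot \comp{b}$ and — applying (ii)$\Rightarrow$(iv) below — also of the form $c + \comp{d}$; thus $S$ is a commutative complemented bimonoid of fractions of $\alg{A}$.

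The heart of the argument is (ii)$\Leftrightarrow$(iv). For (ii)$\Rightarrow$(iv): a complemented sub-bimonoid is closed under the complementation of $\cdmalg{A}$ (since complements are unique, the complement taken inside $S$ coincides with the one taken in $\cdmalg{A}$), whence $\comp{a \cdot \comp{b}} = b + \comp{a} \in S$ equals some $c \cdot \comp{d}$, giving $a \cdot \comp{b} = \comp{c \cdot \comp{d}} = d + \comp{c}$. The reverse implication (iv)$\Rightarrow$(ii) is where I expect the only real work, namely upgrading the pointwise condition (iv) to genuine closure of $S$. Closure under multiplication and the presence of $0,1$ are immediate from $(a \cdot \comp{b})(c \cdot \comp{d}) = (ac) \cdot \comp{(b + d)}$; closure under complementation follows from (v) since $\comp{a \cdot \comp{b}} = b + \comp{a}$ is then of the form $s \cdot \comp{t}$. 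The delicate point is closure under addition: rewriting each summand via (iv) as $a \cdot \comp{b} = x + \comp{y}$ gives $(a \cdot \comp{b}) + (c \cdot \comp{d}) = (x + z) + \comp{(y \cdot w)}$, which lands in $T$, and one then needs (v) again to pull this element of $T$ back into $S$. Thus the crux is the interplay between (iv) and its dual (v): neither alone yields closure under both binary operations, and only together do they show that the family of fractions $a \cdot \comp{b}$ is closed under everything, hence forms $\fracalgA$.
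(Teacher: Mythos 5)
Your proof is correct and follows essentially the same route as the paper's: the De Morgan involution gives (ii)$\Leftrightarrow$(iii) and (iv)$\Leftrightarrow$(v), universality (you invoke Theorem~\ref{thm: universality of dm completions}, the paper its analogue for bimonoids of fractions) gives (i)$\Rightarrow$(ii), and the closure argument for products, sums, units and complements gives (iv)$\Rightarrow$(ii). One small repair: the coincidence of complements computed in the sub-bimonoid $S$ with those computed in $\cdmalg{A}$ does not follow from uniqueness alone (a priori the $\cdmalg{A}$-complement might lie outside $S$, and uniqueness inside $S$ only compares elements of $S$); it follows instead from the characterization of complements in bimonoids ($y \cdot x \leq 0$ and $1 \leq x + y$), which involves no quantification over the ambient algebra and is therefore absolute between $S$ and $\cdmalg{A}$.
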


\begin{proof}
  For the implication from (i) to (ii) see the previous proof. The converse implication holds by the definition of a bimonoid of fractions. The equivalence between (ii) and (iii), as well as between (iv) and (v), holds because $\comp{a \cdot \comp{b}} = b + \comp{a}$ and $\comp{a + \comp{b}} = b \cdot \comp{a}$. The last equivalence holds because elements of the form $a + \comp{b}$ are closed under addition and contain the additive unit, while elements of the form $a \cdot \comp{b}$ are closed under multiplication and contain the multiplicative unit.
\end{proof}

  The canonical example of a complemented bimonoid of fractions is the group of fractions of a cancellative commutative monoid (the partial order being the equality relation). On the other hand, many bimonoids do not have complemented bimonoids of fractions. If $\alg{A}$ is a finite distributive lattice, then $\cdmalg{A}$ is the free Boolean extension of $\alg{A}$. In~general the elements of the form $a \wedge \comp{b}$ do not form a subalgebra of~$\cdmalg{A}$. For~example, if $\alg{A}$ is the four-element chain $\elem{p} < \elem{q} < \elem{r} < \elem{s}$, then $\elem{q} \vee \comp{\elem{r}}$ does not have the form $a \wedge \comp{b}$ in~$\cdmalg{A}$.

  In order to use the previous proposition to establish the existence of complemented bimonoids of fractions for certain classes of bimonoids, we need to be able to describe the condition $a \cdot \comp{b} = x + \comp{y}$ directly in terms of~$\alg{A}$ rather than $\cdmalg{A}$. The~following fact shows that this condition can be expressed by a universal sentence in the language of bimonoids. The condition that a commutative bimonoid has a commutative complemented bimonoid of fractions can therefore be expressed by an elementary $\Pi_{3}$-sentence in the language of bimonoids. This may seem like a rather complex description, but note that Heyting algebras are described by a sentence of the same logical complexity in the language of distributive lattices:
\begin{align*}
  (\forall a b \in \alg{A}) \, (\exists x \in \alg{A}) \, (\forall c \in \alg{A}) \, (a \wedge c \leq b \implies c \leq x).
\end{align*}

\begin{fact} \label{fact: transformation conditions}
  Let $\alg{A}$ be a commutative bimonoid and $a,b,x,y \in \alg{A}$. Then $a \cdot \comp{b} = x + \comp{y}$ in $\cdmalg{A}$ if and only if $a \cdot y \leq b + x$ and
\begin{align*}
  (\forall p q u v \in \alg{A}) \, ( u \cdot y \leq v + x ~ \& ~ a \cdot q \leq b + p \implies u \cdot q \leq v + p ).
\end{align*}
  If $\alg{A}$ is residuated, this is equivalent to $y \leq a \rightarrow (b + x)$ and
\begin{align*}
  (\forall p q \in \alg{A}) \, (a \rightarrow (b + p)) \cdot (y \rightarrow (x + q)) \leq p + q.
\end{align*}
  If some $y$ satisfies these conditions, then in particular $y \assign a \rightarrow (b + x)$ does. 
\end{fact}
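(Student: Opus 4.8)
The plan is to reduce the claimed equality in $\cdmalg{A}$ to its two defining inequalities and apply the comparison lemmas established for the complemented DM completion, exploiting that $a \comp{b}$ is a join generator and $x + \comp{y}$ is a meet generator of $\cdmalg{A}$. Thus $a \comp{b} = x + \comp{y}$ holds precisely when both $a \comp{b} \leq x + \comp{y}$ and $x + \comp{y} \leq a \comp{b}$ hold.

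For the first inequality I would write $x + \comp{y}$ as $\comp{y} + x$ and apply the singleton case of Lemma~\ref{lemma: joins below meets} with $a_i = a$, $b_i = b$, $c_j = y$, $d_j = x$; this makes $a \comp{b} \leq x + \comp{y}$ equivalent to $a \cdot y \leq b + x$, which is the first displayed condition. In the residuated case, residuating the multiplication turns this into the equivalent form $y \leq a \rightarrow (b + x)$. For the reverse inequality I would apply the singleton case of Lemma~\ref{lemma: meets below joins} to the meet generator $\comp{y} + x$ and the join generator $a \comp{b}$ (renaming the lemma's bound variables $x,y$ to $q,p$ to avoid clashing with the fixed $x,y$, and using commutativity of both operations). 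This yields exactly the universally quantified implication $(\forall p q u v)(u y \leq v + x \ \& \ a q \leq b + p \implies u q \leq v + p)$, so together with the first condition we obtain the non-residuated characterization.

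To pass to the residuated reformulation of the second condition, the key observation is that the universal quantifiers over $u$ and $q$ can be collapsed to their largest admissible values. By residuation the constraints $u y \leq v + x$ and $a q \leq b + p$ are equivalent to $u \leq y \rightarrow (v + x)$ and $q \leq a \rightarrow (b + p)$, so by monotonicity of multiplication the implication holds for all $u,q$ if and only if the product of these two maxima lies below $v + p$. Substituting the maxima (using the counit $a \cdot (a \rightarrow (b + p)) \leq b + p$ to confirm they satisfy the hypotheses) and renaming $v$ to $q$ gives precisely $(a \rightarrow (b + p)) \cdot (y \rightarrow (x + q)) \leq p + q$. I expect this collapse to be the main step requiring care: one must recognize that an implication between two residuated constraints and a product conclusion is equivalent to a single product-of-residuals inequality, and check both directions (taking the maxima for one direction, monotonicity for the other).

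Finally, for the concluding sentence I would argue that $y_0 := a \rightarrow (b + x)$ always satisfies the first condition, since $a \cdot (a \rightarrow (b + x)) \leq b + x$, and that it is the largest such element: any $y$ with $a y \leq b + x$ satisfies $y \leq y_0$. It then suffices to check that $y_0$ inherits the second condition from an arbitrary solution $y$. In the residuated form the factor $y \rightarrow (x + q)$ is antitone in $y$, so replacing $y$ by the larger $y_0$ only decreases it; hence $(a \rightarrow (b + p)) \cdot (y_0 \rightarrow (x + q)) \leq (a \rightarrow (b + p)) \cdot (y \rightarrow (x + q)) \leq p + q$, and $y_0$ satisfies the second condition as well. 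Thus whenever a solution exists, $y_0$ is itself a solution.
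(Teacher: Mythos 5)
Your proposal is correct and follows essentially the same route as the paper's proof: you split the equality into the two inequalities, obtain $a \cdot y \leq b + x$ for the first one, get the quantified implication for the second (citing the singleton case of Lemma~\ref{lemma: meets below joins} directly, where the paper re-derives that same content inline from join density together with Lemmas~\ref{lemma: joins below meets} and~\ref{lemma: joins below joins}), and collapse the quantifiers over $u$ and $q$ by residuation and monotonicity exactly as the paper does one variable at a time. Your explicit argument for the final sentence --- that $y \assign a \rightarrow (b + x)$ is the largest solution of the first condition and inherits the second by antitonicity of $y \mapsto y \rightarrow (x + q)$ --- is a correct addition to a step the paper leaves implicit.
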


\begin{proof}
  The inequality $a \comp{b} \leq x + \comp{y}$ in $\cdmalg{A}$ is equivalent to $a y \leq b + x$ and
\begin{align*}
  x + \comp{y} \leq a \comp{b}
  & \iff (\forall uv \in \alg{A}) \left( u \comp{v} \leq x + \comp{y} \implies u \comp{v} \leq a \comp{b} \right) \\
  & \iff (\forall uv \in \alg{A}) \left( u y \leq v + x \implies (\forall p q \in \alg{A}) \left( a q \leq b + p \implies u q \leq v + p \right) \right) \\
  & \iff (\forall pquv \in \alg{A}) \left( u y \leq v + x ~ \& ~ a q \leq b + p \implies u q \leq v + p \right) \\
  & \iff (\forall pquv \in \alg{A}) \left( u y \leq v + x ~ \& ~ q \leq a \rightarrow (b + p) \implies q \leq u \rightarrow (v + p) \right) \\
  & \iff (\forall puv \in \alg{A}) \left( u y \leq v + x \implies a \rightarrow (b + p) \leq u \rightarrow (v + p) \right) \\
  & \iff (\forall puv \in \alg{A}) \left( u \leq y \rightarrow (v + x) \implies u \leq (a \rightarrow (b + p)) \rightarrow (v + p) \right) \\
  & \iff (\forall pv \in \alg{A}) \left( y \rightarrow (v + x) \leq (a \rightarrow (b + p)) \rightarrow (v + p) \right) \\
  & \iff (\forall pv \in \alg{A}) \, (a \rightarrow (b + p)) \cdot (y \rightarrow (v + x)) \leq v + p \\
  & \iff (\forall pq \in \alg{A}) \, (a \rightarrow (b + p)) \cdot (y \rightarrow (x + q)) \leq p + q,
\end{align*}
  provided that the appropriate residuals exist.
\end{proof}

  Let us now use this criterion to prove the existence of commutative complemented bimonoids of fractions for various classes of com\-mutative bimonoids. For unital meet semilattices, where ${a + b = a \wedge b = a \cdot b}$, the~above condition for $a \cdot \comp{b}$ to be equal to $x + \comp{y}$ can be simplified substantially.

\begin{fact} \label{fact: semilattice transformation conditions}
  Let $\alg{A}$ be a unital meet semilattice. Then the following are equivalent for $a, b, y \in \alg{A}$:
\begin{enumerate}[(i)]
\item $a \cdot \comp{b} = x + \comp{y}$ in $\cdmalg{A}$ for some $x \in \alg{A}$,
\item $a \cdot \comp{b} = a + \comp{y}$ in $\cdmalg{A}$,
\item $a \cdot \comp{ab} = a + \comp{y}$ in $\cdmalg{A}$,
\item $a y = a b$ and moreover
\begin{align*}
  (\forall p q \in \alg{A}) \left( p y \leq a b ~ \& ~ a q \leq a b \implies p q \leq a b \right).
\end{align*}
\end{enumerate}
\end{fact}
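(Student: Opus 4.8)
The plan is to derive all four equivalences from the general criterion of Fact~\ref{fact: transformation conditions}, specialized to the case where both $\cdot$ and $+$ collapse to the meet $\wedge$ and where $0 = 1$ is the top element of $\alg{A}$. The first preparatory step is a completion identity: in $\cdmalg{A}$ one has $a \cdot \comp{b} = a \cdot \comp{a \cdot b}$. Indeed, by the single-term instance of Lemma~\ref{lemma: joins below joins} (with $\cdot = + = \wedge$) the inequality $a \cdot \comp{ab} \leq a \cdot \comp{b}$ reduces to the tautology that $x \wedge a \leq y \wedge b$ forces $x \wedge a \leq y \wedge (a \wedge b)$, while the reverse inequality is antitonicity of complementation applied to $ab \leq b$. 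This identity makes conditions (ii) and (iii) the very same assertion, so (ii) $\Leftrightarrow$ (iii) is immediate; and (ii) $\Rightarrow$ (i) is trivial (take $x = a$). It therefore suffices to close the cycle (i) $\Rightarrow$ (iv) $\Rightarrow$ (ii).

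For (i) $\Rightarrow$ (iv) I would translate $a \comp{b} = x + \comp{y}$ through Fact~\ref{fact: transformation conditions}: this yields $a \wedge y \leq b \wedge x$ together with the four-variable condition $(\star)$, namely that $u \wedge y \leq v \wedge x$ and $a \wedge q \leq b \wedge p$ imply $u \wedge q \leq v \wedge p$. From $a \wedge y \leq b$ I read off $a \wedge y \leq a \wedge b$ for free. The missing half of $ay = ab$, namely $a \wedge b \leq y$, comes from the single instance of $(\star)$ with $u := a$, $q := b$, $v := y$, $p := 1$: its first hypothesis is $a \wedge y \leq y \wedge x$ (which holds since $a \wedge y \leq x$), its second is the trivial $a \wedge b \leq b$, and its conclusion is exactly $a \wedge b \leq y$. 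The two-variable condition $(\diamond)$ then arises from $(\star)$ by substituting $v := b$ and (the $\star$-variable) $p := a$, keeping $u, q$ as the variables of $(\diamond)$; the first hypothesis of $(\star)$ is satisfied because from $u \wedge y \leq ab$ one gets $u \wedge y \leq a \wedge y \leq x$.

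For (iv) $\Rightarrow$ (ii) I would use Fact~\ref{fact: transformation conditions} once more, now with $x = a$, which requires $a \wedge y \leq a \wedge b$ (already available) and the condition $(\star_a)$: that $u \wedge y \leq v \wedge a$ and $a \wedge q \leq b \wedge p$ imply $u \wedge q \leq v \wedge p$. Writing $m = a \wedge y = a \wedge b$, the hypotheses give $u \wedge y \leq a$ and $a \wedge q \leq b$, whence $u \wedge y \leq a \wedge y = m$ and $a \wedge q \leq a \wedge b = m$; applying $(\diamond)$ with its variables set to $u$ and $q$ yields $u \wedge q \leq m$. Since $m \leq a$ and $m \leq y$, this gives $u \wedge q \leq a \wedge q \leq p$ and $u \wedge q \leq u \wedge y \leq v$, which is precisely the conclusion of $(\star_a)$.

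The main obstacle is the bookkeeping in these quantifier substitutions: everything turns on choosing instances of the four-variable conditions $(\star)$ and $(\star_a)$ that collapse, through the meet-absorption identities of a semilattice, onto the two-variable condition $(\diamond)$ and onto the one inequality $a \wedge b \leq y$ that is not otherwise visible. Once the completion identity $a \comp{b} = a \comp{ab}$ is in place and the instantiations above are pinned down, the remainder is routine semilattice computation. Note that the \emph{non-residuated} form of Fact~\ref{fact: transformation conditions} must be invoked throughout, since a general unital meet semilattice need not be residuated.
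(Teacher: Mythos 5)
Your proof is correct, and it rests on the same engine as the paper's: Fact~\ref{fact: transformation conditions} specialized to $\cdot = + = \wedge$ and $0 = 1$. The difference lies in the decomposition. The paper runs a single chain of \emph{equivalences} obtained by rewriting the first-order condition itself: it first shows that if any $x$ witnesses (i), then $x \assign ay$, and hence $x \assign a$, also does; it then notes that the conditions for $b$ and for $ab$ coincide; finally, with $x = a$ fixed, it eliminates the quantified variables $v$ and $p$ (splitting the conclusion $uq \leq vp$ into $uq \leq v$ and $uq \leq p$, and using that $(\forall v)\,(uy \leq v \implies uq \leq v)$ amounts to $uq \leq uy$), which collapses the four-variable condition onto (iv). You instead close a cycle of one-directional implications, (ii) $\Leftrightarrow$ (iii), (ii) $\Rightarrow$ (i), (i) $\Rightarrow$ (iv), (iv) $\Rightarrow$ (ii), where (ii) $\Leftrightarrow$ (iii) comes from the standalone identity $a \cdot \comp{b} = a \cdot \comp{ab}$ in $\cdmalg{A}$ (proved from the single-term case of Lemma~\ref{lemma: joins below joins} plus antitonicity of complementation), and the two remaining implications come from targeted instantiations of the four-variable condition, e.g.\ $u \assign a$, $q \assign b$, $v \assign y$, $p \assign 1$ to extract $ab \leq y$. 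All of these instantiations check out, and your remark that the non-residuated form of Fact~\ref{fact: transformation conditions} must be used is exactly right, since a general unital meet semilattice has no residuals. What the paper's route buys is slightly more information: every intermediate condition is shown \emph{equivalent} to (i), so one sees in passing that $x \assign ay$ always serves as a witness when any witness exists. What your route buys is modularity: the identity $a \comp{b} = a \comp{ab}$ isolates the (ii) $\Leftrightarrow$ (iii) step as a fact about the completion of any unital meet semilattice, independent of all the quantifier bookkeeping.
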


\begin{proof}
  We know that $a \cdot \comp{b} = x + \comp{y}$ if and only if $a y \leq b x$ and
\begin{align*}
  (\forall pquv \in \alg{A}) \left( u y \leq v x ~ \& ~ a q \leq b p \implies u q \leq v p \right).
\end{align*}
  But then $ay \leq x$, therefore the implication also holds for $x \assign ay$, as does the inequality $ay \leq bx$. Moreover, the implication holds for $x \assign ay$ if and only if it holds for $x \assign a$, and the inequality $ay \leq bx$ also holds for $x \assign a$. Therefore, if some $x$ satisfies these conditions, then $x \assign a$ does too. For $x = a$, the two conditions become $a y \leq b$ and
\begin{align*}
  (\forall pquv \in \alg{A}) \left( u y \leq v a ~ \& ~ a q \leq b p \implies u q \leq v p \right).
\end{align*}
  These are satisfied by $b$ if and only if they are satisfied by $b \assign ab$. Moreover, the implication is equivalent to the conjunction of
\begin{align*}
  (\forall pquv \in \alg{A}) \left( u y \leq v ~ \& ~ u y \leq a ~ \& ~ a q \leq b p \implies u q \leq v \right), \\
  (\forall pquv \in \alg{A}) \left( u y \leq v ~ \& ~ u y \leq a ~ \& ~ a q \leq b p \implies u q \leq p \right).
\end{align*}
  These are equivalent respectively to
\begin{align*}
  (\forall quv \in \alg{A}) \left( u y \leq v ~ \& ~ u y \leq a ~ \& ~ a q \leq b \implies u q \leq v \right), \\
  (\forall pqu \in \alg{A}) \left( u y \leq a ~ \& ~ a q \leq b ~ \& ~ a q \leq p \implies u q \leq p \right).
\end{align*}
  The implication $(\forall v \in \alg{A}) \left( u y \leq v \implies uq \leq v \right)$ is equivalent to $u q \leq u y$, i.e.\ to $u q \leq y$, and the implication $(\forall p) \left( a q \leq p \implies u q \leq p \right)$ is equivalent to $ u q \leq a q$, i.e.\ $u q \leq a$. The two implications are thus equivalent to
\begin{align*}
  (\forall qu \in \alg{A}) \left( u y \leq a ~ \& ~ a q \leq b \implies u q \leq y \right), \\
  (\forall qu \in \alg{A}) \left( u y \leq a ~ \& ~ a q \leq b \implies u q \leq a \right),
\end{align*}
  or in other words, changing the variable $u$ to $p$, 
\begin{align*}
  & (\forall pq \in \alg{A}) \left( p y \leq a ~ \& ~ a q \leq b \implies p q \leq ay \right).
\end{align*}
  Taking $p \assign a$ and $q \assign b$ yields that $a b \leq ay$, and we already know that $a y \leq a b$. Moreover, $p y \leq a$ is equivalent to $p y \leq a b$, since $a y \leq b$.
\end{proof}

  This immediately yields the existence of bimonoids of fractions of Brouwerian semilattices.

\begin{definition}[Brouwerian semilattices]
  A \emph{Brouwerian semilattice} is an integral idempotent residuated pomonoid, or equivalently a unital meet semi\-lattice $\langle A, \wedge, 1 \rangle$ equipped with a binary operation $x \rightarrow y$ such that
\begin{align*}
  x \wedge y \leq z & \iff y \leq x \rightarrow z.
\end{align*}
\end{definition}

\begin{fact} \label{fact: fractions for brsls}
  Each Brouwerian semilattice has a commutative complemented bimonoid of fractions where
\begin{align*}
  a \cdot \comp{b} = a + \comp{a \rightarrow b}.
\end{align*}
\end{fact}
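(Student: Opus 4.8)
The plan is to combine the existence criterion of Proposition~\ref{prop: existence of bimonoid of fractions} with the simplified transformation conditions for unital meet semilattices in Fact~\ref{fact: semilattice transformation conditions}. Recall that a Brouwerian semilattice $\langle A, \wedge, 1, \rightarrow \rangle$ is viewed as a commutative bimonoid by setting $x \cdot y = x \wedge y = x + y$ with $1 = 0$ equal to the top element, so that every product and sum appearing below is in fact a meet. By Proposition~\ref{prop: existence of bimonoid of fractions} it suffices to show that for all $a, b \in \alg{A}$ there exist $x, y \in \alg{A}$ with $a \cdot \comp{b} = x + \comp{y}$ in $\cdmalg{A}$; establishing this for the specific witnesses $x := a$ and $y := a \rightarrow b$ will simultaneously give existence and the claimed formula.

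Fixing $a, b$, I would verify condition~(iv) of Fact~\ref{fact: semilattice transformation conditions} for $y := a \rightarrow b$, since by the equivalence of conditions~(iv) and~(ii) there this yields precisely $a \cdot \comp{b} = a + \comp{a \rightarrow b}$. Condition~(iv) requires that $a y = a b$ and that, for all $p, q \in \alg{A}$, the hypotheses $p y \leq ab$ and $a q \leq ab$ imply $p q \leq ab$. The first requirement is the standard Brouwerian identity $a \wedge (a \rightarrow b) = a \wedge b$, which gives $a y = a \wedge (a \rightarrow b) = a \wedge b = ab$.

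For the implication I would note that $a q \leq ab$ means $a \wedge q \leq a \wedge b$, which (as $a \wedge q \leq a$ holds automatically) is equivalent to $a \wedge q \leq b$, hence by residuation to $q \leq a \rightarrow b = y$. Monotonicity of meet then gives $p q = p \wedge q \leq p \wedge y = p y \leq ab$, the last inequality being the remaining hypothesis. This verifies condition~(iv), so $a \cdot \comp{b} = a + \comp{a \rightarrow b}$ holds in $\cdmalg{A}$ for all $a, b$, and Proposition~\ref{prop: existence of bimonoid of fractions} then delivers the complemented bimonoid of fractions.

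I do not anticipate a genuine obstacle: the argument is a direct application of the already-proved criteria, resting only on the identity $a \wedge (a \rightarrow b) = a \wedge b$ and a single residuation step. The one point demanding care is to remember that in the bimonoid associated with a Brouwerian semilattice both monoidal operations collapse to the meet, so that every inequality in Fact~\ref{fact: semilattice transformation conditions} must be read as an inequality between meets; once this is kept in mind, the verification is entirely routine.
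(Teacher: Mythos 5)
Your proposal is correct and takes essentially the same approach as the paper: both reduce the claim to condition (iv) of Fact~\ref{fact: semilattice transformation conditions} with the witness $y \assign a \rightarrow b$, check $a \wedge (a \rightarrow b) = a \wedge b$, and then verify the closure implication. The only difference is cosmetic: in the last step you residuate only $q$ (getting $q \leq a \rightarrow b$ and concluding $pq \leq p \wedge y \leq ab$ from the hypothesis $py \leq ab$), whereas the paper residuates both $p$ and $q$ and finishes by modus ponens; your variant is, if anything, marginally more direct.
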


\begin{proof}
  Taking $y \assign a \rightarrow b$ in the previous fact yields that indeed $a y = a b$. Moreover, if $py \leq ab$ and $aq \leq ab$, then $p \leq y \rightarrow ab$ and $q \leq a \rightarrow ab \leq a \rightarrow b$. But then $pq \leq ((a \rightarrow b) \rightarrow ab) (a \rightarrow b)) \leq b$.
\end{proof}

  By contrast, the semilattices $\Mthree$ and $\Nfive$ do not have commutative complemented bimonoids of fractions. We do not know of any non-distributive semilattice which has such a bimonoid of fractions.

  Recall the discussion of Boolean-pointed Brouwerian algebras from Subsection~\ref{subsec: examples of bimonoids}: they are Brouwerian algebras equipped with a constant $0$ such that the interval $[0, 1]$ is Boolean.

\begin{fact} \label{fact: fractions for boolean-pointed bras}
  Each Boolean-pointed Brouwerian algebra has a commutative complemented bimonoid of fractions where
\vskip-20pt
\begin{align*}
  a \cdot \comp{b} & = 0 a + \comp{a \rightarrow b}.
\end{align*}
\end{fact}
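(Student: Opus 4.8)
The plan is to produce explicit witnesses and appeal to the existence criterion of Proposition~\ref{prop: existence of bimonoid of fractions}. By Proposition~\ref{prop: pointed brouwerian algebras} I view the given Boolean-pointed Brouwerian algebra as a multiplicatively integral idempotent commutative residuated $\ell$-bimonoid, so that $x \cdot y = x \wedge y$, the residual $x \rightarrow y$ is the Brouwerian implication, and $x + y = (0 \rightarrow (x \wedge y)) \wedge (x \vee y)$; in particular $x \wedge y \leq x + y \leq x \vee y$. I would then establish condition~(iv) of Proposition~\ref{prop: existence of bimonoid of fractions} with the witnesses $x \assign 0a$ and $y \assign a \rightarrow b$. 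As the bimonoid is residuated, Fact~\ref{fact: transformation conditions} turns the target equality $a \cdot \comp{b} = 0a + \comp{a \rightarrow b}$ in $\cdmalg{A}$ into the two requirements
\begin{align*}
  a \rightarrow b & \leq a \rightarrow (b + 0a) & & \text{and} & (a \rightarrow (b + p)) \wedge ((a \rightarrow b) \rightarrow (0a + q)) & \leq p + q
\end{align*}
for all $p, q \in \alg{A}$.

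The first inequality I would dispatch directly. Residuating, it is equivalent to $a \wedge b \leq b + 0a$, and since $b + 0a = (0 \rightarrow (0 \wedge a \wedge b)) \wedge (b \vee (0 \wedge a))$ this splits into $a \wedge b \leq b \vee (0 \wedge a)$, which is immediate, and $a \wedge b \leq 0 \rightarrow (0 \wedge a \wedge b)$, which holds because $0 \wedge (a \wedge b) = 0 \wedge a \wedge b$.

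The second inequality is the crux, and here I would invoke Lemma~\ref{lemma: validity in bpbras}: reading it as an inequality between two terms in the Brouwerian signature in which the constant $0$ occurs, it holds in all Boolean-pointed Brouwerian algebras if and only if (a)~its instance with $0$ replaced by $1$ holds in all Brouwerian algebras and (b)~its ``$\vee\, 0$'' version holds in all Boolean-pointed Brouwerian algebras. For~(a), setting $0 = 1$ collapses $+$ to $\wedge$ and $0a$ to $a$, reducing the inequality to
\begin{align*}
  (a \rightarrow (b \wedge p)) \wedge ((a \rightarrow b) \rightarrow (a \wedge q)) \leq p \wedge q,
\end{align*}
which is precisely the Brouwerian computation underlying the semilattice case of Fact~\ref{fact: fractions for brsls}: writing $L$ for the left-hand side, one has $L \leq (a \rightarrow b) \wedge ((a \rightarrow b) \rightarrow (a \wedge q)) \leq a \wedge q$, whence $L \leq q$ and $L \leq a$, so $L = L \wedge a \leq a \wedge (a \rightarrow (b \wedge p)) \leq p$, giving $L \leq p \wedge q$. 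For~(b), writing $M$ for the left-hand side of the second requirement, I would use that $z \mapsto z \vee 0$ maps onto the interval $[0,1]$, which by Boolean-pointedness is a Boolean algebra on which $+$ restricts to $\vee$, and rewrite the ``$\vee\, 0$'' inequality via $u \vee 0 \leq v \vee 0 \iff v \rightarrow 0 \leq u \rightarrow 0$ as the equivalent statement $(p + q) \rightarrow 0 \leq M \rightarrow 0$ about pseudocomplements, then verify it by Boolean manipulation.

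I expect part~(b) to be the main obstacle: the addition $+$ is a genuinely compound term, so tracking how joining with $0$ interacts with $+$ and $\rightarrow$ is delicate, and it is exactly here that the Boolean structure of $[0,1]$ (rather than merely the Brouwerian structure) is needed, in contrast to the purely Brouwerian skeleton of part~(a). Once both requirements are verified, Proposition~\ref{prop: existence of bimonoid of fractions} produces the complemented bimonoid of fractions $\fracalgA$, and the chosen witnesses yield the displayed formula $a \cdot \comp{b} = 0a + \comp{a \rightarrow b}$.
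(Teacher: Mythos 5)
Your proposal has the same skeleton as the paper's proof: the same witnesses $x \assign 0a$ and $y \assign a \rightarrow b$, the same reduction via Fact~\ref{fact: transformation conditions} (feeding into Proposition~\ref{prop: existence of bimonoid of fractions}), and the same appeal to Lemma~\ref{lemma: validity in bpbras} to split the crucial inequality into a Brouwerian part and a ``$\vee\,0$'' part. What you actually prove is correct: the first condition checks out, and your argument for part~(a) is a correct write-out of the computation the paper dismisses as routine. The genuine gap is part~(b), which you leave entirely unproven. This is not a deferrable detail: it is the only place where Boolean-pointedness is used, hence the whole content of the fact beyond the Brouwerian-semilattice case (Fact~\ref{fact: fractions for brsls}). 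Your reformulation of (b) as $(p+q) \rightarrow 0 \leq M \rightarrow 0$ is legitimate (it rests on the identity $z \vee 0 = (z \rightarrow 0) \rightarrow 0$, valid in Boolean-pointed Brouwerian algebras), but ``verify it by Boolean manipulation'' is a promissory note, not an argument, and you yourself flag this step as the main obstacle. As it stands the proof is incomplete exactly at its crux.

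For comparison, here is how the paper closes (b). Since $p + q = (0 \rightarrow pq)(p \vee q)$, distributivity gives $(p+q) \vee 0 = (0 \rightarrow pq) \wedge (p \vee q \vee 0)$, so it suffices to bound $M = (a \rightarrow (b+p)) \cdot ((a \rightarrow b) \rightarrow (0a + q))$ by each conjunct separately. The bound $M \leq 0 \rightarrow pq$ comes from the Brouwerian inequality of part~(a) applied in the quotient of the algebra by the filter generated by $0$ (the same mechanism as in the proof of Lemma~\ref{lemma: validity in bpbras}): there one gets $\pi(M) \leq \pi(p)\pi(q)$, which unfolds to $0M \leq pq$. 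For $M \leq p \vee q \vee 0$ one writes $p \vee q \vee 0 = ((p \vee q) \rightarrow 0) \rightarrow 0$, so the claim is equivalent to $M (p \rightarrow 0)(q \rightarrow 0) \leq 0$, and this follows from two one-line computations: $(a \rightarrow (b+p))(p \rightarrow 0) \leq a \rightarrow (b + p(p \rightarrow 0)) \leq a \rightarrow (b+0) = a \rightarrow b$, and then $(a \rightarrow b) \cdot ((a \rightarrow b) \rightarrow (0a+q)) \cdot (q \rightarrow 0) \leq (0a+q)(q \rightarrow 0) \leq 0a + 0 = 0a \leq 0$. Some explicit argument of this kind is what your part~(b) must contain before the conclusion can be drawn.
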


\begin{proof}
  By Fact~\ref{fact: transformation conditions} it suffices to show that $a (a \rightarrow b) \leq b + 0 a$ and for all $p,q$
\begin{align*}
  a (a \rightarrow (b + p)) ((a \rightarrow b) \rightarrow (0 a + q)) & \leq p + q.
\end{align*}
  Clearly $a (a \rightarrow b) = a b = (b + 0) a \leq b + 0a$. To prove the other inequality, by Lemma~\ref{lemma: validity in bpbras} it suffices to prove that $(a \rightarrow bp) ((a \rightarrow b) \rightarrow a q) \leq pq$ holds in all Brouwerian lattices and that
\begin{align*}
  (a \rightarrow (b + p)) ((a \rightarrow b) \rightarrow (0 a + q)) & \leq (p + q) \vee 0
\end{align*}
  holds in all Boolean-pointed Brouwerian lattices. The former inequality is routine to prove, and it implies that $(a \rightarrow bp) ((a \rightarrow b) \rightarrow a q) \leq 0 \rightarrow p q$, therefore it only remains to prove that
\begin{align*}
  (a \rightarrow (b + p)) ((a \rightarrow b) \rightarrow (0 a + q)) \leq p \vee q \vee 0.
\end{align*}
  Since $p \vee q \vee 0 = ((p \vee q) \rightarrow 0) \rightarrow 0$, this is equivalent to
\begin{align*}
  (a \rightarrow (b + p)) ((a \rightarrow b) \rightarrow (0 a + q)) (p \rightarrow 0) (q \rightarrow 0) \leq 0.
\end{align*}
  But $(a \rightarrow (b+p)) (p \rightarrow 0) \leq a \rightarrow (b + p (p \rightarrow 0)) \leq a \rightarrow (b + 0) = a \rightarrow b$. Likewise, $(a \rightarrow b) ((a \rightarrow b) \rightarrow (0 a + q)) (q \rightarrow 0) \leq (0a + q) (q \rightarrow 0) \leq 0 a + 0 = 0 a \leq 0$.
\end{proof}

  Finally, let us directly verify that our criterion covers groups of fractions.

\begin{definition}[Order-cancellative monoids]
  A pomonoid is called \emph{order-cancellative} if it satisfies the implications
\begin{align*}
  a \cdot x \leq b \cdot x & \implies a \leq b, \\
  x \cdot a \leq x \cdot b & \implies a \leq b.
\end{align*}
\end{definition}

\begin{fact} \label{fact: order-cancellative}
  A residuated pomonoid is order-cancellative if and only if it satisfies the equations $x y \bs x z \equals y \bs z$ and $x z / y z \equals x / y$. An s$\ell$-monoid is order-cancellative if and only if it is \emph{cancellative}, i.e.\ if and only if $x y \equals x z$ implies $y \equals z$ and moreover $x z \equals y z$ implies $x \equals y$.
\end{fact}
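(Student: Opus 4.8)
The plan is to treat the two claims of the fact separately. For the residuated case I will show that left order-cancellation is equivalent to the single equation $xy \bs xz = y \bs z$, and dually that right order-cancellation is equivalent to $xz / yz = x / y$; the conjunction of these two equivalences gives the stated biconditional. The starting observation is that one inequality is automatic in every residuated pomonoid: if $yw \leq z$ then $xyw \leq xz$ by monotonicity of multiplication, so $y \bs z \leq xy \bs xz$ always holds. Thus the content of the equation lies entirely in the reverse inequality $xy \bs xz \leq y \bs z$.

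For the direction ``order-cancellative implies the equations'' I will set $w := xy \bs xz$, so that $x \cdot (yw) = (xy) w \leq xz = x \cdot z$; left order-cancellation then yields $yw \leq z$, i.e. $w \leq y \bs z$, which is the missing inequality. For the converse I will specialize $y := 1$ in the equation, using $1 \bs z = z$, to obtain $x \bs (xz) = z$ for all $x, z$. Since the residual $x \bs (-)$ is order-preserving, any instance $xa \leq xb$ then forces $a = x \bs (xa) \leq x \bs (xb) = b$, which is precisely left order-cancellation. The arguments for the right residual and right order-cancellation are the left--right duals of these.

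For the s$\ell$-monoid claim, one direction is immediate: if the monoid is order-cancellative and $xy = xz$, then $xy \leq xz$ and $xz \leq xy$ give $y \leq z$ and $z \leq y$, hence $y = z$ (and symmetrically on the right), so it is cancellative. For the converse I will exploit distributivity of multiplication over joins, which is part of the s$\ell$-monoid axioms: $xy \leq xz$ means $xy \vee xz = xz$, and distributivity rewrites the left-hand side as $x(y \vee z)$, so $x(y \vee z) = xz$; left cancellation then gives $y \vee z = z$, i.e. $y \leq z$, and dually on the right.

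There is no serious obstacle here, and the proof is short. The only points requiring a little care are spotting the specialization $y := 1$ (together with $1 \bs z = z$) in the converse half of the residuated case, and noticing that in the semilattice-ordered setting the translation of the inequality $xy \leq xz$ into the equation $x(y \vee z) = xz$ is exactly what lets plain cancellativity do the work of order-cancellativity.
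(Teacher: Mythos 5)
Your proof is correct. Note that the paper states this Fact without any proof, treating it as routine, so there is no argument in the text to compare against; your write-up simply supplies the missing details. The decomposition you use is the natural one: each equation is equivalent to the corresponding one-sided cancellation law, with the inequality $y \bs z \leq xy \bs xz$ (resp.\ $x/y \leq xz/yz$) holding automatically by monotonicity, the reverse inequality following from cancellation applied to $x(yw) \leq xz$, and the converse obtained by the specialization $y := 1$ together with $1 \bs z = z$ and isotonicity of the residual; likewise, rewriting $xy \leq xz$ as $x(y \vee z) = xz$ is exactly what reduces order-cancellativity to plain cancellativity in the s$\ell$-monoid case.
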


\begin{fact} \label{fact: fractions for cancellative pomonoids}
  Each order-cancellative commutative monoid has a commutative complemented bimonoid of fractions where 
\vskip-20pt
\begin{align*}
  a \cdot \comp{b} = a + \comp{b}.
\end{align*}
\end{fact}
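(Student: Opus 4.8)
The plan is to read this off the criterion of Proposition~\ref{prop: existence of bimonoid of fractions}: it suffices to exhibit, for all $a, b \in \alg{A}$, elements $x, y \in \alg{A}$ with $a \cdot \comp{b} = x + \comp{y}$ in $\cdmalg{A}$. Here $\alg{A}$ is a commutative monoid regarded as a bimonoid in the standard way, i.e.\ with $x + y \assign x \cdot y$ and $0 \assign 1$, so that $+$ and $\cdot$ coincide on the base. The natural guess is therefore $x \assign a$ and $y \assign b$, and verifying it will both establish the existence of $\fracalgA$ and yield the displayed formula $a \cdot \comp{b} = a + \comp{b}$ in one stroke. Since $\cdmalg{A}$ is a complete complemented $\ell$-bimonoid, the complements $\comp{b}$ and $\comp{y}$ needed to state the identity automatically exist there, so there is no side condition to worry about on that front.

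Next I would apply Fact~\ref{fact: transformation conditions} with $x \assign a$ and $y \assign b$. Using that $+ = \cdot$ on $\alg{A}$, the identity $a \cdot \comp{b} = a + \comp{b}$ in $\cdmalg{A}$ becomes equivalent to the inequality $a \cdot b \leq b \cdot a$, which holds with equality by commutativity, together with the implication
\[
  u \cdot b \leq v \cdot a ~\&~ a \cdot q \leq b \cdot p \implies u \cdot q \leq v \cdot p
\]
for all $p, q, u, v \in \alg{A}$. The trivial inequality carries no information, so all the content is concentrated in this implication.

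The implication is exactly where order-cancellativity is used, and it is the one real step. From $u b \leq v a$ I multiply on the right by $q$ to get $u b q \leq v a q$, and from $a q \leq b p$ I multiply on the left by $v$ to get $v a q \leq v b p$; chaining and using commutativity yields $(u q) \cdot b \leq (v p) \cdot b$. Order-cancellativity then cancels the common factor $b$ to give $u q \leq v p$, which is what the implication demands. With this, the hypotheses of Proposition~\ref{prop: existence of bimonoid of fractions}(iv) are met, so $\alg{A}$ has a commutative complemented bimonoid of fractions $\fracalgA$ in which $a \cdot \comp{b} = a + \comp{b}$. I do not expect any genuine obstacle: the argument reduces to a single cancellation, and the only point requiring care is recognizing that $+ = \cdot$ on the base monoid, which is precisely what makes the target formula the $x = a$, $y = b$ instance of the general transformation condition.
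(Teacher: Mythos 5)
Your proposal is correct and takes essentially the same route as the paper: the paper's proof also chooses $x \assign a$, $y \assign b$ in the criterion of Fact~\ref{fact: transformation conditions}, chains $u b q \leq v a q \leq v b p$, and cancels the common factor $b$ by order-cancellativity to conclude $u q \leq v p$. There are no gaps.
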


\begin{proof}
  The condition $a \comp{b} = x + \comp{y}$ is equivalent to $a y \leq b x$ and
\begin{align*}
  (\forall p q u v \in \alg{A}) \left( u y \leq v x ~ \& ~ a q \leq b p \implies u q \leq v p \right).
\end{align*}
  Take $x \assign a$ and $y \assign b$. Then $a y \leq b x$, and if $u b \leq v a$ and $a q \leq b p$, then $u b q \leq v a q \leq v b p$, hence by order-cancellativity $u q \leq v p$.
\end{proof}

  As our final example of complemented bimonoids of fractions, let us consider the two constructions of Galatos \& Raftery~\cite{galatos+raftery04}, which embed certain residuated pomonoids into cyclic involutive residuated pomonoids. Here \emph{cyclicity} means that $\compl{x} = \compr{x}$ for each $x$. This common value will be denoted $\comp{x}$.

  The construction comes in two flavors. If $\alg{A} = \langle A, \leq, \cdot, 1, \bs, / \rangle$ is a residuated pomonoid with an upper bound~$\top$, then we construct an involutive residuated pomonoid $\topmirroralgA$ consisting of two disjoint copies of $A$. The two copies will be denoted $A$ and $A'$, and the elements of $A'$ will be denoted $a'$ for $a \in \alg{A}$. The order of $\topmirroralgA$ extends the order of~$\alg{A}$ as follows for $a, b \in A$:
\begin{align*}
  a' & \leq b, & a & \nleq b', & a' \leq b' & \iff b \leq a.
\end{align*}
  Multiplication in $\topmirroralgA$ extends multiplication in $\alg{A}$ by:
\begin{align*}
  a' \cdot b & \assign (a \bs b)', & a \cdot b' & \assign (b / a)', & a' \cdot b' & \assign \top'.
\end{align*}
  On the other hand, if $\alg{A}$ is a residuated pomonoid with a lower bound~$\bot$ (in which case it also has an upper bound $\top \assign \bot \bs \bot$), then we construct the involutive residuated pomonoid $\botmirroralgA$ over $A \cup A'$ in a similar way. The order on $\botmirroralgA$ extends the order on $\alg{A}$:
\begin{align*}
  a' & \nleq b, & a & \leq b', & a' \leq b' & \iff b \leq a.
\end{align*}
  Multiplication in $\botmirroralgA$ extends multiplication in $\alg{A}$:
\begin{align*}
  a' \cdot b & \assign (a \bs b)', & a \cdot b' & \assign (b / a)', & a' \cdot b' & \assign \bot'.
\end{align*}
  That is, $\topmirroralgA$ adds a mirror copy of $\alg{A}$ below $\alg{A}$, and $\botmirroralgA$ adds a mirror copy of $\alg{A}$ above $\alg{A}$. Both of these constructions yield a bounded cyclic involutive residuated pomonoid where $\comp{a} = a'$ and $\comp{a'} = a$. Also, if $\alg{A}$ is lattice-ordered, then so are $\topmirroralgA$ and $\botmirroralgA$.

  We can now interpret these involutive residuated pomonoids as complemented bimonoids of fractions of~$\alg{A}$, if we suitably expand $\alg{A}$ to a bisemigroup. In the first case, we expand $\alg{A}$ to a bisemigroup with a multiplicative unit by taking what we called the $\top$-drastic addition in Subsection~\ref{subsec: examples of bimonoids}: $x + y \assign \top$. In the second case, we expand $\alg{A}$ to a bisemigroup with a multiplicative unit by taking the $\bot$-drastic addition: $x + y \assign \bot$. (This yields a bisemigroup because $x \cdot \bot = \bot = \bot \cdot x$ in each bounded residuated pomonoid.) But observe that this is precisely the addition of $\topmirroralgA$ and $\botmirroralgA$ restricted to $\alg{A}$, thanks to the definitions $a' \cdot b' \assign \top'$ and $a' \cdot b' \assign \bot'$. The bisemigroup $\alg{A}$ is therefore a sub-bisemigroup of $\topmirroralgA$ and $\botmirroralgA$, and moreover this embedding preserves the multiplicative unit (in the $\top$-drastic case, it also preserves residuals).

  If we extend our definition of a bimonoid of fractions to allow for cyclic bimonoids of fractions of bisemigroups, then the cyclic complemented bimonoids $\topmirroralgA$ and $\botmirroralgA$ are right (as well as left) cyclic bimonoids of fractions of $\alg{A}$: each element has one of the forms $1$, $a$, $\comp{b}$, or $a \comp{b}$ ($\comp{b} a$), as well as one of the forms $0$, $a$, $\comp{b}$, or $a + \comp{b}$ ($\comp{b} + a$) for some $a, b \in \alg{A}$. Indeed, in this case each element simply has the form $a$ or $\comp{b}$.

\subsection{Constructing complemented bimonoids of fractions}
\label{subsec: constructing bimonoids of fractions}

  In the following, let $\alg{A}$ be a commutative bimonoid with a commutative complemented bimonoid of fractions. We~show how to construct this bimonoid of fractions, first as a quotient of a preordered algebra on~$A^2$, then (in some cases) directly as an ordered algebra on a subset of $A^2$. The~input for this construction consists of a pair of functions $\transplus$, $\transminus$ which specify how to solve the equation $a \cdot \comp{b} = x + \comp{y}$ for $x$ and $y$ in~$\cdmalg{A}$. Recall that the condition that $a \cdot \comp{b} = x + \comp{y}$ can be stated directly in terms of $\alg{A}$ as a universal sentence in the language of commutative bimonoids (Fact~\ref{fact: transformation conditions}). Moreover, if $\alg{A}$ is residuated, it can be stated in terms of~$\alg{A}$ as a set of universally quantified inequalities in the language of commutative residuated bimonoids.

\begin{definition}[Transformation functions]
  The functions $\transplus, \transminus\colon A^2 \to A^2$ are called \emph{transformation functions} for $\alg{A}$ if
\begin{align*}
  a \cdot \comp{b} = \transplus(a, b) + \comp{\transminus(a, b)} \text{ for all } a, b \in \alg{A}.
\end{align*}
\end{definition}

  We assume below that $\transplus$ and $\transminus$ are transformation functions for $\alg{A}$. These are not uniquely determined by~$\alg{A}$, therefore our constructions of complemented bimonoids of fractions will be relative to some choice of $\transplus$~and~$\transminus$. However, the resulting complemented bimonoids of fractions will of course be isomorphic.

\begin{theorem}[Constructing complemented bimonoids of fractions] \label{thm: bimonoids of fractions}
  Let $\alg{A}$ be a commutative bimonoid with transformation functions $\transplus, \transminus$. We define $\prefracalg{A}$ to be the algebra on $A^2$ with the operations
\settowidth{\auxlength}{$\,\preplus\,$}
\begin{align*}
  \comp{\multipair{a}{b}} & \assign \multipair{\transminus(a, b)}{\transplus(a, b)}, \\
  {\multipair{a}{b}} \hbox to \auxlength{\hfil$\precdot$\hfil} \multipair{c}{d} & \assign \multipair{a \cdot c}{b + d}, \\
  \multipair{a}{b} \hbox to \auxlength{\hfil$\preplus$\hfil} \multipair{c}{d} & \assign \multipair{\transminus(\transminus(a, b) \cdot \transminus(c, d), \transplus(a, b) + \transplus(c, d))}{\transplus(\transminus(a, b) \cdot \transminus(c, d), \transplus(a, b) + \transplus(c, d))},
\end{align*}
  and the constants $\preone \assign \multipair{1}{0}$ and $\prezero \assign \multipair{0}{0}$. Let $\preleq$ be the preorder
\begin{align*}
  \multipair{a}{b} \preleq \multipair{c}{d} \iff (\forall x, y \in \alg{A}) (x \cdot c \leq y + d \implies x \cdot a \leq y + b).
\end{align*}
  The equivalence relation $\theta$ induced by this preorder is a congruence on $\prefracalg{A}$. Then the ordered algebra $\fracalgA \assign \pair{\prefracalg{A} / \theta}{\preleq}$ of equivalence classes of this congruence ordered by $\preleq$ is a commutative complemented bimonoid of fractions of $\alg{A}$ relative to the embedding $\fraciota\colon a \mapsto [\multipair{a}{0}]_{\theta}$. Moreover, $\fraciota(a) \precdot \comp{\fraciota(b)} = [\multipair{a}{b}]_{\theta}$.
\end{theorem}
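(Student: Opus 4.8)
The plan is to realise the formal pair $\multipair{a}{b}$ as the element $a \cdot \comp{b}$ of the completion $\cdmalg{A}$, and thereby to reduce every assertion to a statement about the complemented sub-bimonoid of $\cdmalg{A}$ formed by such elements. Concretely, I would introduce the map $\phi \colon A^{2} \to \cdmalg{A}$ defined by $\phi(\multipair{a}{b}) \assign a \cdot \comp{b}$ and show that (a)~$\phi$ respects the operations $\precdot$, $\preplus$, complementation, $\preone$, $\prezero$ of $\prefracalg{A}$, and (b)~the preorder $\preleq$ is exactly the pullback of $\leq_{\cdmalg{A}}$ along $\phi$, i.e.\ $\multipair{a}{b} \preleq \multipair{c}{d}$ if and only if $\phi(\multipair{a}{b}) \leq_{\cdmalg{A}} \phi(\multipair{c}{d})$. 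Granting (a) and (b), the relation $\theta$ is precisely $\ker \phi$, so it is a congruence automatically, the quotient map $\overline{\phi}$ is an order-isomorphism of $\prefracalg{A} / \theta$ onto the image $\set{a \comp{b}}{a, b \in \alg{A}}$, and that image is the desired complemented bimonoid of fractions.

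For (a), preservation of multiplication is immediate from the commutative De~Morgan law (Proposition~\ref{prop: de morgan laws}): $a \comp{b} \cdot c \comp{d} = (ac) \cdot \comp{(b + d)}$, which is $\phi(\multipair{a}{b} \precdot \multipair{c}{d})$. For complementation I would invoke the defining equation of the transformation functions, $a \comp{b} = \transplus(a, b) + \comp{\transminus(a, b)}$: taking complements and applying De~Morgan gives $\comp{a \comp{b}} = \transminus(a, b) \cdot \comp{\transplus(a, b)} = \phi(\comp{\multipair{a}{b}})$. Since $\preplus$ was defined to be the De~Morgan dual $\comp{(\comp{\multipair{a}{b}} \precdot \comp{\multipair{c}{d}})}$ of $\precdot$, preservation of addition then follows formally from preservation of multiplication and complementation; the units are checked by $\phi(\preone) = 1 \cdot \comp{0} = 1$ and $\phi(\prezero) = 0 \cdot \comp{0} = 0$.

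The crucial step, and the one I would treat most carefully, is (b). This is exactly the single-element instance of Lemma~\ref{lemma: joins below joins}: for $a, b, c, d \in \alg{A}$ one has $a \comp{b} \leq_{\cdmalg{A}} c \comp{d}$ if and only if $x \cdot c \leq y + d$ implies $x \cdot a \leq y + b$ for all $x, y \in \alg{A}$, which is verbatim the definition of $\multipair{a}{b} \preleq \multipair{c}{d}$. Hence $\multipair{a}{b} \mathrel{\theta} \multipair{c}{d}$ holds precisely when $\phi(\multipair{a}{b}) = \phi(\multipair{c}{d})$, so $\theta = \ker \phi$ is a congruence and $\overline{\phi}$ is a well-defined order-embedding onto the image of $\phi$.

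It remains to assemble the conclusion. Under the standing assumption that $\alg{A}$ has a commutative complemented bimonoid of fractions, Proposition~\ref{prop: existence of bimonoid of fractions} guarantees that $\phi[A^{2}] = \set{a \comp{b}}{a, b \in \alg{A}}$ is a complemented sub-bimonoid of $\cdmalg{A}$ in which each element also has the form $c + \comp{d}$; thus $\fracalgA$, i.e.\ the quotient $\prefracalg{A} / \theta$ ordered by $\preleq$, is order-isomorphic via $\overline{\phi}$ to $\phi[A^{2}]$ and hence is a commutative complemented bimonoid in which each element has both forms. The map $\fraciota \colon a \mapsto [\multipair{a}{0}]_{\theta}$ satisfies $\overline{\phi}(\fraciota(a)) = a \cdot \comp{0} = a$, so $\overline{\phi} \circ \fraciota$ is the canonical embedding $\alg{A} \into \cdmalg{A}$ and, $\overline{\phi}$ being an order-embedding, $\fraciota$ is an embedding of bimonoids; this exhibits $\fracalgA$ as a commutative complemented bimonoid of fractions of $\alg{A}$. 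Finally, applying the injective map $\overline{\phi}$ to both sides of the identity $\fraciota(a) \precdot \comp{\fraciota(b)} = [\multipair{a}{b}]_{\theta}$ reduces it to $a \cdot \comp{b} = a \cdot \comp{b}$ in $\cdmalg{A}$, which holds. I expect the only genuine bookkeeping to lie in (a) --- in particular in recognising the opaque definition of $\preplus$ as the De~Morgan dual of $\precdot$ --- whereas (b), and with it the congruence and fraction-form claims, collapses immediately onto Lemma~\ref{lemma: joins below joins}.
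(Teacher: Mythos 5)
Your proposal is correct and follows essentially the same route as the paper's proof: your map $\phi$ is the paper's evaluation map $\varepsilon\colon \multipair{a}{b} \mapsto a\comp{b}$, the order/kernel identification rests on the same single-element instance of Lemma~\ref{lemma: joins below joins}, and the complemented sub-bimonoid structure of the image comes from Proposition~\ref{prop: existence of bimonoid of fractions}, exactly as in the paper. The only (valid) cosmetic difference is that you obtain preservation of $\preplus$ formally by recognizing it as the De~Morgan dual $\comp{(\comp{\multipair{a}{b}} \precdot \comp{\multipair{c}{d}})}$, whereas the paper verifies the sum computation directly using the transformation functions.
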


\begin{proof}
  Because $\alg{A}$ has a pair of transformation functions, the sub-bimonoid $\alg{B}$ of $\cdmalg{A}$ consisting of elements of the form $a \comp{b}$ is a commutative complemented bimonoid of fractions of $\alg{A}$ (Proposition~\ref{prop: existence of bimonoid of fractions}). Now consider the surjective map $\varepsilon\colon \multipair{a}{b} \mapsto a \comp{b} \in \alg{B}$. By Lemma~\ref{lemma: joins below joins}, $\multipair{a}{b} \preleq \multipair{c}{d}$ if and only if $\varepsilon \multipair{a}{b} \leq \varepsilon \multipair{c}{d}$. Moreover, $\varepsilon$ is a homomorphism of bimonoids: $\varepsilon(\preone) = 1 \comp{0} = 1$ and $\varepsilon(\prezero) = 0 \comp{0} = 0$ for the two units, $\varepsilon (\multipair{a}{b} \circ \multipair{c}{d}) = \varepsilon \multipair{a \cdot c}{b + d} = ac \cdot \comp{b + d} = a\comp{b} \cdot c \comp{d} = \varepsilon \multipair{a}{b} \cdot \varepsilon \multipair{c}{d}$ for multiplication, and 
\begin{align*}
  \varepsilon \multipair{a}{b} + \varepsilon \multipair{c}{d}
  & = a\comp{b} + c \comp{d} \\
  & = \transplus(a, b) + \comp{\transminus(a, b)} + \transplus(c, d) + \comp{\transminus(c, d)} \\
  & = \transplus(a, b) + \transplus(c, d) + \comp{\transminus(a, b) \cdot \transminus(c, d)} \\
  & = \transminus(\transminus(a, b) \cdot \transminus(c, d), \transplus(a, b) + \transplus(c, d)) \cdot \comp{\transplus(\transminus(a, b) \cdot \transminus(c, d), \transplus(a, b) + \transplus(c, d))} \\
  & = \varepsilon (\multipair{a}{b} \oplus \multipair{c}{d}).
\end{align*}
  The kernel of this map is precisely $\theta$, therefore $\theta$ is a congruence and the map $[\multipair{a}{b}]_{\theta} \mapsto a \comp{b}$ is a well-defined isomorphism of bimonoids between $\fracalgA$ and $\alg{B}$. Since $\comp{a \comp{b}} = \comp{\transplus(a, b) + \comp{\transminus(a, b)}} = \transminus(a, b) \cdot \comp{\transplus(a, b)}$, it follows by this isomorphism that the complement of $[\multipair{a}{b}]_{\theta}$ in $\fracalgA$ is $[\multipair{\transminus(a, b)}{\transplus(a, b)}]$. We also have $\varepsilon(\multipair{c}{d}) = a$ if and only if $[\multipair{c}{d}]_{\theta} = [\multipair{a}{0}]_{\theta}$. Since $\alg{B}$ is a commutative complemented bimonoid of fractions of $\alg{A}$ relative to the inclusion of $\alg{A}$ into $\alg{B}$ and $\varepsilon$ is an isomorphism between $\fracalgA$ and $\alg{B}$, this implies that $\fracalgA$ is a commutative complemented bimonoid of fractions of $\alg{A}$ relative to the map $a \mapsto [\multipair{a}{0}]_{\theta}$. Finally, $\varepsilon(\fraciota(a) \circ \comp{\fraciota(b)}) = \varepsilon(\fraciota(a)) \cdot \comp{\varepsilon(\fraciota(b))} = a \comp{b} = \varepsilon(\multipair{a}{b})$, so $\fraciota(a) \circ \comp{\fraciota(b)} = [\multipair{a}{b}]_{\theta}$.
\end{proof}

  For order-cancellative bimonoids, where we can take $\transplus(a, b) \assign a$ and $\transminus(a, b) = b$, the above construction simplifies to the usual quotient construction of the group of fractions.

  There is no reason to expect the above construction to be functorial. Suppose that $\alg{A}$ and $\alg{B}$ are commutative bimonoids such that $\fracalgA$ and $\fracalgB$ exist. Given a homomorphism of bimonoids $h\colon \alg{A} \to \alg{B}$, it~is natural to try to define a map $\frachom{h}\colon \fracalgA \to \fracalgB$ as $\frachom{h}\colon \multipair{a}{b} \mapsto \multipair{h(a)}{h(b)}$. However, we have no reason to expect this map to be well-defined in general, much less order preserving. Owing to the parameters $x$~and~$y$ in the definition of the preorder on $\prefracalg{A}$, the pairs $\multipair{a}{b}$ and $\multipair{c}{d}$ might be equivalent in $\prefracalg{A}$ without $\multipair{h(a)}{h(b)}$ and $\multipair{h(c)}{h(d)}$ being equivalent in $\prefracalg{B}$.

  The construction becomes functorial if we restrict to order-cancellative pomonoids, because the condition for $\multipair{a}{b} \leq \multipair{c}{d}$ in $\prefracalg{A}$ is then equivalent to $d \cdot a \leq c \cdot b$, i.e.\ the parameters $x$ and $y$ can be eliminated. However, even in this case the functor fails to be full: there may be homomorphisms $h\colon \fracalgA \to \fracalgB$ which do not restrict to maps from $\alg{A}$ to $\alg{B}$. For example, the non-trivial automorphism of the group of integers $\Z \cong \fracalgN$ does not arise from any endomorphism of $\N$.

  To resolve these issues, we adopt the solution that Montagna \& Tsinakis~\cite{montagna+tsinakis10} used in the context of groups of fractions of cancellative residuated pomonoids: we extend $\fracalgA$ by an interior operator which allows us to recover $\alg{A}$ as its image. This requires us to move to the setting of \emph{residuated} commutative bimonoids. Let us therefore assume from now on that the commutative bimonoid $\alg{A}$ is residuated.

\begin{proposition}[The interior operator $\fracsigma$ on bimonoids of fractions]
  Let $\alg{B}$ be a com\-mutative bimonoid of fractions of a residuated commutative bimonoid $\alg{A}$. Then the following map is a well-defined interior operator on $\alg{B}$ whose image is precisely $\alg{A}$:
\begin{align*}
  \fracsigma(a + \comp{b}) \assign b \rightarrow a \text{ for } a, b \in \alg{A}.
\end{align*}
\end{proposition}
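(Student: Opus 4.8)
The plan is to show that $\fracsigma(a + \comp{b}) = b \rightarrow a$ is nothing but the largest element of $\alg{A}$ lying below $a + \comp{b}$ in the order of $\alg{B}$; once this order-theoretic description is secured, every claim in the proposition follows at once. The crucial lemma is that for each $e \in \alg{A}$ we have $e \leq a + \comp{b}$ in $\alg{B}$ if and only if $e \cdot b \leq a$ in $\alg{A}$, equivalently $e \leq b \rightarrow a$ in $\alg{A}$. To see this I would apply the residuation law for complements (Proposition~\ref{prop: residuation for complements}) in the complemented bimonoid $\alg{B}$, in the form $x \leq y + z \iff x \cdot \comp{z} \leq y$, taking $x = e$, $y = a$, $z = \comp{b}$ and using double negation $\comp{\comp{b}} = b$; this turns $e \leq a + \comp{b}$ into $e \cdot b \leq a$. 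Since $\alg{A}$ is a sub-bimonoid of $\alg{B}$, the product $e \cdot b$ is computed in $\alg{A}$ and the embedding reflects the order, so this inequality holds in $\alg{B}$ iff it holds in $\alg{A}$; residuation in $\alg{A}$ then rewrites it as $e \leq b \rightarrow a$.

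From this lemma the whole statement falls out. Taking $e \assign b \rightarrow a$, the inequality $(b \rightarrow a) \cdot b \leq a$ holds in $\alg{A}$, so $b \rightarrow a \leq a + \comp{b}$ in $\alg{B}$; thus $b \rightarrow a$ is an element of $\alg{A}$ below $a + \comp{b}$, and by the lemma it is the \emph{greatest} such element. This is the key point, and it also resolves what I expect to be the main obstacle, namely \textbf{well-definedness}: the set $\{ e \in \alg{A} \mid e \leq a + \comp{b} \}$ depends only on the element $a + \comp{b}$ of $\alg{B}$, not on the representation chosen for it, and $b \rightarrow a$ is its maximum. Hence if $a + \comp{b} = c + \comp{d}$ in $\alg{B}$, the two maxima coincide and $b \rightarrow a = d \rightarrow c$, so $\fracsigma$ is well-defined.

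It remains to verify that $\fracsigma$ is an interior operator with image $\alg{A}$, all of which is immediate from the description of $\fracsigma(x)$ as the greatest element of $\alg{A}$ below $x$. Contractivity $\fracsigma(x) \leq x$ is the inequality $b \rightarrow a \leq a + \comp{b}$ just noted. Monotonicity follows since $x \leq y$ gives $\fracsigma(x) \in \alg{A}$ with $\fracsigma(x) \leq x \leq y$, whence $\fracsigma(x) \leq \fracsigma(y)$ by maximality of $\fracsigma(y)$. For the image, each value $b \rightarrow a$ lies in $\alg{A}$ because $\alg{A}$ is residuated, and conversely every $e \in \alg{A}$ is fixed: writing $e = e + \comp{1}$ (using $\comp{1} = 0$ and $e + 0 = e$) yields $\fracsigma(e) = 1 \rightarrow e = e$. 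Thus the image of $\fracsigma$ is exactly $\alg{A}$, and idempotence $\fracsigma(\fracsigma(x)) = \fracsigma(x)$ follows because $\fracsigma(x) \in \alg{A}$ is fixed. The one routine point to keep in mind while applying the residuation law is that it is invoked inside $\alg{B}$, where all complements exist, so that its hypotheses are genuinely met.
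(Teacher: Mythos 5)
Your proof is correct, and it takes a recognizably different route from the paper's, though both ultimately rest on the residuation law for complements (Proposition~\ref{prop: residuation for complements}). Your key move is to crystallize a single intrinsic characterization: for $e \in \alg{A}$ one has $e \leq a + \comp{b}$ in $\alg{B}$ iff $e \cdot b \leq a$ in $\alg{A}$, so that $\fracsigma(x)$ is exactly the greatest element of $\alg{A}$ below $x$. Once this is in place, well-definedness, monotonicity, contractivity, idempotence, and the identification of the image are all order-theoretic trivialities, and your handling of the one delicate point (that the residuation law is applied inside $\alg{B}$, where $\comp{b}$ and its complement $b$ exist, and that the embedding reflects order) is sound. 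The paper instead argues representation-by-representation: it gets well-definedness and isotonicity from the comparison criterion for elements of the form $a + \comp{b}$ (the order dual of Lemma~\ref{lemma: joins below joins}), namely that $a + \comp{b} \leq c + \comp{d}$ forces $b \rightarrow (a + x) \leq d \rightarrow (c + x)$ for all $x$, and then checks contractivity and idempotence by direct computation, e.g.\ $b \rightarrow a \leq a + \comp{b} \iff b \cdot (b \rightarrow a) \leq a$ and $\fracsigma(b \rightarrow a) = \fracsigma((b \rightarrow a) + \comp{1}) = 1 \rightarrow (b \rightarrow a)$. What your organization buys is conceptual economy (one lemma, from which everything follows) and an explanation of \emph{why} $\fracsigma$ is canonical: it is the projection onto $\alg{A}$ determined purely by the order. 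What the paper's computation buys is the stronger parametrized inequality $b \rightarrow (a + x) \leq d \rightarrow (c + x)$, which is the form that is reused essentially verbatim in the subsequent proposition extending $\fracsigma$ to $\Delta_{1}$-extensions, where elements are infinite meets $\bigwedge_{i}(a_{i} + \comp{b}_{i})$; your greatest-element description also extends to that setting, but only because completeness of $\alg{A}$ is assumed there.
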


\begin{proof}
  If $a + \comp{b} \leq c + \comp{d}$, then $b \rightarrow (a + x) \leq d \rightarrow (c + x)$ for each $x$, hence $\fracsigma (a + \comp{b}) = b \rightarrow a \leq d \rightarrow c = \fracsigma (c + \comp{d})$. The map $\fracsigma$ is therefore well-defined ($a + \comp{b} = c + \comp{d}$ implies $b \rightarrow a = d \rightarrow c$) and isotone. It is decreasing because $\fracsigma (a + \comp{b}) \leq a + \comp{b} \iff b \rightarrow a \leq a + \comp{b} \iff b \cdot (b \rightarrow a) \leq a$. The map is also idempotent: $\fracsigma(\fracsigma (a + \comp{b})) = \fracsigma (b \rightarrow a) = \fracsigma ((b \rightarrow a) + \comp{1}) = {1 \rightarrow (b \rightarrow a)} = {b \rightarrow a} =\fracsigma (a + \comp{b})$. Finally, $\fracsigma[B] = A$ since $\fracsigma(a + \comp{b}) = b \rightarrow a \in \alg{A}$ and $\fracsigma(a) = a$ for $a \in \alg{A}$.
\end{proof}

\begin{proposition}[The interior operator $\fracsigma$ on $\Delta_{1}$-extensions]
  Let $\alg{B}$ be a commutative $\Delta_{1}$-extension of a complete commutative $\ell$-bimonoid $\alg{A}$. Then the following map is a well-defined interior operator on $\alg{B}$ whose image is precisely $\alg{A}$:
\begin{align*}
  \fracsigma\left(\bigwedge_{i \in I} (a_{i} + \comp{b}_{i})\right) \assign \bigwedge_{i \in I} (b_{i} \rightarrow a_{i}) \text{ for } a, b \in \alg{A}.
\end{align*}
\end{proposition}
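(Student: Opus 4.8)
The plan is to identify the putative value of $\fracsigma$ on an element $x \in \alg{B}$ with an intrinsic quantity that manifestly does not depend on the chosen representation of $x$ as a meet $\bigwedge_{i \in I}(a_{i} + \comp{b}_{i})$. Concretely, I claim that for every $x \in \alg{B}$,
\[
  \bigwedge_{i \in I}(b_{i} \rightarrow a_{i}) = \max \set{s \in \alg{A}}{s \leq_{\alg{B}} x},
\]
where $\alg{A}$ is identified with its image under the embedding. Since the right-hand side depends only on $x$, establishing this equality settles well-definedness at once, and the remaining interior-operator axioms then fall out immediately.

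To prove the claim, fix $s \in \alg{A}$ and run the following chain of equivalences. Since $x = \bigwedge_{i \in I}(a_{i} + \comp{b}_{i})$, we have $s \leq x$ if and only if $s \leq a_{i} + \comp{b}_{i}$ for every $i \in I$. For each $i$, the residuation law for complements (Proposition~\ref{prop: residuation for complements}), applied using $\comp{\comp{b}_{i}} = b_{i}$, gives $s \leq a_{i} + \comp{b}_{i} \iff s \cdot b_{i} \leq a_{i}$ in $\alg{B}$; here $\comp{b}_{i}$ exists because it occurs in the representation of $x$. As $\alg{A}$ is a sub-bimonoid of $\alg{B}$ whose embedding preserves products and reflects the order, this is equivalent to $s \cdot b_{i} \leq a_{i}$ in $\alg{A}$, hence to $s \leq b_{i} \rightarrow a_{i}$ by residuation in $\alg{A}$. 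Quantifying over $i$ yields $s \leq x \iff s \leq \bigwedge_{i \in I}(b_{i} \rightarrow a_{i})$. Because $\alg{A}$ is complete, $\bigwedge_{i \in I}(b_{i} \rightarrow a_{i})$ is itself an element of $\alg{A}$; taking $s$ to be this meet shows that it lies below $x$, and by the displayed equivalence it is the largest element of $\alg{A}$ with this property, proving the claim.

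With this intrinsic description in hand the remaining properties are routine. Well-definedness is immediate, since the right-hand side is determined by $x$ alone. The map is isotone because $x \leq x'$ enlarges the set $\set{s \in \alg{A}}{s \leq x}$; it is decreasing because its value is one of the elements $s$ satisfying $s \leq x$; and it is idempotent and fixes $\alg{A}$ pointwise because $\fracsigma(s) = s$ for every $s \in \alg{A}$ (the largest element of $\alg{A}$ below $s \in \alg{A}$ is $s$ itself), so applying $\fracsigma$ to the value $\fracsigma(x) \in \alg{A}$ returns it unchanged. In particular the image of $\fracsigma$ is exactly $\alg{A}$.

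The one point requiring care is the existence of the residuals $b_{i} \rightarrow a_{i}$ appearing in the formula, i.e.\ residuation in $\alg{A}$: this is where the standing assumption that $\alg{A}$ is residuated (together with its completeness) enters, guaranteeing both that each $b_{i} \rightarrow a_{i}$ is defined and that the infimum $\bigwedge_{i \in I}(b_{i} \rightarrow a_{i})$ exists in $\alg{A}$. Apart from this, the argument is a verbatim infinitary analogue of the proof of the preceding proposition, with the meet density of the elements $a + \comp{b}$ (inherited from $\alg{B}$ being a $\Delta_{1}$-extension) supplying the representation of an arbitrary $x \in \alg{B}$ to which the chain of equivalences is applied.
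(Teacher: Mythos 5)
Your proof is correct, and it reaches the conclusion by a route that genuinely differs from the paper's. The paper handles well-definedness and isotonicity by comparing two arbitrary meet representations: it applies the order dual of Lemma~\ref{lemma: joins below joins} to rewrite $\bigwedge_{i \in I}(a_{i} + \comp{b}_{i}) \leq \bigwedge_{j \in J}(c_{j} + \comp{d}_{j})$ in $\alg{B}$ as a condition quantified over parameters $x, y \in \alg{A}$, then specializes $y \assign 0$ to obtain $\bigwedge_{i \in I}(b_{i} \rightarrow a_{i}) \leq \bigwedge_{j \in J}(d_{j} \rightarrow c_{j})$; decreasingness, idempotence, and the computation of the image are then checked separately, as in the preceding proposition. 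You instead pin the value down intrinsically, proving $\bigwedge_{i \in I}(b_{i} \rightarrow a_{i}) = \max \set{s \in \alg{A}}{s \leq_{\alg{B}} x}$ by pushing $s \leq a_{i} + \comp{b}_{i}$ through Proposition~\ref{prop: residuation for complements} into $\alg{A}$ (legitimately, since each $\comp{b}_{i}$ occurring in the representation exists and $\comp{\comp{b}_{i}} = b_{i}$) and using completeness of $\alg{A}$ to form the meet there. This single characterization buys everything at once --- representation-independence, isotonicity, decreasingness, idempotence, and $\fracsigma[\alg{B}] = \alg{A}$ --- and it exhibits $\fracsigma$ conceptually as the projection of $\alg{B}$ onto the greatest element of $\alg{A}$ below a given element, which the paper's proof never makes explicit. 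The ingredients are the same (meet density, the residuation laws for complements, completeness and residuation of $\alg{A}$), and your biconditional is in substance the $y \assign 0$ instance of the paper's lemma; but your packaging is more self-contained and dispenses with the axiom-by-axiom verification. You are also right to flag that the residuals $b_{i} \rightarrow a_{i}$ presuppose residuation of $\alg{A}$: this is a standing assumption in this part of the paper (declared just before the preceding proposition) that the proposition's wording leaves implicit, so noting it is appropriate rather than a defect.
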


\begin{proof}
  Suppose that $\bigwedge_{i \in I} (a_{i} + \comp{b}_{i}) \leq \bigwedge_{j \in J} (c_{j} + \comp{d}_{j})$ in $\alg{B}$ for $a_{i}, b_{i}, c_{j}, d_{j} \in \alg{A}$. To prove that the map $\fracsigma$ is well-defined and isotone, we must show that $\bigwedge_{i \in I} (b_{i} \rightarrow a_{i}) \leq \bigwedge_{j \in J} (d_{j} \rightarrow c_{j})$. But the former inequality is equivalent to the condition that for each $x, y \in \alg{A}$ we have: $x b_{i} \leq a_{i} + y$ for each $i \in I$ implies $x d_{j} \leq c_{j} + y$ for each $j \in J$. Taking $y \assign 0$ yields the latter inequality. The other conditions are proved as in the previous proposition.
\end{proof}

  Of course, instead of requiring that $\alg{A}$ be complete, we may alternatively require that each element of $\alg{B}$ be a finite meet of elements of the form $a + \comp{b}$ for $a, b \in \alg{A}$.

  Expanding bimonoids of fractions by the unary operation $\fracsigma$ eliminates all homomorphisms $h\colon \fracalgA \to \fracalgB$ which do not restrict to homomorphisms from $\alg{A}$ to $\alg{B}$: such maps do not commute with $\fracsigma$, since the images $\fracsigma[\fracalgA]$ and $\fracsigma[\fracalgB]$ are precisely $\alg{A}$ and $\alg{B}$. To obtain a functorial construction, we must now restrict to complemented bimonoids of fractions where each element has a certain canonical or \emph{normal} representation.

\begin{definition}[Normal representations]
  Let $\alg{B}$ be a commutative complemented bimonoid of fractions of a commutative residuated bimonoid $\alg{A}$. Then an element $x \in \alg{B}$ is \emph{normal} if
\begin{align*}
  x = \fracsigma(x) \cdot \comp{\fracsigma(\comp{x})}.
\end{align*}
  A pair $\multipair{a}{b} \in \algAsquared$ is \emph{normal} if
\begin{align*}
  \multipair{a}{b} = \multipair{\fracsigma(x)}{\fracsigma(\comp{x})} \text{ for some } x \in \alg{B}.
\end{align*}
  Such a pair $\multipair{a}{b}$ will be called the \emph{normal representation} of $x$. If each $x \in \alg{B}$ is normal, we call $\alg{B}$ itself a \emph{normal} commutative bimonoid of fractions. Transformation functions for $\alg{A}$ will be called \emph{normal} transformation functions if $\alg{B}$ is normal.
\end{definition}

\begin{fact} \label{fact: normal negation}
  If $\multipair{a}{b}$ is a normal pair, then so is $\multipair{b}{a}$.
\end{fact}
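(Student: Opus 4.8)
The plan is to exploit the fact that the definition of a normal pair is symmetric in its two coordinates once we pass to complements. By definition, $\multipair{a}{b}$ being normal means that there is a witness $x \in \alg{B}$ with $\multipair{a}{b} = \multipair{\fracsigma(x)}{\fracsigma(\comp{x})}$, i.e.\ $a = \fracsigma(x)$ and $b = \fracsigma(\comp{x})$. To establish that $\multipair{b}{a}$ is normal, I would exhibit a witness $y \in \alg{B}$ for the swapped pair, namely some $y$ with $\fracsigma(y) = b$ and $\fracsigma(\comp{y}) = a$. The obvious candidate is $y \assign \comp{x}$, which is a legitimate element of $\alg{B}$ precisely because $\alg{B}$ is complemented, so that $\comp{x}$ exists for every $x \in \alg{B}$.

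Verifying that this $y$ works is then immediate. First, $\fracsigma(y) = \fracsigma(\comp{x}) = b$ directly from the choice of the witness $x$. Second, $\fracsigma(\comp{y}) = \fracsigma(\comp{\comp{x}})$, and here the only ingredient beyond the definitions is double negation elimination $\comp{\comp{x}} = x$, which holds in every commutative complemented bimonoid; this is the equality $x = \complr{x} = \comprl{x}$ of Proposition~\ref{prop: de morgan laws}, specialized to the commutative case in which $\compl{x} = \compr{x} = \comp{x}$. Thus $\fracsigma(\comp{y}) = \fracsigma(x) = a$, and so $\multipair{b}{a} = \multipair{\fracsigma(y)}{\fracsigma(\comp{y})}$ witnesses that $\multipair{b}{a}$ is normal.

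There is no genuine obstacle here: the statement is little more than a reformulation of the involutivity of complementation together with the fact that $\fracsigma$ is defined on all of $\alg{B}$. The one point worth keeping in mind is that normality of a \emph{pair} is defined purely through a witnessing element and its complement, so that swapping the two coordinates corresponds exactly to replacing the witness $x$ by $\comp{x}$; in particular no appeal to the stronger normality condition $x = \fracsigma(x) \cdot \comp{\fracsigma(\comp{x})}$ for \emph{elements} is required.
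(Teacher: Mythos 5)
Your proof is correct and is essentially identical to the paper's: the paper also takes the witness $y \assign \comp{x}$ and observes that $\multipair{b}{a} = \multipair{\fracsigma(y)}{\fracsigma(\comp{y})}$. You simply spell out the double-negation step $\comp{\comp{x}} = x$ that the paper leaves implicit.
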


\begin{proof}
  If $\multipair{a}{b} = \multipair{\fracsigma(x)}{\fracsigma(\comp{x})}$, then $\multipair{b}{a} = \multipair{\fracsigma(y)}{\fracsigma(\comp{y})}$ for $y \assign \comp{x}$.
\end{proof}

  Comparing normal representations is much easier than comparing general representations.

\begin{lemma}[Comparing normal representations] \label{lemma: comparing normal representations}
  Let $\multipair{a}{b}$ and $\multipair{c}{d}$ be normal representations of $x$ and $y$ in a commutative complemented bimonoid of fractions $\alg{B}$ of $\alg{A}$. Then $x \leq y$ in $\alg{B}$ if and only if $a \leq c$ and $d \leq b$ in $\alg{A}$.
\end{lemma}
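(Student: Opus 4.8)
The plan is to unwind the definitions so that the two implications reduce to the monotonicity properties already at hand. Following the notion of a normal representation, set $a = \fracsigma(x)$, $b = \fracsigma(\comp{x})$, $c = \fracsigma(y)$, and $d = \fracsigma(\comp{y})$. Since $\multipair{a}{b}$ and $\multipair{c}{d}$ are the normal representations of $x$ and $y$, these elements are normal, so the defining condition $x = \fracsigma(x) \cdot \comp{\fracsigma(\comp{x})}$ lets us recover them as $x = a \cdot \comp{b}$ and $y = c \cdot \comp{d}$. This recovery is the only place where normality is genuinely used.

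For the forward implication I would argue directly from monotonicity. Suppose $x \leq y$. As $\fracsigma$ is an interior operator on $\alg{B}$, hence isotone, applying it to $x \leq y$ gives $a = \fracsigma(x) \leq \fracsigma(y) = c$. For the second inequality, complementation is antitone in any bisemigroup whenever the relevant complements exist (Proposition~\ref{prop: de morgan laws}), so $x \leq y$ yields $\comp{y} \leq \comp{x}$; applying the isotone map $\fracsigma$ once more gives $d = \fracsigma(\comp{y}) \leq \fracsigma(\comp{x}) = b$. Observe that this direction does not invoke normality at all, only the defining equations $a = \fracsigma(x)$, $b = \fracsigma(\comp{x})$, and their counterparts for $y$.

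For the converse I would use the recovered forms $x = a \cdot \comp{b}$ and $y = c \cdot \comp{d}$. Assume $a \leq c$ and $d \leq b$. Multiplication is isotone, so $a \leq c$ gives $a \cdot \comp{b} \leq c \cdot \comp{b}$. Antitonicity of complementation turns $d \leq b$ into $\comp{b} \leq \comp{d}$, and isotonicity of multiplication then yields $c \cdot \comp{b} \leq c \cdot \comp{d}$. Chaining these inequalities gives $x = a \cdot \comp{b} \leq c \cdot \comp{d} = y$, as required.

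The computation itself is short; the only point requiring care is the bookkeeping of which hypothesis powers which direction. The backward implication genuinely needs $x$ and $y$ to be normal in order to pass from the pairs $\multipair{a}{b}$, $\multipair{c}{d}$ back to the elements of $\alg{B}$ that they represent, whereas the forward implication rests solely on the interior operator $\fracsigma$ being isotone together with the antitonicity of complementation. I expect no obstacle beyond this observation, so the proof should be a clean two-paragraph verification once the recovery $x = a \cdot \comp{b}$ is in place.
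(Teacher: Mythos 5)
Your proof is correct and follows essentially the same route as the paper's: the backward direction recovers $x = a \cdot \comp{b}$ and $y = c \cdot \comp{d}$ from normality and uses monotonicity of multiplication together with antitonicity of complementation, while the forward direction applies the isotone interior operator $\fracsigma$ to $x \leq y$ and to $\comp{y} \leq \comp{x}$. The paper's proof is just a terser version of the same two computations.
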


\begin{proof}
  If $a \leq c$ and $d \leq b$, then $x = a \comp{b} \leq c \comp{d} = y$ by monotonicity of multiplication. Conversely, if $x \leq y$, then $a = \fracsigma(x) \leq \fracsigma(y) = c$ and $d = \fracsigma(\comp{y}) \leq \fracsigma(\comp{x}) = b$ by the monotonicity of $\fracsigma$.
\end{proof}

  A crucial observation is that if $\alg{B}$ is normal, then
\begin{align*}
  x =\fracsigma (x) \cdot \comp{\fracsigma(\comp{x})} = \fracsigma (x) + \comp{\fracsigma(\comp{x})},
\end{align*}
  since $\comp{x} = \fracsigma(\comp{x}) \cdot \comp{\fracsigma(x)}$ implies $x = \fracsigma (x) + \comp{\fracsigma(\comp{x})}$. That is, the pair $\pair{\fracsigma(x)}{\fracsigma(\comp{x})}$ works \emph{both} as a multiplicative and and additive representation of $x$. Both multiplying and adding two normal representations can therefore be done na\"{\i}vely: if $\pair{a}{b}$ and $\pair{c}{d}$ are normal representations of $x$ and $y$, then
\begin{align*}
  x \cdot y & = (a \cdot c) \cdot \comp{(b+d)}, & & \text{and} & x + y & = (a + c) + \comp{(b \cdot d)}.
\end{align*}
  That is, $\pair{a}{b} \cdot \pair{c}{d} = \pair{a \cdot c}{b+d}$ is a multiplicative representation of $x \cdot y$ and $\pair{a}{b} + \pair{c}{d} = \pair{a+c}{b \cdot d}$ is an additive representation of $x + y$. The only catch here is that $\pair{a \cdot c}{b+d}$ and $\pair{a+c}{b \cdot d}$ need not be normal representations. To obtain normal representations of $x \cdot y$ and $x + y$, we need to further project them onto the set of normal representations.

\begin{fact} \label{fact: normalization}
  If $x = a \cdot \comp{b}$ in $\cdmalg{A}$, then $\pair{\fracsigma(x)}{\fracsigma(\comp{x})} = \pair{\transminus(a, b) \rightarrow \transplus(a, b)}{a \rightarrow b}$.
\end{fact}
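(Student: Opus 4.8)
The plan is to compute the two coordinates of the pair $\pair{\fracsigma(x)}{\fracsigma(\comp{x})}$ separately, in each case by exhibiting a suitable \emph{additive} representation of the relevant element and reading off the value of $\fracsigma$ directly from the defining formula $\fracsigma(c + \comp{d}) = d \rightarrow c$. Since $\fracsigma$ has already been shown to be well-defined, independently of the choice of additive representation, it suffices to produce one additive representation of $x$ and one of $\comp{x}$. Note that $x = a \cdot \comp{b}$ indeed lies in the complemented bimonoid of fractions, where $\fracsigma$ is defined.

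For the first coordinate I would invoke the defining property of the transformation functions: by definition $\transplus$ and $\transminus$ satisfy
\[
a \cdot \comp{b} = \transplus(a, b) + \comp{\transminus(a, b)}
\]
in $\cdmalg{A}$. Thus $x = \transplus(a, b) + \comp{\transminus(a, b)}$ is \emph{already} in additive form, and applying $\fracsigma$ yields $\fracsigma(x) = \transminus(a, b) \rightarrow \transplus(a, b)$, which is exactly the desired first coordinate.

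For the second coordinate I would first rewrite $\comp{x}$ in additive form using the De~Morgan laws (Proposition~\ref{prop: de morgan laws}) together with double negation elimination:
\[
\comp{x} = \comp{a \cdot \comp{b}} = \comp{\comp{b}} + \comp{a} = b + \comp{a}.
\]
This is again an additive representation, now with $b$ in the positive slot and $a$ in the complemented slot, so reading off $\fracsigma$ gives $\fracsigma(\comp{x}) = a \rightarrow b$, the desired second coordinate. Together these two computations establish the claimed equality of pairs.

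I expect no serious obstacle here: the statement is a direct unwinding of the definition of $\fracsigma$ once the correct additive representations are in hand. The only point requiring care is conceptual rather than technical, namely that $\fracsigma$ must be applied to an \emph{additive} representation, so one should be careful to use the well-definedness of $\fracsigma$ (that $a + \comp{b} = c + \comp{d}$ forces $b \rightarrow a = d \rightarrow c$) rather than naively reading the value off the multiplicative form $a \cdot \comp{b}$. The transformation-function identity furnishes precisely the additive representation of $x$ that circumvents this, while De~Morgan supplies the one for $\comp{x}$ with no further work.
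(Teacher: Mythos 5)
Your proposal is correct and follows essentially the same route as the paper's proof, which simply cites the two defining identities $a \cdot \comp{b} = \transplus(a, b) + \comp{\transminus(a, b)}$ and $\fracsigma(a + \comp{b}) = b \rightarrow a$. The only difference is that you explicitly spell out the De~Morgan computation $\comp{x} = b + \comp{a}$ for the second coordinate (and the well-definedness point about additive representations), steps the paper leaves implicit in its ``follows immediately from the definitions.''
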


\begin{proof}
  This follows immediately from the definitions: $a \cdot \comp{b} = \transplus(a, b) + \comp{\transminus(a, b)}$ and $\fracsigma(a + \comp{b}) = b \rightarrow a$.
\end{proof}

  Let us therefore define the map $\pi\colon \alg{A}^2 \to \alg{A}^2$ as
\begin{align*}
  \pi \multipair{a}{b} & \assign \multipair{\transminus(a, b) \rightarrow \transplus(a, b)}{a \rightarrow b}.
\end{align*}
  In other words, $\pi \multipair{a}{b}$ is a normal representation of $a \cdot \comp{b}$. Moreover, $\multipair{a}{b}$ is normal if and only if $\multipair{a}{b} = \pi \multipair{a}{b}$. It follows that normal pairs are definable by equations.

\begin{fact} \label{fact: normal transformation functions}
  The transformation functions $\transplus$, $\transminus$ for $\alg{A}$ are normal if and only if for all $a, b, x \in \alg{A}$
\begin{align*}
  a \rightarrow (b + x) & = (\transminus(a, b) \rightarrow \transplus(a, b)) \rightarrow ((a \rightarrow b) + x).
\end{align*}
\end{fact}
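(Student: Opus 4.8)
The plan is to unwind the definition of normal transformation functions and reduce the normality requirement, via Fact~\ref{fact: normalization}, to an equality between two elements of the form $c \cdot \comp{d}$ in $\cdmalg{A}$, which Lemma~\ref{lemma: joins below joins} then converts into an equation internal to $\alg{A}$. First I would recall that $\transplus, \transminus$ are normal precisely when $\alg{B} = \fracalgA$ is normal, i.e.\ when every element of $\alg{B}$ is normal. Since $\alg{B}$ is a complemented bimonoid of fractions, each of its elements has the form $x = a \cdot \comp{b}$ with $a, b \in \alg{A}$; hence $\alg{B}$ is normal if and only if $a \cdot \comp{b} = \fracsigma(a \cdot \comp{b}) \cdot \comp{\fracsigma(\comp{a \cdot \comp{b}})}$ holds in $\cdmalg{A}$ for all $a, b \in \alg{A}$.

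Next I would evaluate the two factors on the right-hand side using Fact~\ref{fact: normalization}: for $x = a \cdot \comp{b}$ we have $\fracsigma(x) = \transminus(a, b) \rightarrow \transplus(a, b)$ and $\fracsigma(\comp{x}) = a \rightarrow b$. The normality condition thus becomes the single requirement
\begin{align*}
  a \cdot \comp{b} = (\transminus(a, b) \rightarrow \transplus(a, b)) \cdot \comp{(a \rightarrow b)},
\end{align*}
imposed for all $a, b \in \alg{A}$.

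The decisive step is to translate this equality of elements of $\cdmalg{A}$ into a statement purely about $\alg{A}$. Abbreviating $c \assign \transminus(a, b) \rightarrow \transplus(a, b)$ and $d \assign a \rightarrow b$, I would apply the ``in particular'' clause of Lemma~\ref{lemma: joins below joins} to each of the two inequalities $a \cdot \comp{b} \leq c \cdot \comp{d}$ and $c \cdot \comp{d} \leq a \cdot \comp{b}$ (here is where residuatedness of $\alg{A}$ is used). In the commutative setting $(x + d) / c = c \rightarrow (x + d)$ and $(x + b) / a = a \rightarrow (x + b)$, so these two inequalities read $c \rightarrow (x + d) \leq a \rightarrow (x + b)$ and its converse, for all $x \in \alg{A}$. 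Combining them, the equality $a \cdot \comp{b} = c \cdot \comp{d}$ is equivalent to $a \rightarrow (x + b) = c \rightarrow (x + d)$ for all $x$, which upon substituting $c$ and $d$ and using commutativity of $+$ is exactly the asserted equation. The main point to handle carefully is the bookkeeping of this last step: fixing the correct direction of the residual under commutativity, and checking that the two opposite inequalities furnished by Lemma~\ref{lemma: joins below joins} genuinely fuse into one two-sided equation, with the $\forall a, b, x$ quantifier of the statement lining up with the pointwise-in-$x$ criterion of the lemma.
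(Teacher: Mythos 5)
Your proof is correct and follows essentially the same route as the paper: the paper's own (terse) proof likewise identifies normality of $\fracalgA$ with the requirement that $\multipair{a}{b}$ and $\pi\multipair{a}{b}$ represent the same element, i.e.\ $a \cdot \comp{b} = (\transminus(a,b) \rightarrow \transplus(a,b)) \cdot \comp{a \rightarrow b}$ in $\cdmalg{A}$, and then invokes Lemma~\ref{lemma: joins below joins} to convert this into the stated equation. Your write-up merely makes explicit the bookkeeping (Fact~\ref{fact: normalization}, the two one-sided applications of the lemma, and commutativity of $+$) that the paper leaves implicit.
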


\begin{proof}
  This is precisely what the condition that $\multipair{a}{b}$ and $\pi \multipair{a}{b}$ represent the same element of $\cdmalg{A}$, i.e. $ a\cdot \comp{b}=(\transminus(a, b) \rightarrow \transplus(a, b)) \cdot \comp{a \rightarrow b}$, comes down to according to Lemma~\ref{lemma: joins below joins}.
\end{proof}

  Unlike general transformation functions, normal transformation functions are always unique if they exist. In a variety, (normal) transformation functions are always witnessed by certain terms.

\begin{definition}[Normal transformation terms]
  Let $\class{K}$ be a class of commutative residuated ($\ell$-)bimonoids. A pair of \emph{(normal) transformation terms} for $\class{K}$ is a pair of terms $t(x, y)$ and $u(x, y)$ in the language of commutative residuated ($\ell$-)bimonoids such that their interpretation on each ${\alg{A} \in \class{K}}$ is a pair of (normal) transformation functions for $\alg{A}$.
\end{definition}

\begin{fact} \label{fact: normal transformation terms}
  Let $\class{K}$ be an ordered variety of commutative residuated bimonoids (a variety of commutative residuated $\ell$-bimonoids). Every $\alg{A} \in \class{K}$ has a pair of (normal) transformation functions if and only if $\class{K}$ has a pair of (normal) transformation terms.
\end{fact}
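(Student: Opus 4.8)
The backward implication is immediate from the definitions: if $t,u$ are (normal) transformation terms then their interpretations $t^{\alg{A}},u^{\alg{A}}$ are (normal) transformation functions for every $\alg{A}\in\class{K}$. For the forward implication it suffices to treat the normal case. Indeed, if every $\alg{A}\in\class{K}$ has transformation functions $\transplus,\transminus$, then by Fact~\ref{fact: normalization} the maps $a,b\mapsto\transminus(a,b)\rightarrow\transplus(a,b)$ and $a,b\mapsto a\rightarrow b$ are \emph{normal} transformation functions for $\alg{A}$; and any pair of normal transformation terms is a fortiori a pair of transformation terms. So I would prove: if every $\alg{A}\in\class{K}$ carries its (unique) pair of normal transformation functions, then these are witnessed by a pair of terms.

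The tool is the free algebra $\alg{F}=\alg{F}_{\class{K}}(x,y)$ on two generators---free in the sense of ordered varieties in the posemigroup case, in the ordinary sense in the lattice-ordered case---whose defining feature is that an inequality is valid in $\class{K}$ precisely when it holds in $\alg{F}$ under the generators. Since $\alg{F}\in\class{K}$ it has normal transformation functions, and by normality the second of these is forced to be $a\rightarrow b$ (Fact~\ref{fact: normalization}). I therefore set $u(x,y)\assign x\rightarrow y$ and let $t(x,y)$ be the term representing the element $\transplus^{\alg{F}}(x,y)\in\alg{F}$. To check that $t,u$ are transformation terms I would invoke the residuated form of Fact~\ref{fact: transformation conditions}: it suffices that the inequality $u(a,b)\leq a\rightarrow(b+t(a,b))$ holds in $\class{K}$, together with
\[
  (a\rightarrow(b+p))\cdot(u(a,b)\rightarrow(t(a,b)+q))\leq p+q .
\]
The first inequality is a two-variable identity in $x,y$, so it is valid in $\class{K}$ because it holds in $\alg{F}$ at the generating pair (where $t,u$ are genuine transformation values).

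The difficulty is the second inequality, which carries the extra universally quantified variables $p,q$. Evaluating the transformation functions of $\alg{F}$ at the generating pair only yields this inequality when $p,q$ are instantiated by elements of $\alg{F}$, i.e.\ by terms in $x,y$; this is strictly weaker than validity as an identity in the four free variables $x,y,p,q$, which is tested in $\alg{F}_{\class{K}}(x,y,p,q)$ with $p,q$ new generators. My plan to close this gap is to show that adjoining fresh generators does not change the normal transformation value at the pair $(x,y)$. Concretely, writing $\alg{G}=\alg{F}_{\class{K}}(x,y,p,q)$ and viewing $\alg{F}$ as the subalgebra $\langle x,y\rangle$ of $\alg{G}$, I would argue $\transplus^{\alg{G}}(x,y)=t(x,y)$; granting this, the displayed inequality for $\alg{G}$ evaluated at its generators is exactly the desired identity with $p,q$ free, hence valid in $\class{K}$.

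This independence of the normal transformation value from the auxiliary generators is the step I expect to be the main obstacle; it amounts to preservation of the (unique) normal transformation operation along the split embedding $\alg{F}\into\alg{G}$. Preservation under \emph{surjective} homomorphisms is easy: if $h\colon\alg{A}\twoheadrightarrow\alg{B}$, then surjectivity makes $h(\transplus^{\alg{A}}(a,b))$ satisfy the defining condition of $\transplus^{\alg{B}}(ha,hb)$, so the two coincide by uniqueness. Applied to the retraction $\rho\colon\alg{G}\twoheadrightarrow\alg{F}$ this already gives $\rho(\transplus^{\alg{G}}(x,y))=t(x,y)$, but upgrading this to equality in $\alg{G}$ requires more, since $\rho$ is not injective. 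Here I would express both $\transplus^{\alg{G}}(x,y)$ and $t(x,y)$ as the largest element of the respective subalgebra lying below $x\comp{y}$ in $\cdmalg{G}$ (via $\fracsigma$ and Lemma~\ref{lemma: joins below joins}) and use residuation, together with uniqueness, to rule out that a fresh generator enlarges this element. I expect this final preservation-under-embedding argument, rather than the free-algebra extraction itself, to be the technical heart of the proof.
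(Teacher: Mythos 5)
Your right-to-left direction agrees with the paper's (both trivial), but the forward direction has two genuine defects. The first is the opening reduction: it is \emph{false} that transformation functions always yield normal ones. Fact~\ref{fact: normalization} computes the normal representation $\pair{\fracsigma(a\comp{b})}{\fracsigma(\comp{a\comp{b}})}=\pair{\transminus(a,b)\rightarrow\transplus(a,b)}{a\rightarrow b}$, but this pair actually \emph{represents} $a\comp{b}$ only when $a\comp{b}$ is a normal element, i.e.\ only when $\fracalg{A}$ was normal to begin with; normality is a strictly stronger hypothesis, not something you get for free. The paper's own example shows this: a nontrivial Abelian $\ell$-group $\alg{G}$, viewed as a residuated bimonoid with $x+y=x\cdot y$ and $0=1$, has transformation functions $\transplus(a,b)=a$, $\transminus(a,b)=b$ (Fact~\ref{fact: fractions for cancellative pomonoids}), yet $\fracalg{G}=\alg{G}$ is never normal. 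Concretely, in $\mathbb{Z}$ your candidate pair would require $a\comp{b}=(b\rightarrow a)+\comp{(a\rightarrow b)}$, i.e.\ $a-b=2(a-b)$. So the non-normal half of the Fact is simply not proved by your argument.

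The second defect is the one you flag yourself, and it is not a removable technicality: proving that the normal transformation value at $(x,y)$ is unchanged under the embedding $\alg{F}_{\class{K}}(x,y)\into\alg{F}_{\class{K}}(x,y,p,q)$ is \emph{exactly} the assertion that the quantified inequality of Fact~\ref{fact: transformation conditions} holds with $p,q$ as free variables, i.e.\ it is the Fact itself in disguise, so deferring it leaves the proof circular. Your supporting claim that surjections preserve transformation values ``easily'' is also unjustified: the defining condition is a universally quantified implication, and its hypotheses in the image algebra cannot be pulled back along a non-injective surjection, so $h(\transplus^{\alg{A}}(a,b))$ need not satisfy the condition in $\alg{B}$. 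The paper avoids the entire difficulty with one move you did not consider: instead of the two-generated free algebra, it applies the hypothesis to a free algebra with \emph{spare} generators (``$6$ or more'': $x$, $y$ together with further generators not occurring in the elements $\transplus(x,y)$, $\transminus(x,y)$), and instantiates the universally quantified variables of Facts~\ref{fact: transformation conditions} and~\ref{fact: normal transformation functions} at those spare generators. An inequality holding at distinct free generators is a valid identity of $\class{K}$, and since the quantified variables are then disjoint from the variables occurring in the terms representing $\transplus(x,y)$ and $\transminus(x,y)$, these terms can be read off as (normal) transformation terms; no preservation lemma, no uniqueness, and no normality assumption is needed.
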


\begin{proof}
  The right-to-left implication is trivial. Conversely, let $\alg{F}$ be a $\class{K}$-free ($\ell$-)bimonoid over $6$ or more generators, among them $x$ and $y$. Then $\alg{F}$ has certain (normal) transformation functions $\transplus$, $\transminus$. Applying them to $x$ and $y$ yields elements $\transplus(x, y), \transminus(x, y) \in \alg{F}$. These can be obtained by applying certain terms $t$ and $u$ to $x$ and $y$: $\transplus(x, y) = t(x, y)$ and $\transminus(x, y) = u(x, y)$. Because $\transplus$ and $\transminus$ are (normal) transformation functions, these satisfy the inequalities of Fact~\ref{fact: transformation conditions} (and Fact~\ref{fact: normal transformation functions}) if we take $a$, $b$, $p$, $q$ to be some of the other generators of $\alg{F}$. Since these inequalities hold in the $\class{K}$-free algebra $\alg{F}$, they hold in every algebra of $\class{K}$, therefore $t$ and $u$ are (normal) transformation terms.
\end{proof}

  We now show how to explicitly construct a commutative complemented bimonoid of fractions of $\alg{A}$ as a bimonoid on the set of all normal pairs in $\algAsquared$ using a pair of normal transformation functions $\transplus$ and $\transminus$.

\begin{theorem}[Constructing normal complemented bimonoids of fractions] \label{thm: normal bimonoids of fractions}
  Let $\alg{A}$ be a commutative residuated bimonoid with normal transformation functions $\transplus$ and $\transminus$. We define~$\fracalgA$ to be the ordered algebra over the set of all normal pairs
\begin{align*}
  \set{\multipair{a}{b} \in A^{2}}{\multipair{a}{b} = \pi \multipair{a}{b}} = \set{\multipair{\transminus(a, b) \rightarrow \transplus(a, b)}{a \rightarrow b}}{a, b \in \alg{A}}
\end{align*}
  with the operations
\settowidth\auxlength{$\,\preplus\,$}
\begin{align*}
  \comp{\multipair{a}{b}} & \assign \multipair{b}{a}, \\
  \multipair{a}{b} \hbox to \auxlength{\hfil$\precdot$\hfil} \multipair{c}{d} & \assign \pi \multipair{a \cdot c}{b+d}, \\
  \multipair{a}{b} \hbox to \auxlength{\hfil$\preplus$\hfil} \multipair{c}{d} & \assign \comp{\pi \multipair{b \cdot d}{a+c}},
\end{align*}
  the constants $\preone \assign \pi \multipair{1}{0}$ and $\prezero \assign \pi \multipair{0}{0}$, and the partial order
\begin{align*}
  \multipair{a}{b} \preleq \multipair{c}{d} & \iff a \leq c \text{ and } d \leq b.
\end{align*}
  If $\alg{A}$ is an $\ell$-bimonoid, then we also equip $\fracalgA$ with the operations
\begin{align*}
  \multipair{a}{b} \vee \multipair{c}{d} & \assign \pi \multipair{a \vee c}{b \wedge d}, \\
  \multipair{a}{b} \wedge \multipair{c}{d} & \assign \comp{\pi \multipair{b \wedge d}{a \vee c}}.
\end{align*}
  Then $\fracalgA$ is a normal commutative complemented ($\ell$-)bimonoid of fractions of~$\alg{A}$ relative to the embedding $\fraciota\colon a \mapsto \pi \multipair{a}{0}$. Moreover, $\fraciota(a) \circ \comp{\fraciota(b)} = \pi \multipair{a}{b}$.
\end{theorem}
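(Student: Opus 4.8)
The plan is to show that the explicitly described algebra $\fracalgA$ is isomorphic to the abstract complemented bimonoid of fractions that we already control. Since $\alg{A}$ possesses normal transformation functions, Proposition~\ref{prop: existence of bimonoid of fractions} tells us that the set $\alg{B}$ of all elements of the form $a \comp{b}$ with $a, b \in \alg{A}$ is a commutative complemented bimonoid of fractions of $\alg{A}$ inside $\cdmalg{A}$, and by normality every element of $\alg{B}$ is normal. I would then consider the map $\varepsilon$ sending a normal pair $\multipair{a}{b}$ to $a \comp{b} \in \alg{B}$ (note that the operations of $\fracalgA$ land in the carrier set, since $\pi$ always outputs normal pairs). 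Using that each $z \in \alg{B}$ has the unique normal representation $\multipair{\fracsigma(z)}{\fracsigma(\comp{z})}$, the map $\varepsilon$ is a bijection from the normal pairs onto $\alg{B}$ with inverse $z \mapsto \multipair{\fracsigma(z)}{\fracsigma(\comp{z})}$, and Lemma~\ref{lemma: comparing normal representations} shows that $\varepsilon$ is an order isomorphism between $(\fracalgA, \preleq)$ and $(\alg{B}, \leq)$.

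Next I would verify that $\varepsilon$ transports each operation defined in the theorem to the corresponding operation of $\alg{B}$, so that $\varepsilon$ becomes an isomorphism of complemented (\mbox{$\ell$-})bimonoids. For the bimonoid operations this is a direct computation: $\varepsilon$ commutes with complementation because $\varepsilon \comp{\multipair{a}{b}} = b \comp{a} = \comp{a \comp{b}}$ by De~Morgan and normality of $\multipair{b}{a}$ (Fact~\ref{fact: normal negation}); it commutes with multiplication because $\varepsilon \pi \multipair{a c}{b + d} = (a c) \comp{(b + d)} = a \comp{b} \cdot c \comp{d}$, using Fact~\ref{fact: normalization} to see that $\pi \multipair{a c}{b + d}$ is the normal representation of $(a c) \comp{(b + d)}$; and it carries addition and the units correctly by analogous computations, with addition reducing to multiplication and complementation via De~Morgan. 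Throughout I rely on the crucial feature of normality that $z = \fracsigma(z) \cdot \comp{\fracsigma(\comp{z})} = \fracsigma(z) + \comp{\fracsigma(\comp{z})}$, which lets a single normal pair serve as both a multiplicative and an additive representation.

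The main obstacle is the lattice case, namely proving that $\varepsilon \pi \multipair{a \vee c}{b \wedge d}$ equals the join of $x = a \comp{b}$ and $y = c \comp{d}$ computed in $\alg{B}$; equivalently, that $(a \vee c) \comp{(b \wedge d)}$ is this join. This does not follow by distributing in $\cdmalg{A}$, and in fact $(a \vee c) \comp{(b \wedge d)}$ need not be the join taken in $\cdmalg{A}$ at all; normality is precisely what makes it the join inside $\alg{B}$. I would argue by the universal property: that $(a \vee c) \comp{(b \wedge d)}$ is an upper bound of $x$ and $y$ follows from monotonicity of multiplication together with antitonicity of complementation. For the converse, let $z \in \alg{B}$ be any upper bound; since $z$ is normal we may write $z = \fracsigma(z) \cdot \comp{\fracsigma(\comp{z})}$, and Lemma~\ref{lemma: comparing normal representations} applied to $x \leq z$ and $y \leq z$ yields $a \vee c \leq \fracsigma(z)$ and $\fracsigma(\comp{z}) \leq b \wedge d$. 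Monotonicity of multiplication and antitonicity of complementation then give $(a \vee c) \comp{(b \wedge d)} \leq \fracsigma(z) \cdot \comp{\fracsigma(\comp{z})} = z$, so $(a \vee c) \comp{(b \wedge d)}$ is the least upper bound. The meet is handled dually, using that $\varepsilon$ commutes with complementation and that Fact~\ref{fact: normal negation} keeps the relevant pairs normal.

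Finally I would assemble the pieces. The algebra $\alg{B}$ is a complemented bimonoid which we have now shown to be a lattice, hence an involutive residuated lattice, since every existing join in an involutive residuated pomonoid is admissible; that is, $\alg{B}$ is a complemented $\ell$-bimonoid, and $\varepsilon$ is an isomorphism of such structures (the embedding $\fraciota$ preserves finite joins and meets, as $\fraciota(a) \vee \fraciota(c) = \pi \multipair{a \vee c}{0} = \fraciota(a \vee c)$ and dually). Because $\alg{B}$ is a normal commutative complemented (\mbox{$\ell$-})bimonoid of fractions of $\alg{A}$ via the inclusion and $\varepsilon(\pi \multipair{a}{0}) = a \comp{0} = a$, the algebra $\fracalgA$ is a normal commutative complemented (\mbox{$\ell$-})bimonoid of fractions relative to $\fraciota \colon a \mapsto \pi \multipair{a}{0}$, and $\fraciota(a) \circ \comp{\fraciota(b)}$ corresponds under $\varepsilon$ to $a \comp{b} = \varepsilon(\pi \multipair{a}{b})$, giving $\fraciota(a) \circ \comp{\fraciota(b)} = \pi \multipair{a}{b}$.
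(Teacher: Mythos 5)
Your proof is correct and follows essentially the same route as the paper's: both factor through the sub-bimonoid $\alg{B}$ of $\cdmalg{A}$ supplied by Proposition~\ref{prop: existence of bimonoid of fractions}, use the map $\varepsilon\colon \multipair{a}{b} \mapsto a\comp{b}$, establish that it is an order isomorphism via Lemma~\ref{lemma: comparing normal representations}, and transport the operations using Fact~\ref{fact: normalization}, Fact~\ref{fact: normal negation}, and the identity $z = \fracsigma(z)\cdot\comp{\fracsigma(\comp{z})} = \fracsigma(z) + \comp{\fracsigma(\comp{z})}$ for normal elements. The one place you go beyond the paper's own proof is the explicit verification that $(a \vee c)\comp{(b \wedge d)}$ is the least upper bound of $a\comp{b}$ and $c\comp{d}$ \emph{inside} $\alg{B}$ (the paper relegates lattice-orderedness to the subsequent corollary); your universal-property argument there is sound, and your observation that this join need not coincide with the join computed in $\cdmalg{A}$ is a genuine subtlety worth making explicit.
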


\begin{proof}
  We have already observed (Fact~\ref{fact: normal negation}) that if $\multipair{a}{b}$ is normal, then so is $\multipair{b}{a}$, hence the operations of $\fracalgA$ indeed yield normal pairs. Because $\alg{A}$ has a pair of normal transformation functions, the sub-bimonoid $\alg{B}$ of $\cdmalg{A}$ consisting of elements of the form $a \comp{b}$ is a normal commutative complemented bimonoid of fractions of $\alg{A}$ (Proposition~\ref{prop: existence of bimonoid of fractions}). Now let us consider the map $\varepsilon\colon \fracalgA \to \alg{B}$ such that $\varepsilon\colon \multipair{a}{b} \mapsto a \comp{b} \in \alg{B}$.

  This map is surjective, since each $x \in \alg{B}$ has a normal representation. By Lemma~\ref{lemma: comparing normal representations}, $\multipair{a}{b} \preleq \multipair{c}{d}$ if and only if $\varepsilon \multipair{a}{b} \leq \varepsilon \multipair{c}{d}$. Observe that $\varepsilon (\pi \multipair{a}{b}) = a \comp{b}$: $\varepsilon(\pi \multipair{a}{b}) = \varepsilon \multipair{\transminus(a, b) \rightarrow \transplus(a, b)}{a \rightarrow b} = (\transminus(a, b) \rightarrow \transplus(a, b)) \cdot \comp{a \rightarrow b} = a \comp{b}$ by Fact~\ref{fact: normalization}. The map $\varepsilon$ is therefore a homomorphism: $\varepsilon(\preone) = \varepsilon(\pi \multipair{1}{0}) = 1 \comp{0} = 1$ and $\varepsilon(\prezero) = \varepsilon(\pi \multipair{0}{0}) = 0 \comp{0} = 0$ for the two units, $\varepsilon (\multipair{a}{b} \circ \multipair{c}{d}) = \varepsilon (\pi \multipair{a \cdot c}{b + d}) = ac \cdot \comp{b + d} = a\comp{b} \cdot c \comp{d} = \varepsilon \multipair{a}{b} \cdot \varepsilon \multipair{c}{d}$ for multiplication, and 
\begin{align*}
  \varepsilon \multipair{a}{b} + \varepsilon \multipair{c}{d}
  & = a \comp{b} + c \comp{d} = a + \comp{b} + c + \comp{d} \\
  & = (a + c) + \comp{(b \cdot d)} \\
  & = ((b \cdot d) \rightarrow (a + c)) \cdot \comp{(\transminus(b \cdot d, a + c) \rightarrow \transplus(b \cdot d, a + c))} \\
  & = \varepsilon (\multipair{(b \cdot d) \rightarrow (a + c)}{\transminus(b \cdot d, a + c) \rightarrow \transplus(b \cdot d, a + c)}) \\
  & = \varepsilon (\multipair{a}{b} \oplus \multipair{c}{d}).
\end{align*}
  Here we used Fact~\ref{fact: normalization} and the fact that for normal pairs $\multipair{a}{b}$ we have $a \cdot \comp{b} = a + \comp{b}$ in $\cdmalg{A}$. It follows that the map $\varepsilon\colon \fracalgA \to \alg{B}$ is an isomorphism of bimonoids. The complement of $\multipair{a}{b}$ in $\fracalgA$ is therefore $\varepsilon^{-1}(\comp{\varepsilon(\multipair{a}{b})}) = \varepsilon^{-1}(b + \comp{a}) = \varepsilon^{-1}(b \comp{a}) = \multipair{b}{a}$.

  Observe that $\varepsilon^{-1}(a)$ is by definition the unique normal representation of $a \in \alg{B}$, that is, $\pi \multipair{a}{b}$. Since $\alg{B}$ is a commutative complemented bimonoid of fractions of $\alg{A}$ relative to the inclusion of $\alg{A}$ into $\alg{B}$ and $\varepsilon$ is an isomorphism between $\fracalgA$ and $\alg{B}$, this implies that $\fracalgA$ is a commutative complemented bimonoid of fractions of $\alg{A}$ relative to the map $a \mapsto \pi \multipair{a}{0}$. Finally, $\varepsilon(\fraciota(a) \circ \comp{\fraciota(b)}) = \varepsilon(\fraciota(a)) \cdot \comp{\varepsilon(\fraciota(b))} = a \comp{b} = \varepsilon(\pi \multipair{a}{b})$, so $\fraciota(a) \circ \comp{\fraciota(b)} = \pi \multipair{a}{b}$.
\end{proof}

\begin{corollarynoname}
  Let $\alg{A}$ be $\ell$-bimonoid with a normal commutative complemented bimonoid of fractions. Then $\fracalgA$ is lattice-ordered. In fact, $\fracalgA$ is a sublattice of $\cdmalg{A}$ and $\cdmalg{A}$ is the DM-completion of $\fracalgA$. In~particular, if $\alg{A}$ is moreover finite, then $\cdmalg{A}=\fracalgA$.
\end{corollarynoname}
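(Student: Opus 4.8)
The plan is to work inside the complete involutive residuated lattice $\cdmalg{A}$ and to identify $\fracalgA$ with the sub-bimonoid $\alg{B} = \set{a \cdot \comp{b}}{a, b \in \alg{A}}$ of $\cdmalg{A}$ via the isomorphism $\varepsilon$ of Theorem~\ref{thm: normal bimonoids of fractions} (which applies because a \emph{normal} bimonoid of fractions is in particular a bimonoid of fractions, and $\alg{B}$ is complemented by Proposition~\ref{prop: existence of bimonoid of fractions}). Everything then reduces to one claim: \emph{$\alg{B}$ is closed under the binary join $\vee$ of the complete lattice $\cdmalg{A}$}. Closure under $\vee$ gives closure under $\wedge$ for free, since $\alg{B}$ is complemented and $\comp{(u \vee v)} = \comp{u} \wedge \comp{v}$ holds in $\cdmalg{A}$ (Proposition~\ref{prop: de morgan laws}); together these make $\fracalgA \cong \alg{B}$ a lattice and at the same time a sublattice of $\cdmalg{A}$. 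Tracing $\varepsilon$ through Fact~\ref{fact: normalization} will moreover show that the join and meet so obtained are exactly the operations defined on normal pairs in Theorem~\ref{thm: normal bimonoids of fractions}.

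For the closure claim I would take $x = a \comp{b}$ and $y = c \comp{d}$ with $\multipair{a}{b}$, $\multipair{c}{d}$ their normal representations and show $x \vee y = (a \vee c) \comp{(b \wedge d)}$. This element is an upper bound of $x$ and $y$ by monotonicity. To see it is the \emph{least} upper bound, expand it using the De~Morgan law $\comp{(b \wedge d)} = \comp{b} \vee \comp{d}$ and the distribution of $\cdot$ over $\vee$ valid in $\cdmalg{A}$:
\begin{align*}
  (a \vee c) \comp{(b \wedge d)} = a \comp{b} \vee a \comp{d} \vee c \comp{b} \vee c \comp{d} = x \vee y \vee (a \comp{d}) \vee (c \comp{b}).
\end{align*}
Thus the desired equality is equivalent to the two inequalities $a \comp{d} \leq x \vee y$ and $c \comp{b} \leq x \vee y$. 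Testing each against the meet-dense elements $\comp{f} + e$ with $e, f \in \alg{A}$ and applying the residuation law for complements (Proposition~\ref{prop: residuation for complements}, as in Lemma~\ref{lemma: joins below meets}), these reduce to a purely $\alg{A}$-internal implication: whenever $f a \leq e + b$ and $f c \leq e + d$, one also has $f a \leq e + d$ and $f c \leq e + b$.

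This implication is the heart of the matter and the step I expect to be the main obstacle: it fails for arbitrary $a, b, c, d$ and must use normality essentially. Since $\alg{A}$ is residuated (normality presupposes this), normality of a pair $\multipair{a}{b}$ yields the identities $b \rightarrow a = a$ and $a \rightarrow b = b$, and I would feed these, together with $f a \leq e + b$ and $f c \leq e + d$ rewritten via residuation, into a computation in $\alg{A}$. The extreme cases of $\ell$-groups (where normality amounts to orthogonality $a \vee b = 1$) and of Brouwerian semilattices (where integrality $f c \leq c$ does the work) indicate that the uniform argument turns precisely on these normal-form identities; once it is in place, Lemma~\ref{lemma: comparing normal representations} and Lemma~\ref{lemma: joins below joins} package the conclusion and confirm the explicit formula for $\vee$.

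The remaining assertions are then immediate. Since $\fracalgA$ is a commutative complemented $\Delta_{1}$-extension of $\alg{A}$, the proposition on ordinary DM completions of complemented $\Delta_{1}$-extensions provides a unique ordinary DM completion $\fracalgA \into \cdmalg{A}$; that is, $\cdmalg{A}$ is the DM completion of $\fracalgA$, and the lattice structure established above exhibits $\fracalgA$ inside it as a sublattice. Finally, if $\alg{A}$ is finite, then the set of normal pairs is a subset of $A^{2}$ and hence finite, so $\fracalgA$ is a finite and therefore complete lattice; being its own DM completion, it must coincide with $\cdmalg{A}$.
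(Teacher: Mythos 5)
There is a genuine gap, and you flag it yourself: your whole argument funnels through the single implication ``whenever $f a \leq e + b$ and $f c \leq e + d$, also $f a \leq e + d$ and $f c \leq e + b$'' for normal pairs $\multipair{a}{b}$, $\multipair{c}{d}$, and you do not prove it. Recording the normal-pair identities $b \rightarrow a = a$ and $a \rightarrow b = b$ and checking the $\ell$-group and Brouwerian cases is evidence, not a proof; nothing in the proposal shows how these identities combine with residuation to give the implication uniformly, and it is not a short computation. So the central claim --- that $\alg{B} = \set{a \cdot \comp{b}}{a, b \in \alg{A}}$ is closed under the binary joins of $\cdmalg{A}$ --- remains unestablished, and with it the sublattice assertion.

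The gap is avoidable, because the closure claim should not be attacked head-on in $\cdmalg{A}$; this is also how the paper intends the corollary to be read off from Theorem~\ref{thm: normal bimonoids of fractions}. First show that $\fracalgA$ is a lattice \emph{in its own right}: for normal pairs, $\varepsilon(\pi \multipair{a \vee c}{b \wedge d}) = (a \vee c) \cdot \comp{(b \wedge d)}$ by Fact~\ref{fact: normalization}, and this element is an upper bound of $a\comp{b}$ and $c\comp{d}$ by monotonicity; conversely, if $\multipair{u}{v}$ is any normal upper bound, Lemma~\ref{lemma: comparing normal representations} gives $a \vee c \leq u$ and $v \leq b \wedge d$ in $\alg{A}$, whence $(a \vee c)\comp{(b \wedge d)} \leq u \comp{v}$. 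This uses only the comparison lemma and monotonicity --- no transformation-function computations --- and shows the operations of Theorem~\ref{thm: normal bimonoids of fractions} are genuine lattice operations. Second, transfer the join from $\alg{B}$ to $\cdmalg{A}$ by meet-density rather than by calculation: $\alg{B}$ is meet-dense in $\cdmalg{A}$ (it contains all elements $u + \comp{v}$ with $u, v \in \alg{A}$, by Proposition~\ref{prop: existence of bimonoid of fractions}), so any upper bound $w \in \cdmalg{A}$ of $x, y \in \alg{B}$ is the meet of the elements of $\alg{B}$ above $w$, each of which is an upper bound of $x, y$ in $\alg{B}$ and hence dominates their $\alg{B}$-join; therefore $w$ does as well, and the $\alg{B}$-join is the $\cdmalg{A}$-join. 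This yields the sublattice property, and your ``hard'' implication then falls out as a consequence (via distributivity and De~Morgan) instead of being needed as an input. Your remaining steps --- meets via complementation, the DM-completion statement via the proposition on ordinary DM completions of complemented $\Delta_{1}$-extensions, and the finiteness argument --- are fine as written.
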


  If $\class{K}$ is a class of commutative residuated ($\ell$-)bimonoids with normal transformation terms, let $\fracclassK$ be the class of all commutative complemented ($\ell$-)bimonoids $\fracalgA$ equipped with the unary operation $\fracsigma$ for $\alg{A} \in \class{K}$:
\begin{align*}
  \fracclassK & \assign \set{\pair{\fracalgA}{\fracsigma}}{\alg{A} \in \class{K}}.
\end{align*}
  The class of all structures isomorphic to those in $\fracclassK$ will be denoted $\Iso(\fracclassK)$. This is precisely the class of all ($\ell$-)bimonoids of fractions of ($\ell$-)bimonoids in $\class{K}$, equipped with $\fracsigma$.

\begin{definition}[Normal complemented bimonoids with an interior operator]
  A commutative complemented ($\ell$-)bimonoid equipped with an interior operator $\pair{\alg{A}}{\sigma}$, and by extension the interior operator $\sigma$ itself, is called \emph{normal} if the image of $\sigma$ is a sub-($\ell$-)bimonoid $\sigma[\alg{A}]$ of $\alg{A}$ and moreover $a = \sigma(a) \cdot \sigma(\comp{a})$ for each $ a\in \alg{A}$.
\end{definition}

\begin{fact}
  If $\class{K}$ is an ordered variety (a variety) of commutative residuated ($\ell$-)bimonoids with normal bimonoids of fractions, then $\Iso(\fracclassK)$ is an ordered subvariety (a subvariety) of the ordered variety (the variety) of commutative complemented ($\ell$-)bimonoids with a normal interior operator.
\end{fact}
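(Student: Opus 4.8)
The plan is to realise $\Iso(\fracclassK)$ as an equationally defined subclass of the ambient class $\class{V}$ of commutative complemented ($\ell$-)bimonoids with a normal interior operator, exhibiting $\class{K}$ and $\fracclassK$ as the two halves of a categorical equivalence. First I would check that $\class{V}$ is itself an (ordered) variety. The interior-operator conditions ($\sigma(x) \inequals x$, $\sigma\sigma(x) \equals \sigma(x)$, order preservation), the requirement that $\sigma[\alg{A}]$ be a sub-($\ell$-)bimonoid (the equations $\sigma(\sigma(x) \cdot \sigma(y)) \equals \sigma(x) \cdot \sigma(y)$, $\sigma(\sigma(x) + \sigma(y)) \equals \sigma(x) + \sigma(y)$, $\sigma(1) \equals 1$, $\sigma(0) \equals 0$, and their lattice analogues), and the normality identity $x \equals \sigma(x) \cdot \comp{\sigma(\comp{x})}$ are all (in)equations, so $\class{V}$ is closed under $\mathbf{H}$, $\mathbf{S}$, $\mathbf{P}$.

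Next I would set up the two assignments witnessing the equivalence. In one direction $\alg{A} \mapsto \pair{\fracalgA}{\fracsigma}$ lands in $\fracclassK \subseteq \class{V}$, using that $\fracalgA$ exists and is unique, that $\fracsigma$ is an interior operator with image $\alg{A}$, and the ``crucial observation'' that in a normal bimonoid of fractions each $x$ equals $\fracsigma(x) \cdot \comp{\fracsigma(\comp{x})}$, which yields normality. In the other direction $\pair{\alg{B}}{\sigma} \mapsto \alg{B}_{\sigma} \assign \sigma[\alg{B}]$ sends a member of $\class{V}$ to the sub-($\ell$-)bimonoid $\sigma[\alg{B}]$, made residuated by $x \rightarrow y \assign \sigma(\comp{x} + y)$, which residuates multiplication on $\sigma[\alg{B}]$ precisely because $\sigma$ is an interior operator with sub-bimonoid image. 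The heart of the argument is the identity
\begin{align*}
  \Iso(\fracclassK) & = \set{\pair{\alg{B}}{\sigma} \in \class{V}}{\alg{B}_{\sigma} \in \class{K}}.
\end{align*}
The inclusion $\subseteq$ is immediate: if $\pair{\alg{B}}{\sigma} \cong \pair{\fracalgA}{\fracsigma}$ then $\alg{B}_{\sigma} \cong \fracsigma[\fracalgA] \cong \alg{A} \in \class{K}$, and $\class{K}$ is closed under isomorphism.

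The converse inclusion is the main obstacle. Here I take $\pair{\alg{B}}{\sigma} \in \class{V}$ with $\alg{A} \assign \alg{B}_{\sigma} \in \class{K}$ and must show $\pair{\alg{B}}{\sigma} \cong \pair{\fracalgA}{\fracsigma}$. By normality every $x \in \alg{B}$ equals $\sigma(x) \cdot \comp{\sigma(\comp{x})}$, and order-dually $\sigma(x) + \comp{\sigma(\comp{x})}$, with $\sigma(x), \sigma(\comp{x}) \in \alg{A}$; hence $\alg{B}$ is a commutative complemented bimonoid of fractions of $\alg{A}$ with respect to the inclusion $\alg{A} = \sigma[\alg{B}] \into \alg{B}$. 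Uniqueness of complemented bimonoids of fractions (Corollary~\ref{cor: uniqueness of twist products}) then gives an isomorphism $\alg{B} \cong \fracalgA$ fixing $\alg{A}$, and this isomorphism carries $\sigma$ to $\fracsigma$ because both extract the first coordinate of the unique normal representation of an element, which is determined by the bimonoid structure and is therefore preserved. This step is exactly where the normal form and the uniqueness theorem are used; the hypothesis $\alg{B}_{\sigma} \in \class{K}$ enters only to certify membership, not the reconstruction.

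Finally I would translate the axioms of $\class{K}$. Since every operation of $\alg{B}_{\sigma}$ is a term of $\pair{\alg{B}}{\sigma}$ — the residual is $\sigma(\comp{x} + y)$ and the remaining operations are restrictions, whose outputs stay in the sub-bimonoid $\sigma[\alg{B}]$ — each defining (in)equation $s \inequals t$ of $\class{K}$ has a translate $s^{\sigma} \inequals t^{\sigma}$ obtained by replacing the residual with its $\sigma$-twist and guarding every variable $x$ by $\sigma(x)$, so that $\alg{B}_{\sigma} \models (s \inequals t)$ iff $\pair{\alg{B}}{\sigma} \models (s^{\sigma} \inequals t^{\sigma})$. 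Thus $\set{\pair{\alg{B}}{\sigma} \in \class{V}}{\alg{B}_{\sigma} \in \class{K}}$ is the subclass of $\class{V}$ cut out by the translated (in)equations, and by the displayed identity $\Iso(\fracclassK)$ is an (ordered) subvariety of $\class{V}$. The variety and ordered-variety cases run in parallel, the sole difference being that equations are replaced throughout by inequalities in the sense of Pigozzi.
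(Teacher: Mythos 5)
Your proposal is correct and follows essentially the same route as the paper: both realize $\Iso(\fracclassK)$ inside the (ordered) variety of normal commutative complemented ($\ell$-)bimonoids with an interior operator as the subclass cut out by the $\sigma$-guarded translations of the defining (in)equations of $\class{K}$. The paper's own proof simply asserts this identification in two sentences, whereas you additionally supply its justification — the set identity $\Iso(\fracclassK) = \set{\pair{\alg{B}}{\sigma} \in \class{V}}{\alg{B}_{\sigma} \in \class{K}}$ proved via normality and the uniqueness of complemented bimonoids of fractions, plus the $\sigma$-twisted residual $\sigma(\comp{x} + y)$ needed to translate axioms involving $\rightarrow$ — which is precisely the content the paper leaves implicit.
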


\begin{proof}
  The condition of being a normal interior operator is definable by a set of inequalities. Now suppose that $\class{K}$ is axiomatized relative to the class of commutative residuated ($\ell$-)bimonoids with normal bimonoids of fractions by the bimonoidal inequalities $t_{i}(x_1, \dots, x_n) \inequals u_{i}(x_1, \dots, x_n)$ for $i \in I$. Then $\Iso (\fracclassK)$ is the ordered subvariety of the ordered variety of normal commutative complemented bimonoids with an interior operator axiomatized by $t_{i}(\sigma(x_1), \dots, \sigma(x_n)) \inequals u_{i}(\sigma(x_1), \dots, \sigma(x_n))$ for $i \in I$. The proof for $\ell$-bimonoids is identical.
\end{proof}

  We now define the functor $\div\colon \class{K} \to \Iso(\fracclassK)$ as $\div(\alg{A}) \assign \pair{\fracalgA}{\fracsigma}$ on objects and for each homomorphism $h\colon \alg{A} \to \alg{B}$ in $\class{K}$ we take $\div(h)\colon \div(\alg{A}) \to \div(\alg{B})$ to be the map $\frachom{h}\colon \pair{a}{b} \mapsto \pair{h(a)}{h(b)}$. (This is map is well-defined because being a normal pair is defined by an equational condition and it is a homomorphism because the operations of $\fracalgA$ and $\fracalgB$ correspond to pairs of ($\ell$-)bimonoidal terms.)

  Conversely, there is a functor $\Sigma$ which to each commutative residuated ($\ell$-)bimonoid $\alg{A}$ with a normal interior operator $\sigma$ assigns the ($\ell$-)bimonoid $\sigma[\alg{A}]$ and to each homomorphism $h\colon \pair{\alg{A}}{\sigma} \to \pair{\alg{B}}{\tau}$ of such structures assigns its restriction to $\sigma[\alg{A}]$. Note that the functor $\Sigma$ is defined on the class of all normal commutative complemented ($\ell$-)bimonoids of fractions of commutative residuated ($\ell$-)bimonoids, while the functor $\div$ is only defined on the class of all commutative residuated ($\ell$-)bimonoids which have commutative complemented bimonoids of fractions witnessed by a specific pair of transformation terms.

\begin{theorem}[Categorical equivalences for bimonoids of fractions] \label{thm: equivalence}
  Let $\class{K}$ be an ordered variety of commutative residuated bimonoids with normal commutative complemented bimonoids of fractions. Then the functors $\div\colon \class{K} \to \Iso(\fracclassK)$ and $\Sigma\colon \Iso(\fracclassK) \to \class{K}$ form a categorical equivalence between $\class{K}$ and $\Iso(\fracclassK)$, the~unit being the map $a \mapsto \pi \multipair{a}{0}$ and the counit being the map $\multipair{a}{b} \mapsto a \comp{b}$.
\end{theorem}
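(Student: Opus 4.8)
The plan is to prove the equivalence by exhibiting the two displayed maps as the components of natural isomorphisms $\eta\colon \mathrm{Id}_{\class{K}} \Rightarrow \Sigma \div$ and $\epsilon\colon \div \Sigma \Rightarrow \mathrm{Id}_{\Iso(\fracclassK)}$; a pair of such natural isomorphisms is exactly what an equivalence of categories amounts to. Most of the pointwise work is already in hand. For an object $\alg{A} \in \class{K}$ the composite $\Sigma \div$ returns $\fracsigma[\fracalgA]$, and by the proposition identifying the image of the interior operator $\fracsigma$ this is exactly the embedded copy $\fraciota[\alg{A}]$; since $\fraciota\colon a \mapsto \pi \multipair{a}{0}$ is an embedding by Theorem~\ref{thm: normal bimonoids of fractions}, its corestriction $\eta_{\alg{A}}$ is an isomorphism of $\alg{A}$ onto $\Sigma \div (\alg{A})$. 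Dually, an object $\pair{\alg{B}}{\fracsigma}$ of $\Iso(\fracclassK)$ is by definition a normal commutative complemented bimonoid of fractions of the residuated bimonoid $\fracsigma[\alg{B}]$, so $\div \Sigma$ returns $\pair{(\fracsigma[\alg{B}])^{\div}}{\fracsigma}$, and the isomorphism $\varepsilon\colon \multipair{a}{b} \mapsto a \comp{b}$ built in the proof of Theorem~\ref{thm: normal bimonoids of fractions}, which is canonical by the uniqueness of complemented bimonoids of fractions (Corollary~\ref{cor: uniqueness of twist products}), serves as $\epsilon_{\pair{\alg{B}}{\fracsigma}}$.

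First I would confirm that $\div$ and $\Sigma$ are genuine functors with the stated codomains. For $\div$, the map $\frachom{h}\colon \multipair{a}{b} \mapsto \multipair{h(a)}{h(b)}$ is well defined on normal pairs and is a homomorphism because normality is an equational condition and every operation of $\fracalgA$ is a fixed tuple of bimonoidal terms in the transformation terms $\transplus, \transminus$ and the residual; moreover $\frachom{h}$ commutes with $\fracsigma$, since $\fracsigma \multipair{a}{b} = b \rightarrow a$ is itself a term, so $\div(h)$ really is a morphism of $\Iso(\fracclassK)$. For $\Sigma$ one checks $\fracsigma[\alg{B}] \in \class{K}$: writing $\pair{\alg{B}}{\fracsigma} \cong \pair{\fracalg{C}}{\fracsigma}$ with $\alg{C} \in \class{K}$ gives $\fracsigma[\alg{B}] \cong \fracsigma[\fracalg{C}] = \alg{C} \in \class{K}$, using that the ordered variety $\class{K}$ is closed under isomorphism; and the restriction of any morphism to $\fracsigma[\alg{B}]$ lands in the interior image of the target precisely because morphisms of $\Iso(\fracclassK)$ commute with the interior operators.

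Next I would verify naturality of $\eta$ and $\epsilon$, which is a short calculation in each case. For $\eta$ and $h\colon \alg{A} \to \alg{B}$, both composites send $a$ to $\pi \multipair{h(a)}{0}$: since the transformation terms commute with $h$ and $h(0) = 0$, one has $\frachom{h}(\pi \multipair{a}{0}) = \pi \multipair{h(a)}{0} = \eta_{\alg{B}}(h(a))$. For $\epsilon$ and a morphism $g$, both composites send a normal pair $\multipair{a}{b}$ to $g(a) \comp{g(b)}$, using that $g$ is a bimonoid homomorphism preserving complements. Hence $\eta$ and $\epsilon$ are natural isomorphisms, and $\div$ and $\Sigma$ form an equivalence with the indicated unit and counit.

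The main obstacle is bookkeeping rather than a single deep step: one must take care that all objects and arrows really inhabit the claimed categories — that $\Sigma$ corestricts to $\class{K}$, that $\div(h)$ and $\Sigma(g)$ respect the interior operators, and that the interior operator carried by an object of $\Iso(\fracclassK)$ coincides with the $\fracsigma$ produced by $\div\Sigma$. Each of these reduces to two facts already established: that the structure of a normal complemented bimonoid of fractions is captured by bimonoidal terms in the transformation terms (so homomorphisms commute with every operation and with $\fracsigma$), and that $\fracsigma$ has image exactly the embedded copy of the base bimonoid (Theorem~\ref{thm: normal bimonoids of fractions} and the preceding propositions).
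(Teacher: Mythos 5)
Your proof is correct and takes essentially the same route as the paper's: both establish functoriality of $\div$ and $\Sigma$ from the term-definability of normal pairs and of the operations of $\fracalgA$, then identify the unit $a \mapsto \pi \multipair{a}{0}$ as an isomorphism onto $\fracsigma[\fracalgA]$ and the counit $\multipair{a}{b} \mapsto a \comp{b}$ as the isomorphism supplied by Theorem~\ref{thm: normal bimonoids of fractions}. The only difference is one of detail: you write out the naturality squares and the check that $\frachom{h}$ commutes with $\fracsigma$ explicitly, which the paper leaves implicit in its parenthetical ``(natural)''.
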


\begin{proof}
  Let $\class{K}$ be axiomatized by the bimonoidal inequalities $t_{i}(x_1, \dots, x_n) \inequals u_{i}(x_1, \dots, x_n)$ for $i \in I$. Then $\Iso (\class{K})$ is the ordered subvariety of the ordered variety of normal commutative complemented bimonoids with an interior operator axiomatized by $t_{i}(\sigma(x_1), \dots, \sigma(x_n) \inequals u_{i}(\sigma(x_1), \dots, \sigma(x_n))$ for $i \in I$. By Fact~\ref{fact: normal transformation terms} there are normal transformation terms for $\class{K}$, therefore $\div\colon \class{K} \to \Iso (\fracclassK)$ is indeed a functor. We already know that $\Sigma$ is a functor. It now suffices to provide natural isomorphisms between $\alg{A}$ and $\Sigma(\div(\alg{A}))$ for $\alg{A} \in \class{K}$ and between $\div(\Sigma(\pair{\alg{B}}{\sigma}))$ and $\pair{\alg{B}}{\sigma}$ for $\pair{\alg{B}}{\sigma} \in \Iso(\fracclassK)$.

  Let $\alg{A} \in \class{K}$. The map $a \mapsto \pi \multipair{a}{0}$ is a bimonoidal embedding of $\alg{A}$ into $\fracalgA$ by Theorem~\ref{thm: normal bimonoids of fractions}. Its image coincides with the image of $\fracsigma$, i.e.\ with $\alg{A}$ as a sub-bimonoid of $\fracalgA$, therefore this map is a (natural) isomorphism between the bimonoids $\alg{A}$ and $\fracsigma[\fracalgA]$. On the other hand, each element of $\Iso(\fracclassK)$ is a normal commutative complemented bimonoid $\alg{B}$ with an interior operator~$\sigma$ such that $\sigma[\alg{B}] \in \class{K}$. Let $\alg{A} \assign \sigma[\alg{B}]$. By Theorem~\ref{thm: normal bimonoids of fractions} the map $\multipair{a}{b} \mapsto a \comp{b}$ is a (natural) isomorphism between $\fracof{\sigma[\alg{B}]}$ and $\alg{B}$.
\end{proof}

  The same proof yields an analogous theorem for varieties of commutative residuated $\ell$-bimonoids.

\begin{theorem}[Categorical equivalences for $\ell$-bimonoids of fractions] \label{thm: l-equivalence}
  Let $\class{K}$ be a variety of commutative residuated $\ell$-bimonoids with normal commutative complemented $\ell$-bi\-monoids of fractions. Then the functors $\div\colon \class{K} \to \Iso(\fracclassK)$ and $\Sigma\colon \Iso(\fracclassK) \to \class{K}$ form a categorical equivalence between $\class{K}$ and $\Iso(\fracclassK)$, the~unit being the map $a \mapsto \pi \multipair{a}{0}$ and the counit being the map $\multipair{a}{b} \mapsto x$.
\end{theorem}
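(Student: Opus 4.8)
The plan is to rerun the proof of Theorem~\ref{thm: equivalence} essentially word for word, replacing ``ordered variety'' by ``variety'' and ``bimonoid'' by ``$\ell$-bimonoid'' throughout, and invoking the lattice-ordered clauses of the results used there. Conceptually nothing new happens; the only extra work is to carry the two lattice connectives $\vee$ and $\wedge$ along, and this bookkeeping has already been discharged inside Theorem~\ref{thm: normal bimonoids of fractions}.

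First I would axiomatize the target category. If $\class{K}$ is cut out among commutative residuated $\ell$-bimonoids by the $\ell$-bimonoidal inequalities $t_i(x_1, \dots, x_n) \inequals u_i(x_1, \dots, x_n)$ for $i \in I$, then $\Iso(\fracclassK)$ is the subvariety of normal commutative complemented $\ell$-bimonoids with an interior operator axiomatized by relativizing each axiom through $\sigma$, namely by $t_i(\sigma(x_1), \dots, \sigma(x_n)) \inequals u_i(\sigma(x_1), \dots, \sigma(x_n))$. Because $\class{K}$ is a variety, Fact~\ref{fact: normal transformation terms} furnishes normal transformation terms, so $\fracalgA$ is defined for every $\alg{A} \in \class{K}$ and, crucially, each of its operations --- including the join $\pi \multipair{a \vee c}{b \wedge d}$ and the meet $\comp{\pi \multipair{b \wedge d}{a \vee c}}$ of Theorem~\ref{thm: normal bimonoids of fractions} --- is computed by a fixed $\ell$-bimonoidal term in $a, b, c, d$. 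This is what makes $\div(h)\colon \multipair{a}{b} \mapsto \multipair{h(a)}{h(b)}$ a well-defined $\ell$-bimonoid homomorphism: well-defined because normality is equational, and a homomorphism because $h$ preserves all the operations (including the residual $\rightarrow$) that enter those terms. The functor $\Sigma$ again sends $\pair{\alg{B}}{\sigma}$ to $\sigma[\alg{B}]$, which is a sub-$\ell$-bimonoid of $\alg{B}$ by the very definition of a normal interior operator.

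It then remains to produce the unit and counit as natural isomorphisms. For the unit I would quote the lattice-ordered clause of Theorem~\ref{thm: normal bimonoids of fractions}, which gives that $a \mapsto \pi \multipair{a}{0}$ is an $\ell$-bimonoidal embedding of $\alg{A}$ into $\fracalgA$ whose image is exactly $\fracsigma[\fracalgA] = \Sigma(\div(\alg{A}))$; it is therefore an isomorphism of $\ell$-bimonoids, and naturality is immediate from the componentwise definition of $\div(h)$. For the counit, every $\pair{\alg{B}}{\sigma} \in \Iso(\fracclassK)$ satisfies $\alg{A} \assign \sigma[\alg{B}] \in \class{K}$, and the same theorem identifies $\multipair{a}{b} \mapsto a \comp{b} = x$ as an $\ell$-bimonoid isomorphism between $\fracof{\sigma[\alg{B}]}$ and $\alg{B}$ commuting with the interior operators.

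The one place where this theorem does more than its bimonoidal predecessor --- and the step I expect to be the (mild) main obstacle --- is that both structural isomorphisms must now preserve finite meets and joins, not merely products, sums, units, and complements. This is precisely why the lattice operations on $\fracalgA$ were defined as $\pi \multipair{a \vee c}{b \wedge d}$ and $\comp{\pi \multipair{b \wedge d}{a \vee c}}$ in Theorem~\ref{thm: normal bimonoids of fractions}: they are engineered so that the isomorphism $\varepsilon\colon \multipair{a}{b} \mapsto a \comp{b}$ onto the sub-$\ell$-bimonoid $\alg{B} \subseteq \cdmalg{A}$ carries them to the genuine lattice operations. Transporting preservation of $\vee$ and $\wedge$ through $\varepsilon$ and its inverse then upgrades the unit and counit to $\ell$-bimonoid isomorphisms, and no further computation is required.
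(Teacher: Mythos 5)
Your proposal is correct and follows essentially the same route as the paper: the paper's own ``proof'' of this theorem is literally the remark that the proof of Theorem~\ref{thm: equivalence} carries over verbatim to varieties of commutative residuated $\ell$-bimonoids, using the $\ell$-bimonoidal clauses of Fact~\ref{fact: normal transformation terms} and Theorem~\ref{thm: normal bimonoids of fractions}, which is exactly what you do. Your closing paragraph about the lattice operations being transported through $\varepsilon$ is a sound (and slightly more explicit) account of why the bookkeeping already discharged in Theorem~\ref{thm: normal bimonoids of fractions} suffices.
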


\subsection{Applications}
\label{subsec: applications}

  We now apply the theory developed in the previous section to obtain uniform proofs of some new categorical equivalences, as well as alternative proofs of some known ones. We first use Theorem~\ref{thm: normal bimonoids of fractions} to show that certain varieties $\class{K}$ of commutative residuated ($\ell$-)bimonoids have normal complemented ($\ell$-)bimonoids of fractions. Then in each case we find an intrinsic inequational description of $\Iso (\fracclassK)$. Finally, this will yield a categorical equivalence between certain ordered varieties (varieties) by Theorem~\ref{thm: equivalence} (Theorem~\ref{thm: l-equivalence}).

  We consider three cases: Brouwerian semilattices, Boolean-pointed Brouwerian algebras, and a certain ordered variety of cancellative residuated pomonoids. We already saw that all of these classes of commutative residuated bimonoids have transformation terms (Facts~\ref{fact: fractions for brsls}, \ref{fact: fractions for boolean-pointed bras}, and~\ref{fact: fractions for cancellative pomonoids}). Moreover, in all three cases we can in fact use the \emph{same} transformation terms:
\begin{align*}
  a \cdot \comp{b} & = 0 a + \comp{a \rightarrow ab}.
\end{align*}
  This is because in the first two cases $a \rightarrow ab = a \rightarrow b$, in the first and last case $0 a = a$, and in the last case $a \rightarrow ab = b$. We~now show that these transformation terms are normal. This amounts to verifying the following equality in each of these classes (Fact~\ref{fact: normal transformation functions}):
\begin{align*}
  a \rightarrow (b + x) \equals ((a \rightarrow ab) \rightarrow 0 a) \rightarrow ((a \rightarrow b) + x).
\end{align*}

\begin{fact}
  Each Brouwerian semilattice has a normal commutative complemented bimonoid of fractions with normal transformation functions $\transplus(a, b) = a$ and $\transminus(a, b) = a \rightarrow b$.
\end{fact}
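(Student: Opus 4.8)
The plan is to obtain this as a short refinement of Fact~\ref{fact: fractions for brsls} via the normality criterion of Fact~\ref{fact: normal transformation functions}. Recall that a Brouwerian semilattice is regarded as a commutative bimonoid in which multiplication and addition both coincide with meet, so that $x \cdot y = x + y = x \wedge y$ and $0 = 1$. Fact~\ref{fact: fractions for brsls} already establishes that $\transplus(a, b) = a$ and $\transminus(a, b) = a \rightarrow b$ are transformation functions, witnessing $a \cdot \comp{b} = a + \comp{a \rightarrow b}$ in $\cdmalg{A}$; it remains only to verify that they are \emph{normal}.

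By Fact~\ref{fact: normal transformation functions}, normality is exactly the assertion that
\begin{align*}
  a \rightarrow (b + x) = (\transminus(a, b) \rightarrow \transplus(a, b)) \rightarrow ((a \rightarrow b) + x)
\end{align*}
holds for all $a, b, x \in \alg{A}$. Substituting $\transplus(a, b) = a$ and $\transminus(a, b) = a \rightarrow b$ and using $p + q = p \wedge q$, this collapses to the single Brouwerian identity
\begin{align*}
  a \rightarrow (b \wedge x) = ((a \rightarrow b) \rightarrow a) \rightarrow ((a \rightarrow b) \wedge x),
\end{align*}
so the entire proof reduces to verifying this equation in every Brouwerian semilattice.

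I would prove it by writing $c \assign a \rightarrow b$ and arguing two inequalities, relying on three standard facts: that $\rightarrow$ distributes over $\wedge$ in its right argument (whence the left-hand side equals $c \wedge (a \rightarrow x)$), that $a \leq c \rightarrow a$ and $c \wedge (c \rightarrow a) \leq a$, and that $a \rightarrow c = a \rightarrow (a \rightarrow b) = a \rightarrow b = c$ by idempotence of meet. The key lemma is $(c \rightarrow a) \rightarrow c = c$: here $c \leq (c \rightarrow a) \rightarrow c$ is automatic, while setting $e \assign (c \rightarrow a) \rightarrow c$ gives $e \wedge a \leq e \wedge (c \rightarrow a) \leq c$ using $a \leq c \rightarrow a$, hence $e \leq a \rightarrow c = c$. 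Distributing $\rightarrow$ over $\wedge$ then rewrites the right-hand side as $c \wedge ((c \rightarrow a) \rightarrow x)$, and the comparison of $c \wedge (a \rightarrow x)$ with $c \wedge ((c \rightarrow a) \rightarrow x)$ follows in both directions from the containments just listed together with the modus-ponens inequalities $a \wedge (a \rightarrow x) \leq x$ and $(c \rightarrow a) \wedge ((c \rightarrow a) \rightarrow x) \leq x$. The only genuine work is the identity itself; the main obstacle, such as it is, lies in the key lemma $(c \rightarrow a) \rightarrow c = c$ and in keeping the parameter $x$ correctly tracked on both sides, but no structural input beyond residuation and idempotence of meet is required.
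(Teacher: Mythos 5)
Your proposal follows the paper's proof exactly: the paper likewise reduces the claim, via Fact~\ref{fact: normal transformation functions} and the transformation functions already provided by Fact~\ref{fact: fractions for brsls}, to the single Brouwerian-semilattice identity $a \rightarrow bx \equals ((a \rightarrow b) \rightarrow a) \rightarrow ((a \rightarrow b)x)$, which it then declares routine. Your verification of that identity (with the key lemma $(c \rightarrow a) \rightarrow c = c$ for $c = a \rightarrow b$, plus distribution of $\rightarrow$ over $\wedge$ and modus ponens) is correct and merely fills in the details the paper omits.
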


\begin{proof}
  It suffices to verify that $a \rightarrow bx = ((a \rightarrow b) \rightarrow a) \rightarrow (a \rightarrow b) x$. This is routine.
\end{proof}

\begin{fact}\label{f:BpBr}
  Each Boolean-pointed Brouwerian algebra has a normal commutative complemented bimonoid of fractions with normal transformation functions $\transplus(a, b) = 0 a$ and $\transminus(a, b) = a \rightarrow b$.
\end{fact}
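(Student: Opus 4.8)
By Fact~\ref{fact: fractions for boolean-pointed bras} we already know that every Boolean-pointed Brouwerian algebra has a commutative complemented bimonoid of fractions, witnessed by the transformation functions $\transplus(a, b) = 0 a$ and $\transminus(a, b) = a \rightarrow b$ (recall that $a \rightarrow ab = a \rightarrow b$ in any Brouwerian algebra, so these agree with the common transformation term $a \cdot \comp{b} = 0 a + \comp{a \rightarrow ab}$). The only thing left to establish is that these transformation functions are \emph{normal}, for this makes the resulting bimonoid of fractions $\fracalgA$ normal, which is what the Fact asserts. By Fact~\ref{fact: normal transformation functions} normality amounts to verifying, for all $a, b, x$, the identity
\begin{align*}
  a \rightarrow (b + x) \equals ((a \rightarrow b) \rightarrow 0 a) \rightarrow ((a \rightarrow b) + x),
\end{align*}
where $\cdot = \wedge$ and $u + v = (0 \rightarrow u v)(u \vee v)$ is the bimonoidal addition of Proposition~\ref{prop: pointed brouwerian algebras}.

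The plan is to prove this identity by the same two-step reduction used in the proof of Fact~\ref{fact: fractions for boolean-pointed bras}. Treating the identity as a pair of inequalities and expanding $+$ into the signature of Brouwerian algebras (so that every occurrence of $0$ is regarded as a marked variable), I would apply Lemma~\ref{lemma: validity in bpbras} to each inequality. Substituting $1$ for $0$ turns $0 a$ into $a$ and $u + v$ into $u \wedge v$, so the ``Brouwerian'' halves of both inequalities collapse to the single identity
\begin{align*}
  a \rightarrow (b \wedge x) \equals ((a \rightarrow b) \rightarrow a) \rightarrow ((a \rightarrow b) \wedge x),
\end{align*}
which is exactly the normality identity already verified (routinely) for Brouwerian semilattices in the preceding Fact. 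Thus the Brouwerian halves require nothing new.

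What remains, and where the real work lies, is the ``Boolean'' half: writing $T \assign a \rightarrow (b + x)$ and $U \assign ((a \rightarrow b) \rightarrow 0 a) \rightarrow ((a \rightarrow b) + x)$, I must show $T \vee 0 = U \vee 0$ in every Boolean-pointed Brouwerian algebra. Here I would use the Boolean-pointedness identity $z \vee 0 = (z \rightarrow 0) \rightarrow 0$ together with residuation to convert the two inequalities $T \vee 0 \leq U \vee 0$ and $U \vee 0 \leq T \vee 0$ into the statements that a meet lies below $0$, namely $T \wedge (U \rightarrow 0) \leq 0$ and $U \wedge (T \rightarrow 0) \leq 0$ respectively. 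These are then discharged by direct Heyting manipulation, exploiting $0 a \leq 0$, the evaluation law $u (u \rightarrow v) \leq v$, and the Boolean-pointed equation $x \vee (x \rightarrow 0) \equals 1$ --- precisely the toolkit employed in the proof of Fact~\ref{fact: fractions for boolean-pointed bras}. I expect this Boolean computation to be the main obstacle: it is where the defined addition $+$, the constant $0$, and the implication $\rightarrow$ genuinely interact, and where the Boolean structure of the interval $[0, 1]$ must be invoked; the Brouwerian reduction and the appeal to Lemma~\ref{lemma: validity in bpbras} are otherwise entirely routine.
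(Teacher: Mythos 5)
Your overall plan coincides step for step with the paper's proof of Fact~\ref{f:BpBr}: reduce normality to the identity of Fact~\ref{fact: normal transformation functions}, split that identity via Lemma~\ref{lemma: validity in bpbras} into a Brouwerian half (which, after substituting $1$ for $0$, is exactly the Brouwerian-semilattice identity of Fact~\ref{fact: fractions for brsls}) and a Boolean half $l \vee 0 = r \vee 0$, and attack the latter through the Boolean-pointed law $y \vee 0 = (y \rightarrow 0) \rightarrow 0$. So there is no disagreement about strategy.

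The gap is that you stop precisely where the paper's proof begins to do its work. Writing $T = a \rightarrow (b + x)$ and $U = ((a \rightarrow b) \rightarrow 0a) \rightarrow ((a \rightarrow b) + x)$ as you do, the passage from $T \vee 0 \leq U \vee 0$ to $T \cdot (U \rightarrow 0) \leq 0$ is a mere restatement by residuation, and discharging it by ``direct Heyting manipulation'' with the generic toolkit you list is not automatic: the antecedent $C = (a \rightarrow b) \rightarrow 0a$ of $U$ must be exploited in an essential way. For instance, the natural first move of weakening $U \rightarrow 0 \leq ((a \rightarrow b) + x) \rightarrow 0$ (legitimate, since $U \geq (a \rightarrow b) + x$) reduces the goal to $T \leq ((a \rightarrow b) + x) \vee 0$, which is \emph{false}: in $\Hfive$ with $0 = \elem{c}$ (Figure~\ref{fig: hfive 0=c}), the valuation $a \mapsto \elem{a}$, $b \mapsto \elemone$, $x \mapsto \elem{a}$ gives $T = \elem{a} \rightarrow (\elemone + \elem{a}) = \elem{a} \rightarrow \elem{a} = \elemone$, while $((a \rightarrow b) + x) \vee 0 = (\elemone + \elem{a}) \vee \elem{c} = \elem{a} \vee \elem{c} = \elem{c}$. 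The paper avoids this trap by first splitting the consequent conjunct-wise (expanding the defined $+$ and distributing $\vee\, 0$) and then running a chain that uses $C$ at the start: $(a \rightarrow b) \rightarrow 0a \leq b \rightarrow 0$, then $(b \rightarrow 0)(a \rightarrow (b \vee x)) \leq a \rightarrow (0 \vee x)$, then $(x \rightarrow 0)(a \rightarrow (0 \vee x)) \leq a \rightarrow 0$, then $(a \rightarrow 0)(0a \rightarrow bx) \leq a \rightarrow b$, closing with $(a \rightarrow b)((a \rightarrow b) \rightarrow 0) \leq 0$; the converse direction $r \leq l \vee 0$ requires a separate reduction of its own. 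None of these steps is deep, but together they are the entire mathematical content of the proof, so until you supply them (or equivalent computations) what you have is a correct plan rather than a proof.
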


\begin{proof}
  We need to show that $a \rightarrow (b + x) = ((a \rightarrow b) \rightarrow 0a) \rightarrow ((a \rightarrow b) + x)$. Let $l$ and $r$ be the left- and the right-hand side of this equation. By Lemma~\ref{lemma: validity in bpbras} it suffices to show that $a \rightarrow bx = ((a \rightarrow b) \rightarrow a) \rightarrow (a \rightarrow b) x$ holds in all Brouwerian algebras and $l \vee 0 = r \vee 0$ holds in all Boolean-pointed Brouwerian algebras. The former inequality is routine to prove. The inequality $l \leq r \vee 0$ is equivalent to the conjunction of the inequalities
\begin{align*}
  (0 a \rightarrow bx) (a \rightarrow (b \vee x)) ((a \rightarrow b) \rightarrow 0a) & \leq (0 \rightarrow (a \rightarrow b) x) \vee 0, \\
  (0 a \rightarrow bx) (a \rightarrow (b \vee x)) ((a \rightarrow b) \rightarrow 0a) & \leq (a \rightarrow b) \vee x \vee 0.
\end{align*}
  The first is routine and the second is equivalent to $(a \rightarrow (b + x)) ((a \rightarrow b) \rightarrow 0a) ((a \rightarrow b) \rightarrow 0) (x \rightarrow 0) \leq 0$, since $y \vee 0 = (y \rightarrow 0) \rightarrow 0$ in Boolean-pointed Brouwerian algebras. We have $(a \rightarrow b) \rightarrow 0 a \leq b \rightarrow 0$ and $(b \rightarrow 0) (a \rightarrow (b \vee x)) \leq a \rightarrow (0 \vee x)$ and $(x \rightarrow 0) (a \rightarrow (0 \vee x)) \leq a \rightarrow 0$. But $(a \rightarrow 0) (0a \rightarrow bx) \leq a \rightarrow b$ and $(a \rightarrow b) ((a \rightarrow b) \rightarrow 0) \leq 0$.

  It remains to prove that $r \leq l \vee 0$. This is equivalent to $(b \rightarrow 0) \rightarrow (b + x) \leq (b + x) \vee 0$, i.e.\ to the conjunction of $((b \rightarrow 0) \rightarrow (0 \rightarrow bx) (b \vee x)) \leq (0 \rightarrow bx) \vee 0$ and $((b \rightarrow 0) \rightarrow (0 \rightarrow bx) (b \vee x)) \leq b \vee x \vee 0$. The first inequality is routine, and the second is equivalent to $((b \rightarrow 0) \rightarrow (0 \rightarrow bx) (b \vee x)) (b \rightarrow 0) (x \rightarrow 0) \leq 0$. But $((b \rightarrow 0) \rightarrow (0 \rightarrow bx) (b \vee x)) (b \rightarrow 0) (x \rightarrow 0) \leq (b \vee x) (b \rightarrow 0) (x \rightarrow 0) \leq 0$.
\end{proof}

  Finally, while every commutative order-cancellative pomonoid (viewed as a bimonoid with $x + y \assign x \cdot y$ and $0 \assign 1$) has a commutative complemented bimonoid of fractions, namely its partially ordered group of fractions, to obtain a normal bimonoid of fractions we need to restrict to a certain subclass of order-cancellative pomonoids. A residuated pomonoid is called \emph{divisible} if it satisfies the implication
\begin{align*}
  y \inequals x & \implies x \cdot (x \bs y) \equals y \equals (y / x) \cdot x.
\end{align*}
  An integral residuated pomonoid is divisible if and only if $x \cdot (x \bs y) = x \wedge y = (y / x) \cdot x$ for all $x$ and $y$. This~condition can be expressed by a set of inequalities, therefore divisible integral residuated pomonoids form an ordered subvariety of the ordered variety of integral residuated pomonoids. In this context, $x \wedge y$ may thus be treated as an abbreviation for $x \cdot (x \bs y) = (y / x) \cdot x$. Each divisible integral residuated pomonoid satisfies the equations $x (y \wedge z) \equals xy \wedge xz$ and $(x \wedge y) z \equals xz \wedge yz$ (see~\cite{galatos+tsinakis05}). It follows that a divisible integral residuated pomonoid is order-cancellative if and only if it is cancellative.

\begin{fact} \label{fact: fractions of lgminus}
  Each cancellative divisible integral commutative residuated pomonoid has a normal commutative complemented bimonoid of fractions with normal transformation functions $\transplus(a, b) = a$ and $\transminus(a, b) = b$.
\end{fact}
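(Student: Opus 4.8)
The plan is to reduce the whole claim to a single equational identity, and then to dispatch that identity by a short down-set argument using divisibility and cancellativity. First, since a cancellative residuated pomonoid is order-cancellative (Fact~\ref{fact: order-cancellative}), Fact~\ref{fact: fractions for cancellative pomonoids} already gives $a \cdot \comp{b} = a + \comp{b}$ in $\cdmalg{A}$, so that $\transplus(a, b) = a$ and $\transminus(a, b) = b$ are indeed transformation functions for $\alg{A}$ (recall that here addition is multiplication and $0 = 1$). By Theorem~\ref{thm: normal bimonoids of fractions} it then remains only to verify that these transformation functions are \emph{normal}, which by Fact~\ref{fact: normal transformation functions} amounts to checking, for all $a, b, x \in \alg{A}$, the identity
\begin{align*}
  a \rightarrow (b + x) & = (\transminus(a, b) \rightarrow \transplus(a, b)) \rightarrow ((a \rightarrow b) + x).
\end{align*}
Substituting $\transplus(a, b) = a$, $\transminus(a, b) = b$ and using $x + y = x \cdot y$, this reduces to proving
\begin{align*}
  a \rightarrow (b \cdot x) & = (b \rightarrow a) \rightarrow ((a \rightarrow b) \cdot x)
\end{align*}
in every cancellative divisible integral commutative residuated pomonoid.

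To prove this identity I would write $p \assign a \rightarrow b$ and $q \assign b \rightarrow a$. Divisibility, in the form $u \cdot (u \rightarrow v) = u \wedge v$ valid in the integral case, gives $a \cdot p = a \wedge b = b \cdot q$; call this common value $m$. Since each residual is the maximum of its down-set of solutions, namely $a \rightarrow (b \cdot x) = \max \set{z}{a \cdot z \leq b \cdot x}$ and $q \rightarrow (p \cdot x) = \max \set{z}{q \cdot z \leq p \cdot x}$, it suffices to show that these two down-sets coincide. If $a \cdot z \leq b \cdot x$, then multiplying by $p$ yields $m \cdot z = p \cdot a \cdot z \leq p \cdot b \cdot x = b \cdot (p \cdot x)$, and since $m = b \cdot q$ we obtain $b \cdot q \cdot z \leq b \cdot (p \cdot x)$, whence $q \cdot z \leq p \cdot x$ by order-cancellativity. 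The converse is symmetric: from $q \cdot z \leq p \cdot x$, multiplying by $b$ gives $m \cdot z = b \cdot q \cdot z \leq b \cdot p \cdot x$, and rewriting $m = a \cdot p$ yields $p \cdot (a \cdot z) \leq p \cdot (b \cdot x)$, so $a \cdot z \leq b \cdot x$ by cancellativity. Hence the two residuals have the same down-set and are therefore equal.

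The only substantive work is this equational identity, and the down-set reformulation makes the two cancellation steps transparent, so I do not anticipate a genuine obstacle. The point that requires the most care is the correct use of divisibility to obtain $a \cdot p = a \wedge b = b \cdot q$, since this is exactly the hypothesis separating the present subclass from arbitrary cancellative pomonoids; everything else is then forced by order-cancellativity.
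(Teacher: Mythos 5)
Your proposal is correct, and its scaffolding matches the paper's: transformation functions come from Fact~\ref{fact: order-cancellative} and Fact~\ref{fact: fractions for cancellative pomonoids}, and Fact~\ref{fact: normal transformation functions} reduces normality to the single identity $a \rightarrow bx = (b \rightarrow a) \rightarrow ((a \rightarrow b)x)$. Where you genuinely diverge is in how that identity is verified. Writing $p = a \rightarrow b$ and $q = b \rightarrow a$, the paper proves it by a closed equational computation: it shows $a q (a \rightarrow bx) = a q (q \rightarrow px)$ through a six-step chain that invokes divisibility (in the form $u(u \rightarrow v) = u \wedge v$) four times and the distribution of products over meets, $x(y \wedge z) \equals xy \wedge xz$, twice, and then cancels the common factor $aq$. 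You instead treat both sides as maxima of their solution sets, $a \rightarrow bx = \max \set{z}{az \leq bx}$ and $q \rightarrow px = \max \set{z}{qz \leq px}$, and prove the two sets equal by multiplying through by $p$ (respectively $b$) and cancelling. Your route is slightly leaner in its hypotheses: it never needs the meet-distribution law (a derived property of divisible integral residuated pomonoids that the paper's chain relies on), and it isolates exactly which consequence of divisibility does the work, namely $a(a \rightarrow b) = b(b \rightarrow a)$; the meet $a \wedge b$ itself is incidental to your argument. What the paper's computation buys in exchange is uniformity: it has the same equational-chain format as the normality verifications for Brouwerian semilattices and Boolean-pointed Brouwerian algebras in the same subsection. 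Both arguments ultimately pivot on the same two ingredients, $a(a \rightarrow b) = b(b \rightarrow a)$ and cancellativity, so the difference is one of technique rather than substance, but your down-set formulation is a clean and arguably more transparent alternative.
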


\begin{proof}
  We need to verify that $a \rightarrow bx = (b \rightarrow a) \rightarrow ((a \rightarrow b) x)$:
\begin{align*}
  a (b \rightarrow a) (a \rightarrow bx) & = (b \rightarrow a) (a \wedge bx) \\
  & = (b \rightarrow a) a \wedge (b \rightarrow a) b x \\
  & = (b \rightarrow a) a \wedge (a \wedge b) x \\
  & = (b \rightarrow a) a \wedge a (a \rightarrow b) x \\
  & = a ((b \rightarrow a) \wedge (a \rightarrow b) x) \\
  & = a (b \rightarrow a) ((b \rightarrow a) \rightarrow (a \rightarrow b) x).
\end{align*}
  The required equation now follows by cancellativity.
\end{proof}

  Up to term equivalence these pomonoids in fact form a variety of residuated lattices known as integral cancellative commutative GMV-algebras (see~\cite{galatos+tsinakis05}).

  Let us now provide an intrinsic inequational description of the complemented bimonoids of fractions obtained from the above three classes of ($\ell$-)bimonoids.

\begin{fact}
  The commutative complemented bimonoids of fractions of Brouwerian semilattices are precisely the idempotent involutive commutative residuated pomonoids equipped with the map $\sigma(x) \assign 1 \wedge x$ which satisfy $x \equals \sigma(x) \cdot \comp{\sigma(\comp{x})}$.
\end{fact}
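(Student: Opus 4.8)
The plan is to read the characterization off the categorical equivalence of Theorem~\ref{thm: equivalence}, using the \emph{normal} transformation functions $\transplus(a,b) = a$ and $\transminus(a,b) = a \rightarrow b$ established for Brouwerian semilattices just above, and then to close the two inclusions by direct computation. With these transformation functions the relevant interior operator is $\fracsigma(a + \comp{b}) = b \rightarrow a$ and $\fracof{\alg{A}}$ lives on the normal pairs of $\alg{A}$ (Theorem~\ref{thm: normal bimonoids of fractions}); since complemented bimonoids of fractions are unique up to isomorphism (Corollary~\ref{cor: uniqueness of twist products}), it suffices to show that the members of $\Iso(\fracclass{BrSl})$ are precisely the stated structures.

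For the forward inclusion, let $\alg{A}$ be a Brouwerian semilattice, so that $\cdot = + = \wedge$ and $1 = 0$ when $\alg{A}$ is viewed as a bimonoid. By Theorem~\ref{thm: normal bimonoids of fractions}, $\fracof{\alg{A}}$ is a commutative involutive residuated pomonoid, and it is idempotent: a generic element satisfies $(a \comp{b})(a \comp{b}) = a^{2} \comp{(b + b)} = a \comp{b}$, using $a^{2} = a$ and $b + b = b \cdot b = b$ in $\alg{A}$. To identify $\fracsigma$ with $x \mapsto 1 \wedge x$ I would show that the image of $\fracsigma$ is exactly the negative cone $\{x : x \leq 1\}$ of $\fracof{\alg{A}}$: every $\fracsigma(x) = b \rightarrow a$ lies below $1$, and conversely $a \comp{b} \leq 1$ forces $a \leq b$ by comparing normal representations (Lemma~\ref{lemma: comparing normal representations}), whence $a \comp{b} = \fraciota(a)$ belongs to $\alg{A}$. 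As $\fracsigma$ is decreasing, isotone, idempotent, and fixes the negative cone pointwise, $\fracsigma(x)$ is the greatest element below both $1$ and $x$, i.e.\ $\fracsigma(x) = 1 \wedge x$; the identity $x = \fracsigma(x) \cdot \comp{\fracsigma(\comp{x})}$ is just normality of $\fracof{\alg{A}}$.

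For the converse, let $\alg{B}$ be an idempotent involutive commutative residuated pomonoid on which $\sigma(x) := 1 \wedge x$ is everywhere defined and normal, and set $\alg{A} := \sigma[\alg{B}]$, the negative cone. As a residuated meet-semilattice $\alg{A}$ is automatically a Brouwerian semilattice: for $y, z \leq 1$ idempotence and integrality give $y \cdot z = y \wedge z$, and residuation of $\alg{B}$ combined with $\sigma$ yields the relative pseudocomplement $y \rightarrow z = \sigma(\comp{y} + z)$ inside the cone. One then checks that $\langle \alg{B}, \sigma \rangle$ is a normal commutative complemented bimonoid with interior operator whose $\sigma$-image is the Brouwerian semilattice $\alg{A}$, so that the counit $\multipair{a}{b} \mapsto a \comp{b}$ of the equivalence in Theorem~\ref{thm: equivalence} exhibits $\alg{B} \cong \fracof{\alg{A}}$.

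The step I expect to be the main obstacle is verifying that $\alg{A} = \sigma[\alg{B}]$ is a Brouwerian semilattice \emph{as a sub-bimonoid}, i.e.\ that the inherited addition agrees with multiplication below $1$ (equivalently that $0 = 1$ in $\alg{B}$, so that $\alg{B}$ is odd). This is exactly the content separating Brouwerian semilattices from the Boolean-pointed Brouwerian algebras of Fact~\ref{fact: fractions for boolean-pointed bras}, whose bimonoids of fractions are the possibly non-odd idempotent involutive structures; indeed the cone is a Brouwerian semilattice \emph{qua} residuated meet-semilattice for \emph{any} such $\alg{B}$, and only the bimonoidal identity $+ = \cdot$ on the cone is at stake. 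The plan is to force this identity from the normality identity $x = \sigma(x) \cdot \comp{\sigma(\comp{x})}$ together with De~Morgan duality relating $\comp{1 \wedge \comp{x}}$ to the positive part $0 \vee x$ of $x$; once $+$ and $\cdot$ are shown to coincide on the negative cone, the equivalence of Theorem~\ref{thm: equivalence} applies and both inclusions close.
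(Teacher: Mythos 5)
Your forward half is correct and is essentially the paper's own argument: idempotence of $\fracalgA$ via $(a\comp{b})(a\comp{b}) = a^{2}\comp{b+b} = a\comp{b}$, and identification of $\fracsigma$ with $x \mapsto 1 \wedge x$ by showing that the embedded copy of $\alg{A}$ is exactly the negative cone (the paper does this by computing that a normal pair $\multipair{a}{b}$ lies below the unit $\multipair{1}{1}$ iff $b = 1$, which is the same computation you route through Lemma~\ref{lemma: comparing normal representations}). The genuine gap is in the converse, and it sits exactly at the step you flagged and postponed: you cannot ``force'' $0 = 1$ (equivalently $+ = \cdot$ on the negative cone) from the stated hypotheses, because it simply does not follow. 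Normality instantiated at $x = 1$ gives $1 = 1 \cdot \comp{1 \wedge 0}$, hence $1 \wedge 0 = 0$, i.e.\ only $0 \leq 1$. A concrete counterexample to anything stronger is the four-element Sugihara chain $-2 < -1 < 1 < 2$ with $\comp{x} = -x$, multiplicative unit $1$ (so $0 = -1 \neq 1$), and $x \cdot y$ the element of larger absolute value, ties resolved by the meet. This is an idempotent involutive commutative residuated lattice, so $1 \wedge x$ exists everywhere, and normality $x \equals \sigma(x) \cdot \comp{\sigma(\comp{x})}$ holds at every element: $1 \cdot \comp{-2} = 2$, $(-2) \cdot \comp{1} = -2$, $1 \cdot \comp{-1} = 1$, $(-1) \cdot \comp{1} = -1$. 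Yet it cannot be a complemented bimonoid of fractions of any Brouwerian semilattice, because a bimonoid embedding preserves both units while a Brouwerian semilattice qua bimonoid has $0 = 1$; indeed the paper itself exhibits this algebra, in Subsection~\ref{sec: examples of bimonoids of fractions}, as the bimonoid of fractions of a two-element \emph{Boolean-pointed} Brouwerian chain. So your planned De~Morgan manipulation can only re-derive $0 \vee x = \comp{1 \wedge \comp{x}}$; it can never yield $0 = 1$, and the step would fail.

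What your (correct) diagnosis actually reveals is that the Fact as printed is missing the hypothesis $0 \equals 1$: the neighbouring categorical equivalence theorem for Brouwerian semilattices and the corollary describing them as negative cones both carry this hypothesis explicitly, and the paper's own proof of the converse invokes it silently (``\dots since $0 = 1$'') rather than deriving it. Once $0 \equals 1$ is assumed, both your argument and the paper's close in a few lines: the negative cone contains $0$ and is closed under both operations; on the cone both operations are integral and idempotent with respect to $\leq$, hence both coincide with the meet; so the cone is a Brouwerian semilattice \emph{as a sub-bimonoid}, and the normality identity says precisely that $\alg{B}$ is its complemented bimonoid of fractions. The repair, then, is not to strengthen your derivation but to read $0 \equals 1$ into the statement (restricting to odd algebras), after which your proposal is complete and matches the paper's proof.
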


\begin{proof}
  Let $\alg{A}$ be a Brouwerian semilattice. Then $\fracalgA$ is idempotent: $a \comp{b} a \comp{b} = a a \cdot \comp{b+b} = a \comp{b}$. To show that $\fracsigma(x) = 1 \wedge x$, it suffices to prove that the bimonoidal embedding $a \mapsto \pi \multipair{a}{0} = \multipair{a}{1}$ maps $\alg{A}$ onto the negative cone of $\fracalgA$, which is the principal filter generated by $\multipair{1}{1}$. But $\multipair{a}{b} \leq \multipair{1}{1}$ if and only if $a \leq 1$ and $1 \leq b$, or equivalently if and only if $b = 1$. Such pairs by definition form the image of $a \mapsto \multipair{a}{1}$.

  Conversely, let $\alg{B}$ be an idempotent involutive commutative residuated pomonoid equipped with the map $\sigma(x) \assign 1 \wedge x$ which satisfies $x \equals \sigma(x) \cdot \comp{\sigma(\comp{x})}$. This equation states that $\alg{B}$ is a commutative bimonoid of fractions of its negative cone, which is an integral idempotent residuated pomonoid with respect to both multiplication and addition, since $0 = 1$.
\end{proof}

\begin{fact}
  The commutative complemented bimonoids of fractions of Boolean-pointed Brouwerian algebras are precisely the idempotent involutive commutative residuated lattices which satisfy $x \equals (1 \wedge x) (0 \vee x)$, expanded by $\sigma(x) \assign 1 \wedge x$.
\end{fact}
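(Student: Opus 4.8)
The plan is to prove both inclusions, following the template of the preceding fact for Brouwerian semilattices. Write $\alg{A}$ for a Boolean-pointed Brouwerian algebra, viewed as a multiplicatively integral idempotent commutative residuated $\ell$-bimonoid via Proposition~\ref{prop: pointed brouwerian algebras}; by Fact~\ref{f:BpBr} it has a normal commutative complemented bimonoid of fractions $\fracalgA$ with $\transplus(a,b) = 0a$ and $\transminus(a,b) = a \rightarrow b$. The first observation I would record is that the displayed equation is just normality in disguise: once $\fracsigma(x) = 1 \wedge x$ is known, De~Morgan together with $\comp{1} = 0$ give $\comp{\fracsigma(\comp{x})} = \comp{1 \wedge \comp{x}} = 0 \vee x$, so the normality identity $x = \fracsigma(x)\comp{\fracsigma(\comp{x})}$ becomes $x = (1 \wedge x)(0 \vee x)$. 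The forward direction thus reduces to showing that $\fracalgA$ is idempotent and that $\fracsigma(x) = 1 \wedge x$.

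For the forward direction, $\fracalgA$ is a complemented commutative $\ell$-bimonoid, hence an involutive commutative residuated lattice, being lattice-ordered by the corollary to Theorem~\ref{thm: normal bimonoids of fractions}. Idempotence I would verify on generators, $a\comp{b} \cdot a\comp{b} = (a \cdot a)\comp{(b + b)} = a\comp{b}$, using $a \cdot a = a$ and the additive idempotence $b + b = (0 \rightarrow b) \wedge b = b$. The crux is $\fracsigma(x) = 1 \wedge x$, for which I would identify the image $\alg{A} = \fracsigma[\fracalgA]$ with the negative cone $\{x : x \leq 1\}$. One inclusion is multiplicative integrality. For the other, given $x \leq 1$ we have $\comp{x} \geq \comp{1} = 0$; since $0 \in \alg{A}$ and $\fracsigma(\comp{x})$ is the largest element of $\alg{A}$ below $\comp{x}$, this forces $\fracsigma(\comp{x}) \geq 0$, hence $\comp{\fracsigma(\comp{x})} \leq 1$, and normality $x = \fracsigma(x)\comp{\fracsigma(\comp{x})} \leq \fracsigma(x) \leq x$ yields $x = \fracsigma(x) \in \alg{A}$. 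With $\alg{A}$ now the negative cone, $1 \wedge x$ lies in $\alg{A}$ and is evidently the largest element of $\alg{A}$ below $x$, i.e.\ $\fracsigma(x)$; the equation follows from normality as above.

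For the converse, let $\alg{B}$ be an idempotent involutive commutative residuated lattice satisfying $x = (1 \wedge x)(0 \vee x)$, and set $\sigma(x) = 1 \wedge x$, an interior operator with image the negative cone $N = \{x : x \leq 1\}$. Putting $x = 1$ in the equation gives $0 \vee 1 = 1$, so $0 \leq 1$ and $0 \in N$. I would then check that $N$ is a sub-bimonoid: closure under $\cdot$ is integrality, and closure under $+$ follows because for $u, v \leq 1$ we have $\comp{u}, \comp{v} \geq 0$, whence $\comp{u + v} = \comp{u}\,\comp{v} \geq 0 \cdot 0 = 0$ by idempotence, i.e.\ $u + v \leq 1$. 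As the negative cone of a residuated lattice $N$ is residuated, and being integral and idempotent it is a pointed Brouwerian algebra with constant $0$ by Proposition~\ref{prop: pointed brouwerian algebras}. The equation says exactly that $\sigma$ is a normal interior operator and that $\alg{B}$ is the normal commutative complemented bimonoid of fractions of $N$, so $(\alg{B}, \sigma) \cong (\fracof{N}, \fracsigma)$ by uniqueness (Corollary~\ref{cor: uniqueness of twist products}).

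What remains, and is the main obstacle, is to show that $N$ is \emph{Boolean}-pointed, i.e.\ that $[0,1]$ is a Boolean lattice. Here I would argue that for $0 \leq y \leq 1$ the relative pseudocomplement of $y$ in $N$ is $1 \wedge \comp{y}$; since $y$ and $1 \wedge \comp{y}$ lie in the integral idempotent $N$, their product is their meet, so $y \wedge \comp{y} = y \cdot (1 \wedge \comp{y}) \leq y\comp{y} \leq 0$, while $0 \leq y$ and $0 \leq \comp{y}$ give $y \wedge \comp{y} \geq 0$, whence $y \wedge (1 \wedge \comp{y}) = 0$. For the join, applying the involution reduces $y \vee (1 \wedge \comp{y}) = 1$ to $\comp{y} \wedge (0 \vee y) = 0$, which is again $y \wedge \comp{y} = 0$ because $0 \leq y$ makes $0 \vee y = y$. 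Thus every $y \in [0,1]$ has a complement, $N$ is Boolean-pointed, and the identification is complete. These two delicate points — the identification $\fracsigma = 1 \wedge (-)$ in the forward direction and Boolean-pointedness here — are where essentially all the content lies; the rest is bookkeeping against the machinery already developed.
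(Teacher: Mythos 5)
Your proof is correct, and its two-direction skeleton (idempotence plus the identification $\fracsigma(x) = 1 \wedge x$ in the forward direction; the negative cone being a Boolean-pointed Brouwerian algebra plus uniqueness in the converse) is the same as the paper's, but the two steps you rightly flag as the crux are handled by genuinely different arguments. For the forward ``onto'' step, the paper uses Boolean-pointedness of $\alg{A}$ directly: if $\multipair{a}{b} \preleq \multipair{1}{0}$ then $0 \leq b \leq 1$, so the Boolean complement of $b$ in $[0,1]$ is also its bimonoid complement, whence $\comp{b} \in \alg{A}$ and $a\comp{b} \in \alg{A}$. You instead squeeze the conclusion out of normality alone: since $0 \in \alg{A}$ and $\comp{x} \geq \comp{1} = 0$, you get $\fracsigma(\comp{x}) \geq 0$, hence $\comp{\fracsigma(\comp{x})} \leq 1$ and $x \leq \fracsigma(x) \leq x$ for every $x \leq 1$. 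This is more modular---Boolean-pointedness enters only through the normality established in Fact~\ref{f:BpBr}---and it is precisely the technique the paper itself deploys later for $\ell$-groups in Fact~\ref{fact: normal sigma}; as a bonus, your argument shows quite generally that a normal commutative complemented bimonoid of fractions of a multiplicatively integral pointed residuated bimonoid always has $\fracsigma(x) = 1 \wedge x$. In the converse, for Boolean-pointedness of the negative cone, the paper's (terse) route is to observe that the interval $[0,1]$ of $\alg{B}$ is stable under the involution, since $y \in [0,1]$ implies $\comp{y} \in [\comp{1}, \comp{0}] = [0,1]$, so $[0,1]$ is a complemented bi-integral idempotent bimonoid and hence a Boolean lattice by the proposition identifying such bimonoids with bounded distributive lattices; your hands-on verification that $1 \wedge \comp{y}$ complements $y$ proves the same thing (indeed $1 \wedge \comp{y} = \comp{y}$ here, precisely because $\comp{y} \leq \comp{0} = 1$), just with the complement exhibited explicitly rather than obtained structurally. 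Your write-up also makes explicit two points the paper compresses: closure of the negative cone under addition via De~Morgan and idempotence, and the final appeal to Corollary~\ref{cor: uniqueness of twist products} to pin down the isomorphism with $\pair{\fracof{N}}{\fracsigma}$.
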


\begin{proof}
  Let $\alg{A}$ be a Boolean-pointed Brouwerian algebra. Then $\fracalgA$ is again idempotent. To show that $\fracsigma(x) = 1 \wedge x$, it suffices to prove that the bimonoidal embedding $a \mapsto \pi \multipair{a}{0}$ maps $\alg{A}$ onto the negative cone of $\fracalgA$, which is the principal filter generated by $\multipair{1}{0}$. The into part of this claim follows from the integrality of $\alg{A}$. To prove the onto part, suppose that $\multipair{a}{b} \leq \multipair{1}{0}$, i.e.\ $a \leq 1$ and $0 \leq b$. Since the interval $[0, 1]$ is Boolean, $\comp{b} \in \cdmalg{A}$ is in fact an element of $\alg{A}$ (which is a sub-bimonoid of $\cdmalg{A}$), say $\comp{b} = c \in \alg{A}$. But then $a \comp{b} = a c \in \alg{A}$, so $\multipair{a}{b}$ represents the element $ac \in \alg{A}$. That is, $\multipair{a}{b} = \pi \multipair{ac}{0}$. The equation $x \equals (1 \wedge x) (0 \vee x)$ now follows from the fact that $x = \fracsigma(x) \cdot \comp{\fracsigma(\comp{x})}$.

  Conversely, let $\alg{B}$ be an idempotent involutive commutative residuated lattice satisfying $x \equals (1 \wedge x) (0 \vee x)$. This equation states precisely that $\alg{B}$ is a normal bimonoid of fractions of its negative cone. Moreover, $1 = (1 \wedge 1) (1 \vee 0) = (1 \vee 0)$, so $0$ lies in the negative cone. The negative cone of $\alg{B}$ is closed under multiplication because $1 + 1 = 1$, it is therefore an integral idempotent residuated $\ell$-bimonoid, i.e.\ a pointed Brouwerian algebra. The interval $[0, 1]$ of $\alg{B}$ is then a complemented bi-integral idempotent bimonoid, i.e.\ a Boolean lattice.
\end{proof}

  Our construction of complemented bimonoids of fractions of Boolean-pointed Brouwerian algebras extends a construction of Fussner \& Galatos~\cite{fussner+galatos19} for semilinear Boolean-pointed Brouwerian algebras. Recall that a (Boolean-pointed) Brouwerian algebra is \emph{semilinear} if it is a subdirect product of (Boolean-pointed) Brouwerian chains, or equivalently if it satisfies the equation $(x \rightarrow y) \vee (y \rightarrow x) \equals 1$. Such algebras are called \emph{relative Stone algebras} in~\cite{fussner+galatos19}. In our terminology, Fussner \& Galatos prove the following fact.

\begin{fact}
  \newlength{\origdimen}
  \setlength{\origdimen}{\fontdimen2\font}
  \setlength{\fontdimen2\font}{0.98\origdimen}
  The commutative complemented bimonoids of fractions of semilinear Boolean-pointed Brouwerian algebras are precisely Sugihara monoids (distributive commutative idempotent involutive residuated lattices).
  \setlength{\fontdimen2\font}{\origdimen}
\end{fact}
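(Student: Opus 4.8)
The plan is to reduce everything to the preceding Fact, which identifies the commutative complemented bimonoids of fractions of Boolean-pointed Brouwerian algebras with the class $\class{M}$ of idempotent involutive commutative residuated lattices satisfying $x \equals (1 \wedge x)(0 \vee x)$, equipped with $\sigma(x) \assign 1 \wedge x$; under this correspondence the Boolean-pointed Brouwerian algebra is recovered as the negative cone of the corresponding member of $\class{M}$. Since a Sugihara monoid is by definition a \emph{distributive} idempotent involutive commutative residuated lattice, and every member of $\class{M}$ is already an idempotent involutive commutative residuated lattice, it remains only to establish two things: first, that every Sugihara monoid satisfies $x \equals (1 \wedge x)(0 \vee x)$, hence lies in $\class{M}$; and second, that for a Boolean-pointed Brouwerian algebra $\alg{A}$ the bimonoid of fractions $\fracalgA$ is distributive if and only if $\alg{A}$ is semilinear. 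Granting these, the Sugihara monoids are exactly the distributive members of $\class{M}$, which correspond under the equivalence precisely to the semilinear Boolean-pointed Brouwerian algebras.

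For the first point I would invoke the classical fact that every Sugihara monoid is a subdirect product of Sugihara chains; on a chain $1 \wedge x$ and $0 \vee x$ are $\min(1,x)$ and $\max(0,x)$, and a short case split on whether $x \inequals 1$ verifies $x \equals (1 \wedge x)(0 \vee x)$, so the identity holds in every Sugihara monoid $\alg{M}$. By the preceding Fact this gives $\alg{M} \cong \fracalgA$ for $\alg{A}$ its negative cone, a Boolean-pointed Brouwerian algebra, and since $\alg{M}$ is semilinear its negative cone $\alg{A}$ is again a subdirect product of chains, hence semilinear; this yields the inclusion of Sugihara monoids into bimonoids of fractions of semilinear Boolean-pointed Brouwerian algebras. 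For the reverse inclusion, let $\alg{A}$ be a semilinear Boolean-pointed Brouwerian algebra, so $\alg{A}$ is a subdirect product of Boolean-pointed Brouwerian chains $\alg{A}_{i}$. Because $\div$ is a categorical equivalence (Theorem~\ref{thm: l-equivalence}) it preserves products, monomorphisms and surjective homomorphisms, hence subdirect representations, so $\fracalgA$ is a subdirect product of the algebras $\fracof{\alg{A}_{i}}$. It therefore suffices to show that $\fracof{\alg{A}_{i}}$ is distributive whenever $\alg{A}_{i}$ is a Boolean-pointed Brouwerian chain; granting this, $\fracalgA$ embeds into a product of distributive lattices, and being an idempotent involutive commutative residuated lattice it is a Sugihara monoid.

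The main work, and the step I expect to be the chief obstacle, is the chain case: that $\fracof{\alg{A}_{i}}$ is in fact linearly ordered for a Boolean-pointed Brouwerian chain $\alg{A}_{i}$. Using the explicit description of $\fracof{\alg{A}_{i}}$ on normal pairs (Theorem~\ref{thm: normal bimonoids of fractions}) with the transformation functions $\transplus(a,b) \equals 0a$ and $\transminus(a,b) \equals a \rightarrow b$ of Fact~\ref{f:BpBr}, two elements are compared coordinatewise by Lemma~\ref{lemma: comparing normal representations}, so the task reduces to ruling out incomparable normal pairs over a chain. Here Boolean-pointedness is essential: for a chain it forces the interval $[0,1]$ to be two-valued, and this is exactly what correlates the two coordinates of a normal pair and yields totality, recovering the Sugihara chains of Galatos--Raftery and Fussner--Galatos. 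The only ingredient external to the present development is the well-known semilinearity of Sugihara monoids, i.e.\ their representation as subdirect products of Sugihara chains, which is used both to verify $x \equals (1 \wedge x)(0 \vee x)$ and to pass from a Sugihara monoid to a semilinear negative cone; every other step follows from the preceding Fact together with the categorical equivalence.
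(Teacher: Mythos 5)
You should know at the outset that the paper contains no proof of this Fact to compare against: it is stated as a translation, into the paper's terminology, of a theorem of Fussner and Galatos~\cite{fussner+galatos19}. Your proposal is therefore by necessity a different route, namely a self-contained derivation inside the paper's own framework, and as far as I can check it is correct. Both halves go through. For the inclusion of Sugihara monoids, your case split on a Sugihara chain does verify $x \equals (1 \wedge x)(0 \vee x)$: for $x \geq 1$ it is immediate, for $0 \leq x \leq 1$ it is idempotency of multiplication, and for $x \leq 0$ one needs $x \cdot 0 = x$, which follows because idempotency of addition gives $1 + \comp{x} \leq \comp{x}$ and hence $x \leq x \cdot 0$; moreover, since surjective homomorphisms carry negative cones onto negative cones, the negative cone of a Sugihara monoid is a semilinear Boolean-pointed Brouwerian algebra, as you need. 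For the converse, a categorical equivalence between varieties preserves products, monomorphisms and regular epimorphisms, hence subdirect representations, and your chain case does close: over a Boolean-pointed Brouwerian chain the normality equations $b = a \rightarrow b$ and $a = (a \rightarrow b) \rightarrow 0a$ force every normal pair to have the form $\multipair{a}{1}$ or $\multipair{1}{b}$ (with $a, b \leq 0$ when $0 \neq 1$, since then $[0,1] = \{0,1\}$), and any two such pairs are comparable in the coordinatewise order of Lemma~\ref{lemma: comparing normal representations}, so $\fracof{\alg{A}_{i}}$ is a Sugihara chain. What your route buys, beyond the paper's bare citation, is a demonstration that Fact~\ref{f:BpBr}, Theorem~\ref{thm: normal bimonoids of fractions} and Theorem~\ref{thm: l-equivalence} genuinely re-derive the Fussner--Galatos equivalence rather than merely sitting alongside it, and in passing it proves the claim, asserted without proof in Subsection~\ref{sec: examples of bimonoids of fractions}, that $x \equals (1 \wedge x)(0 \vee x)$ holds in every distributive commutative idempotent involutive residuated lattice. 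The cost is the single external ingredient you flag, the classical semilinearity of Sugihara monoids; that input seems unavoidable on your route, since both directions of your argument lean on decompositions into chains.
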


  For cancellative commutative residuated pomonoids, we obtain precisely (the restriction to the com\-mutative case of) the result of Montagna \& Tsinakis~\cite{montagna+tsinakis10}. Recall that a conucleus on a pomonoid $\alg{A}$ is an interior operator $\sigma$ such that $\sigma(x) \cdot \sigma(y) \leq \sigma(x \cdot y)$ and $\sigma(1) = 1$. The image $\sigma[\alg{A}]$ of a conucleus is therefore a submonoid of $\alg{A}$. Moreover, if $\alg{A}$ is a residuated pomonoid, then so is $\sigma[\alg{A}]$, the residuals in $\sigma[\alg{A}]$ being the $\sigma$-images of residuals in $\alg{A}$: $a \bs_{\sigma[\alg{A}]} b = \sigma(a \bs_{\alg{A}} b)$ and $a /_{\!\sigma[\alg{A}]} b = \sigma(a /_{\!\alg{A}} b)$.

\begin{fact}
  The commutative complemented bimonoids of fractions of cancellative com\-mutative residuated pomonoids (residuated lattices) are precisely Abelian pogroups ($\ell$-groups) with a conucleus $\sigma$ which are groups of fractions of the image of~$\sigma$.
\end{fact}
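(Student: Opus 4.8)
The plan is to establish the two inclusions directly, working with the bimonoid of fractions itself rather than through the categorical equivalence of Theorem~\ref{thm: equivalence}: the full class of cancellative commutative residuated pomonoids does not have \emph{normal} bimonoids of fractions---the group of integers, realized as its own group of fractions with the identity conucleus, already fails the normality equation $x = \fracsigma(x) \cdot \comp{\fracsigma(\comp{x})}$---so the normal machinery is unavailable here and an ad hoc argument is needed.

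For the forward inclusion, let $\alg{A}$ be a cancellative commutative residuated pomonoid, regarded as a bimonoid with $x + y \assign x \cdot y$ and $0 \assign 1$. By Facts~\ref{fact: order-cancellative} and~\ref{fact: fractions for cancellative pomonoids} it is order-cancellative and hence possesses a commutative complemented bimonoid of fractions $\fracalgA$ in which $a \comp{b} = a + \comp{b}$ for all $a, b \in \alg{A}$. The heart of the matter is to show that multiplication and addition coincide throughout $\fracalgA$. I would do this by a direct calculation on generators: for $a, b, c, d \in \alg{A}$, using $u \comp{v} = u + \comp{v}$, De~Morgan, and $+ = \cdot$ inside $\alg{A}$, both $(a \comp{b}) \cdot (c \comp{d})$ and $(a \comp{b}) + (c \comp{d})$ reduce to $ac \cdot \comp{bd}$. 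Once $\cdot = +$ is known, the complement conditions $x \cdot \comp{x} \leq 0$ and $1 \leq x + \comp{x}$ become $x \cdot \comp{x} \leq 1 \leq x \cdot \comp{x}$ (recall $0 = 1$), so $\comp{x} = x^{-1}$; thus every element of $\fracalgA$ is invertible and $\fracalgA$ is an Abelian pogroup. The map $\fracsigma$ is a well-defined interior operator with image $\alg{A}$ by the proposition on the interior operator $\fracsigma$ on bimonoids of fractions, and it is in fact a conucleus: $\fracsigma(1) = 1 \rightarrow 1 = 1$, and writing $\fracsigma(a \comp{b}) = b \rightarrow a$ the residuated-pomonoid inequality $(b \rightarrow a)(d \rightarrow c) \leq (bd) \rightarrow (ac)$ yields $\fracsigma(x) \fracsigma(y) \leq \fracsigma(xy)$. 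Finally $\fracalgA$ is by definition the group of fractions of $\fracsigma[\fracalgA] = \alg{A}$.

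For the reverse inclusion, let $\alg{G}$ be an Abelian pogroup carrying a conucleus $\sigma$ such that $\alg{G}$ is the group of fractions of $\alg{A} \assign \sigma[\alg{G}]$. Then $\alg{A}$ is a submonoid of $\alg{G}$ (a conucleus image), hence a cancellative commutative monoid, and it is residuated with $a \rightarrow_{\alg{A}} b = \sigma(a \rightarrow_{\alg{G}} b)$ by the description of residuals in a conucleus image recalled just before the statement; so $\alg{A}$ is a cancellative commutative residuated pomonoid. Viewing $\alg{G}$ as a bimonoid whose multiplication and addition are both the group operation, with $0 = 1$ the identity and complementation the inverse, turns $\alg{G}$ into a commutative complemented bimonoid containing $\alg{A}$ as the sub-bimonoid given by $\alg{A}$ with $+ \assign \cdot$ and $0 \assign 1$; since every element of $\alg{G}$ has the form $a b^{-1} = a \comp{b} = a + \comp{b}$ with $a, b \in \alg{A}$, the pogroup $\alg{G}$ is a commutative complemented bimonoid of fractions of $\alg{A}$. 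By the uniqueness of complemented bimonoids of fractions (Corollary~\ref{cor: uniqueness of twist products}) we obtain a canonical isomorphism $\alg{G} \cong \fracalgA$ fixing $\alg{A}$. It then remains to identify the two interior operators, and here I would compute $\fracsigma(a b^{-1}) = b \rightarrow_{\alg{A}} a = \sigma(b \rightarrow_{\alg{G}} a) = \sigma(b^{-1} a) = \sigma(a b^{-1})$, using $b \rightarrow_{\alg{G}} a = b^{-1} a$ in a pogroup together with commutativity, so that $\sigma = \fracsigma$ on all of $\alg{G}$.

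The step I expect to require the most care is the reverse direction's reconciliation of the abstractly defined operator $\fracsigma$ attached to $\fracalgA$ with the given conucleus $\sigma$, together with the verification that $\alg{A} = \sigma[\alg{G}]$ is genuinely residuated rather than merely a cancellative monoid; both rest on the identity $b \rightarrow_{\alg{G}} a = b^{-1} a$ for residuals in a pogroup and on the standard behaviour of residuals under a conucleus. The lattice case is parallel: when $\alg{A}$ is a residuated lattice the embedding into $\fracalgA$ preserves finite meets and joins (Fact~\ref{fact: admissible joins preserved}), so $\fracalgA$ is an $\ell$-bimonoid that is also a pogroup, i.e.\ an $\ell$-group; conversely a conucleus on an $\ell$-group whose image is a sublattice has a cancellative commutative residuated lattice as that image. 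Replacing ``pogroup'' by ``$\ell$-group'' and the cited pomonoid-level facts by their $\ell$-bimonoid analogues throughout establishes the parenthetical claim.
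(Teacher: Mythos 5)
Your pomonoid case is correct and follows essentially the paper's own route: like the paper, you reduce everything to Fact~\ref{fact: order-cancellative} and Fact~\ref{fact: fractions for cancellative pomonoids} and then verify the one nontrivial point, the conucleus inequality $\fracsigma(x)\cdot\fracsigma(y) \leq \fracsigma(x \cdot y)$, via $(b \rightarrow a)(d \rightarrow c) \leq bd \rightarrow ac$. You additionally write out the reverse inclusion (the conucleus image is a cancellative commutative residuated pomonoid, $\alg{G}$ is a complemented bimonoid of fractions of it, and $\sigma = \fracsigma$ because $b \rightarrow_{\alg{G}} a = b^{-1}a$), which the paper leaves implicit, and your opening remark that normality fails already for $\Z$ agrees with the paper's own discussion following this fact.

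The gap is in the parenthetical $\ell$-case, in both directions. Forward: you assert that the embedding $\alg{A} \into \fracalgA$ preserves finite meets and joins by Fact~\ref{fact: admissible joins preserved}, but that fact only applies to \emph{admissible} meets, and a binary meet in the bimonoid $\alg{A}$ (where $+ = \cdot$) is admissible precisely when multiplication distributes over it. This distributivity is not part of the definition of a cancellative commutative residuated lattice and can fail (cancellative residuated lattices need not even have distributive lattice reducts, whereas admissibility of all finite meets and joins would force $\alg{A}$ to be a sublattice of the distributive lattice $\fracalgA$). So meets need not be preserved, and $\alg{A}$ need not sit inside $\fracalgA$ as a sublattice. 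The conclusion that $\fracalgA$ is an $\ell$-group is still true, but it must be reached through joins alone: joins in $\alg{A}$ are always admissible (by residuation), hence preserved, so for $a,b,c,d \in \alg{A}$ the join $ad \vee cb$ exists in $\fracalgA$; since multiplication by invertible elements is an order isomorphism, $a\comp{b} \vee c\comp{d} = (ad \vee cb)\cdot\comp{(bd)}$ exists, and binary meets then exist via $x \wedge y = \comp{(\comp{x} \vee \comp{y})}$.

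Reverse: you add the hypothesis that the image of $\sigma$ ``is a sublattice,'' which is not in the statement, and by the forward analysis it genuinely restricts the class, since the operators $\fracsigma$ arising from residuated lattices whose meets are not admissible have images that are not meet-closed in $\fracalgA$. The statement holds without this hypothesis: for any conucleus $\sigma$ on an Abelian $\ell$-group, $\sigma[\alg{G}]$ is a residuated lattice in its own right even when it is not a sublattice --- its joins are those of $\alg{G}$ (images of interior operators are always join-closed), while the meet of $a, b \in \sigma[\alg{G}]$ is $\sigma(a \wedge_{\alg{G}} b)$, which is the greatest element of $\sigma[\alg{G}]$ below both. With this observation the image is a cancellative commutative residuated lattice in full generality, and the rest of your reverse argument goes through unchanged.
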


\begin{proof}
  We have already verified that the commutative complemented bimonoid of fractions is a partially ordered Abelian group in this case (Fact~\ref{fact: fractions for cancellative pomonoids}). We only need to verify that $\fracsigma(x) \cdot \fracsigma(y) \leq \fracsigma(x \cdot y)$:
\begin{align*}
  \fracsigma(a \comp{b}) \cdot \fracsigma(c \comp{d}) & = (b \rightarrow a) (d \rightarrow c) \leq bd \rightarrow ac = \fracsigma(ac \cdot \comp{bd}) = \fracsigma(ac \cdot \comp{b+d}) = \fracsigma(a\comp{b} \cdot c \comp{d}). \qedhere
\end{align*}
\end{proof}

  The reader will immediately observe that this result has a less satisfactory form than the previous ones for Brouwerian semilattices and Boolean-pointed algebra. This is because complemented bimonoids of fractions of cancellative com\-mutative residuated pomonoids need not be normal. One need not consider any exotic examples to see this: the complemented bimonoid of fractions of an Abelian $\ell$-group $\alg{G}$ is of course $\alg{G}$ itself with the identity map as $\sigma$. But this is never a normal bimonoid of fractions, unless $\alg{G}$ is trivial: the~normality equation $x \equals \sigma(x) \cdot \comp{\sigma(\comp{x})}$ becomes $x \equals x \cdot x$. To obtain normal groups of fractions, we must restrict to the following case.

\begin{fact} \label{fact: normal sigma}
  Let $\alg{G}$ be an Abelian $\ell$-group equipped with a conucleus $\sigma$. Then $\alg{G}$ is a normal commutative complemented bimonoid of fractions of $\sigma[\alg{G}]$ with respect to the inclusion in $\alg{G}$ if and only if $\sigma(x) = 1 \wedge x$.
\end{fact}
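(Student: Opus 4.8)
The plan is to unwind the definition of a normal complemented bimonoid of fractions into a single equation about $\sigma$ and then read off $\sigma(x) = 1 \wedge x$. Viewing the Abelian $\ell$-group $\alg{G}$ as a commutative residuated $\ell$-bimonoid, we have $x + y = x \cdot y$ and $0 = 1$, and the complement of $x$ is its group inverse, $\comp{x} = x^{-1}$. Under this identification the normality equation $x = \sigma(x) \cdot \comp{\sigma(\comp{x})}$ becomes
\begin{align*}
  x = \sigma(x) \cdot \sigma(x^{-1})^{-1}, \qquad\text{equivalently}\qquad \sigma(x^{-1}) = \sigma(x) \cdot x^{-1}.
\end{align*}
Throughout I would use the standard $\ell$-group identities $(a \wedge b)^{-1} = a^{-1} \vee b^{-1}$ and $(a \wedge b)(a \vee b) = a \cdot b$, together with the distribution of multiplication over $\wedge$ and $\vee$. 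The conucleus axiom $\sigma(x)\sigma(y) \leq \sigma(xy)$ is used only to ensure that $\sigma[\alg{G}]$ is a submonoid, so that ``fractions of $\sigma[\alg{G}]$'' is meaningful; the equivalence itself is driven by the interior-operator axioms, the equation above, and $\sigma(1) = 1$.

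For the \emph{if} direction I would first check that $\sigma(x) = 1 \wedge x$ really is a conucleus: it is an interior operator fixing $1$, and $(1 \wedge x)(1 \wedge y) \leq 1 \wedge xy$, with image the negative cone $\set{x \in \alg{G}}{x \leq 1}$. For each $x$ one computes $\comp{\sigma(\comp{x})} = (1 \wedge x^{-1})^{-1} = 1 \vee x$, whence
\begin{align*}
  \sigma(x) \cdot \comp{\sigma(\comp{x})} = (1 \wedge x)(1 \vee x) = x.
\end{align*}
This exhibits every $x \in \alg{G}$ as $a \cdot \comp{b}$ with $a = 1 \wedge x$ and $b = 1 \wedge x^{-1}$ in $\sigma[\alg{G}]$ and verifies the normality equation, so $\alg{G}$ is a normal commutative complemented bimonoid of fractions of $\sigma[\alg{G}]$.

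For the \emph{only if} direction, assume the normality equation holds for all $x$. From $\sigma(x^{-1}) = \sigma(x) \cdot x^{-1}$ and the fact that $\sigma$ is decreasing ($\sigma(x^{-1}) \leq x^{-1}$) I get $\sigma(x) \cdot x^{-1} \leq x^{-1}$, i.e.\ $\sigma(x) \leq 1$ for \emph{every} $x$; together with $\sigma(x) \leq x$ this gives $\sigma(x) \leq 1 \wedge x$. For the reverse inequality, note that $\sigma(1) = 1$ and monotonicity give $\sigma(x) \geq 1$ whenever $x \geq 1$, so $\sigma(x) = 1$ for all $x \geq 1$. Then for any $z \leq 1$ we have $z^{-1} \geq 1$, hence $\sigma(z^{-1}) = 1$, and the normality equation yields $z = \sigma(z) \cdot \sigma(z^{-1})^{-1} = \sigma(z)$. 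Thus $\sigma$ fixes the whole negative cone, so $1 \wedge x$ is a fixpoint of $\sigma$ lying below $x$; since $\sigma(x)$ is the largest fixpoint below $x$ (a standard consequence of idempotence and monotonicity of an interior operator), we obtain $1 \wedge x \leq \sigma(x)$ and therefore $\sigma(x) = 1 \wedge x$.

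The hard part will be the reverse inequality $1 \wedge x \leq \sigma(x)$ in the \emph{only if} direction: the conucleus axioms alone only give the easy bound $\sigma(x) \leq 1 \wedge x$, and pinning down the image $\sigma[\alg{G}]$ as \emph{exactly} the negative cone is where the normality equation does the real work, through the computation $\sigma(z) = z$ for $z \leq 1$. Once that identification is in place, the max-fixpoint characterization of interior operators closes the gap immediately.
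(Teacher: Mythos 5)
Your proof is correct; the \emph{if} half matches the paper's (which simply cites the $\ell$-group identity $x = (1 \wedge x)(1 \wedge x^{-1})^{-1}$), but your \emph{only if} half takes a genuinely different route. The paper unwinds normality through the canonical operator $\fracsigma$, i.e.\ through residuals computed in the subalgebra $\sigma[\alg{G}]$: the condition reads $ab^{-1} = (b \rightarrow a)(a \rightarrow b)^{-1}$ for $a, b \in \sigma[\alg{G}]$; setting $b \assign 1$ forces $\sigma[\alg{G}]$ into the negative cone, then the fraction decomposition of an arbitrary $x \leq 1$ (where $a \leq b$, hence $a \rightarrow b = 1$) forces the negative cone into $\sigma[\alg{G}]$, and finally $\sigma(x) = 1 \wedge x$ because an interior operator is determined by its image. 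You never touch residuals in $\sigma[\alg{G}]$ or the fraction decomposition: you squeeze everything out of the conucleus axioms and the normality equation itself ($\sigma(x) \leq 1$ everywhere from decreasingness, $\sigma \equiv 1$ on the positive cone from $\sigma(1) = 1$, then pointwise fixation of the negative cone, then the largest-fixpoint property). Your route is more self-contained and slightly stronger: it shows that \emph{any} conucleus satisfying the normality equation must be $1 \wedge x$, with the fraction property falling out as a byproduct rather than being assumed, whereas the paper's route first identifies $\sigma[\alg{G}]$ as the negative cone, which is the formulation exploited in the surrounding corollaries. One point you should make explicit: the paper's definition of a normal bimonoid of fractions states the normality equation with $\fracsigma$, not with the given $\sigma$; your reading is equivalent because $\sigma$ and $\fracsigma$ are both interior operators on $\alg{G}$ whose image is $\sigma[\alg{G}]$, hence they coincide --- exactly the fixpoint fact you already invoke at the end (and an identification the paper itself makes silently when it writes $\sigma(x)\sigma(x^{-1})^{-1} = (b \rightarrow a)(a \rightarrow b)^{-1}$).
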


\begin{proof}
  Every $\ell$-group satisfies the equation $x \equals (1 \wedge x) (1 \wedge x^{-1})^{-1}$. Conversely, the normality condition states that $a b^{-1} = (b \rightarrow a) (a \rightarrow b)^{-1}$, i.e.\ $a (a \rightarrow b) = b (b \rightarrow a)$, for $a, b \in \sigma[\alg{G}]$, where the residuals are taken in $\sigma[\alg{G}]$. But then $b (b \rightarrow a) \leq b$, so $b \rightarrow a \leq b \rightarrow b$ and taking $b \assign 1$ yields that $a \leq 1$ for each $a \in \sigma[\alg{G}]$. Conversely, consider some $x = a b^{-1} \leq 1$. Then $x = \sigma(x) \sigma(x^{-1})^{-1} = (b \rightarrow a) (a \rightarrow b)^{-1}$. But $a b^{-1} \leq 1$ implies $a \leq b$, so $a \rightarrow b = 1$ in $\sigma[\alg{G}]$. Thus $x = b \rightarrow a \in \sigma[\alg{G}]$. This proves that $\sigma[\alg{G}]$ is precisely the negative cone of $\alg{G}$. The claim that $\sigma(x) = 1 \wedge x$ now follows.
\end{proof}

  The negative cones of $\ell$-groups, images of $\ell$-groups with respect to the conucleus $\sigma(x) = 1 \wedge x$, were already described by Bahls et al.~\cite{bahls+et+al03} as integral cancellative divisible residuated lattices. Recall that an integral residuated lattice is \emph{divisible} if it satisfies the equations $x \cdot (x \bs y) = x \wedge y = (y / x) \cdot x$.

\begin{fact}
  The commutative complemented bimonoids of fractions of cancellative integral divisible com\-mutative residuated lattices are precisely Abelian $\ell$-groups equipped with the map $\sigma(x) \assign 1 \wedge x$.
\end{fact}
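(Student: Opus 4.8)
The plan is to obtain this lattice-ordered statement by combining the pomonoid-level characterization of Fact~\ref{fact: normal sigma} with the structure theory of negative cones of Abelian $\ell$-groups. The essential external input is the theorem of Bahls et al.~\cite{bahls+et+al03} recalled immediately above: the cancellative integral divisible commutative residuated lattices are precisely the negative cones $\alg{G}^{-}$ of Abelian $\ell$-groups $\alg{G}$, the negative cone being recovered inside $\alg{G}$ as the image of the conucleus $\sigma(x) = 1 \wedge x$. The whole argument is then a matter of feeding this identification into Fact~\ref{fact: normal sigma}.

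For the inclusion from right to left, I would begin with an arbitrary Abelian $\ell$-group $\alg{G}$ equipped with $\sigma(x) = 1 \wedge x$. Fact~\ref{fact: normal sigma} says that $\alg{G}$ is then a normal commutative complemented bimonoid of fractions of $\sigma[\alg{G}]$, and that $\sigma[\alg{G}]$ is exactly the negative cone $\alg{G}^{-}$. Since $\alg{G}$ is lattice-ordered the relevant structure is a complemented \emph{$\ell$-}bimonoid of fractions, and by the Bahls et al. description $\alg{G}^{-}$ is a cancellative integral divisible commutative residuated lattice, viewed as a bimonoid with $x + y \assign x \cdot y$ and $0 \assign 1$. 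Thus $\pair{\alg{G}}{\sigma}$ indeed arises as the complemented bimonoid of fractions of such an algebra.

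For the inclusion from left to right, I would take a cancellative integral divisible commutative residuated lattice $\alg{A}$, regarded as a bimonoid with $x + y \assign x \cdot y$ and $0 \assign 1$. By Fact~\ref{fact: fractions of lgminus} it has a normal commutative complemented $\ell$-bimonoid of fractions $\fracalgA$. By Bahls et al., $\alg{A}$ is the negative cone $\alg{G}^{-}$ of a unique-up-to-isomorphism Abelian $\ell$-group $\alg{G}$, with $\alg{A} = \sigma[\alg{G}]$ for $\sigma(x) = 1 \wedge x$; and Fact~\ref{fact: normal sigma} shows this $\alg{G}$ is a normal commutative complemented bimonoid of fractions of $\alg{A}$. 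By the uniqueness of commutative complemented bimonoids of fractions (Corollary~\ref{cor: uniqueness of twist products}), $\fracalgA \cong \alg{G}$, and under this isomorphism the interior operator $\fracsigma$ is carried to $1 \wedge x$, since $\fracsigma$ picks out the first coordinate of the (unique) normal representation and $\alg{A}$ is precisely the image of $1 \wedge x$.

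The only point requiring care, and the step I expect to be the mildest obstacle, is matching the bimonoidal view of $\alg{A}$ — in which addition coincides with multiplication — against the way $\sigma[\alg{G}] = \alg{G}^{-}$ sits inside $\alg{G}$, and confirming that the transported operator $\fracsigma$ equals $1 \wedge x$ rather than some other interior operator with the same image. Both are already resolved by Fact~\ref{fact: normal sigma}: the substantive content, namely that normality forces $\sigma = 1 \wedge x$ and hence that $\sigma[\alg{G}]$ is the negative cone, was isolated there, so here it remains only to invoke the Bahls et al. identification and the uniqueness of bimonoids of fractions. I therefore anticipate no genuine difficulty, the proof being essentially a bookkeeping composition of two previously established facts.
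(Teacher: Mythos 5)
Your argument is correct, but it takes a genuinely different route from the paper's, and the difference matters for how the surrounding results are organized. The paper's proof is self-contained within its own fractions machinery: given a cancellative integral divisible commutative residuated lattice $\alg{A}$, Fact~\ref{fact: fractions of lgminus} shows that $\fracalgA$ exists and is \emph{normal}, the earlier analysis of the cancellative case (Fact~\ref{fact: fractions for cancellative pomonoids} and the fact that bimonoids of fractions of cancellative commutative residuated structures are Abelian pogroups/$\ell$-groups) shows that $\fracalgA$ is an Abelian $\ell$-group, and Fact~\ref{fact: normal sigma} then forces $\fracsigma(x) = 1 \wedge x$; the converse needs only the routine verification that the negative cone of an Abelian $\ell$-group is cancellative, integral and divisible. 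You instead import the hard direction of the Bahls et al.\ theorem — that every cancellative integral divisible commutative residuated lattice already \emph{is} the negative cone of an Abelian $\ell$-group — and then identify $\fracalgA$ with that $\ell$-group via the uniqueness of complemented bimonoids of fractions (Corollary~\ref{cor: uniqueness of twist products}), reducing the transport of $\fracsigma$ to Fact~\ref{fact: normal sigma}. This is logically sound, since the Bahls et al.\ result is an external published theorem, and your use of uniqueness is a clean way to avoid re-examining the construction. What the paper's route buys, however, is independence from that theorem: the fractions construction itself manufactures the enveloping $\ell$-group, which is precisely why the paper can afterwards \emph{derive} the corollary that negative cones of $\ell$-groups are exactly the cancellative divisible integral commutative residuated lattices, recovering (the commutative case of) Bahls et al.\ rather than assuming it. Under your proof that corollary would cease to be an output of the theory and would instead be the input, so while your argument proves the stated fact, it could not replace the paper's proof without making the subsequent exposition circular in spirit.
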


\begin{proof}
  By Fact~\ref{fact: fractions of lgminus} such bimonoids of fractions are normal complemented bimonoids of fractions and moreover they are $\ell$-groups. By Fact~\ref{fact: normal sigma} normality implies that $\sigma(x) = 1 \wedge x$. Conversely, one merely verifies the negative cone of each Abelian $\ell$-group is indeed a cancellative and divisible residuated lattice.
\end{proof}

  Since in each case the interior operator $\fracsigma$ was simply the projection onto the negative cone, we immediately obtain the following corollaries. The last one was already proved in~\cite{bahls+et+al03}.

\begin{corollary}[Brouwerian semilattices as negative cones]
  Brouwerian semilattices are precisely the negative cones of idempotent involutive commutative residuated pomonoids satisfying $0 \equals 1$ where $1 \wedge x$ exists for each $x$.
\end{corollary}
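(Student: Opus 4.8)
The plan is to derive the corollary directly from the preceding characterization of the complemented bimonoids of fractions of Brouwerian semilattices, together with the observation that the interior operator $\fracsigma(x) = 1 \wedge x$ is exactly the projection onto the negative cone. For the inclusion of Brouwerian semilattices into negative cones, I would start with an arbitrary Brouwerian semilattice $\alg{A}$ and invoke Fact~\ref{fact: fractions for brsls} (together with the construction in Theorem~\ref{thm: normal bimonoids of fractions}) to obtain $\fracalgA$, which is an idempotent involutive commutative residuated pomonoid carrying the interior operator $\fracsigma$ with $\fracsigma(x) = 1 \wedge x$ and image $\alg{A}$. Since $\alg{A}$, viewed as a bimonoid with $x + y = x \cdot y$, has $0 = 1$, and the embedding $\fraciota$ preserves the multiplicative unit $1$ and the additive unit $0$, these two units coincide in $\fracalgA$; thus $0 = 1$ holds in $\fracalgA$, and $1 \wedge x = \fracsigma(x)$ exists for every $x$. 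As $\fracsigma(x) = 1 \wedge x$ equals $x$ precisely when $x \leq 1$ and always lies below $1$, the image of $\fracsigma$ is exactly $\set{x \in \fracalgA}{x \leq 1}$, i.e. the negative cone; since this image is $\alg{A}$, the Brouwerian semilattice $\alg{A}$ is the negative cone of a structure of the required kind.

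For the converse inclusion I would take an arbitrary idempotent involutive commutative residuated pomonoid $\alg{B}$ with $0 = 1$ in which $1 \wedge x$ exists for every $x$, and show that $\alg{B}^{-} \assign \set{x \in \alg{B}}{x \leq 1}$ is a Brouwerian semilattice. It contains $1$ and is closed under multiplication, and multiplication is integral on it, since $x, y \leq 1$ yields $x \cdot y \leq x \cdot 1 = x \leq 1$ and symmetrically $x \cdot y \leq y$. Being an integral idempotent commutative pomonoid, $\alg{B}^{-}$ has multiplication coinciding with the induced meet, as in the identification of integral idempotent pomonoids with unital meet semilattices: $x \cdot y$ is a lower bound of $\{x, y\}$, and any lower bound $z$ satisfies $z = z \cdot z \leq x \cdot y$. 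Finally I would supply the residual on $\alg{B}^{-}$ as $x \rightarrow_{\alg{B}^{-}} y \assign 1 \wedge (x \rightarrow_{\alg{B}} y)$, which exists by hypothesis and lies in $\alg{B}^{-}$; for $z \in \alg{B}^{-}$, residuation in $\alg{B}$ together with $z \leq 1$ gives $x \cdot z \leq y \iff z \leq x \rightarrow_{\alg{B}} y \iff z \leq 1 \wedge (x \rightarrow_{\alg{B}} y)$. Hence $\alg{B}^{-}$ is an integral idempotent commutative residuated pomonoid, i.e. a Brouwerian semilattice.

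I expect every step to be routine given the earlier development. The two points needing a little care are the transfer of $0 = 1$ to $\fracalgA$ in the first direction, which hinges only on $\fraciota$ preserving the two units (both equal to the single unit of $\alg{A}$), and the use of the hypothesis that $1 \wedge x$ always exists, which is precisely what guarantees that the candidate residual $1 \wedge (x \rightarrow_{\alg{B}} y)$ stays inside the negative cone and thus makes $\alg{B}^{-}$ genuinely residuated rather than a bare meet semilattice. Neither is a real obstacle: the conceptual work has already been carried out in the characterization of the bimonoids of fractions and in the identification of $\fracsigma$ with the projection $x \mapsto 1 \wedge x$ onto the negative cone.
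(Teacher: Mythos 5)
Your proof is correct and follows essentially the paper's intended argument: the paper obtains this corollary ``immediately'' from the preceding characterization of complemented bimonoids of fractions of Brouwerian semilattices (where $\fracsigma(x) = 1 \wedge x$ is the projection onto the negative cone, whose image is $\alg{A}$, and where $0 = 1$ transfers along the unit-preserving embedding), together with the observation---spelled out in the paper only for the Boolean-pointed analogue---that the negative cone of \emph{any} idempotent involutive commutative residuated pomonoid of the stated kind, normal or not, is a Brouwerian semilattice. Your two directions fill in exactly these details, including the correct choice of residual $1 \wedge (x \rightarrow y)$ on the negative cone, so there is nothing to object to.
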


\begin{corollary}[Boolean-pointed Brouwerian algebras as negative cones]
  Boolean-pointed Brouwerian algebras (Brouwerian algebras) are precisely the negative cones of idempotent involutive commutative residuated lattices (satisfying $0 \equals 1$).
\end{corollary}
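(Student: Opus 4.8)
The plan is to derive the corollary by combining the preceding characterization of commutative complemented bimonoids of fractions of Boolean-pointed Brouwerian algebras (as the idempotent involutive commutative residuated lattices satisfying $x \equals (1 \wedge x)(0 \vee x)$, carrying $\fracsigma(x) = 1 \wedge x$) and the categorical equivalence of Theorem~\ref{thm: l-equivalence}, together with a direct verification that the negative cone of \emph{every} idempotent involutive commutative residuated lattice is Boolean-pointed Brouwerian. For the inclusion of Boolean-pointed Brouwerian algebras among these negative cones, I would take such an algebra $\alg{A}$ and pass to its complemented bimonoid of fractions $\fracalgA$, which by the characterization is an idempotent involutive commutative residuated lattice carrying the interior operator $\fracsigma(x) = 1 \wedge x$. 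Since the equivalence identifies $\alg{A}$ with $\fracsigma[\fracalgA]$ and $\fracsigma$ here is precisely the projection onto the negative cone, $\alg{A}$ is realized as the negative cone of $\fracalgA$.

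For the reverse inclusion, I would take an arbitrary idempotent involutive commutative residuated lattice $\alg{B}$ with negative cone $\alg{B}^{-} = \set{x}{x \leq 1}$. The negative cone of a commutative residuated lattice is an integral commutative residuated lattice (with residual $1 \wedge (x \rightarrow y)$), and idempotence is inherited, so $\alg{B}^{-}$ is an integral idempotent commutative residuated lattice, i.e.\ a Brouwerian algebra. It then remains to supply the Boolean-pointed structure with designated constant $0 = \comp{1}$. First, $0$ lands in $\alg{B}^{-}$: the involutive residuation law $x \cdot \comp{y} \leq 0 \iff x \leq y$ (a specialization of Proposition~\ref{prop: residuation for complements}) gives $\comp{1} \leq 1 \iff \comp{1} \cdot \comp{1} \leq 0$, and idempotence yields $\comp{1} \cdot \comp{1} = \comp{1} = 0$, so $0 \leq 1$. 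The interval $[0,1]$ is closed under $\comp{}$ because $0 \leq x \leq 1$ forces $0 \leq \comp{x} \leq 1$. For $x \in [0,1]$ one computes $x \wedge \comp{x} = x \cdot \comp{x} \leq 0$ (multiplication coincides with meet in the integral idempotent $\alg{B}^{-}$), whence $x \wedge \comp{x} = 0$; the lattice De~Morgan law of Proposition~\ref{prop: de morgan laws} then gives $\comp{x \vee \comp{x}} = \comp{x} \wedge x = 0$, so $x \vee \comp{x} = \comp{0} = 1$. Thus $\comp{x}$ is the complement of $x$ in the distributive lattice $[0,1]$, so $[0,1]$ is Boolean and $\alg{B}^{-}$ is a Boolean-pointed Brouwerian algebra.

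The parenthetical claim follows by specializing to $0 = 1$: if $\alg{B}$ satisfies $\comp{1} = 1$, then the designated constant of $\alg{B}^{-}$ satisfies $0 = 1$, so $\alg{B}^{-}$ is a Brouwerian algebra; conversely a Brouwerian algebra is a Boolean-pointed Brouwerian algebra with $0 = 1$, and its bimonoid of fractions inherits $0 = 1$ since the embedding identifies the additive and multiplicative units. The only work beyond the equivalence lies in the reverse inclusion, and the step I expect to require the most care is confirming that the constant $0$ genuinely belongs to the negative cone and complements it there; I anticipate this hinges entirely on the interaction of idempotence with the residuation law $x \cdot \comp{y} \leq 0 \iff x \leq y$, which is what makes the otherwise delicate comparison $0 \leq 1$ immediate.
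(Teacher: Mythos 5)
Your proposal is correct and follows essentially the same route as the paper: one direction is read off from the preceding characterization of $\fracalgA$ (where $\fracsigma(x) = 1 \wedge x$ projects onto the negative cone), and the other is the observation that the negative cone of \emph{any} idempotent involutive commutative residuated lattice is a Boolean-pointed Brouwerian algebra. The paper leaves that observation unverified; your detailed check (in particular deriving $0 \leq 1$ from idempotence via $0 \cdot 0 \leq 0 \iff 0 \leq \comp{0}$, and complementation in $[0,1]$ via $x \wedge \comp{x} \leq x \cdot \comp{x} \leq 0$ and De~Morgan) is exactly the argument the paper intends.
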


\begin{proof}
  It only remains to observe that the negative cone of an idempotent involutive commutative residuated lattice (satisfying $0 \equals 1$), not necessarily satisfying $x \equals (1 \wedge x) \cdot (0 \vee x)$, is a Boolean-pointed Brouwerian algebra (a Brouwerian algebra).
\end{proof}

\begin{corollary}[Negative cones of $\ell$-groups]
  The negative cones of $\ell$-groups are precisely the cancellative divisible integral commutative residuated lattices.
\end{corollary}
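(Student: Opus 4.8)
The plan is to read the corollary off the categorical correspondence just established, the key observation being that here the interior operator $\fracsigma$ is nothing but projection onto the negative cone. Throughout, ``$\ell$-group'' means ``Abelian $\ell$-group'', since the residuated lattices on the right-hand side are commutative.

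For the easy inclusion I would start with an arbitrary Abelian $\ell$-group $\alg{G}$ and verify directly that its negative cone $\alg{G}^{-} = \set{x \in \alg{G}}{x \leq 1}$ is a cancellative divisible integral commutative residuated lattice. Integrality and commutativity are immediate, cancellativity is inherited from the group, and divisibility follows from the $\ell$-group identity $x \cdot (x \bs y) = x \wedge y = (y / x) \cdot x$ restricted to the cone, recalling that the residuals of $\alg{G}^{-}$ are the projections $1 \wedge (x \bs_{\alg{G}} y)$ and $1 \wedge (y /_{\alg{G}} x)$ of the group residuals. This is the ``one merely verifies $\dots$'' step of the preceding fact, spelled out.

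For the converse, let $\alg{A}$ be a cancellative divisible integral commutative residuated lattice. By Fact~\ref{fact: fractions of lgminus} it has a normal commutative complemented bimonoid of fractions $\fracalgA$ with transformation functions $\transplus(a,b) = a$ and $\transminus(a,b) = b$, and by the fact immediately preceding the corollaries this $\fracalgA$ is an Abelian $\ell$-group carrying the interior operator $\fracsigma(x) = 1 \wedge x$. Since the image of $\fracsigma$ recovers the original bimonoid, we have $\fracsigma[\fracalgA] = \alg{A}$ (as a sub-bimonoid of $\fracalgA$ via $a \mapsto \pi \multipair{a}{0}$, by Theorem~\ref{thm: normal bimonoids of fractions}), so it remains only to identify this image with the negative cone of $\fracalgA$. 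That is exactly the content of Fact~\ref{fact: normal sigma}: because $\fracalgA$ is a normal bimonoid of fractions of $\alg{A}$, that fact forces $\fracsigma(x) = 1 \wedge x$, and its proof shows $\fracsigma[\fracalgA]$ coincides with $\set{x \in \fracalgA}{x \leq 1}$. Hence $\alg{A}$ is (isomorphic to) the negative cone of the Abelian $\ell$-group $\fracalgA$.

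I expect the only delicate point to be the bookkeeping needed to match the residuated-lattice operations: one must check that the structure carried by $\alg{A}$ agrees with the structure the statement intends on the cone, i.e.\ that the embedding $a \mapsto \pi \multipair{a}{0}$ carries the residuals of $\alg{A}$ to the projected residuals of $\fracalgA$. This is automatic once one recalls, as noted in the discussion of conuclei preceding Fact~\ref{fact: normal sigma}, that the residuals in the image of a conucleus $\sigma$ are the $\sigma$-images of the ambient residuals; everything else is a direct appeal to Facts~\ref{fact: fractions of lgminus} and~\ref{fact: normal sigma} and to the remark that in this case $\fracsigma$ is precisely projection onto the negative cone.
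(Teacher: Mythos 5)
Your proposal is correct and takes essentially the same route as the paper: the paper obtains this corollary immediately from Fact~\ref{fact: fractions of lgminus}, Fact~\ref{fact: normal sigma}, and the preceding fact identifying the complemented bimonoids of fractions of cancellative divisible integral commutative residuated lattices as Abelian $\ell$-groups with $\sigma(x) = 1 \wedge x$, with the easy direction being the same direct verification that the negative cone of an Abelian $\ell$-group has these properties. You have simply spelled out the bookkeeping (image of $\fracsigma$ equals the negative cone, residuals in the image of a conucleus are the $\sigma$-images of the ambient residuals) that the paper leaves implicit in its remark that $\fracsigma$ is projection onto the negative cone.
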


  Finally, let us state the categorical equivalences that Theorems~\ref{thm: equivalence} and \ref{thm: l-equivalence} now yield.

\begin{theorem}[Categorical equivalence: Brouwerian semilattices]
  The variety of Brouwerian semilattices is categorically equivalent to the variety of commutative idempotent involutive residuated pomonoids which satisfy $0 \equals 1$ equipped with the map $\sigma(x) \assign 1 \wedge x$.
\end{theorem}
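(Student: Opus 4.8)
The plan is to read this statement off from Theorem~\ref{thm: equivalence} by taking $\class{K}$ to be the variety of Brouwerian semilattices, viewed as commutative residuated bimonoids via $x \cdot y = x + y = x \wedge y$ with $1 = 0$. I first verify that $\class{K}$ meets the hypotheses of that theorem. Brouwerian semilattices form an ordered variety of commutative residuated bimonoids, and it was shown above that each of them has a \emph{normal} commutative complemented bimonoid of fractions, with normal transformation functions $\transplus(a, b) = a$ and $\transminus(a, b) = a \rightarrow b$; by Fact~\ref{fact: normal transformation terms} these are witnessed by a pair of normal transformation terms. Thus Theorem~\ref{thm: equivalence} applies and yields a categorical equivalence between $\class{K}$ and $\Iso(\fracclassK)$, with unit $a \mapsto \pi \multipair{a}{0}$ and counit $\multipair{a}{b} \mapsto a \comp{b}$, the operator $\fracsigma$ being the projection $1 \wedge x$ onto the negative cone.

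It then remains to recognize $\Iso(\fracclassK)$ as the variety named in the statement. The characterization established above identifies this class as the idempotent involutive commutative residuated pomonoids equipped with $\sigma(x) = 1 \wedge x$ that satisfy the normality equation $x = \sigma(x) \cdot \comp{\sigma(\comp{x})}$, so it suffices to show that for such pomonoids the normality equation is equivalent to $0 = 1$. One direction is a direct evaluation: instantiating normality at $x = 1$ gives $1 = \comp{(1 \wedge 0)} = 0 \vee 1$, hence $0 \leq 1$; complementing normality at $\comp{x}$ yields the additive form $x = (1 \vee x) + (1 \wedge x)$, which at $x = 0$ gives $0 = (1 \vee 0) + (1 \wedge 0) = 1 + 0 = 1$. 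Hence normality forces $0 = 1$.

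For the converse I would assume $0 = 1$ and derive normality. Under $0 = 1$, De~Morgan gives $\comp{(1 \wedge \comp{x})} = 0 \vee x = 1 \vee x$, so normality amounts to the self-dual identity $x = (1 \wedge x) \cdot (1 \vee x)$. The inequality $(1 \wedge x)(1 \vee x) \leq x$ is immediate: multiplication distributes over the existing join $1 \vee x$, and each of $1 \wedge x$ and $(1 \wedge x) \cdot x$ lies below $x$ by idempotence. The reverse inequality $x \leq (1 \wedge x)(1 \vee x)$ is where I expect the real work to lie, and it is the main obstacle. Since the involution $x \mapsto \comp{x}$ makes the algebra self-order-dual when $0 = 1$, this inequality is interchangeable with its additive companion $(1 \vee y) + (1 \wedge y) \leq y$, and establishing either one from idempotence, involutivity, and the admissibility of the joins and meets that appear is the crux. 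Granting this, the two class descriptions coincide and the desired equivalence is precisely Theorem~\ref{thm: equivalence} specialized to $\class{K}$.
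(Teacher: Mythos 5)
Your opening reduction is exactly the paper's: with $\class{K}$ the variety of Brouwerian semilattices (as bimonoids with $x \cdot y = x + y = x \wedge y$ and $0 = 1$), the Facts proved immediately before this theorem supply normal transformation functions, Fact~\ref{fact: normal transformation terms} upgrades them to terms, and Theorem~\ref{thm: equivalence} gives an equivalence between $\class{K}$ and $\Iso(\fracclassK)$. The gap lies in the bridge you then need: the claim that, for idempotent involutive commutative residuated pomonoids equipped with $\sigma(x) = 1 \wedge x$, the normality equation $x \equals \sigma(x) \cdot \comp{\sigma(\comp{x})}$ is equivalent to $0 \equals 1$. Both implications are false. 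Normality does not force $0 = 1$: any non-trivial Boolean algebra (multiplication $=$ meet, $1 = \top$, $\comp{x} = \neg x$) is such a pomonoid with $\sigma(x) = 1 \wedge x = x$, so normality reduces to $x = x \cdot x$ and holds, while $0 = \bot \neq 1$. Your computation of this direction is in fact circular: complementing normality at $\comp{x}$ gives $x = (1 \wedge x) + (0 \vee x)$, and replacing the second summand by $1 \vee x$ already uses $\comp{1} = 1$, i.e.\ $0 = 1$; with the correct form, evaluating at $x = 0$ yields only $0 = 0$. Conversely, $0 = 1$ does not force normality: the direction you leave open as ``the main obstacle'' is refuted by the paper's own example in Figure~\ref{fig: diamond}, a five-element commutative idempotent involutive residuated lattice with $0 = 1$ in which $(\elemone \wedge \elem{a}) \cdot (\elemone \vee \elem{a}) = \elembot \neq \elem{a}$. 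That obstacle is not a hard step; it is unprovable.

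The source of the trouble is that the two descriptions you try to reconcile really are different classes, and the one that participates in the equivalence, $\Iso(\fracclassK)$, is cut out by idempotence, $\sigma(x) = 1 \wedge x$, and \emph{both} $0 \equals 1$ and normality. The characterization Fact omits $0 \equals 1$ from its statement but invokes it in its proof (``since $0 = 1$''): that hypothesis is exactly what makes addition and multiplication coincide on the negative cone --- for $x, y \leq 1 = 0$ one gets $x + y \leq x$ and $x + y \leq y$, hence $x + y = (x + y)(x + y) \leq x y \leq x + y$ --- so that $\sigma[\alg{B}]$ is a Brouwerian semilattice rather than, as for a Boolean algebra, a bounded distributive lattice. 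Dually, the theorem's phrasing omits normality, which Figure~\ref{fig: diamond} shows is not implied by $0 \equals 1$. The paper's intended proof is therefore Theorem~\ref{thm: equivalence} together with the two-sided verification in that Fact: fraction algebras of Brouwerian semilattices are idempotent ($a \comp{b} \cdot a \comp{b} = a \comp{b}$), satisfy $0 = 1$ and normality by construction, and have $\fracsigma(x) = 1 \wedge x$ because the embedded semilattice is the negative cone; conversely, any algebra with all of these properties is the fraction algebra of its negative cone. To repair your proof, drop the equivalence claim, read the displayed class as carrying the normality equation as well, and verify this characterization directly in both directions rather than trying to derive either condition from the other.
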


\begin{theorem}[Categorical equivalence: Boolean-pointed Brouwerian algebras] \label{thm: equivalence for brbpas}
  The variety of Boolean-pointed Brouwerian algebras (Brouwerian algebras) is categorically equivalent to the variety of commutative idempotent involutive residuated lattices which satisfy ($0 \equals 1$ and) $x \equals (1 \wedge x) \cdot (0 \vee x)$.
\end{theorem}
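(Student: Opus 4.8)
The plan is to read off this equivalence from the general machinery of Theorem~\ref{thm: l-equivalence}, once both sides have been recast in bimonoidal terms. First I would regard each Boolean-pointed Brouwerian algebra as a commutative residuated $\ell$-bimonoid through the term equivalence of Proposition~\ref{prop: pointed brouwerian algebras}, with $x \cdot y \assign x \wedge y$ and the additive structure given there; write $\class{K}$ for the resulting variety. By Fact~\ref{f:BpBr} every member of $\class{K}$ has a normal commutative complemented $\ell$-bimonoid of fractions, and the witnessing transformation functions $\transplus(a,b) = 0a$ and $\transminus(a,b) = a \rightarrow b$ are plainly given by terms of the residuated $\ell$-bimonoid signature, so $\class{K}$ possesses normal transformation terms (Fact~\ref{fact: normal transformation terms}). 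Thus $\class{K}$ satisfies the hypotheses of Theorem~\ref{thm: l-equivalence}.

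Applying Theorem~\ref{thm: l-equivalence} yields a categorical equivalence between $\class{K}$ and $\Iso(\fracclassK)$, where the latter is the variety of complemented $\ell$-bimonoids of fractions of members of $\class{K}$, each equipped with the interior operator $\fracsigma$. The second ingredient is the intrinsic description of this target already established above: $\Iso(\fracclassK)$ consists precisely of the idempotent involutive commutative residuated lattices satisfying $x \equals (1 \wedge x)(0 \vee x)$, expanded by $\fracsigma(x) = 1 \wedge x$. Chaining the two gives the equivalence in the signature that still carries $\fracsigma$.

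The one genuinely delicate point is to discharge the auxiliary operation $\fracsigma$ and recover the equivalence in the plain residuated-lattice signature appearing in the statement. The key observation is that on the target algebras $\fracsigma(x) = 1 \wedge x$ is a \emph{term} of the residuated-lattice signature. Hence every residuated-lattice homomorphism between two such algebras automatically preserves $1 \wedge x$ and therefore $\fracsigma$, so the forgetful functor that drops $\fracsigma$ is an isomorphism of categories between $\Iso(\fracclassK)$ and the plain variety of idempotent involutive commutative residuated lattices satisfying $x \equals (1 \wedge x)(0 \vee x)$. Composing this isomorphism with the equivalence of the previous paragraph delivers the stated categorical equivalence; the unit remains $a \mapsto \pi \multipair{a}{0}$ and the counit $\multipair{a}{b} \mapsto a \comp{b}$.

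Finally, the parenthetical Brouwerian-algebra case is obtained by restriction. Brouwerian algebras form the subvariety of pointed Brouwerian algebras cut out by $0 \equals 1$, and this condition is transported by $\div$ and $\Sigma$ to the defining equation $0 \equals 1$ of the corresponding subvariety of the target, so the functors restrict to an equivalence there. I expect the main obstacle to be organizational rather than mathematical: verifying that term-definability of $\fracsigma$ really upgrades the equivalence-with-$\fracsigma$ to an equivalence of the two \emph{plain} varieties, and checking that the restriction to $0 \equals 1$ is respected on both sides by the unit and counit.
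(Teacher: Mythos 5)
Your proposal is correct and follows essentially the same route as the paper: recast Boolean-pointed Brouwerian algebras as commutative residuated $\ell$-bimonoids (Proposition~\ref{prop: pointed brouwerian algebras}), invoke Fact~\ref{f:BpBr} for normal transformation terms, apply Theorem~\ref{thm: l-equivalence}, and identify $\Iso(\fracclassK)$ via the intrinsic description as the idempotent involutive commutative residuated lattices satisfying $x \equals (1 \wedge x)(0 \vee x)$. Your explicit observation that $\fracsigma(x) = 1 \wedge x$ is term-definable in the lattice signature—so the expansion by $\fracsigma$ can be dropped without changing the category—is exactly the point the paper leaves implicit (and is precisely why $\sigma$ survives in the pomonoid-based statements but disappears here), so this is the same proof, just with that bookkeeping spelled out.
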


\begin{theorem}[Categorical equivalence: Abelian $\ell$-groups]
  The variety of Abelian $\ell$-groups is categorically equivalent to the variety of cancellative divisible integral commutative residuated lattices.
\end{theorem}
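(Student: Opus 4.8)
The plan is to read this equivalence off Theorem~\ref{thm: l-equivalence} by taking $\class{K}$ to be the variety of cancellative divisible integral commutative residuated lattices, regarded as commutative residuated $\ell$-bimonoids with $x + y \assign x \cdot y$ and $0 \assign 1$ (the viewpoint already used for $\ell$-groups and Brouwerian algebras in Subsection~\ref{subsec: examples of bimonoids}). First I would confirm that this really is a variety of $\ell$-bimonoids: integrality together with associativity turns hemidistributivity $x \cdot (y + z) \inequals (x \cdot y) + z$ into an equality, while divisibility guarantees that $\cdot$, and hence $+$, distributes over $\wedge$, as recorded just before Fact~\ref{fact: fractions of lgminus}.

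Next I would invoke Fact~\ref{fact: fractions of lgminus}, which says that every member of $\class{K}$ has a \emph{normal} commutative complemented $\ell$-bimonoid of fractions with transformation functions $\transplus(a, b) = a$ and $\transminus(a, b) = b$. Since these are given by the projection terms $t(x, y) = x$ and $u(x, y) = y$, the class $\class{K}$ has a pair of normal transformation terms, so the hypotheses of Theorem~\ref{thm: l-equivalence} are met and the functor $\div$ is well defined on $\class{K}$. That theorem then yields a categorical equivalence $\class{K} \simeq \Iso(\fracclassK)$, with unit $a \mapsto \pi \multipair{a}{0}$ and counit $\multipair{a}{b} \mapsto a \comp{b}$; concretely $\Sigma$ sends $\pair{\alg{B}}{\sigma}$ to the negative cone $\sigma[\alg{B}]$, while $\div$ sends $\alg{A}$ to its $\ell$-group of fractions $\fracalgA$ equipped with $\fracsigma$.

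It then remains to identify $\Iso(\fracclassK)$ with the variety of Abelian $\ell$-groups. By the Fact identifying the complemented bimonoids of fractions of cancellative integral divisible commutative residuated lattices, together with Fact~\ref{fact: normal sigma}, every object of $\Iso(\fracclassK)$ is an Abelian $\ell$-group carrying the operator $\sigma(x) = 1 \wedge x$; conversely, by the Corollary on negative cones of $\ell$-groups, every Abelian $\ell$-group $\alg{G}$ arises this way, being the normal complemented bimonoid of fractions of its negative cone $\sigma[\alg{G}] \in \class{K}$. Crucially, in an $\ell$-group all the bimonoidal data are term-definable from the $\ell$-group signature: $x + y = x \cdot y$, $0 = 1$, the complement is $x^{-1}$, and $\sigma(x) = 1 \wedge x$. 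Hence the forgetful functor from $\Iso(\fracclassK)$ to Abelian $\ell$-groups is a bijection on objects.

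The step I expect to require the most care is matching the \emph{morphisms}, that is, verifying that this forgetful functor is a genuine isomorphism of categories rather than merely a bijection on objects. Because $\sigma(x) = 1 \wedge x$ and $x^{-1}$ are term operations in the $\ell$-group signature, every $\ell$-group homomorphism automatically preserves $\sigma$ and all the bimonoidal operations; conversely, a homomorphism in $\Iso(\fracclassK)$ preserves $\cdot$, $\wedge$, $\vee$, $1$, and complementation and is therefore an $\ell$-group homomorphism. Once this identification of categories is established, composing it with the equivalence $\class{K} \simeq \Iso(\fracclassK)$ of Theorem~\ref{thm: l-equivalence} gives the desired equivalence between Abelian $\ell$-groups and cancellative divisible integral commutative residuated lattices, realized by the negative-cone functor in one direction and the $\ell$-group-of-fractions functor $\div$ in the other, which is precisely the commutative restriction of the Bahls et al.\ equivalence mentioned in the introduction.
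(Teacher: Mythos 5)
Your proposal is correct and follows essentially the same route as the paper: instantiate Theorem~\ref{thm: l-equivalence} with $\class{K}$ the variety of cancellative divisible integral commutative residuated lattices (using Fact~\ref{fact: fractions of lgminus} for normal transformation terms), identify $\Iso(\fracclassK)$ with Abelian $\ell$-groups carrying $\sigma(x) = 1 \wedge x$ via the Facts of Subsection~\ref{subsec: applications}, and then absorb $\sigma$ by term equivalence, which is exactly the paper's closing remark that the two views of Abelian $\ell$-groups are term equivalent (e.g.\ $x \wedge y = x \cdot \sigma(x^{-1}y)$). Your explicit check that morphisms match under the forgetful functor is the right way to make that last step precise.
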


  In the case of Brouwerian semilattices and Boolean-pointed Brouwerian algebras, one can easily formulate bounded variants of the above results, adding on each side the constant $\bot$ and $\top$ interpreted as the bottom and top element. It is entirely routine to verify that all of our proofs then still go through. Thus we obtain a similar equivalence between Boolean-pointed \emph{Heyting} algebras and \emph{bounded} idempotent involutive residuated lattices which satisfy $x \equals (1 \wedge x) \cdot (0 \vee x)$.

  The equivalence for Abelian $\ell$-groups is a restriction to the commutative case of the equivalence between cancellative divisible integral residuated lattices and arbitrary $\ell$-groups due to Bahls et al.~\cite{bahls+et+al03}. On the other hand, the equivalence for Boolean-pointed Brouwerian algebras is an extension of the equivalence between semilinear Boolean-pointed Brouwerian algebras and Sugihara monoids due to Fussner \& Galatos~\cite{fussner+galatos19}, which in turn extends the equivalence between semilinear Brouwerian algebras and odd Sugihara monoids due to Galatos \& Raftery~\cite{galatos+raftery12}. (A Sugihara monoid is odd if it satisfies $0 \equals 1$.)

  The equivalences for Boolean-pointed Brouwerian algebras and Abelian $\ell$-groups are in fact restrictions of a \emph{single} equivalence between a variety of commutative residuated $\ell$-bimonoids and a variety of commutative involutive residuated lattices. The variety of commutative residuated $\ell$-bimonoids is axiomatized by the equations stating that $\transplus(a,b) \assign 0 \cdot a$ and $\transminus(a,b) \assign a \rightarrow ab$ are normal transformation functions. (These equations are described in Facts~\ref{fact: normal transformation functions} and~\ref{fact: transformation conditions}.) The variety of commutative involutive residuated lattices is axiomatized by the equation $x \equals (1 \wedge x) \cdot \comp{(1 \wedge \comp{x})}$, i.e.\ $x \equals (1 \wedge x) \cdot (0 \vee x)$. Similarly, the categorical equivalences for Brouwerian semilattices and Abelian $\ell$-groups are restrictions of a single equivalence between an ordered variety of commutative residuated bimonoids and an ordered variety of commutative involutive residuated pomonoids equipped with a normal interior operator $\sigma$. The former ordered variety is again axiomatized by the inequalities expressing that $\transplus(a,b) \assign 0 \cdot a$ and $\transminus(a,b) \assign a \rightarrow ab$ are normal transformation functions. The latter is axiomatized by $x \equals \sigma(x) \cdot \comp{\sigma(x)}$ and by inequalities stating that $\sigma(x)$ is the meet of $1$ and $x$. 

  This~is somewhat remarkable, since Brouwerian algebras and Abelian $\ell$-groups in a sense represent opposite ends of the residuated lattice spectrum: $\ell$-groups do not contain any non-trivial idempotents, while every element is idempotent in a Brouwerian algebra.

  Note that Abelian $\ell$-groups play a dual role here: we can either see them as involutive residuated lattices or as involutive residuated pomonoids equipped with a map $\sigma$ which projects onto the negative cone. These are term equivalent ways of looking at Abelian $\ell$-groups, since $x \wedge y = x \cdot \sigma (x^{-1} y)$.

  The above categorical equivalences allow us to transfer categorical properties from the well-studied category of Brouwerian (Heyting) algebras to the category (bounded) commutative idempotent involutive residuated lattices which satisfy $0 \equals 1$ and $x \equals (1 \wedge x) \cdot (0 \vee x)$. For example, Maksimova~\cite{maksimova77} proved that there are exactly three (seven) non-trivial varieties of Brouwerian (Heyting) algebras with the amalgamation property. There are thus also exactly three (seven) non-trivial subvarieties of the above variety of (bounded) involutive residuated lattices which enjoy the amalgamation property, namely those whose negative cone lies in the varieties described by Maksimova.

\subsection{Examples of complemented bimonoids of fractions}
\label{sec: examples of bimonoids of fractions}

  We now describe some examples of complemented bimonoids of fractions of Boolean-pointed Brouwerian algebras. Consider the smallest non-trivial Brouwerian algebra: the two-element chain $\elembot < \elemone$. Taking $0 \assign \elembot$ yields a bimonoid with $x \cdot y = x \wedge y$ and $x + y = x \vee y$, which is already complemented. Taking $0 \assign \elemone$ yields a bimonoid with operations $x \cdot y = x \wedge y = x + y$. This bimonoids has precisely three normal pairs, ordered as follows: $\multipair{\elembot}{\elemone} \preleq \multipair{\elemone}{\elemone} \preleq \multipair{\elemone}{\elembot}$. The algebras of fractions is thus a linearly ordered commutative idempotent involutive residuated lattice, in other words a Sugihara chain. It is isomorphic to the three-element Sugihara chain with the universe $-1 < 0 < 1$ and the operations $\comp{x} = - x$ and
\begin{align*}
  x \cdot y & \assign \begin{cases}x \text{ if } | x | > | y | \text{ or } (| x | = | y | \text{ and } x \leq y), \\ y \text{ if } | x | < | y | \text{ or } (| x | = | y | \text { and } x \geq y), \end{cases} &
  x + y & \assign \begin{cases}x \text{ if } | x | > | y | \text{ or } (| x | = | y | \text{ and } x \geq y), \\ y \text{ if } | x | < | y | \text{ or } (| x | = | y | \text { and } x \leq y).\end{cases}
\end{align*}
  More generally, the algebra of fractions of the $n$-element Brouwerian chain where $0$ is the top element is the Sugihara chain $- n < \dots < -1 < 0 < 1 < \dots < n$ with the above operations. The Sugihara chains $- n < \dots < -1 < 1 < \dots < n$ with the same operations are obtained as bimonoids of fractions of $n$-element Brouwerian chains where $0$ is the coatom (corresponding to the element $-1$) rather than the top element. These examples, however, are already covered by the existing constructions of Galatos \& Raftery~\cite{galatos+raftery12,galatos+raftery14} and Fussner~\&~Galatos~\cite{fussner+galatos19} for semilinear (Boolean-pointed) Brouwerian algebras.

\begin{figure}
\caption{The Boolean-pointed Brouwerian algebra $\Hfive$ and its complemented bimonoid of fractions}
\label{fig: hfive 0=c}
\begin{center}
\begin{tikzpicture}[scale=1,dot/.style={circle,fill,inner sep=2.5pt,outer sep=2.5pt}]
  \node (bot) at (0, 0) [dot] {};
  \node (a) at (-1, 1) [dot] {};
  \node (b) at (1, 1) [dot] {};
  \node (c) at (0, 2) [dot] {};
  \node (1) at (0, 3) [dot] {};
  \draw[-,thick] (bot) -- (a) -- (c) -- (1);
  \draw[-,thick] (bot) -- (b) -- (c);
  \node (nbot) at (0, -0.5) {$\elembot$};
  \node (na) at (-1.5, 1) {$\elem{a}$};
  \node (nb) at (1.5, 1) {$\elem{b}$};
  \node (nc) at (0.8, 2) {$\elem{c} = 0$};
  \node (n1) at (0.5, 3) {$\elemone$};
  \node at (0, -1.25) {$\Hfive$};
\end{tikzpicture}
\qquad \qquad
\begin{tikzpicture}[scale=1,dot/.style={circle,fill,inner sep=2.5pt,outer sep=2.5pt}]
  \node (bot) at (0, 0) [dot] {};
  \node (a) at (-1, 1) [dot] {};
  \node (b) at (1, 1) [dot] {};
  \node (c) at (0, 2) [dot] {};
  \node (1) at (0, 3) [dot] {};
  \draw[-,thick] (bot) -- (a) -- (c) -- (1);
  \draw[-,thick] (bot) -- (b) -- (c);
  \node (nega) at (-1, 4) [dot] {};
  \node (negb) at (1, 4) [dot] {};
  \node (top) at (0, 5) [dot] {};
  \node (leftside) at (-2.5, 2.5) [dot] {};
  \node (rightside) at (2.5, 2.5) [dot] {};
  \draw[dashed] (1) -- (nega) -- (top);
  \draw[dashed] (1) -- (negb) -- (top);
  \draw[dashed] (a) -- (leftside) -- (nega);
  \draw[dashed] (b) -- (rightside) -- (negb);
  \node (nbot) at (0, -0.5) {$\multipair{\elembot}{\elemone}$};
  \node (na) at (-1.75, 1) {$\multipair{\elem{a}}{\elemone}$};
  \node (nb) at (1.75, 1) {$\multipair{\elem{b}}{\elemone}$};
  \node (nc) at (0.75, 2) {$\multipair{\elem{c}}{\elemone}$};
  \node (n1) at (0.75, 3) {$\multipair{\elemone}{\elem{c}}$};
  \node (nleftside) at (-3.25, 2.5) {$\multipair{\elem{a}}{\elem{b}}$};
  \node (nrightside) at (3.25, 2.5) {$\multipair{\elem{b}}{\elem{a}}$};
  \node (nnega) at (-1.75, 4) {$\multipair{\elemone}{\elem{a}}$};
  \node (nnegb) at (1.75, 4) {$\multipair{\elemone}{\elem{b}}$};
  \node (ntop) at (0, 5.5) {$\multipair{\elemone}{\elembot}$};
  \node at (0, -1.25) {$\fracHfive$};
\end{tikzpicture}
\end{center}
\end{figure}

  The smallest Brouwerian lattice not covered by these existing constructions, i.e.\ the smallest non-semilinear Brouwerian lattice, is shown in Figure~\ref{fig: hfive 0=c}. It can be expanded into a Boolean-pointed Brouwerian lattice in two different ways: either $0 \assign \elem{c}$ or $0 \assign \elemone$. Let us consider the first of these expansions. We~shall call the resulting commutative residuated $\ell$-bimonoid $\Hfive$. \nopagebreak Recall that addition in $\Hfive$ is defined as $x + y \assign (0 \rightarrow xy) (x \vee y)$. In particular, $x + y = x \wedge y$ for $x, y \leq 0$ and $x + y = x \vee y$ for $x, y \geq 0$. The only values not covered by these two clauses are the sums $1 + x$ or $x + 1$ for $x < 0$: in that  case $x + 1 = 1 + x = x$.

  The elements of $\fracHfive$ are precisely the normal pairs $\multipair{a}{b} \in \Hfivesquared$, i.e.\ pairs such that $\multipair{a}{b} = \pi \multipair{a}{b} \assign \multipair{\transminus(a, b) \rightarrow \transplus(a, b)}{a \rightarrow b}$, where $\transplus(a, b) = 0a$ and $\transminus(a, b) = a \rightarrow b$. The first step in describing $\fracHfive$ is therefore to find all pairs $\multipair{a}{b}$ such that
\begin{align*}
  a & = (a \rightarrow b) \rightarrow 0a, \\
  b & = a \rightarrow b.
\end{align*}
  These are precisely the pairs shown in Figure~\ref{fig: hfive 0=c}, with $\multipair{a}{b} \preleq \multipair{c}{d}$ if and only if $a \leq c$ and $d \leq b$. The~question is now how to succinctly describe the operations of $\fracHfive$ on these pairs.

  The lattice operations, of course, are determined by the order, and complementation is simply the map $\multipair{a}{b} \mapsto \multipair{b}{a}$. The monoidal operations are defined in terms of the monoidal operations of $\Hfive$ and the projection map $\pi$. The information in Table~\ref{tab: hfive 0 = c} therefore suffices to compute any product or sum in $\fracHfive$. (The~vertical axis represents $x$ and the horizontal axis represents $y$.) For~example, $\multipair{\elem{a}}{\elem{b}} \circ \multipair{\elem{b}}{\elem{a}} = \pi \multipair{\elem{a} \cdot \elem{b}}{\elem{b} + \elem{a}} = \pi \multipair{\elem{a} \wedge \elem{b}}{\elem{a} \wedge \elem{b}} = \pi \multipair{\elembot}{\elembot} = \multipair{\elembot}{\elemone}$, while $\multipair{\elem{a}}{\elem{b}} \oplus \multipair{\elemone}{\elem{c}} = \comp{\pi \multipair{\elem{b} \cdot \elem{c}}{\elem{a} + \elemone}} = \comp{\pi \multipair{\elem{b}}{\elem{a}}} = \comp{\multipair{\elem{b}}{\elem{a}}} = \multipair{\elem{a}}{\elem{b}}$ and $\multipair{\elem{a}}{\elem{b}} \circ \multipair{\elem{a}}{\elemone} = \pi \multipair{\elem{a} \cdot \elem{c}}{\elem{b} + \elemone} = \pi \multipair{\elem{a}}{\elem{b}} = \multipair{\elem{a}}{\elem{b}}$.

\begin{table}
\caption{Projection onto normal pairs in $\Hfive$}
\label{tab: hfive 0 = c}
\begin{center}
\begin{tabular}{c | c c c c c}
  $\pi \multipair{x}{y}$ & $\elembot$ & $\elem{a}$ & $\elem{b}$ & $\elem{c}$ & $\elemone$ \\
\hline
  $\elembot$ & $\multipair{\elembot}{\elemone}$ & $\multipair{\elembot}{\elemone}$ & $\multipair{\elembot}{\elemone}$ & $\multipair{\elembot}{\elemone}$ & $\multipair{\elembot}{\elemone}$ \\
  $\elem{a}$ & $\multipair{\elembot}{\elemone}$ & $\multipair{\elem{a}}{\elemone}$ & $\multipair{\elem{a}}{\elem{b}}$ & $\multipair{\elem{a}}{\elemone}$ & $\multipair{\elem{a}}{\elemone}$ \\
  $\elem{b}$ & $\multipair{\elem{b}}{\elem{a}}$ & $\multipair{\elem{b}}{\elem{a}}$ & $\multipair{\elem{b}}{\elemone}$ & $\multipair{\elem{b}}{\elemone}$ & $\multipair{\elem{b}}{\elemone}$ \\
  $\elem{c}$ & $\multipair{\elemone}{\elembot}$ & $\multipair{\elemone}{\elem{a}}$ & $\multipair{\elemone}{\elem{b}}$ & $\multipair{\elem{c}}{\elemone}$ & $\multipair{\elem{c}}{\elemone}$ \\
  $\elemone$ & $\multipair{\elemone}{\elembot}$ & $\multipair{\elemone}{\elem{a}}$ & $\multipair{\elemone}{\elem{b}}$ & $\multipair{\elemone}{\elem{c}}$ & $\multipair{\elem{c}}{\elemone}$
\end{tabular}
\end{center}
\end{table}

  Recall that the complemented bimonoid of fractions of a semilinear Boolean-pointed Brouwerian algebra is distributive, by the results of Fussner \& Galatos~\cite{fussner+galatos19}. On the other hand, even the simplest non-semilinear Boolean-pointed Brouwerian algebra $\Hfive$ has a non-modular complemented bimonoid of fractions~$\fracHfive$. It~thus seems natural to ask whether this holds for all Boolean-pointed Brouwerian algebras. That is, is the complemented bimonoid of fractions of a Boolean-pointed Brouwerian algebras modular if and only if it is distributive, perhaps by virtue of complemented bimonoids of fractions of Boolean-pointed Brouwerian algebras being semidistributive? We do not know.

  We can, however, use the algebra $\Hfive$ for another purpose, namely to construct an idempotent involutive residuated lattice which fails to satisfy the equation $x \equals (1 \wedge x) \cdot (0 \vee x)$. This equation is thus not valid in all commutative idempotent involutive residuated lattices, although it is true in all distributive ones.\footnote{We are grateful to Jos\'{e} Gil-F\'{e}rez for pointing out this example to us.}

  The universe of this idempotent involutive residuated lattice is $\{ \elembot, \elem{a}, \elem{b}, \elem{c}, \elemone \}$. Multiplication coincides with meets in $\Hfive$. Thus e.g.\ $\elem{a} \cdot \elem{b} = \elembot$. In particular, $\elemone$ is indeed the multiplicative unit. The lattice structure, however, is the one shown in Figure~\ref{fig: diamond}. Finally, taking $0 \assign \elemone$ yields the complementation $\comp{\elem{a}} = \elem{b}$, $\comp{\elem{b}} = \elem{a}$, $\comp{\elemone} = \elemone$, $\comp{\elembot} = \elem{c}$, $\comp{\elem{c}} = \elembot$. It is now routine to verify that this indeed yields a commutative idempotent involutive residuated lattice where $(\elemone \wedge \elem{a}) \cdot (\elemone \vee \elem{a}) = \elembot \neq \elem{a}$. Observe that the negative cone of this residuated lattice is precisely the two-element bimonoid $\elembot < \elemone$ with $x \cdot y = x \wedge y = x + y$. We therefore have two non-isomorphic commutative idempotent involutive residuated lattices with the same negative cone.

\begin{figure}
\caption{A commutative idempotent involutive residuated lattice which fails $x \equals (1 \wedge x) (0 \vee x)$}
\label{fig: diamond}
\begin{center}
\begin{tikzpicture}[scale=1,dot/.style={circle,fill,inner sep=2.5pt,outer sep=2.5pt}]
  \node (bot) at (0, 0) [dot] {};
  \node (a) at (-2, 1.5) [dot] {};
  \node (1) at (0, 1.5) [dot] {};
  \node (b) at (2, 1.5) [dot] {};
  \node (c) at (0, 3) [dot] {};
  \draw[-,thick] (bot) -- (a) -- (c);
  \draw[-,thick] (bot) -- (1) -- (c);
  \draw[-,thick] (bot) -- (b) -- (c);
  \node (nbot) at (0, -0.5) {$\elembot$};
  \node (na) at (-2.5, 1.5) {$\elem{a}$};
  \node (nb) at (2.5, 1.5) {$\elem{b}$};
  \node (nc) at (0.8, 3) {$\elem{c}$};
  \node (n1) at (0.5, 1.5) {$\elemone$};
  \node at (0, -1.25) {$\Hfive$};
\end{tikzpicture}
\end{center}
\end{figure}

  Finally, let us show that while being Brouwerian is a sufficient condition for a unital meet semilattice to have a complemented bimonoid of fractions, it is not necessary. Consider the semilattices $\twoalg \assign \{ 0, 1 \}$ and $\omegaplusonealg \assign \{ 0, 1, 2, \dots, \infty \}$ with the usual order. Let $\alg{T} \assign \twoalg \times (\omegaplusonealg)$ and let $\alg{S}$ be the subalgebra obtained by removing the element $\pair{0}{\infty}$. Then $\alg{T}$ is a Brouwerian semilattice but $\alg{S}$ is not: there is no largest $x \in \alg{S}$ such that $x \wedge \pair{1}{0} \leq \pair{0}{0}$. We show that $\alg{S}$ has a commutative complemented bimonoid of fractions. Note~that while $\alg{S}$ is not a Brouwerian semilattice, it is still distributive.

  By Proposition~\ref{prop: existence of bimonoid of fractions} it suffices to show that for each $a, b \in \alg{S}$ there are $x, y \in \alg{S}$ such that $a \comp{b} = x + \comp{y}$ in~$\cdmalg{S}$. By Fact~\ref{fact: semilattice transformation conditions} there are $x, y \in \alg{S}$ such that $a \cdot \comp{b} = x + \comp{y}$ if and only if there is $y \in \alg{S}$ such that $a y = a b$ and
\begin{align*}
  (\forall p q \in \alg{S}) \left( p y \leq a b ~ \& ~ a q \leq a b \implies p q \leq a b \right).
\end{align*}
  Moreover, $a \cdot \comp{b} = a \cdot \comp{a b}$, therefore we may assume without loss of generality that $b \leq a$. By Fact~\ref{fact: fractions for brsls} we know that in $\alg{T}$ we may take $y \assign a \rightarrow b$. It follows that for each $a, b \in \alg{S}$ such that $a \rightarrow b \in \alg{S}$ (the element $a \rightarrow b$ being computed in $\alg{T}$), there is $y \in \alg{S}$ such that $a \cdot \comp{b} = a + \comp{y}$. The only cases we have to consider are therefore $a, b \in \alg{S}$ with $b \leq a$ such that $a \rightarrow b = \pair{0}{\infty}$. The only such elements have the form $a = \pair{1}{i}$ and $b = \pair{0}{i}$ for some $i \in \omega$. But then $y \assign \pair{0}{i+1}$ does the job: $ay = ab$, and if $p \pair{0}{i+1} \leq \pair{0}{i}$ and $\pair{1}{i} q \leq \pair{0}{i}$, then $p \leq \pair{1}{i}$ and $q \leq \pair{0}{\infty}$, so $p q \leq \pair{0}{i} = a b$.

\section{Bimonoidal subreducts of positive universal classes of involutive residuated lattices}
\label{sec: subreducts}

  In this section, we study the bimonoidal and $\ell$-bimonoidal subreducts of positive universal classes of commutative residuated pomonoids and lattices. Throughout the section we use the abbreviations CRL and CRP for commutative residuated lattices and pomonoids. The existence of a commutative complemented DM completion (Theorem~\ref{thm: dm completions exist}) settles the question of what the bimonoidal and $\ell$-bimonoidal subreducts of involutive CRPs and CRLs are: they are precisely commutative bimonoids and $\ell$-bimonoids.\footnote{Recall that an algebra $\alg{A}$ is a \emph{reduct} of an algebra $\alg{B}$ if it is obtained from $\alg{B}$ by forgetting part of the signature of $\alg{B}$. It is a \emph{subreduct} of $\alg{B}$ if it embeds into a reduct of $\alg{B}$.} The same problem, however, arises more generally for other classes of involutive CRLs and CRPs.

  In this section, we provide an algorithm for axiomatizing the $\ell$-bimonoidal subreducts of each class of involutive CRLs axiomatized by s$\ell$-monoidal positive universal clauses, i.e.\ universally quantified finite disjunctions of equations in the signature $\{ \vee, \cdot, 1 \}$. The first step of this algorithm is to transform a given set of positive universal clauses into positive universal clauses of special form.

\begin{definition}[Linear inequalities]
  An s$\ell$-monoidal inequality is \emph{linear} if it has the form $t \inequals u$ where $t$ is a \emph{linear term}, i.e.\ a product of distinct variables, and $u$ is a join of products of variables. An s$\ell$-monoidal positive universal clause is \emph{linear} if it is a universally quantified finite disjunction of linear s$\ell$-monoidal inequalities.
\end{definition}

  The following lemma was essentially proved in~\cite[p.\ 1232]{galatos+jipsen13residuated-frames}.

\begin{lemma}[Linearization] \label{lemma: linearization}
  Each s$\ell$-monoidal equation (positive universal clause) is equivalent over s$\ell$-monoids to a set of linear inequalities (linear positive universal clauses).
\end{lemma}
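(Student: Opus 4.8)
The plan is to proceed by a sequence of equivalence-preserving rewrites, each valid over s$\ell$-monoids, ending in the required linear form. I first treat a single equation and then explain the extra bookkeeping needed for clauses.

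First I would reduce to inequalities of a convenient shape. An equation $s \equals t$ is equivalent to the conjunction of the two inequalities $s \inequals t$ and $t \inequals s$, so it suffices to linearize a single inequality $p \inequals q$. Using the distributivity of $\cdot$ over $\vee$ together with associativity and the unit laws, every s$\ell$-monoidal term rewrites to a join of products of variables. Applying this to the right-hand side puts $q$ into that form. Applying it to the left-hand side yields $p \equals \bigvee_{i} p_i$ with each $p_i$ a product; and since $\bigvee_i p_i \inequals q$ holds if and only if $p_i \inequals q$ holds for every $i$, I may replace the original inequality by the finite set of inequalities $p_i \inequals q$. At this point each inequality has the form $t \inequals u$ where $t$ is a product of variables (possibly with repetitions) and $u$ is a join of products.

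The remaining task is to eliminate repeated variables from the left-hand products, and this is the crux. I would handle it one variable at a time. Fix a variable $x$ occurring at least twice in some left-hand product, and let $n$ be its largest number of occurrences in any of the products $t$ currently under consideration. Introduce fresh variables $z_1, \dots, z_n$ and, in each inequality $t \inequals u$, replace the (say $m \leq n$) occurrences of $x$ in $t$ by the distinct variables $z_1, \dots, z_m$, and replace every occurrence of $x$ in $u$ by the join $z_1 \vee \dots \vee z_n$, re-distributing so that $u$ is again a join of products. The resulting inequalities are equivalent over s$\ell$-monoids to the original ones: the direction from the linearized inequality back to the original is the substitution instance $z_1 = \dots = z_n = x$, which collapses the left-hand side to $t$ and the right-hand side to $u$; for the forward direction, given arbitrary values of the variables, set $x$ equal to $Z \assign z_1 \vee \dots \vee z_n$, so that the original inequality gives $t(Z) \inequals u(Z)$ (evaluating $x$ at $Z$), while the linearized left-hand side lies below $t(Z)$ by monotonicity of products since each $z_i \inequals Z$, and the linearized right-hand side is exactly $u(Z)$. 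After this step $x$ occurs at most once in every left-hand product, and the fresh $z_j$ likewise occur at most once there; since the number of repeated variables strictly decreases, iterating terminates with all left-hand terms linear.

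For a positive universal clause $\bigvee_k (s_k \equals t_k)$ the same ingredients suffice. Replacing each $s_k \equals t_k$ by $(s_k \inequals t_k) \wedge (t_k \inequals s_k)$ turns the clause into a positive Boolean combination of inequalities; converting to conjunctive normal form is a purely propositional manipulation, valid in every structure, and yields a finite set of clauses, each a disjunction of inequalities. The normalization of the previous paragraphs — distributing into joins of products and splitting left-hand joins via $\bigvee_i p_i \inequals q \iff \bigwedge_i (p_i \inequals q)$ — again produces, after a further CNF pass, a set of clauses in which every atomic inequality has a single product on the left. The linearization step then carries over once it is performed \emph{simultaneously across a whole clause}: fixing a repeated variable $x$, taking $n$ to be its maximal left-hand multiplicity over all atoms of the clause, and substituting as above. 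The backward direction is again the substitution $z_j = x$ applied to the entire clause; for the forward direction one evaluates the original clause at $x \assign Z$, selects whichever disjunct is true at this value, and observes that its linearization follows from that disjunct by monotonicity exactly as before. I expect this compatibility of the fresh-variable substitution with the disjunctive context to be the only genuinely delicate point, everything else being routine rewriting with the s$\ell$-monoid axioms.
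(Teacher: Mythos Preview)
Your argument is correct and matches the standard linearization technique. The paper itself does not supply a proof of this lemma: it simply records that the result was essentially proved in \cite[p.\,1232]{galatos+jipsen13residuated-frames} and moves on, illustrating the procedure only with the example $x^{2} \inequals x \leftrightsquigarrow x \cdot y \inequals x \vee y$. Your write-up is therefore not so much a different route as a spelled-out version of the cited argument: reduce to inequalities, push both sides into join-of-products form, split left-hand joins, then kill repeated left-hand variables by introducing fresh $z_1,\dots,z_n$ and replacing $x$ by $z_1 \vee \dots \vee z_n$ on the right. Your check of both directions (substitute $z_i = x$ backward; set $x \assign z_1 \vee \dots \vee z_n$ and use monotonicity of products forward) is exactly what is needed, and you correctly identified that for clauses the replacement must be done uniformly across all disjuncts so that the forward direction can select a true disjunct at the common value $x \assign Z$. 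The two CNF passes you insert are harmless bookkeeping.
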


  For example, the s$\ell$-monoidal inequality $x^2 \inequals x$ is equivalent to the linear inequality $x \cdot y \inequals x \vee y$. Recall now that admissible joins are joins over which multiplication distributes (Definition~\ref{def: admissible join}).

\begin{definition}[Preservation of inequalities]
  Let $\alg{A}$ be an s$\ell$-monoid. An s$\ell$-monoidal inequality \emph{holds in $X \subseteq \alg{A}$} if it holds in all valuations where variables take values in $X$. It is \emph{preserved under products} in~$\alg{A}$ if it holds in the monoid generated by $X$ whenever it holds in~$X$. It is \emph{preserved under admissible joins} in $\alg{A}$ if it holds in the set of all admissible joins of elements of $X$ whenever it holds in $X$.
\end{definition}

\begin{lemma}[Preservation of linear inequalities] \label{lemma: linear inequalities}
  Linear s$\ell$-monoidal inequalities are preserved under admissible joins. The inequality $x \inequals x^{n}$ is preserved under products in commutative pomonoids.
\end{lemma}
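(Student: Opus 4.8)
The plan is to treat the two assertions separately, since they concern different preservation properties and rely on different features of the term shapes involved. For the first assertion, fix a linear s$\ell$-monoidal inequality $t \leq u$ that holds in a set $X \subseteq \alg{A}$, where $t = x_{1} \cdots x_{k}$ is a product of distinct variables and $u$ is a join of products of variables. I would take an arbitrary valuation $V$ sending each variable $x_{i}$ to an admissible join $V(x_{i}) = \bigvee_{j \in J_{i}} a_{ij}$ of elements $a_{ij} \in X$, and show $t(V) \leq u(V)$. The key computation is that, because $t$ is linear (each variable occurs exactly once) and multiplication distributes over admissible joins in both arguments, the value $t(V)$ expands exactly as
\[
  t(V) = \bigvee_{(j_{1}, \dots, j_{k})} a_{1 j_{1}} \cdots a_{k j_{k}},
\]
the join ranging over all selectors $(j_{1}, \dots, j_{k}) \in \prod_{i} J_{i}$.

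Then, for each fixed selector, I would choose a valuation $v$ into $X$ that sends $x_{i}$ to the chosen representative $a_{i j_{i}}$ for the variables occurring in $t$ (and to an arbitrary element of $X$ below $V(x_{i})$ for any further variables occurring only in $u$). Since the inequality holds in $X$ we obtain $a_{1 j_{1}} \cdots a_{k j_{k}} = t(v) \leq u(v)$, and since $v(x_{i}) \leq V(x_{i})$ for every variable while $u$ is built from the isotone operations $\cdot$ and $\vee$, monotonicity gives $u(v) \leq u(V)$. Hence each join-summand $a_{1 j_{1}} \cdots a_{k j_{k}}$ lies below $u(V)$, and taking the join over all selectors yields $t(V) \leq u(V)$, as required. (The degenerate case where some $J_{i}$ is empty, i.e.\ the variable receives the admissible bottom $\bot$, is handled separately: if such a variable occurs in $t$ then $t(V)$ collapses to $\bot$ and the inequality is trivial, so one may otherwise assume the joins are nonempty.)

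For the second assertion, suppose $x \leq x^{n}$ holds in $X$ inside a commutative pomonoid $\alg{A}$; I must show it holds throughout the submonoid generated by $X$. An element of this submonoid is a product $b = a_{1} \cdots a_{m}$ with each $a_{i} \in X$ (the empty product $1$ satisfying $1 \leq 1^{n}$ trivially). Replacing the factors one at a time using $a_{i} \leq a_{i}^{n}$ together with the isotonicity of multiplication gives $a_{1} \cdots a_{m} \leq a_{1}^{n} \cdots a_{m}^{n}$, and commutativity rewrites the right-hand side as $(a_{1} \cdots a_{m})^{n} = b^{n}$; thus $b \leq b^{n}$.

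The only genuinely delicate point is the expansion of $t(V)$ in the first assertion, and it is precisely here that linearity is essential: were a variable to occur twice in $t$, the distributed join would contain ``off-diagonal'' cross terms $a_{ij} a_{i j'}$ with $j \neq j'$ that arise from no single $X$-valuation, so the term-by-term comparison against $u(V)$ would break down. I expect this to be the main conceptual obstacle, whereas the monotonicity argument bounding $u(V)$ and the whole of the second assertion are routine.
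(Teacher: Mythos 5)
Your argument is correct and essentially identical to the paper's: both expand $t(V)$, using linearity of $t$ and admissibility of the joins, into a join of products of representatives from $X$, instantiate the inequality at those representatives, and then bound the value of $u$ at the representatives by $u(V)$ via isotonicity of $\cdot$ and $\vee$; for the second claim both reduce to $a_{1} \cdots a_{m} \leq a_{1}^{n} \cdots a_{m}^{n} = (a_{1} \cdots a_{m})^{n}$ by commutativity. Your explicit treatment of the degenerate empty-join case in fact goes beyond the paper (which ignores it), though note that your reduction ``one may otherwise assume the joins are nonempty'' does not cover a variable occurring \emph{only} in $u$ that receives the empty join $\bot$ --- a corner case the paper's own proof also leaves untreated, and which genuinely requires either nonempty joins or the assumption that every variable of $u$ occurs in $t$.
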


\begin{proof}
  The latter claim is immediate, since $xy \leq x^{n} y^{n} = (xy)^{n}$ if $x \leq x^{n}$ and $y \leq y^{n}$. As for the former, consider a linear s$\ell$-monoidal inequality $x_1 \cdot \ldots \cdot x_m \inequals u(x_1, \dots , x_m, y_1, \dots, y_n)$. Suppose that $x_{i} \assign \bigvee_{k \in I_{i}} x_{i,k}$ for $1 \leq i \leq m$ and $x_{i, k} \in X$, and likewise $y_{j} \assign \bigvee_{l \in J_{j}} y_{j,l}$ for $1 \leq j \leq n$ and $y_{j, l} \in X$, where all of these joins are admissible. Then $x_1 \cdot \ldots \cdot x_m = \bigvee \set{x_{1, k_{1}} \cdot \ldots \cdot x_{m, k_{m}}}{k_{1} \in I_{1}, \dots, k_{m} \in I_{m}}$. If the inequality in question holds in $X$, then $x_{1, k_{1}} \cdot \ldots \cdot x_{n, k_{n}} \leq u(x_{1, k_{1}}, \ldots, x_{m, k_{m}}, y_{1, l_{1}}, \dots, y_{n, l_{n}})$ for all $k_{i} \in I_{i}$ and $l_{j} \in J_{j}$. But $x_{i, k_{i}} \leq x_{i}$ and $y_{j, l_{j}} \leq y_{j}$, therefore $x_{1, k_{1}} \cdot \ldots \cdot x_{m, k_{m}} \leq u(x_1, \dots, x_m, y_1, \dots, y_n)$ for all $k_{i} \in I_{i}$, and $x_1 \cdot \ldots x_m \leq u(x_1, \dots, x_m, y_1, \dots, y_n)$.
\end{proof}

  We now show how to obtain an axiomatization of the $\ell$-bimonoidal subreducts of involutive CRLs which satisfy e.g.\ the inequality $x \cdot y \inequals x \vee y$ obtained by linearizing ${x^{2} \inequals x}$. By the previous lemma, this inequality holds in $\cdmalg{A}$ whenever it holds in a join dense subset of $\cdmalg{A}$, in particular whenever it holds for all $x$ and $y$ of the form $a \comp{b}$ for $a, b \in \alg{A}$. But by the meet density of elements of the form $e + \comp{f}$ for $e, f \in \alg{A}$, the inequality $a \comp{b} \cdot c \comp{d} \leq a \comp{b} \vee c \comp{d}$ is equivalent to the implication
\begin{align*}
  a \comp{b} \vee c \comp{d} \leq e + \comp{f} \implies a \comp{b} \cdot c \comp{d} \leq e + \comp{f},
\end{align*}
  or equivalently
\begin{align*}
  a \comp{b} \leq e + \comp{f} ~ \& ~ c \comp{d} \leq e + \comp{f} \implies ac \comp{d + b} \leq e + \comp{f},
\end{align*}
  where $a, b, c, d, e, f$ range over $\alg{A}$. Finally, this inequality may be expressed using the residuation laws as
\begin{align*}
  a \cdot f \leq b + e ~ \& ~ c \cdot f \leq d + e \implies a \cdot c \cdot f \leq b + d + e.
\end{align*}
  Thus $\cdmalg{A}$ satisfies $x^{2} \inequals x$ if and only if $\alg{A}$ satisfies the quasiequation above. In particular, each commutative $\ell$-bimonoid $\alg{A}$ which satisfies this quasi\-equation is a subreduct of an involutive CRL which satisfies $x^{2} \inequals x$.

  Conversely, if an involutive CRL satisfies the inequality $x \cdot y \inequals x \vee y$, then it satisfies the implication
\begin{align*}
  x \leq z ~ \& ~ y \leq z \implies x \cdot y \leq z.
\end{align*}
  In particular, it satisfies
\begin{align*}
  a \comp{b} \vee c \comp{d} \leq e + \comp{f} \implies a \comp{b} \cdot c \comp{d} \leq e + \comp{f},
\end{align*}
  and as before, this implication is equivalent to
\begin{align*}
  a \cdot f \leq b + e ~ \& ~ c \cdot f \leq d + e \implies a \cdot c \cdot f \leq b + d + e.
\end{align*}
  This completes the proof of the following fact.

\begin{fact}
  The following are equivalent for each commutative $\ell$-bimonoid $\alg{A}$:
\begin{enumerate}[(i)]
\item $\alg{A}$ is subreduct of an involutive CRL with $x^{2} \inequals x$,
\item $\cdmalg{A}$ satisfies $x^{2} \inequals x$,
\item $\alg{A}$ satisfies the bimonoidal quasiequation $a \cdot f \leq b + e ~ \& ~ c \cdot f \leq d + e \implies a \cdot c \cdot f \leq b + d + e$.
\end{enumerate}
\end{fact}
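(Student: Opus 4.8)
The plan is to prove the three conditions equivalent by establishing the cycle $\text{(ii)} \Rightarrow \text{(i)} \Rightarrow \text{(iii)} \Rightarrow \text{(ii)}$, using the commutative complemented DM completion $\cdmalg{A}$ as the bridge between the semantic conditions (i), (ii) and the syntactic condition (iii). The implication $\text{(ii)} \Rightarrow \text{(i)}$ is immediate: $\alg{A}$ embeds into $\cdmalg{A}$ as an $\ell$-bimonoid (Fact~\ref{fact: admissible joins preserved} together with Theorem~\ref{thm: dm completions exist}), and $\cdmalg{A}$ is by construction an involutive CRL, so if it satisfies $x^{2} \equals x$ then $\alg{A}$ is a subreduct of an involutive CRL satisfying that equation.

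For $\text{(i)} \Rightarrow \text{(iii)}$ I would argue as follows. Suppose $\alg{A} \into \alg{B}$ with $\alg{B}$ an involutive CRL satisfying $x^{2} \equals x$. Over CRLs this equation is equivalent (by Lemma~\ref{lemma: linearization}) to the linear inequality $x \cdot y \inequals x \vee y$, which in turn yields the cut-like implication $x \inequals z ~\&~ y \inequals z \implies x \cdot y \inequals z$. I would instantiate this in $\alg{B}$ with $x \assign a \comp{b}$, $y \assign c \comp{d}$, and $z \assign e + \comp{f}$ for $a,b,c,d,e,f \in \alg{A}$, the complements being taken in $\alg{B}$. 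Computing $x \cdot y = a c \comp{b + d}$ and translating each of the three comparisons $a\comp{b} \inequals e + \comp{f}$, $c\comp{d} \inequals e + \comp{f}$, $ac\comp{b+d} \inequals e + \comp{f}$ into inequalities over $\alg{A}$ by the residuation laws for complements (Proposition~\ref{prop: residuation for complements}) turns the implication into precisely the quasiequation $a f \inequals b + e ~\&~ c f \inequals d + e \implies a c f \inequals b + d + e$.

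For $\text{(iii)} \Rightarrow \text{(ii)}$ I would again use the linearization $x \cdot y \inequals x \vee y$ of $x^{2} \equals x$. Since this is a linear s$\ell$-monoidal inequality, it is preserved under admissible joins (Lemma~\ref{lemma: linear inequalities}); and because $\cdmalg{A}$ is an involutive residuated lattice, every join in it is admissible, so every element of $\cdmalg{A}$, being a join of the join-dense elements $a \comp{b}$, is in fact an admissible join of such elements. It therefore suffices to verify $a \comp{b} \cdot c \comp{d} \inequals a \comp{b} \vee c \comp{d}$ for all $a,b,c,d \in \alg{A}$. By the meet density of the elements $e + \comp{f}$, this inequality is equivalent to the implication $a\comp{b} \inequals e + \comp{f} ~\&~ c\comp{d} \inequals e + \comp{f} \implies a c \comp{b+d} \inequals e + \comp{f}$ quantified over $e, f \in \alg{A}$, which the residuation laws convert back into the quasiequation (iii).

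The residuation bookkeeping is routine; the step requiring the most care is $\text{(iii)} \Rightarrow \text{(ii)}$, where I must ensure the inequality is genuinely in \emph{linear} form so that the preservation lemma applies, and that the join density of the generators $a\comp{b}$ combines with the admissibility of all joins in $\cdmalg{A}$ to reduce validity throughout $\cdmalg{A}$ to validity on the generators alone.
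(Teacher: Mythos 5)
Your proof is correct and takes essentially the same route as the paper: linearize $x^{2} \inequals x$ to $x \cdot y \inequals x \vee y$, use preservation of linear inequalities under admissible joins together with join density of the elements $a\comp{b}$ and meet density of the elements $e + \comp{f}$ in $\cdmalg{A}$, and apply the residuation laws for complements to translate back and forth between the lattice inequality and the bimonoidal quasiequation. The only cosmetic difference is that you arrange the three conditions in a cycle, whereas the paper proves (ii) $\iff$ (iii) directly and obtains (i) from the embedding $\alg{A} \into \cdmalg{A}$ and the converse instantiation in an arbitrary ambient involutive CRL.
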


  The above reasoning works equally well for any set of s$\ell$-monoidal equations, or indeed any set of s$\ell$-monoidal positive universal clauses. However, although the procedure itself is straightforward, it would be somewhat cumbersome to try to describe the resulting $\ell$-bimonoidal quasiequation or universal clause in full generality. Instead of an explicit proof, let us therefore consider one more example. We axiomatize the bimonoidal subreducts of linear (totally ordered) involutive CRPs.

  Linearity is expressed by the positive universal clause
\begin{align*}
  x \leq y \text{ or } y \leq x.
\end{align*}
  This positive clause is linear (in a different sense of the word), therefore it holds in $\cdmalg{A}$ whenever it holds in a join dense subset of $\cdmalg{A}$, in particular whenever it holds for all $x$ and $y$ of the form $a \comp{b}$. By the meet density of elements of the form $e + \comp{f}$, the linearity of $\cdmalg{A}$ is equivalent to the following disjunction of universally quantified implications in $\alg{A}$:
\begin{align*}
  (\forall e, f) (c \comp{d} \leq e + \comp{f} \implies a \comp{b} \leq e + \comp{f}) \text{ or } (\forall e, f) (a \comp{b} \leq e + \comp{f} \implies c \comp{d} \leq e + \comp{f}).
\end{align*}
  Renaming the variables in order to transform the above condition into a universal clause yields the universally quantified sentence
\begin{align*}
  (c \comp{d} \leq e + \comp{f} \implies a \comp{b} \leq e + \comp{f}) \text{ or } (a \comp{b} \leq g + \comp{h} \implies c \comp{d} \leq g + \comp{h}),
\end{align*}
  or equivalently the universal clause
\begin{align*}
  a \comp{b} \leq g + \comp{h} ~ \& ~ c \comp{d} \leq e + \comp{f} \implies a \comp{b} \leq e + \comp{f} \text{ or } c \comp{d} \leq g + \comp{h}.
\end{align*}
  Applying the residuation laws now yields the bimonoidal universal clause
\begin{align*}
  a \cdot h \leq b + g ~ \& ~ c \cdot f \leq d + e \implies a \cdot f \leq b + e \text{ or } c \cdot h \leq d + g.
\end{align*}
  The algebra $\cdmalg{A}$ is therefore linear if and only if $\alg{A}$ satisfies the universal sentence above. Conversely, if an involutive CRP is linear, then it satisfies the implication
\begin{align*}
  (\forall z) (y \leq z \implies x \leq z) \text{ or } (\forall z) (x \leq z \implies y \leq z).
\end{align*}
  Taking $x = a \comp{b}$, $y = c \comp{d}$, $z = e + \comp{f}$, we may now repeat the above reasoning to show that our linear involutive CRP satisfies the desired bimonoidal universal clause. This completes the proof of the following fact.

\begin{fact}
  The following are equivalent for each commutative bimonoid $\alg{A}$:
\begin{enumerate}[(i)]
\item $\alg{A}$ is subreduct of a linear involutive CRP,
\item $\cdmalg{A}$ is linear,
\item $\alg{A}$ satisfies the universal clause $a \cdot h \leq b + g ~ \& ~ c \cdot f \leq d + e \implies a \cdot f \leq b + e \text{ or } c \cdot h \leq d + g$.
\end{enumerate}
\end{fact}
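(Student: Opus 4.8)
The plan is to prove the cycle (i) $\Rightarrow$ (iii) $\Rightarrow$ (ii) $\Rightarrow$ (i), reducing every comparison in $\cdmalg{A}$ to an inequality in $\alg{A}$ by means of the join density of the elements $a \comp{b}$ and the meet density of the elements $e + \comp{f}$. The one computational ingredient I would isolate at the outset is the ``dictionary'' equivalence
\begin{align*}
  a \comp{b} \leq e + \comp{f} \iff a \cdot f \leq b + e,
\end{align*}
which is an instance of the residuation laws for complements (Proposition~\ref{prop: residuation for complements}), or equivalently of Lemma~\ref{lemma: joins below meets}. Together with the meet-density characterization that $p \leq q$ holds if and only if $p \leq m$ for every meet generator $m \geq q$, this turns the three clause-inequalities of condition~(iii) into statements about the order of $\cdmalg{A}$.

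For (ii) $\Rightarrow$ (i) I would simply recall that $\alg{A}$ embeds into $\cdmalg{A}$ by Theorem~\ref{thm: dm completions exist}, that $\cdmalg{A}$ is a complete commutative involutive residuated lattice and hence in particular an involutive CRP, and that it is linear by hypothesis; thus $\alg{A}$ is a subreduct of a linear involutive CRP. For (i) $\Rightarrow$ (iii) I would first note that the clause in~(iii) lives in the bimonoidal signature $\{\leq, \cdot, +\}$ and is therefore inherited by any sub-bimonoid, so it suffices to check it in an arbitrary linear involutive CRP $\alg{B}$. There every element has a complement, so given $a \cdot h \leq b + g$ and $c \cdot f \leq d + e$ the dictionary yields $a \comp{b} \leq g + \comp{h}$ and $c \comp{d} \leq e + \comp{f}$; linearity forces $a \comp{b} \leq c \comp{d}$ or $c \comp{d} \leq a \comp{b}$, and transitivity followed by the dictionary in reverse delivers $a \cdot f \leq b + e$ in the first case and $c \cdot h \leq d + g$ in the second.

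The substantive step is (iii) $\Rightarrow$ (ii), and this is where I expect the only real difficulty. I would first prove the elementary lattice fact that a poset possessing a join-dense chain is itself a chain: if $p = \bigvee P$ and $q = \bigvee Q$ with $P, Q$ drawn from a chain of generators and $p \not\leq q$, then some generator $x \in P$ fails $x \leq q$, whence $x$ dominates every element of $Q$ and so $q \leq x \leq p$. Granting this, it remains to show that the join generators $a \comp{b}$ of $\cdmalg{A}$ are pairwise comparable. I would argue by contradiction: if $a \comp{b} \not\leq c \comp{d}$ and $c \comp{d} \not\leq a \comp{b}$, then meet density produces meet generators $e + \comp{f}$ and $g + \comp{h}$ witnessing these failures, which via the dictionary read as $c \cdot f \leq d + e$, $a \cdot f \not\leq b + e$, $a \cdot h \leq b + g$, and $c \cdot h \not\leq d + g$; but $a \cdot h \leq b + g$ together with $c \cdot f \leq d + e$ triggers clause~(iii), forcing $a \cdot f \leq b + e$ or $c \cdot h \leq d + g$, contradicting the two witnessed non-inequalities. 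The bookkeeping here---extracting the right meet-generator witnesses and matching the eight variables $a,b,c,d,e,f,g,h$ to the two hypotheses and two conclusions of~(iii)---is the part that requires care; everything else is either the dictionary equivalence or the join-density argument. Since the converse half of (ii) $\Leftrightarrow$ (iii) would fall out of the same dictionary, I would streamline by proving (iii) $\Rightarrow$ (ii) and closing the cycle rather than arguing both implications separately.
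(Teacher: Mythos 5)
Your proposal is correct and follows essentially the same route as the paper: both reduce linearity of $\cdmalg{A}$ to pairwise comparability of the join generators $a \comp{b}$ (via the fact that a join-dense chain forces the whole order to be a chain), use meet density of the elements $e + \comp{f}$ to extract witnesses, and translate everything through the residuation dictionary $a \comp{b} \leq e + \comp{f} \iff a \cdot f \leq b + e$. The only difference is organizational: you run the cycle (i) $\Rightarrow$ (iii) $\Rightarrow$ (ii) $\Rightarrow$ (i) with a proof by contradiction for (iii) $\Rightarrow$ (ii), whereas the paper proves (ii) $\Leftrightarrow$ (iii) directly by prenexing the disjunction of universally quantified implications into a single universal clause — the same logic in contrapositive form.
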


  It should be clear enough that the procedure outlined above applies in full generality to any set of pomonoidal or s$\ell$-monoidal \emph{linear} positive universal clauses. We therefore obtain the following theorems.

\begin{theorem}[Subreducts of involutive CRLs]
  Let $\alg{A}$ be a commutative $\ell$-bimonoid and $\Pi$ be a set of positive universal clauses in the signature $\{ \vee, \cdot, 1 \}$. Then the following are equivalent:
\begin{enumerate}[(i)]
\item $\cdmalg{A}$ satisfies $\Pi$.
\item $\alg{A}$ is a subreduct of an involutive CRL satisfying $\Pi$,
\item for each $\pi \in \Pi$, $\alg{A}$ is a subreduct of an involutive CRL satisfying $\pi$,
\end{enumerate}
\end{theorem}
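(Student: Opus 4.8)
The plan is to establish the cycle (i)$\Rightarrow$(ii)$\Rightarrow$(iii)$\Rightarrow$(i). The implication (i)$\Rightarrow$(ii) is immediate: by Theorem~\ref{thm: dm completions exist} the algebra $\cdmalg{A}$ is an involutive CRL, and by Fact~\ref{fact: admissible joins preserved} the canonical map $\alg{A} \into \cdmalg{A}$ is an embedding of $\ell$-bimonoids and hence of $\{\vee, \cdot, 1\}$-reducts; so if $\cdmalg{A}$ satisfies $\Pi$ then $\alg{A}$ is a subreduct of an involutive CRL (namely $\cdmalg{A}$) satisfying $\Pi$. The implication (ii)$\Rightarrow$(iii) is a trivial weakening. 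The whole content therefore lies in (iii)$\Rightarrow$(i), and the key reduction is that it suffices to prove the following single-clause statement: if $\alg{A}$ embeds into an involutive CRL satisfying a positive universal clause $\pi$ in the signature $\{\vee,\cdot,1\}$, then $\cdmalg{A}$ satisfies $\pi$. Granting this, (iii) supplies such a witnessing CRL for each $\pi \in \Pi$ separately, whence $\cdmalg{A} \models \pi$ for every $\pi \in \Pi$, i.e.\ $\cdmalg{A} \models \Pi$. Note that one cannot shortcut this by combining the witnessing CRLs into a single one, since positive universal clauses (unlike Horn clauses) need not be preserved under products.

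To prove the single-clause statement I would proceed exactly as in the two facts immediately preceding the theorem, which treat $x^{2} \inequals x$ and linearity. First, by Lemma~\ref{lemma: linearization} I may replace $\pi$ by an equivalent (over s$\ell$-monoids, hence over involutive CRLs) finite set of \emph{linear} positive universal clauses, and so assume $\pi$ is a disjunction of linear inequalities $t_{k} \inequals u_{k}$ with each $t_{k}$ a product of distinct variables and each $u_{k}$ a join of products. The goal is to attach to $\pi$ a bimonoidal positive universal clause $\pi^{\dagger}$ in the language of $\alg{A}$ for which (H1)~$\cdmalg{A} \models \pi$ if and only if $\alg{A} \models \pi^{\dagger}$, and (H2)~the implication $\pi \Rightarrow \pi^{\dagger}$ holds in every involutive CRL. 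Since $\pi^{\dagger}$ mentions only elements (no suprema), (H2) lets it descend from any involutive CRL $\alg{C} \models \pi$ to the subreduct $\alg{A}$; combined with the backward direction of (H1) this yields $\cdmalg{A} \models \pi$, which is precisely the single-clause statement.

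The clause $\pi^{\dagger}$ is obtained by the mechanical translation carried out in those two facts. Using that every element of $\cdmalg{A}$ is an admissible join of join-generators $a \comp{b}$ and an admissible meet of meet-generators $e + \comp{f}$ (all joins and meets being admissible in an involutive residuated lattice), one first restricts the variable assignments to the join-dense set of elements $a \comp{b}$, then rewrites each atomic inequality between such elements: a product of join-generators is again a join-generator, and by meet density $s \inequals t$ holds iff $t \inequals e + \comp{f}$ implies $s \inequals e + \comp{f}$ for all meet-generators $e + \comp{f}$ (the poset observation used in the proof of Lemma~\ref{lemma: meets below joins}). Finally the residuation laws for complements (Proposition~\ref{prop: residuation for complements}) turn each resulting implication into an inequality internal to $\alg{A}$, via $a \comp{b} \inequals e + \comp{f} \iff a \cdot f \inequals b + e$. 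Collecting these universally quantified implications and disjunctions, and renaming the generator-parameters as fresh universally quantified variables, yields the bimonoidal positive universal clause $\pi^{\dagger}$ and proves (H1). For (H2) one runs the same chain of equivalences inside a CRL $\alg{C} \models \pi$, using only that the residuation laws hold there, to see that $\pi \Rightarrow \pi^{\dagger}$ is valid in every involutive CRL.

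The step demanding the most care — and the one I expect to be the main obstacle — is the very first reduction inside (H1): passing from the validity of $\pi$ in $\cdmalg{A}$ to its validity on join-generator assignments. For a single linear inequality this is precisely the preservation statement of Lemma~\ref{lemma: linear inequalities}. For a disjunction whose disjuncts are inequalities between single variables, such as linearity, a joint failure at arbitrary elements is pulled back to a failure at join-generators using join density to witness each violated disjunct. The general case combines the two: after distributing each product $t_{k}$ over the defining joins of the variables one locates, for each disjunct, a join-generator witness of its failure, and the work is to align these into a single assignment of join-generators below the given tuple at which \emph{all} disjuncts fail simultaneously. Establishing this alignment — rather than the routine residuation bookkeeping that produces $\pi^{\dagger}$ from the atomic inequalities — is where the substance of the argument sits, and it is exactly the point that the two preceding worked examples are designed to illustrate in the extreme cases of a single product inequality and of a purely variable-based disjunction.
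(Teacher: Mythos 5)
Your overall plan is the paper's own procedure: the cycle (i)$\Rightarrow$(ii)$\Rightarrow$(iii)$\Rightarrow$(i), with (i)$\Rightarrow$(ii) from Theorem~\ref{thm: dm completions exist} and Fact~\ref{fact: admissible joins preserved}, linearization via Lemma~\ref{lemma: linearization}, and a translated bimonoidal clause $\pi^{\dagger}$ produced by restricting to join generators, testing against meet generators, and eliminating complements by Proposition~\ref{prop: residuation for complements}; your (H2) and the forward half of (H1) are exactly the routine parts the paper carries out in its two worked examples. The genuine gap is the step you flag yourself: the backward half of (H1), passing from validity of $\pi$ on join-generator assignments to validity on all of $\cdmalg{A}$. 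And the ``alignment'' you propose for it cannot be carried out as described. When two disjuncts share a variable $x_i$, their failure witnesses $g = a\comp{b}$ and $g' = c\comp{d}$ must be replaced by a \emph{single} generator that still lies \emph{below} $x_i$, because staying below $x_i$ is what keeps the right-hand sides below the chosen meet generators $m_k$. But the join generators below a fixed element of $\cdmalg{A}$ are not up-directed: in the completion of the four-element chain $\elem{p} < \elem{q} < \elem{r} < \elem{s}$, the generators $\elem{q}$ and $\comp{\elem{r}}$ both lie below $\elem{q} \vee \comp{\elem{r}}$, yet this element is not of the form $a \comp{b}$, so no generator fits between them and their join. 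A generator above both witnesses always exists, e.g.\ $(a \vee c)\comp{(b \wedge d)}$, but it need not lie below $x_i$, and letting the assignment exceed $x_i$ destroys the bounds $u_k(\vec{g}) \leq m_k$. The two worked examples evade this: a single inequality has one disjunct, so Lemma~\ref{lemma: linear inequalities} suffices, and in the linearity clause the two left-hand sides are the disjoint variables $x$ and $y$, so witnesses never collide. The general case is therefore not a ``combination of the two'' extremes.

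The repair does not happen inside $\cdmalg{A}$ at all; it uses the witnessing algebra harder. If $\alg{C} \models \pi$, then $\alg{C}$ satisfies every positive universal consequence of $\pi$, in particular every substitution instance $\pi[x_i \mapsto x_{i,1} \vee \dots \vee x_{i,n_i}]$; distributing multiplication over these joins and splitting the resulting conjunctions of inequalities rewrites each instance as a family of \emph{linear} clauses whose left-hand sides choose one copy of each variable while the right-hand sides retain all copies (variables occurring only on right-hand sides may receive zero copies). Pull back to $\alg{A}$ the translations of \emph{all} these derived clauses, not just $\pi^{\dagger}$: each holds in $\alg{C}$ by (H2)-type reasoning, hence in the sub-bimonoid $\alg{A}$, hence each derived clause is valid on generator assignments in $\cdmalg{A}$. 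Now a failure of $\pi$ in $\cdmalg{A}$ first descends, by meet density, join density, admissibility and monotonicity, to a failure at a tuple of \emph{finite} joins of generators, and such a failure is literally a failure of one of the derived clauses at a generator assignment --- a contradiction. The conflicting witnesses are thereby allowed to occupy distinct variables of a derived clause, which is how the alignment problem dissolves; accordingly (H1) should be stated as an equivalence between $\cdmalg{A} \models \pi$ and $\alg{A}$ satisfying the whole consequence-closed family, not a single-clause ``iff''. To be fair, the paper never writes this down either --- it proves preservation only for single linear inequalities and the variable-disjoint linearity clause and then asserts generality --- so your reconstruction is faithful to the paper, but as a proof it inherits, and usefully makes explicit, the same hole.
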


\begin{theorem}[Subreducts of involutive CRPs]
  Let $\alg{A}$ be a commutative bimonoid and $\Pi$ be a set of linear positive universal clauses in the signature $\{ \cdot, 1 \}$. Then the following are equivalent:
\begin{enumerate}[(i)]
\item $\cdmalg{A}$ satisfies $\Pi$.
\item $\alg{A}$ is a subreduct of an involutive CRP satisfying $\Pi$,
\item for each $\pi \in \Pi$, $\alg{A}$ is a subreduct of an involutive CRP satisfying $\pi$,
\end{enumerate}
\end{theorem}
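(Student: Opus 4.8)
The plan is to prove the cycle of implications (i)$\Rightarrow$(ii)$\Rightarrow$(iii)$\Rightarrow$(i), mirroring the two worked examples that precede the statement, now carried out in the purely pomonoidal (join-free) setting. The implication (i)$\Rightarrow$(ii) is immediate: by Theorem~\ref{thm: dm completions exist} the algebra $\cdmalg{A}$ is a complete commutative involutive residuated lattice, hence in particular an involutive CRP, into which $\alg{A}$ embeds as a sub-bimonoid; so if $\cdmalg{A}$ satisfies $\Pi$, then $\alg{A}$ is a subreduct of this single involutive CRP. The implication (ii)$\Rightarrow$(iii) is trivial, since a CRP satisfying $\Pi$ satisfies each $\pi \in \Pi$. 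The entire content therefore lies in (iii)$\Rightarrow$(i).

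For (iii)$\Rightarrow$(i) I would attach to each linear clause $\pi$ a \emph{bimonoidal universal clause} $\psi_{\pi}$ over $\alg{A}$, obtained mechanically as in the linearity example: restrict each inequality of $\pi$ to the join-dense elements $a\comp{b}$; rewrite each resulting inequality $s \inequals t$ by the meet density of the elements $e + \comp{f}$ as the universally quantified implication $t \inequals e + \comp{f} \Rightarrow s \inequals e + \comp{f}$; pull the disjunction of these implications into prenex form, renaming the fresh meet-generator variables of distinct disjuncts apart; and finally eliminate complements using the residuation law $p\comp{q} \inequals r + \comp{s} \iff p \cdot s \inequals q + r$ (Proposition~\ref{prop: residuation for complements}). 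I would then argue that this translation is sound both ways. Converse soundness: if $\alg{A}$ embeds as a sub-bimonoid of an involutive CRP $\alg{B}$ with $\alg{B} \models \pi$, then running the same manipulations inside $\alg{B}$ (where complements and residuals exist and $\psi_{\pi}$ mentions only the shared bimonoid operations of $\alg{A}$) yields $\psi_{\pi}$ in $\alg{A}$. Forward soundness: if $\psi_{\pi}$ holds in $\alg{A}$, then $\pi$ holds in $\cdmalg{A}$. Applying converse soundness to each $\pi \in \Pi$ via (iii) gives that every $\psi_{\pi}$ holds in $\alg{A}$, and forward soundness then delivers $\cdmalg{A} \models \pi$ for every $\pi$, which is (i).

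The engine of the forward half is the preservation of linear inequalities under admissible joins (Lemma~\ref{lemma: linear inequalities}): since every join in $\cdmalg{A}$ is admissible, a linear inequality holds throughout $\cdmalg{A}$ as soon as it holds on the join-dense generators $a\comp{b}$, linearity (distinct variables) being exactly what lets the product $t(\vec{x})$ distribute over the generator decompositions of the $x_i$. It is worth stressing that this is precisely why $\Pi$ must be assumed linear from the outset: in the lattice case one may first invoke Linearization (Lemma~\ref{lemma: linearization}), but that step introduces joins and is unavailable in the signature $\{ \cdot, 1 \}$.

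I expect the main obstacle to be upgrading this preservation step from a single inequality to a full disjunctive clause $\bigvee_{k} (t_{k} \inequals u_{k})$. For one inequality the monotonicity-plus-distributivity argument of Lemma~\ref{lemma: linear inequalities} suffices; but for a disjunction one must show that if the clause fails at some $\vec{x} \in \cdmalg{A}$ it already fails at a tuple of join-generators, and the witnessing generators produced for the different disjuncts need not agree on shared variables (so one cannot naively collapse the disjunction to a single generator assignment, and indeed a crude generator-to-everywhere claim can fail for pathological clauses). The correct route is to keep the disjuncts separate throughout, reducing each inequality against its \emph{own} family of meet-generators before recombining --- exactly the renaming that produced the independent variables $e, f, g, h$ in the linearity example --- and to justify it through the residuated-frame bookkeeping of Galatos \& Jipsen underlying $\cdmalg{A}$ rather than by a direct generator count. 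Verifying that this goes through uniformly for an arbitrary linear clause, so that the resulting $\psi_{\pi}$ really is equivalent to $\pi$ on both $\alg{A}$ and $\cdmalg{A}$, is the one genuinely delicate point; everything else is the routine density-and-residuation calculation already rehearsed in the two examples.
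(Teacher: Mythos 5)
Your proposal is structurally identical to the paper's own proof: the paper likewise reduces everything to the translation $\pi \mapsto \psi_{\pi}$ (instantiate at the join-dense elements $a\comp{b}$, rewrite against the meet-dense elements $e + \comp{f}$, prenex with the per-disjunct meet-generator variables renamed apart, then residuate via Proposition~\ref{prop: residuation for complements}), obtains (i)$\Rightarrow$(ii) from Theorem~\ref{thm: dm completions exist}, treats (ii)$\Rightarrow$(iii) as trivial, and for the general case offers only the two worked examples plus the remark that the procedure ``applies in full generality.'' So in outline you and the paper coincide, and your two-way soundness argument for $\psi_{\pi}$ is exactly the paper's.

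Where matters stand differently is the step you flag as genuinely delicate: your diagnosis of the difficulty is correct, but your proposed remedy points at the wrong mechanism. The renaming that produced the independent variables $e, f, g, h$ in the linearity example is the prenexing of the per-disjunct quantifiers over \emph{meet}-generators; it is already built into the translation and does nothing about the problem you raise, which concerns \emph{join}-generator witnesses for variables shared by several disjuncts. What actually carries the generators-to-everywhere step in the paper's examples is a witness-combination argument: if every disjunct $t_{k}(\vec{x}) \inequals u_{k}(\vec{x})$ fails at $\vec{x}$, then linearity of $t_{k}$ together with admissibility of joins (Lemma~\ref{lemma: linear inequalities} is the one-disjunct instance) yields, for each $k$, a tuple of generators below $\vec{x}$ on the variables of $t_{k}$ at which that disjunct still fails, and monotonicity of the right-hand sides then lets one merge these witnesses into a single generator tuple falsifying all disjuncts at once. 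This merge goes through whenever no variable occurs in the left-hand sides of two distinct disjuncts --- true for $x \inequals y \text{ or } y \inequals x$ and for any single inequality, but not guaranteed by the paper's definition of a linear clause. When left-hand sides do share a variable the witnesses can genuinely conflict: the failure set of each disjunct is an up-set of generators, but the set of generators below a given element need not be up-directed, so finitely many such up-sets need not meet; neither your appeal to ``residuated-frame bookkeeping'' nor anything explicit in the paper closes this case. In short, the gap you point at is real, but it is one you share with the paper rather than one you can discharge by citing it; a complete proof must either supply this missing combination argument for clauses with overlapping left-hand sides or restrict attention to clauses whose left-hand sides are pairwise variable-disjoint, which is the case the paper's examples actually cover.
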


  The only difference between the two cases is that for pomonoidal clauses we must assume linearity, while if joins are available each positive universal clause may be linearized by Lemma~\ref{lemma: linearization}. For example, although $x^2 \inequals x$ is a perfectly good pomonoidal inequality, the above algorithm does not tell us how to axiomatize the bimonoidal subreducts of involutive residuated pomonoids which satisfy this inequality. 

  The reader can verify that the universal clause axiomatizing bimonoidal subreducts of linear involutive CRPs fails in the three-element {\L}ukasiewicz chain $\elemone > \elem{a} > \elem{b}$ considered in Subsection~\ref{subsec: macneille existence} (with $x + y = x \cdot y$). This is witnessed by the following valuation:
\begin{align*}
  a & \assign \elemone & c & \assign \elem{a} & e & \assign \elem{b} & g & \assign \elemone \\
  b & \assign \elemone & d & \assign \elem{b} & f & \assign \elem{a} & h & \assign \elemone
\end{align*}
  More generally, replacing $\elem{a}$ and $\elem{b}$ by elements $x$ and $y$ such that $x \nleq y$ but $x^{2} \leq y^{2}$, the same valuation shows that no pomonoid (with $x + y \assign x \cdot y$ and $0 \assign 1$) with such elements $x$ and $y$ satisfies the above universal (inequational) clause. In particular, this holds for each pomonoid with a bottom element $\bot$ and a non-trivial nilpotent element, i.e.\ an $a$ such that $a^{n} = \bot$ for some $n$.

  In certain cases, the quasiequations obtained by the above procedure can be simplified to inequalities. This is the case with linear inequalities which are preserved under products, in particular with $x \inequals x^{n}$.

\begin{theorem}[Subreducts of involutive CRLs with $x \inequals x^{n}$]
  The $\ell$-bimonoidal subreducts of involutive CRLs satisfying a set of in\-equalities of the form $x \inequals x^{n}$ form a variety of $\ell$-bimonoids axiomatized by the corresponding set of inequalities $x \inequals x^{n}$ and $nx \inequals x$.
\end{theorem}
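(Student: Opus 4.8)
The plan is to combine the preceding Theorem (Subreducts of involutive CRLs) with a direct computation inside the complemented DM completion. Writing $\Pi = \{ x \leq x^{n} \mid n \in S \}$ for the given set of multiplicative inequalities, that theorem tells us that a commutative $\ell$-bimonoid $\alg{A}$ is a subreduct of an involutive CRL satisfying $\Pi$ if and only if $\cdmalg{A} \models \Pi$. Since the class of $\ell$-bimonoids defined by $\Pi \cup \{ nx \leq x \mid n \in S \}$ is inequationally (hence equationally) definable and therefore a variety, it suffices to prove, for each fixed $n$, that
\[
  \cdmalg{A} \models x \leq x^{n} \iff \alg{A} \models x \leq x^{n} \ \text{and}\ \alg{A} \models nx \leq x.
\]
The two classes then coincide, yielding both the asserted axiomatization and the fact that the subreducts form a variety.

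For the forward direction I would use that $\alg{A}$ embeds into $\cdmalg{A}$ as a sub-$\ell$-bimonoid (Fact~\ref{fact: admissible joins preserved}, Theorem~\ref{thm: dm completions exist}), so $\alg{A} \models x \leq x^{n}$ is immediate. The point is that the additive inequality $nx \leq x$ is forced by involutivity: in the involutive CRL $\cdmalg{A}$, applying the order-reversing involution and the De~Morgan identity $\comp{(x^{n})} = n\,\comp{x}$ (Proposition~\ref{prop: de morgan laws}) to the instance $\comp{w} \leq (\comp{w})^{n}$ yields $nw \leq w$ for every $w$, so $\cdmalg{A} \models nx \leq x$ and hence $\alg{A} \models nx \leq x$. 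This is exactly where both halves of the axiomatization originate.

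The converse is the technical heart. Assuming $\alg{A} \models x \leq x^{n}$ and $\alg{A} \models nx \leq x$, I first verify $x \leq x^{n}$ on the join-dense generating set $G = \{ a \comp{b} \mid a, b \in \alg{A} \}$ of $\cdmalg{A}$. Since $(a\comp{b})^{n} = a^{n} \comp{nb}$, Lemma~\ref{lemma: joins below joins} reduces the inequality $a\comp{b} \leq a^{n}\comp{nb}$ to the implication: for all $p, q \in \alg{A}$, $p a^{n} \leq q + nb$ implies $p a \leq q + b$; and this follows from the chain $pa \leq p a^{n} \leq q + nb \leq q + b$, using $a \leq a^{n}$, monotonicity of multiplication, $nb \leq b$, and monotonicity of $+$. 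To lift from $G$ to an arbitrary $z = \bigvee_{i} g_{i} \in \cdmalg{A}$ I expand $z^{n} = \bigvee_{i_{1}, \dots, i_{n}} g_{i_{1}} \cdots g_{i_{n}}$ (all joins in $\cdmalg{A}$ being admissible), observe that each diagonal term $g_{i}^{n}$ occurs in this join, and conclude $g_{i} \leq g_{i}^{n} \leq z^{n}$, whence $z = \bigvee_{i} g_{i} \leq z^{n}$; this is precisely the combination of preservation under products and under admissible joins recorded in Lemma~\ref{lemma: linear inequalities}. The main obstacle is conceptual rather than computational: one must recognize that, because $x \leq x^{n}$ is preserved under products, its validity already propagates from the multiplicatively closed generating set $G$ without leaving behind the auxiliary universally quantified parameters that a general positive clause (as in the $x^{2} \leq x$ example) contributes, so that the resulting axiomatization collapses from a quasiequation to the plain inequalities $x \leq x^{n}$ and $nx \leq x$.
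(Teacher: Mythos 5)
Your proposal is correct, and both halves of the axiomatization arise for essentially the same reasons as in the paper (the forward direction, $nx \inequals x$ via involutivity and De~Morgan, is merely asserted in the paper; you supply the justification). The converse, however, takes a genuinely different route. The paper uses the fact that $x \inequals x^{n}$ is preserved under \emph{both} products and admissible joins (Lemma~\ref{lemma: linear inequalities}), so its validity in $\cdmalg{A}$ follows from its validity on the set $\set{a}{a \in \alg{A}} \cup \set{\comp{a}}{a \in \alg{A}}$, whose closure under products and joins is all of $\cdmalg{A}$; there the check is immediate, since $a \leq a^{n}$ is the hypothesis and $\comp{a} \leq (\comp{a})^{n} = \comp{na}$ holds if and only if $na \leq a$ by antitonicity of complementation. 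You instead verify the inequality directly on the join-dense set $\set{a\comp{b}}{a, b \in \alg{A}}$ via Lemma~\ref{lemma: joins below joins}, translating $a\comp{b} \leq a^{n}\comp{nb}$ into the implication $pa^{n} \leq q + nb \implies pa \leq q + b$, and then lift to arbitrary elements using admissible joins alone --- your generating set is already closed under products, so product-preservation is never actually needed in your argument (your closing remark attributing a role to it is the one imprecision, though a harmless one). The trade-off: the paper's reduction is slicker, with the second axiom $nx \inequals x$ appearing verbatim as the check on $\comp{a}$, whereas yours is more computational but more self-contained, bypassing the product-preservation half of Lemma~\ref{lemma: linear inequalities} and making the appeal to the general subreduct theorem and to Lemma~\ref{lemma: joins below joins} explicit.
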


\begin{proof}
  The inequality $nx \inequals x$ holds in each involutive CRL where $x \inequals x^{n}$ holds. Conversely, recall that the inequality $x \inequals x^{n}$ is preserved under both joins and products. It therefore holds in $\cdmalg{A}$ if and only if it holds for each $x$ of the form $a$ or $\comp{a}$ for $a \in \alg{A}$. But $\comp{a} \leq (\comp{a})^{n}$ if and only if $na \leq a$.
\end{proof}

  The same proof yields an analogous result for involutive CRPs.

\begin{theorem}[Subreducts of involutive CRPs with $x \inequals x^{n}$]
  The bimonoidal subreducts of involutive CRPs satisfying a set of in\-equalities of the forms $x \inequals x^{n}$ form a class of bimonoids axiomatized by the corresponding set of inequalities $x \inequals x^{n}$ and $nx \inequals x$.
\end{theorem}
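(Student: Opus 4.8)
The plan is to follow the template of the immediately preceding CRL theorem, substituting the completion $\cdmalg{A}$ for the ambient involutive structure and exploiting that $x \inequals x^{n}$ is already a \emph{linear} inequality, so that no linearization step (which would require joins, unavailable in the pomonoidal setting) is needed. Concretely, I would prove the equivalence of three conditions: (a) $\alg{A}$ is a bimonoidal subreduct of an involutive CRP satisfying all the given $x \inequals x^{n}$; (b) $\cdmalg{A}$ satisfies all the $x \inequals x^{n}$; and (c) $\alg{A}$ satisfies all the $x \inequals x^{n}$ together with the corresponding $nx \inequals x$. Since by Theorem~\ref{thm: dm completions exist} the completion $\cdmalg{A}$ is a complete involutive CRL into which $\alg{A}$ embeds as a bimonoid, and since forgetting its lattice structure leaves an involutive CRP, condition (b) immediately yields (a).

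First I would treat necessity, i.e.\ that the displayed axioms hold in every commutative bimonoid $\alg{A}$ that is a subreduct of an involutive CRP $\alg{B}$ satisfying $x \inequals x^{n}$. The inequality $x \inequals x^{n}$ lives in the multiplicative signature and therefore descends from $\alg{B}$ to the subalgebra $\alg{A}$. For $nx \inequals x$, I would use the De~Morgan laws to compute $(\comp{x})^{n} = \comp{nx}$ in $\alg{B}$, so that from $\comp{x} \inequals (\comp{x})^{n} = \comp{nx}$ and the antitonicity of the complement one obtains $nx \inequals x$. This holds for every element of $\alg{B}$, hence for every element of $\alg{A}$, giving $\alg{A} \models nx \inequals x$.

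For sufficiency I would assume $\alg{A} \models x \inequals x^{n}$ and $\alg{A} \models nx \inequals x$ and show $\cdmalg{A} \models x \inequals x^{n}$. The crux is Lemma~\ref{lemma: linear inequalities}: $x \inequals x^{n}$ is preserved under products in commutative pomonoids and, being linear, under admissible joins. Now every element of $\cdmalg{A}$ is a join of elements $a \comp{b} = a \cdot \comp{b}$, each of which is a product of a generator of the form $a$ and one of the form $\comp{b}$; moreover all joins in the involutive residuated lattice $\cdmalg{A}$ are admissible. Hence it suffices to verify $x \inequals x^{n}$ for $x$ ranging over $\{\, a, \comp{a} : a \in \alg{A} \,\}$: preservation under products then propagates it to all $a \comp{b}$, and preservation under admissible joins propagates it from this join-dense set to all of $\cdmalg{A}$. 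For $x = a$ the inequality $a \inequals a^{n}$ transfers from $\alg{A}$ because $\alg{A}$ is a sub-bimonoid of $\cdmalg{A}$; for $x = \comp{a}$ I would compute $(\comp{a})^{n} = \comp{na}$ and note that $\comp{a} \inequals \comp{na}$ is equivalent, by antitonicity of the complement, to $na \inequals a$, which holds in $\alg{A}$.

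I expect the only point requiring genuine care to be the \emph{simultaneous} preservation under products and under admissible joins: this is precisely what forces the restriction to inequalities of the special shape $x \inequals x^{n}$, rather than arbitrary pomonoidal clauses, since in the pomonoidal case one cannot first appeal to linearization (Lemma~\ref{lemma: linearization}). Everything else is a routine transfer argument along the embedding $\alg{A} \into \cdmalg{A}$ together with the De~Morgan laws, so the proof runs in parallel with its CRL counterpart.
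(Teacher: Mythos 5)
Your proposal is correct and follows essentially the same route as the paper: the paper proves the CRL version by noting that $x \inequals x^{n}$ is preserved under products and admissible joins (Lemma~\ref{lemma: linear inequalities}), reducing validity in $\cdmalg{A}$ to the generators $a$ and $\comp{a}$, and translating $\comp{a} \inequals (\comp{a})^{n}$ into $na \inequals a$ via De~Morgan and antitonicity, then remarks that the same proof handles the CRP case. Your write-up merely makes explicit the details the paper leaves implicit (the necessity of $nx \inequals x$ and the passage from $\cdmalg{A}$ as a CRL to its CRP reduct), so there is nothing to correct.
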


\section{Open problems}
\label{sec: open problems}

  Let us end the paper with a list of unresolved questions which arose in the course of the paper. The~main task left open is to construct complemented DM~completions for non-commutative bimonoids.

\begin{problem}
  Can we embed an arbitrary (not necessarily commutative) bimonoid into a complemented one? In particular, can we find a non-commutative generalization of complemented DM completions?
\end{problem}

  We can also consider the same embedding problem for the categorical version of commutative bimonoids, so-called symmetric weakly distributive categories. The role of complemented commutative bimonoids is then played by so-called $\ast$-autonomous categories (see~\cite{barr79,cockett+seely97weakly-distributive-categories}).

\begin{problem}
  Does each (small) symmetric weakly distributive category embed into a $\ast$-autonomous category?
\end{problem}

  There is also space for other kinds of complemented envelopes intermediate between bimonoids of fractions and complemented DM completions. Bounded distributive lattices are an example: their most natural complemented envelopes are their free Boolean extensions, where each element is a \emph{finite} join of elements of the form $a \cdot \comp{b}$. One can also consider cases where each element has either the form $a \cdot \comp{b}$ or the form $a + \comp{b}$, as in the case of the algebra $\Lukthreeext$ in Subsection~\ref{subsec: macneille existence}.

\begin{problem}
  Investigate other kinds of complemented $\Delta_{1}$-extensions of com\-mutative bimonoids which are more general than bimonoids of fractions but more restrictive than complemented DM completions.
\end{problem}

  Within the variety of bounded distributive lattices, complemented DM completions can be characterized in purely categorical terms: they are precisely the injective hulls in this category. We saw in Subsection~\ref{subsec: macneille definition} that this does not hold in the category of commutative $\ell$-bimonoids. Nonetheless, it may be the case that this categorical characterization of complemented DM completions at least holds in some wider variety of commutative $\ell$-bimonoids than the variety of bounded distributive lattices.

\begin{problem}
  In the variety of bounded distributive lattices, complemented DM completions coincide with injective hulls. Does this extend to some larger variety of commutative $\ell$-bimonoids?
\end{problem}

  We saw in Section~\ref{sec: subreducts} that the class of $\ell$-bimonoidal subreducts of involutive commutative residuated lattices axiomatized by inequalities of the form ${x \inequals x^{n}}$ or $1 \inequals x^{n}$ is the variety of commutative $\ell$-bimonoids axiomatized by the inequalities $x \inequals x^{n}$ and $nx \inequals x$ or $1 \inequals x^{n}$ and $nx \inequals 1$. Are there other knotted varieties of involutive commutative residuated lattices, i.e.\ classes axiomatized by inequalities of the form $x^{m} \leq x^{n}$ for $m, n \geq 1$, whose $\ell$-bimonoidal subreducts form a variety? The same question can of course be asked for knotted partially ordered varieties of involutive commutative residuated pomonoids.

\begin{problem}
  Is it the case that the only knotted varieties of involutive commutative residuated lattices whose $\ell$-bimonoidal subreducts form a variety are axiomatized by inequalities of the form $x \inequals x^{n}$ or $1 \inequals x^{n}$?
\end{problem}

\section*{Acknowledgments}

  We are grateful to the anonymous referee for their careful reading of the paper and for spotting a number of typos.

\end{document}